\numberwithin{equation}{section}
\newcommand{\R}{\mathbb{R}}
\newcommand{\Q}{\mathbb{Q}}
\newcommand{\Z}{\mathbb{Z}}
\newcommand{\F}{\mathcal{F}}
\newcommand{\A}{\mathcal{A}}
\newcommand{\B}{\mathcal{B}}
\newcommand{\E}{\mathcal{E}}
\newcommand{\Nor}{\mathcal N}
\newcommand{\wt}{\widetilde}
\newcommand{\deq}{\overset{d}{=}}
\newcommand{\Da}{\overleftarrow{D}}
\newcommand{\Ra}{\overleftarrow{R}}
\newcommand{\Qa}{\overleftarrow{Q}}
\DeclarePairedDelimiter\floor{\lfloor}{\rfloor}
\newcommand{\ve}{\varepsilon}
\newcommand{\sqtwopi}{\sqrt{2\pi}}
\newcommand{\zfraclambda}{\frac{z -\lambda}{\sqrt 2}}
\newcommand{\negzfraclambda}{-\frac{z + \lambda}{\sqrt 2}}
\newcommand{\f}{\frac}
\newcommand{\mbf}{\mathbf}
\newcommand{\Pp}{\mathbb P}
\newcommand{\Ee}{\mathbb E}
\newcommand{\NU}{\operatorname{NU}}
\newcommand{\h}{h}
\newcommand{\vv}{v}
\newcommand{\Largsup}{L \arg \sup}
\newcommand{\sig}{{\scaleobj{0.8}{\boxempty}}} 
\newcommand{\sigg}{{\scaleobj{0.9}{\boxempty}}}
\theoremstyle{plain}
\newtheorem{theorem}{Theorem}[section]
\newtheorem{lemma}[theorem]{Lemma}
\theoremstyle{definition}
\newtheorem{definition}[theorem]{Definition}
\theoremstyle{remark}
\newtheorem{remark}[theorem]{Remark}
\newcommand{\be}{\begin{equation}}
\newcommand{\ee}{\end{equation}}
\title[Brownian last-passage percolation]{Busemann process and semi-infinite geodesics in Brownian last-passage percolation}
\author{Timo Sepp{\"a}l{\"a}inen}
\address{Timo Sepp{\"a}l{\"a}inen, University of Wisconsin-Madison, Mathematics Department, Van Vleck Hall, 480
Lincoln Dr., Madison WI 53706-1388, USA.}
\email{seppalai@math.wisc.edu}
\author{Evan Sorensen}
\address{Evan Sorensen, University of Wisconsin-Madison, Mathematics Department, Van Vleck Hall, 480
Lincoln Dr., Madison WI 53706-1388, USA.}
\email{elsorensen@wisc.edu}
\subjclass[2020]{60K30,60K35,60K37}
\keywords{Brownian motion, Busemann function, semi-infinite geodesic, coalescence, last-passage percolation, midpoint problem}
\date{\today}
\begin{document}
\maketitle
\begin{abstract}
We prove the existence of semi-infinite geodesics for Brownian last-passage percolation (BLPP). Specifically, on a single event of probability one, there exist  semi-infinite geodesics  started from every space-time point and traveling in every asymptotic direction. Properties of these geodesics include uniqueness for a fixed initial point and direction, non-uniqueness for fixed direction but random initial points, and coalescence of all geodesics traveling in a common, fixed direction. Along the way, we prove that for fixed northeast and southwest directions, there almost surely exist no bi-infinite geodesics in the given directions. The semi-infinite geodesics are constructed from Busemann functions.  Our starting point is a result of Alberts, Rassoul-Agha and Simper that established Busemann functions for  fixed points and directions.  Out of this, we construct the global process of Busemann functions simultaneously for all initial points and directions, and then the family of semi-infinite Busemann geodesics.  The  uncountable space  of the semi-discrete setting requires extra consideration and leads to new phenomena, compared to discrete models.  
\end{abstract}
\tableofcontents

\section{Introduction}

\subsection{Brownian last-passage percolation}
 Brownian last-passage percolation (BLPP) dates back to  the 1991 work of Glynn and Whitt~\cite{glynn1991}, where  the model appeared as a large-scale limit of multiple queues in series under heavy traffic conditions. Harrison and Williams~\cite{Harrison1985,harrison1990,harrison1992} also studied what is known as the Brownian queue and developed a stability result for that model. More specifically, if the arrivals process is given by increments of Brownian motion and the  service process is given by increments of an independent Brownian motion with drift, the departures process is also given by increments of Brownian motion. The connection between BLPP and the Brownian queue is expounded on by O'Connell and Yor~\cite{brownian_queues}, who also introduced the positive temperature version of this model, known as the Brownian polymer. We discuss the connection to queuing theory in Section~\ref{section:queue} and Appendix~\ref{section:queue and stationary}.

\subsection{BLPP in the Kardar-Parisi-Zhang universality class}
 In the early 2000s, Gravner, Tracy, and Widom~\cite{Gravner} and Baryshnikov~\cite{Baryshnikov}   discovered that the Brownian last-passage value $L_{(1,0),(n,1)}(\mbf B)$ (to be defined in \eqref{BLPP formula} below) has the same distribution as the largest eigenvalue of an $n \times n$ GUE random matrix. Soon after, O'Connell and Yor~\cite{rep_non_colliding} provided an alternate proof relying on the queuing interpretation of the model. Since then, Brownian last-passage percolation and the Brownian polymer have been widely studied as gateways to properties of the KPZ universality class. As a semi-discrete model with one continuous and one discrete parameter, BLPP serves as an intermediary between discrete models such as LPP  on the planar integer lattice  and continuum models such as the stochastic heat equation and the KPZ equation.  In~\cite{CorwinHammond}, Corwin and Hammond constructed the Airy line ensemble and proved that certain statistics of Brownian last-passage percolation converge in distribution to the Airy line ensemble. 
 Dauvergne, Nica, and  Vir{\'a}g (\cite{Dauvergne2019UniformCT}, Corollary 6.4) provided an alternate method to prove this fact, which couples Brownian last-passage percolation with geometric random walks. 

Recently, Dauvergne, Ortmann, and Vir{\'a}g~\cite{Directed_Landscape} constructed the directed landscape, a central object in the KPZ universality class, as the scaling limit of Brownian last-passage percolation. Another central object is the KPZ fixed point, constructed by  Matetski,  Quastel, and Remenik~\cite{KPZfixed}, as the limit of the totally asymmetric simple exclusion process. In~\cite{Directed_Landscape}, a variational duality is described between the directed landscape and KPZ fixed point, which was rigorously proven in~\cite{reflected_KPZfixed}. Even more recently, convergence to the KPZ fixed point was shown for a larger class of models, including the height function of the KPZ equation. This was done independently by Quastel and Sarkar~\cite{KPZ_equation_convergence} and Vir{\'a}g~\cite{heat_and_landscape}.

\subsection{Semi-infinite geodesics in discrete models} The study of infinite geodesics in planar random growth models has gone through a number of stages over the last 30 years, beginning with the work of Licea and Newman~\cite{licea1996, Newman} on  first-passage percolation with i.i.d.\ edge weights. 
Under a global curvature assumption on the limit shape, for  continuously distributed edge weights, they proved existence of a deterministic full-Lebesgue measure set of directions in which  there is a unique semi-infinite geodesic out of every lattice point. They also showed that, for each direction in this set, the semi-infinite geodesics in that direction all coalesce.

A separate strand of work consists of long-term efforts to prove the nonexistence of bi-infinite geodesics.  In first-passage percolation, Licea and Newman~\cite{licea1996}  showed that there are no bi-infinite geodesics in fixed northeast and southwest directions. Howard and Newman~\cite{howard2001} later proved similar results for Euclidean last-passage percolation. Around this time, Wehr and Woo~\cite{wehr_woo_1998} proved that, under a first moment assumption on the edge weights, there are no bi-infinite geodesics for first-passage percolation that lie entirely in the upper-half plane. 

In 2016, Damron and Hanson~\cite{Damron_Hanson2016} strengthened the result of Licea and Newman by proving that, if the weights have continuous distribution and the boundary of the limit shape is differentiable, for each fixed direction, there are no bi-infinite geodesics with one end having that direction. The conjectured nonexistence was finally resolved in exponential last-passage percolation (LPP), known also as the exponential corner growth model (CGM). The proofs came in two independent works: first by Basu, Hoffman, and Sly~\cite{SlyNonexistenceOB} and shortly thereafter by Bal{\'a}zs, Busani, and the first author~\cite{Balzs2019NonexistenceOB}. The latter proof is in spirit aligned with the development in the present paper, as it rested on understanding  the joint distribution of the Busemann functions from~\cite{CGM_Joint_Buse}. 

Another focus of research has been the coalescence structure of geodesics.
Ferrari and Pimentel \cite{ferr-pime-05}  imported the  Licea--Newman approach~\cite{licea1996,Newman} to exponential LPP on the lattice.  Later Pimentel~\cite{pimentel2016} developed a probabilistic duality between the coalescence time of two semi-infinite geodesics and their last exit times from the initial boundary, again in exponential LPP. A key idea was the equality in distribution of the tree of directed semi-infinite geodesics and the dual  tree of southwest-directed geodesics. From this,  he obtained a lower bound on tail probabilities of the coalescence time for two geodesics starting from $(-\floor{k^{2/3}},\floor{k^{2/3}})$ and $(\floor{k^{2/3}},-\floor{k^{2/3}})$.  Coalescence bounds have seen significant recent  improvement in \cite{BasuSarkarSly_Coalescence} and  \cite{XiaoTimoCoalescence}. A study of the coalescence structure of finite geodesics in BLPP was undertaken by Hammond in four papers~\cite{Hammond1,Hammond3,Hammond4,Hammond2}. The existence of semi-infinite geodesics in the Airy line ensemble, for a countable dense set of directions and initial points, was proven recently by Sarkar and Vir{\'a}g in~\cite{Sarkar-Virag-21}.


Hoffman was the first to use ideas of Busemann functions to study geodesics in first-passage percolation. In~\cite{hoffman2008} he showed the existence of disjoint semi-infinite geodesics. Later, Damron and Hanson~\cite{Damron_Hanson2012} constructed generalized Busemann functions from weak subsequential limits of first-passage times. This allowed them to develop results for semi-infinite geodesics under weaker assumptions than the global  curvature. Under the assumption that the limit shape is strictly convex and differentiable, they proved that every semi-infinite geodesic has an asymptotic direction and that, in every direction, there exists a semi-infinite geodesic out of every lattice point.

On the side of discrete last-passage percolation with general i.i.d.\ weights, Georgiou, Rassoul-Agha, and the first author~\cite{general_Busemanns,goed_and_comp_interface} used  the  stationary  LPP process to prove existence of Busemann functions under mild moment conditions. The  Busemann functions were then used to construct  semi-infinite geodesics. They also showed that if the shape function is strictly concave, every semi-infinite geodesic has an asymptotic direction.   Further in this direction,  in~\cite{Timo_Coalescence}  the first author  introduced a new proof of the coalescence of semi-infinite geodesics that utilizes the  stationary  LPP process. 

Our work is  related to this last approach.  We use Busemann functions to construct    and  study semi-infinite geodesics in BLPP.   Existence of Busemann functions in  BLPP, for fixed initial points and directions, was recently established by Alberts, Rassoul-Agha and Simper~\cite{blpp_utah}, along with  Busemann functions and infinite polymer measures for the semi-discrete Brownian polymer.  
Their BLPP result is the  starting point of our study.

\subsection{New techniques and phenomena in  the semi-discrete model}
The present  paper develops  the global setting of Busemann functions and semi-infinite geodesics in the semi-discrete BLPP, with a view to future study of their finer properties.  The novelty lies in going beyond the  discrete set-up.  In discrete last-passage percolation, one can prove a probability-one statement about semi-infinite geodesics out of a fixed initial point, and then that statement extends to all initial points by a simple union bound. This is not the case in BLPP. To overcome this difficulty, we need new methods of proof. Additionally, the continuum of points gives rise to new results regarding non-uniqueness of semi-infinite geodesics, as seen in Item~\eqref{itm: non-uniqueness intro} below.

Specific items proved in this paper include the following, and are recorded as Theorems~\ref{thm:general_SIG} and~\ref{thm:summary of properties of Busemanns for all theta}.
\begin{enumerate}
    \item The  Busemann functions of BLPP from~\cite{blpp_utah}  are extended to a global process on a single event of probability one, for all the uncountably many initial points and directions. 
    \item  Once the Busemann process is in place, we use it to construct semi-infinite geodesics for BLPP.  With probability one, there exists a family of semi-infinite geodesics, starting from each initial point and in each asymptotic direction.
    \item \label{itm: non-uniqueness intro} With probability one, all semi-infinite geodesics, whether constructed by the Busemann functions or not, have an asymptotic direction. For a fixed initial point and direction, there is almost surely a unique semi-infinite geodesic starting from the given point and traveling asymptotically in the given direction. We also show that for a fixed direction, there is a countably infinite set of initial points whose geodesic in that direction is not unique. This non-uniqueness into a fixed direction is a new phenomenon that is not present in exponential last-passage percolation on the lattice.  
    \item \label{itm:intro_coal} For each fixed direction, we prove that all semi-infinite geodesics, traveling in that common direction, coalesce.
    \item For fixed northeast and southwest directions, we prove the almost sure nonexistence of bi-infinite geodesics in those directions.
    \end{enumerate}

To construct an infinite up-right path on the lattice $\Z^2$, one chooses at each step whether to move upward or to the right. In the BLPP setting, one chooses a real-valued increment  to the right  and then takes  a unit-size  upward step.  Section~\ref{section:geodesic construction intro} explains  informally   this construction. The locations of the upward steps of a Busemann geodesic are determined by a variational problem for Brownian motion with drift. 
This formulation is significant in at least two ways. First, this step is  where non-uniqueness of geodesics can arise.   Understanding it involves properties of Brownian paths. Furthermore,  this variational construction can be  potentially and profitably adapted to other models. This includes both continuum models of the KPZ class, such as the directed landscape, and lattice LPP. 
We address the latter in 
Section~\ref{sec:discreteLPP}.


The proof of the coalescence  in Item~\eqref{itm:intro_coal} above requires technical novelties.  The underlying idea from~\cite{Timo_Coalescence} is to construct a dual environment from the original environment and a Busemann function. If two geodesics in a given  direction do not coalesce, there exists a bi-infinite geodesic in the dual environment with given northwest and southeast directions. Then, it is proven that  there are almost surely no bi-infinite geodesics in fixed  directions.  

A key point is a non-intersection property between the original semi-infinite geodesics  and southwest-travelling semi-infinite geodesics in the dual environment. This is in Theorem~\ref{strong crossing theorem for geodesics and dual geodesics}, which is the analogue of Lemma 4.4 in~\cite{Timo_Coalescence}. The dual environment in BLPP is constructed through dual queuing mappings, first presented in~\cite{brownian_queues} and further studied in~\cite{Sepp_and_Valko,blpp_utah}. See Equation~\eqref{definition of dual weights from Busemanns} for the precise definition. While the general approach is not new, our contribution comes first from  Theorem~\ref{bijectivity of D R joint mapping} that  shows that joint queuing mappings can be inverted by reverse-time queuing mappings. Then, we use this theorem and variational formulas to construct southwest-travelling semi-infinite geodesics in the dual environment. Lastly, the non-intersection of Theorem~\ref{strong crossing theorem for geodesics and dual geodesics} is proved by comparing the   maximizers of the variational formulas.



\subsection{Future work}
The present paper lays the foundation  for future work on  Busemann functions and semi-infinite geodesics, which most immediately will cover the following. 
    \begin{enumerate}
    \item To study the full family of semi-infinite geodesics in  BLPP, we derive the joint distribution of Busemann functions across all directions. The analogous result was achieved in~\cite{CGM_Joint_Buse} for the exponential CGM, which led to important advances in the global structure of geodesic trees \cite{Geometry_of_Geodesics}, coalescence \cite{XiaoTimoCoalescence}, convergence \cite{bala-busa-sepp-20, busa-ferr-20},  and bi-infinite geodesics \cite{Balzs2019NonexistenceOB}. We likewise study the geometry of geodesics in BLPP and give a full characterization of exceptional directions where geodesics do not coalesce, in terms of the Busemann process.
    \item We define the competition interface for each initial point of BLPP. This has a straightforward definition in discrete models, but requires more care in the semi-discrete setting.  Specifically, we show that, on each level $m$ of the semi-discrete space $\Z\times\R$, outside a random set of Hausdorff dimension $\f{1}{2}$, all points $(m,t)$ have a degenerate competition interface. For each of the points in this set of Hausdorff dimension $\f{1}{2}$, there exists a random direction $\theta^\star$ such that there are two semi-infinite geodesics starting at $(m,t)$ in direction $\theta^\star$ that only share the common initial point.  \label{itm:comp interface}
\end{enumerate}
As can be seen from Item~\eqref{itm:comp interface}, there are many rich questions to explore related to non-uniqueness of semi-infinite geodesics in Brownian last-passage percolation. Other questions related to Hausdorff dimension have recently been explored in the KPZ fixed point and the directed landscape  in~\cite{Airy_Fractal,DirectedLanscape_Hausdorff_dim,Ganguly-Hegde-2021,KPZ_violate_Johansson}.  
Our longer-term goal is to extend the approach of this paper to the continuum models of the KPZ class.

\subsection{Organization of the paper}   Section~\ref{section: defs_and_results}  defines the Brownian last-passage percolation model and related objects. We also state results from other papers, including the existence of Busemann functions from~\cite{blpp_utah}.   Section~\ref{section:main results} contains the main results on the existence and properties of the global Busemann process and semi-infinite geodesics. 
 Section~\ref{sec:SIG_constr} states more technical theorems related to the specific construction of semi-infinite geodesics from the Busemann functions. In Section~\ref{section:connections}, we discuss connections to infinite polymer measures in the O'Connell-Yor polymer, semi-infinite geodesics in discrete last-passage percolation, and to queuing theory. The proofs of this paper are contained in Sections~\ref{section:Busemann construction} and~\ref{section:main proofs}. Section~\ref{section:Busemann construction} constructs the global Busemann process and derives its properties. All results for semi-infinite geodesics are proved in Section~\ref{section:main proofs}. 
 The Appendices collect some standard material, background from the literature, and some technical theorems.

 \section{Definitions and previous results} \label{section: defs_and_results}

\subsection{Preliminaries}
The following notation and conventions are used throughout the paper.
\begin{enumerate} [label=\rm(\roman{*}), ref=\rm(\roman{*})]  \itemsep=3pt 
    \item For a function $f:\R \rightarrow \R$ , we write 
$
f(s,t) = f(t) - f(s)
$
and 
$
\wt f(t) = -f(-t).
$
\item $\Z$, $\Q$ and $\R$ are restricted by subscripts, as in for example $\Z_{> 0}=\{1,2,3,\dotsc\}$.  

\item Whenever $m \le n \in \Z$ and $s \le t \in \R$, we say that $(m,s) \le (n,t)$.
\item Let $X \sim \Nor(\mu,\sigma^2)$ indicate that the random variable $X$ has normal distribution with mean $\mu$ and variance $\sigma^2$. For $\alpha > 0$, let $X \sim \operatorname{Exp}(\alpha)$ indicate that $X$ has exponential distribution with rate $\alpha$, or equivalently, mean $\alpha^{-1}$.
\item  Equality in distribution between random variables and  processes is denoted by $\deq$.  
\item A two-sided Brownian motion is a continuous random process $\{B(t): t \in \R\}$ such that $B(0) = 0$ almost surely and such that $\{B(t):t \ge 0\}$ and $\{B(-t):t \ge 0\}$ are two independent standard Brownian motions on $[0,\infty)$. 
\item For $\lambda \in \R$,   $\{Y(t): t \in \R\}$ is a two-sided Brownian motion with drift $\lambda$ if the process $\{Y(t) - \lambda t: t \in \R\}$ is a two-sided Brownian motion. 
\item The square $\sigg$ as a superscript represents a sign $+$ or $-$. 
\end{enumerate}
\subsection{Geodesics in Brownian last-passage percolation} \label{section:def of BLPP}
The Brownian last-passage process is defined as follows. On a probability space $(\Omega, \F,\Pp)$, let $\mathbf B = \{B_r\}_{r \in \Z}$ be a field of independent, two-sided Brownian motions. For $(m,s) \le (n,t)$, define the set 
\[
\Pi_{(m,s),(n,t)} := \{\mbf s_{m,n} = (s_{m - 1},s_m,\ldots,s_n) \in \R^{n - m + 2}: s = s_{m - 1} \le s_m \le \cdots \le s_n = t   \}.
\]
 Denote the energy of a sequence $\mbf s_{m,n} \in \Pi_{(m,s),(n,t)}$ by
\be \label{E10}
\E(\mbf s_{m,n}) = \sum_{r = m}^n B_r(s_{r - 1},s_r).
\ee
Now, for $\mbf x = (m,s) \le (n,t) = \mbf y$, define the Brownian last-passage time as
\begin{equation} \label{BLPP formula}
L_{\mbf x,\mbf y}(\mbf B) = \sup\{\E(\mbf s_{m,n}): \mbf s_{m,n} \in \Pi_{\mbf x,\mbf y}\}.
\end{equation}
Whenever the specific field of Brownian motions used is either clear from or not important in the context, we write $L_{(m,s),(n,t)}$.

\begin{figure}[ht!]
    \centering
    \begin{tikzpicture}
\draw[gray,thin] (0.5,0) -- (15.5,0);
\draw[gray,thin] (0.5,0.5) --(15.5,0.5);
\draw[gray, thin] (0.5,1)--(15.5,1);
\draw[gray,thin] (0.5,1.5)--(15.5,1.5);
\draw[gray,thin] (0.5,2)--(15.5,2);
\draw[black,thick] plot coordinates {(1.5,-0.1)(1.7,-0.2)(1.9,0.1)(2.1,0.4)(2.3,-0.3)(2.5,0.2)(2.7,0.1)(2.9,-0.3)(3.1,-0.4)(3.3,0.2)(3.5,0.3)(3.7,-0.4)(3.9,0.1)(4.1,-0.2)(4.3,-0.3)(4.45,0.2)};
\draw[red, ultra thick] plot coordinates {(4.5,0.4)(4.7,0.7)(4.9,0.3)(5.1,0.1)(5.4,0.6)(5.7,0.3)(5.7,0.6)(5.9,0.8)(6.1,0.4)(6.3,0.3)(6.6,0.4)(6.95,0.8)};
\draw[blue,thick] plot coordinates {(7,0.8)(7.3,1.2)(7.7,1.1)(8,0.6)(8.2,0.8)(8.5,1.3)(8.7,1.1)(9,0.7)(9.2,1.1)(9.45,1.3)};
\draw[green,thick] plot coordinates
{(9.5,1.1)(9.7,1.3)(9.9,1.7)(10.2,1.1)(10.4,1.3)(10.6,1.9)(10.8,1.4)
(11.1,1.2)(11.3,1.6)(11.5,1.9)(11.8,1.4)(12,1.1)(12.2,1.8)(12.4,1.3)(12.6,1.2)(12.8,1.1)(12.95,1.7)};
\draw[brown,thick] plot coordinates {(13,2.3)(13.3,1.7)(13.5,2.4)(13.7,2.3)(13.9,2.1)(14.1,1.9)(14.3,1.7)(14.4,2.1)(14.6,1.8)(14.8,1.6)(15,2.1)};
\node at (1.5,-0.5) {$s$};
\node at (4.5,-0.5) {$s_0$};
\node at (7,-0.5) {$s_1$};
\node at (9.5,-0.5) {$s_2$};
\node at (13,-0.5) {$s_3$};
\node at (15,-0.5) {$t$};
\node at (0,0) {$0$};
\node at (0,0.5) {$1$};
\node at (0,1) {$2$};
\node at (0,1.5) {$3$};
\node at (0,2) {$4$};
\end{tikzpicture}
    \caption{\small The Brownian increments $B_r(s_{r - 1},s_r)$ for $r=0,\dotsc,4$ in \eqref{E10} that make up the energy of the path depicted in Figure \ref{fig:BLPP_geodesic}. }
    \label{fig:BLPP maximizing path}
\end{figure}
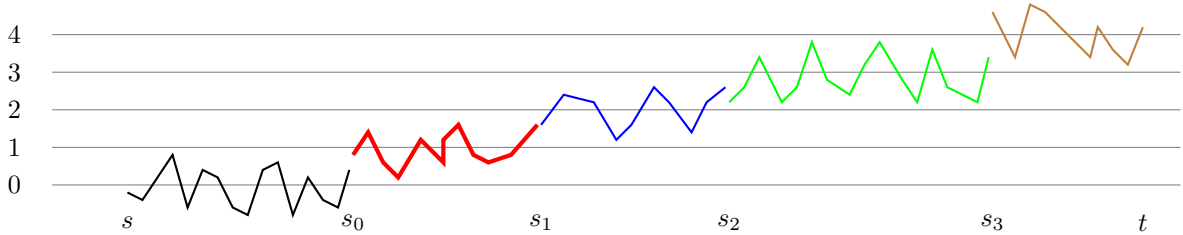

The elements  of $\Pi_{\mbf x,\mbf y}$ are in bijection with paths in $\R^2$ between $\mbf x$ and $\mbf y$, that move to the right in real-valued increments, and move upwards in integer increments. For a given $\mbf s_{m,n} \in \Pi_{(m,s),(n,t)}$, the path consists of the following points: 
\begin{equation} \label{set of points in path}
 \bigcup_{r = m}^n\bigl\{(r,u): u \in [s_{r - 1},s_r]\bigr\} \cup \bigcup_{r = m}^{n - 1} \bigl\{(v,s_r):  v \in [r,r + 1]\bigr\}.
\end{equation}
This set consists of horizontal and vertical line segments, such that the vertical segments occur at the points $s_r$. Because of this bijection, we sometimes say $\Gamma \in \Pi_{\mbf x,\mbf y}$ for such an up-right path. For $(m,t) \in \Z \times \R$, we graphically represent the $t$-coordinate as the horizontal coordinate and the $m$-coordinate as the vertical coordinate in the plane. Since $\Pi_{\mbf x,\mbf y}$ is a compact set and Brownian motion is continuous, on a single event of probability one, for all $(m,t) = \mbf x \le \mbf y = (n,t) \in \Z \times \R$, there exists a sequence $\mbf s_{m,n} \in \Pi_{\mbf x,\mbf y}$ such that $\E(\mbf s_{m,n}) = L_{\mbf x,\mbf y}$. The associated path is called a \textit{geodesic} between the points.

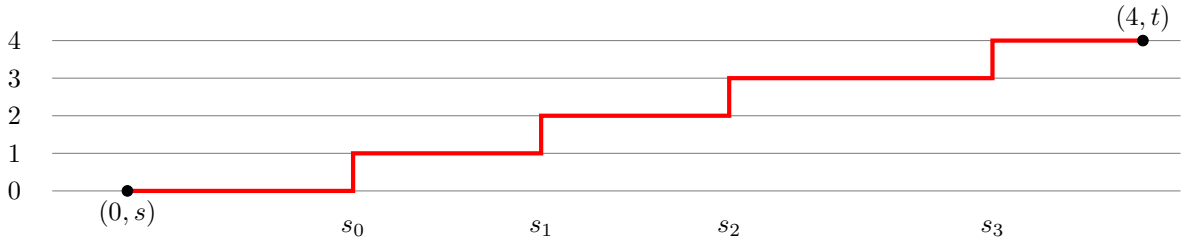
\begin{figure}[ht]
\begin{tikzpicture}
\draw[gray,thin] (0.5,0) -- (15.5,0);
\draw[gray,thin] (0.5,0.5) --(15.5,0.5);
\draw[gray, thin] (0.5,1)--(15.5,1);
\draw[gray,thin] (0.5,1.5)--(15.5,1.5);
\draw[gray,thin] (0.5,2)--(15.5,2);
\draw[red, ultra thick] (1.5,0)--(4.5,0)--(4.5,0.5)--(7,0.5)--(7,1)--(9.5,1)--(9.5,1.5)--(13,1.5)--(13,2)--(15,2);
\filldraw[black] (1.5,0) circle (2pt) node[anchor = north] {$(0,s)$};
\filldraw[black] (15,2) circle (2pt) node[anchor = south] {$(4,t)$};
\node at (4.5,-0.5) {$s_0$};
\node at (7,-0.5) {$s_1$};
\node at (9.5,-0.5) {$s_2$};
\node at (13,-0.5) {$s_3$};
\node at (0,0) {$0$};
\node at (0,0.5) {$1$};
\node at (0,1) {$2$};
\node at (0,1.5) {$3$};
\node at (0,2) {$4$};
\end{tikzpicture}
\caption{\small Example of a planar path from $(0,s)$ to $(4,t)$, represented by the sequence $(s=s_{-1}, s_0, s_1, s_2, s_3, s_4=t)\in\Pi_{(0,s),(4,t)}$.}
\label{fig:BLPP_geodesic}
\end{figure}

To an infinite sequence, $s = s_{m - 1} \le s_m \le s_{m + 1} \le \cdots$ we similarly associate a semi-infinite path. It is possible that $s_r = \infty$ for some $r \ge m$, in which case the last segment of the path is the ray $[s_{r - 1},\infty) \times \{r\}$, where $r$ is the first index with $s_r = \infty$. 
The infinite path has \textit{direction} $\theta \in [0,\infty]$ or is \textit{$\theta$-directed} if 
\[
\lim_{n \rightarrow \infty}\f{s_n}{n} \qquad\text{exists and equals }\theta.
\]
We call an up-right semi-infinite path a \textit{semi-infinite geodesic} if, for any two points $\mbf x\le \mbf y \in \Z \times \R$ that lie along the path, the portion of the path between the two points is a geodesic between the two points. Similarly, a \textit{bi-infinite geodesic} is a bi-infinite path that forms a geodesic between any two of its points. Two  up-right, semi-infinite paths \textit{coalesce} if there exists $\mbf z \in \Z \times \R$ such that the two paths agree above and to the right of $\mbf z$, as shown in Figure~\ref{fig:coalescence of geodesics}. Alternatively, if the paths are defined by sequences of jump times $s^1 = s_{m_1 - 1}^1 \le s_{m_1}^1\cdots$ and $s^2 = s_{m_2 -1}^2 \le s_{m_2}^2 \le \cdots$, then the two paths coalesce if and only if there exists $N \in \Z$ such that $s_r^1 = s_r^2$ for all $r \ge N$. 
  
  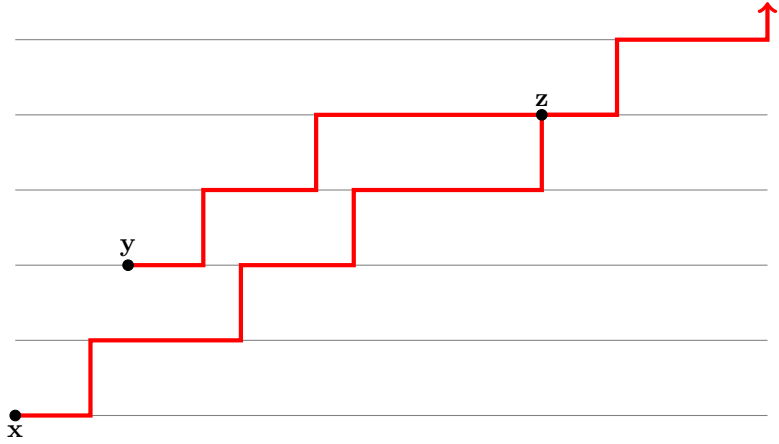
\begin{figure}[ht!]
 \centering
            \begin{tikzpicture}
            \draw[gray,thin] (0,0)--(10,0);
            \draw[gray,thin] (0,1)--(10,1);
            \draw[gray,thin] (0,2)--(10,2);
            \draw[gray,thin] (0,3)--(10,3);
            \draw[gray,thin] (0,4)--(10,4);
            \draw[gray,thin] (0,5)--(10,5);
            \draw[red, ultra thick,->] plot coordinates {(0,0)(1,0)(1,1)(3,1)(3,2)(4.5,2)(4.5,3)(7,3)(7,4)(8,4)(8,5)(10,5)(10,5.5)};
            \draw[red, ultra thick] plot coordinates {(1.5,2)(2.5,2)(2.5,3)(4,3)(4,4)(8,4)};
            \filldraw[black] (0,0) circle (2pt) node[anchor = north] {$\mbf x$};
            \filldraw[black] (1.5,2) circle (2pt) node[anchor = south] {$\mbf y$};
            \filldraw[black] (7,4) circle (2pt) node[anchor = south] {$\mbf z$};
            \end{tikzpicture}
            \caption{\small Coalescence of semi-infinite paths}
            \label{fig:coalescence of geodesics}
            \bigskip
        \end{figure}
        
The following lemma, due to Hammond~\cite{Hammond4}, establishes uniqueness of geodesics for a fixed initial and terminal point. 
\begin{lemma}[\cite{Hammond4}, Theorem B.1] \label{thm:uniqueness of LPP time}
Fix endpoints $\mbf x \le \mbf y \in \Z \times \R$. Then, there is almost surely a unique path whose energy achieves $L_{\mbf x,\mbf y}(\mbf B)$.
\end{lemma}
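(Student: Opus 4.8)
The existence of a maximizing path is immediate, as noted above: $\Pi_{\mbf x,\mbf y}$ is compact and $\E$ is continuous. For uniqueness I would argue by induction on the number of levels $n-m$, writing $\mbf x=(m,s)$, $\mbf y=(n,t)$. If $n=m$ the only path is the horizontal segment at level $m$, so there is nothing to prove. For the inductive step, decompose a path by its last upward step: the energy of a path whose last jump is at time $u\in[s,t]$ equals the energy from $(m,s)$ to $(n-1,u)$ plus $B_n(u,t)$, so
\begin{equation*}
L_{\mbf x,\mbf y}=B_n(t)+\max_{u\in[s,t]}\bigl(L_{(m,s),(n-1,u)}-B_n(u)\bigr).
\end{equation*}
The profile $f(u):=L_{(m,s),(n-1,u)}$ is continuous in $u$ and depends only on $B_m,\dots,B_{n-1}$, hence is independent of $B_n$. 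Every maximizing path for $L_{\mbf x,\mbf y}$ consists of a geodesic from $(m,s)$ to $(n-1,u^\star)$ followed by the last jump at a maximizer $u^\star$ of $f-B_n$ over $[s,t]$, and conversely. So it suffices to show (i) $u^\star$ is a.s.\ unique, and (ii) a.s.\ the geodesic from $(m,s)$ to $(n-1,u^\star)$ is unique.

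For (i) I would isolate the following general fact. \emph{Lemma A:} if $g$ is a Brownian motion on $[a,b]$ and $\varphi$ is a continuous process independent of $g$, then $g+\varphi$ has an a.s.\ unique maximizer on $[a,b]$. To prove it, note that non-uniqueness forces rationals $p<q$ in $[a,b]$ with maximizers in $[a,p]$ and in $[q,b]$, hence $\max_{[a,p]}(g+\varphi)=\max_{[q,b]}(g+\varphi)$; writing $\max_{[q,b]}(g+\varphi)=g(q)+\max_{u\in[q,b]}\bigl(g(u)-g(q)+\varphi(u)\bigr)$ and using that $g(q)-g(p)\sim\Nor(0,q-p)$ is independent of $g|_{[a,p]}$, of $(g(u)-g(q))_{u\ge q}$, and of $\varphi$, this equality pins a nondegenerate Gaussian to a value measurable with respect to an independent $\sigma$-algebra, an event of probability zero; a union bound over rational pairs $p<q$ finishes it. Applying Lemma A with $g=-B_n$ and $\varphi=f$ gives (i).

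For (ii), let $\NU:=\{v\ge s:\text{the geodesic }(m,s)\to(n-1,v)\text{ is not unique}\}$, a random set measurable with respect to $\F_{<n}:=\sigma(B_m,\dots,B_{n-1})$. By the induction hypothesis $\Pp(v\in\NU)=0$ for each fixed $v$, so by Fubini $\NU$ is a.s.\ Lebesgue-null; in fact, applying the same top-peeling to the $(n-1)$-level problem exhibits $\NU$ (modulo images of the analogous sets for fewer levels, handled recursively) as the right endpoints of the intervals of constancy of a running maximum, hence a.s.\ at most countable. Since $\NU$ is $\F_{<n}$-measurable and $B_n$ is independent of $\F_{<n}$, it is enough to show $\Pp(u^\star=v\mid\F_{<n})=0$ a.s.\ for each fixed $v$: conditionally on $\F_{<n}$ the set $\NU$ is a fixed countable set $\{v_1,v_2,\dots\}$, and $\Pp(u^\star\in\NU)=\Ee\bigl[\Pp(u^\star\in\NU\mid\F_{<n})\bigr]\le\Ee\bigl[\sum_i\Pp(u^\star=v_i\mid\F_{<n})\bigr]$. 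Thus (ii) reduces to the statement that, conditionally on $\F_{<n}$, the law of $u^\star=\argmax_{[s,t]}(f-B_n)$ has no atoms.

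This last step is the main obstacle. The maximizer of $f-B_n$ can be atomic at a point $u_0$ only if $f$ has an upward cusp at $u_0$ steep enough to beat the Brownian oscillations of $B_n$ on both sides — roughly $f(u_0)-f(u_0\pm v)\gtrsim\sqrt{2v\log\log(1/v)}$ for all small $v>0$. I would rule this out using the Brownian-type regularity of the last-passage profile: $u\mapsto L_{(m,s),(n-1,u)}$ is the sum of the Brownian motion $B_{n-1}$ and a nondecreasing function obtained from a running maximum of a lower profile, and such a process is locally absolutely continuous with respect to Brownian motion (this is essentially the Brownian Gibbs property of the associated line ensemble, and can also be extracted directly from the recursion). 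Consequently $f$ has, almost surely, the same local modulus of continuity as Brownian motion, which is too tame to exhibit a cusp of the above type at any point, so the conditional law of $u^\star$ is non-atomic. The only delicate bookkeeping is that $\NU$ and $f$ both depend on $B_m,\dots,B_{n-1}$; conditioning on $\F_{<n}$ and then invoking the independence of $B_n$ is exactly what decouples the bad set from the randomness that locates $u^\star$. This closes the induction and proves the lemma.
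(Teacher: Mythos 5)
A preliminary remark: the paper does not prove this lemma at all --- it is imported verbatim from \cite{Hammond4} (Theorem B.1) --- so there is no internal proof to compare yours against, and I am judging your argument on its own terms. Your skeleton is reasonable: existence by compactness is fine, your decomposition over the last jump time is fine, and your Lemma A is correct and is the standard mechanism (pin an independent Gaussian increment to a value measurable with respect to an independent $\sigma$-algebra, then union over rational pairs). Step (i) is therefore sound.

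The genuine gap is step (ii), and you have located it yourself. Two things are missing. First, the countability of $\NU$ is asserted rather than proved: the ``right endpoints of intervals of constancy of a running maximum'' only describes non-uniqueness created at the top level of the $(n-1)$-level problem, and transporting the bad sets of the lower-level problems through yet another random argmax is exactly the difficulty you are trying to resolve, so the parenthetical ``handled recursively'' hides the whole issue. Second, and more seriously, non-atomicity of the conditional law of $u^\star$ does not follow from ``$f$ has the same local modulus of continuity as Brownian motion.'' An atom at $u_0$ requires $f(u)-f(u_0)\le B_n(u)-B_n(u_0)$ for all $u$, i.e.\ an LIL-sharp local maximum of $f-B_n$ at $u_0$; but Brownian motion itself possesses exceptional fast points where one-sided increments decay at rate $\sqrt{2h\log(1/h)}$, far beyond the pointwise LIL envelope, so a modulus-of-continuity bound on $f$ --- or even absolute continuity of the profile with respect to Brownian motion, which is itself a deep input (Brownian Gibbs property) and degenerates at the left endpoint $u=s$ of the profile --- does not exclude such cusps at \emph{every} point simultaneously. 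Indeed, ``no LIL-sharp local maximum of $f-B_n$ at any point'' is essentially the uniqueness statement being proved, so the argument as written is close to circular at its decisive step. A workable route is to avoid the random endpoint $(n-1,u^\star)$ altogether: two distinct geodesics must have distinct jump times at some level $r$, which (after inserting rationals between them) forces a coincidence of suprema that can be attacked level by level with the Lemma-A mechanism; this is, in outline, how the union bound is organized in Appendix B of \cite{Hammond4}, which is where the actual proof should be sought.
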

However, it is also true that for each fixed initial point $\mbf x \in \Z \times \R$, with probability one, there exist points $\mbf y \ge \mbf x$, such that the geodesic between $\mbf x$ and $\mbf y$ is not unique. Hence, the following lemma  is important for our understanding.  It is a deterministic statement which holds for last-passage percolation across any field of continuous functions.

\begin{lemma}[\cite{Directed_Landscape}, Lemma 3.5] \label{existence of leftmost and rightmost geodesics}
Between any two points $(m,s) \le (n,t) \in \Z \times \R$,  there is a rightmost and a leftmost Brownian last-passage geodesic. That is, there exist $\mbf s_{m,n}^L,\mbf s_{m,n}^R \in \mbf \Pi_{(m,s),(n,t)}$, that are maximal for $\E(\mbf s_{m,n})$, such that, for any other maximal sequence $\mbf s_{m,n}$, $s_{r}^L \le s_r \le s_r^R$ holds for $m \le r \le n$.
\end{lemma}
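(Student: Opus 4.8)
The plan is to reduce to a finite-dimensional optimization and exploit that, after a summation by parts, the energy \eqref{E10} becomes a \emph{separable} function of the interior jump times. Describe a path by $\mathbf s_{m,n}=(s_{m-1},\dots,s_n)$ with the endpoints $s_{m-1}=s$ and $s_n=t$ held fixed. Each interior variable $s_r$ ($m\le r\le n-1$) appears exactly twice in \eqref{E10}: once as an upper limit in $B_r(s_{r-1},s_r)$ and once as a lower limit in $B_{r+1}(s_r,s_{r+1})$. Summation by parts therefore gives
\[
\E(\mathbf s_{m,n}) = B_n(t)-B_m(s) + \sum_{r=m}^{n-1}\phi_r(s_r), \qquad \phi_r:=B_r-B_{r+1},
\]
so maximizing $\E$ over $\Pi_{(m,s),(n,t)}$ is the same as maximizing the continuous separable function $g(\mathbf s):=\sum_{r=m}^{n-1}\phi_r(s_r)$ over the compact sublattice $K:=\{(s_m,\dots,s_{n-1})\colon s\le s_m\le\cdots\le s_{n-1}\le t\}$ of $\R^{n-m}$. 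Let $M\subseteq\Pi_{(m,s),(n,t)}$ denote the nonempty compact set of maximizers.

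The key step is the lattice property: if $\mathbf s,\mathbf s'\in M$, then their coordinatewise maximum $\mathbf s\vee\mathbf s'$ and coordinatewise minimum $\mathbf s\wedge\mathbf s'$ both lie in $M$. Both belong to $K$ because $\vee$ and $\wedge$ preserve monotone chains and fix the common endpoints. For the energies, for each $r$ the pair $\bigl(\max(s_r,s_r'),\,\min(s_r,s_r')\bigr)$ is just $(s_r,s_r')$ reordered, so by separability
\[
g(\mathbf s\vee\mathbf s') + g(\mathbf s\wedge\mathbf s') = \sum_{r=m}^{n-1}\bigl(\phi_r(s_r)+\phi_r(s_r')\bigr) = g(\mathbf s)+g(\mathbf s').
\]
Since the right side is twice the maximal value of $g$ while neither term on the left exceeds that value, both terms equal the maximum, i.e.\ $\mathbf s\vee\mathbf s',\mathbf s\wedge\mathbf s'\in M$.

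It remains to extract $\mathbf s^R$, the argument for $\mathbf s^L$ being symmetric. For each interior index $r$ put $d_r:=\max\{s_r\colon \mathbf s\in M\}$, attained by compactness of $M$. Beginning from a maximizer realizing $d_m$ and successively replacing the current maximizer by its coordinatewise maximum with a maximizer realizing the next $d_{k+1}$, the lattice property yields after finitely many applications a single $\mathbf s^R\in M$ with $s_r^R=d_r$ for all $m\le r\le n-1$; its endpoints are $s$ and $t$. Then $s_r\le d_r=s_r^R$ for every $\mathbf s\in M$ and every $r$, so $\mathbf s^R$ is the rightmost geodesic. The only real content is the lattice property of $M$, and the point worth getting right is that the energy, though not separable as written, becomes separable after summation by parts; once that is in hand I do not anticipate a genuine obstacle. (An alternative, more combinatorial route is to argue directly that whenever two geodesics cross, one can swap their segments past a crossing to produce two new geodesics, iterating until an outermost and an innermost geodesic remain; the separability argument packages this cleanly.)
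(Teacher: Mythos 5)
Your proof is correct, and it follows essentially the same route as the cited reference. The heart of the argument in \cite{Directed_Landscape}, Lemma 3.5 is precisely the lattice property you isolate: the coordinatewise max and min of two geodesics are again geodesics, because $\E(\mathbf s \vee \mathbf s') + \E(\mathbf s \wedge \mathbf s') = \E(\mathbf s) + \E(\mathbf s')$. The summation-by-parts reduction to the separable functional $\sum_r \phi_r(s_r)$ is a clean way to make that modular identity transparent (one can also verify it directly at each level $r$, since $B_r(a,b)+B_r(a',b') = B_r(a\wedge a',b\wedge b') + B_r(a\vee a',b\vee b')$ by rearranging the four endpoint values), but the mathematical content is the same. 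The closing extraction via compactness and finitely many joins is also standard. No gaps.
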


\subsection{Busemann functions}
In the present paper, semi-infinite geodesics are constructed from Busemann functions. Busemann functions are defined to be the asymptotic difference of last-passage times from two different starting points to a common terminal point that is traveling to $\infty$ in a given direction. See Figure~\ref{fig:Busemann functions}. The direction is indexed by a parameter $\theta > 0$. 
  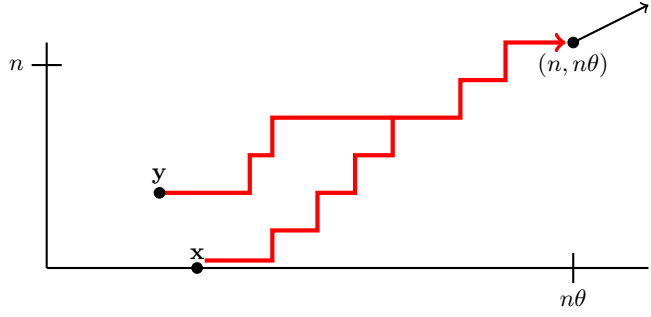
\begin{figure}[t]
  \centering
            \begin{tikzpicture}
            \draw[black,thick] (0,0)--(8,0);
            \draw[black,thick] (0,0)--(0,3);
            \draw[red, ultra thick] plot coordinates {(1.5,1)(2.7,1)(2.7,1.5)(3,1.5)(3,2)(4.6,2)};
            \draw[red, ultra thick,arrows = ->] plot  coordinates {(2.1,0.1)(3,0.1)(3,0.5)(3.6,0.5)(3.6,1)(4.1,1)(4.1,1.5)(4.6,1.5)(4.6,2)(5.5,2)(5.5,2.5)(6.1,2.5)(6.1,3)(6.9,3)};
            \draw[black,thick,arrows = ->] (7,3) --(8,3.5);
            \filldraw[black] (1.5,1) circle (2pt) node[anchor =  south] {\small $\mbf y$};
            \filldraw[black] (2,0) circle (2pt) node[anchor = south] {\small $\mbf x$};
            \filldraw[black] (7,3) circle (2pt);
            \node at (7,2.7) {\small $(n,n\theta)$};
            \draw[black,thick] (-0.2,2.7)--(0.2,2.7);
            \node at (-0.4,2.7) {\small $n$};
            \draw[black,thick] (7,-0.2)--(7,0.2);
            \node at (7,-0.4) {\small $n\theta$};
            \end{tikzpicture}
            \caption{\small Two geodesics with a common terminal point $(n,n\theta)$.}
            \label{fig:Busemann functions}
        \end{figure}
Existence of these Busemann functions was proven in~\cite{blpp_utah}, both in the positive temperature and zero temperature cases. We state the zero-temperature  result: 
\begin{theorem}[\cite{blpp_utah}, Theorem 4.2] \label{thm:existence of Busemann functions for fixed points}
Fix $\theta > 0$ and $\mathbf x,\mathbf y \in \Z \times \R$. Then, there exists a random variable $\B^\theta(\mbf x,\mbf y)$ and an event $\Omega_{\mbf x,\mbf y}^{(\theta)}$ of probability one, on which
\begin{equation} \label{eqn:limit definition of Busemann functions}
\B^\theta(\mathbf x,\mathbf y) = \lim_{n \rightarrow \infty} \bigl[L_{\mathbf x,(n,t_n)} - L_{\mathbf y,(n,t_n)}\bigr]
\end{equation}
holds for any sequence $\{t_n\} \subseteq \R$ satisfying ${t_n}/{n}\to \theta$.
Further, if
\begin{align} 
    &\vv_{m}^\theta(t) := \B^\theta((m - 1,t),(m,t)), \text{ and} \label{vertical Busemann simple expression}\\[1em]
    &\h_m^\theta(t) := \B^\theta((m,0),(m,t)), \label{horizontal Busemann simple expression}
\end{align}
then $\vv_m^\theta(t) \sim \operatorname{Exp}\bigl(\f{1}{\sqrt \theta}\bigr)$ and $\h_m^\theta(s,t) \sim \mathcal N\bigl(\f{t - s}{\sqrt \theta},|t - s|\bigr)$ for all $s,t \in \R$ and $m \in \Z$. 
\end{theorem}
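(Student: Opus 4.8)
The natural route is through the \emph{stationary} Brownian last-passage process, following the queuing program of \cite{glynn1991,brownian_queues,Sepp_and_Valko}. Fix $\theta>0$ and set $\lambda = 1/\sqrt\theta$. On an enlarged probability space, couple the bulk field $\mbf B$ with a two-sided Brownian motion $Y$ of drift $\lambda$, independent of $\mbf B$, and install $Y$ as a boundary along a level below both $\mbf x$ and $\mbf y$; let $G^\lambda_{\mbf x,\mbf y}$ denote the last-passage value computed from that boundary. The input--output (Burke) property of the Brownian queue, proved by Harrison--Williams \cite{harrison1990} and O'Connell--Yor \cite{brownian_queues} and iterated level by level, shows that $G^\lambda$ is honestly stationary: along every horizontal level the process $t\mapsto G^\lambda_{\,\cdot\,,(n,t)}$ is again a two-sided Brownian motion with drift $\lambda$ and unit diffusivity, and it carries an explicit Markovian/independence structure across levels. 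This produces a stationary cocycle $\Ss^\lambda(\mbf x,\mbf y)$, additive in the sense $\Ss^\lambda(\mbf x,\mbf y)+\Ss^\lambda(\mbf y,\mbf z)=\Ss^\lambda(\mbf x,\mbf z)$, with a recovery identity relating it back to $\mbf B$.

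The marginal laws then read off directly. The one-step map $G^\lambda_{\,\cdot\,,(r-1,t)}\mapsto G^\lambda_{\,\cdot\,,(r,t)}$ is the Brownian queue map applied to the arrival process at level $r-1$ (drift $\lambda$, diffusivity $1$) and the service process $B_r$ (drift $0$, diffusivity $1$), and a short computation expresses the vertical increment $G^\lambda_{\,\cdot\,,(r,t)}-G^\lambda_{\,\cdot\,,(r-1,t)}$ as $-\inf_{v\ge 0}W(v)$ for a Brownian motion $W$ of drift $\lambda$ and diffusivity $2$; hence it is $\operatorname{Exp}(\lambda)=\operatorname{Exp}(1/\sqrt\theta)$. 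By the output half of Burke's theorem the level-$r$ horizontal process has the same law as $Y$, namely Brownian motion with drift $\lambda$ and diffusivity $1$, so $G^\lambda_{\,\cdot\,,(r,t)}-G^\lambda_{\,\cdot\,,(r,s)}\sim\Nor\bigl(\lambda(t-s),|t-s|\bigr)=\Nor\bigl((t-s)/\sqrt\theta,|t-s|\bigr)$. That $\lambda=1/\sqrt\theta$ is the right choice of parameter is confirmed by a law of large numbers: Brownian scaling gives $n^{-1}L_{(0,0),(n,\gamma n)}\to 2\sqrt\gamma$ a.s., and splitting the stationary geodesic at its exit time from the boundary yields $n^{-1}G^\lambda_{(0,0),(n,\gamma n)}\to\sup_{0\le\beta\le\gamma}\bigl[\lambda\beta+2\sqrt{\gamma-\beta}\,\bigr]$, whose maximizer leaves the boundary immediately precisely when $\gamma\le 1/\lambda^2=\theta$ --- the characteristic-direction condition.

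What remains, and is the main obstacle, is to show that the Busemann limit \eqref{eqn:limit definition of Busemann functions} exists on a single probability-one event $\Omega_{\mbf x,\mbf y}^{(\theta)}$ and has the law of $\Ss^\lambda(\mbf x,\mbf y)$. I would argue by a sandwich. Couple $\mbf B$ with the stationary systems $G^{\lambda+\ve}$ and $G^{\lambda-\ve}$. Monotonicity of last-passage values in the boundary, together with the fact that a stationary geodesic aimed in its own characteristic direction leaves the boundary immediately and thereafter shadows the bulk geodesic closely enough to control $L_{\mbf x,(n,t_n)}-L_{\mbf y,(n,t_n)}$ from both sides --- the non-return to the boundary being forced by the strict concavity of $\gamma\mapsto 2\sqrt\gamma$ and a crossing argument using the leftmost and rightmost geodesics of Lemma~\ref{existence of leftmost and rightmost geodesics} --- traps the bulk increment between the cocycle increments of $\Ss^{\lambda+\ve}$ and $\Ss^{\lambda-\ve}$. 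The ergodic theorem governs the stationary increments, and continuity of $\lambda\mapsto\Ss^\lambda$ and of the characteristic direction lets one send $\ve\downarrow 0$. Finally, monotonicity of $t\mapsto L_{\mbf x,(n,t)}-L_{\mbf y,(n,t)}$ in the endpoint and a separability argument upgrade the convergence to hold simultaneously for every sequence $t_n$ with $t_n/n\to\theta$ on the one event $\Omega_{\mbf x,\mbf y}^{(\theta)}$, and the distributional statements for $\vv_m^\theta$ and $\h_m^\theta$ are inherited from the cocycle marginals above. (An alternative to the sandwich step is to transfer the analogous discrete Busemann statements through the coupling of BLPP with geometric last-passage percolation in \cite{Dauvergne2019UniformCT}.)
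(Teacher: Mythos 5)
Your outline reproduces, essentially verbatim in strategy, the proof of this result in \cite{blpp_utah}: the paper under review does not reprove Theorem~\ref{thm:existence of Busemann functions for fixed points} but quotes it, and the machinery you describe --- the stationary queuing construction with $\lambda=1/\sqrt\theta$, the iterated Burke property (Theorem~\ref{Burke-type Theorem}), the characteristic-direction computation, and the sandwich trapping the bulk increment between stationary increments (Theorem~\ref{thm:busemann sandwich theorem}, used in Lemma~\ref{lemma:equality of Busemann function distribution at rationals}) --- is exactly what the paper recaps in Appendix~C as the source's argument. So your approach is the same as the paper's (i.e., the cited one), and the sketch is correct.
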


\section{Main results} \label{section:main results}
\subsection{Semi-infinite geodesics} \label{sec:general_geodesics}The following result summarizes the contributions of this paper related to semi-infinite geodesics in BLPP.
\begin{theorem} \label{thm:general_SIG}
The following hold
\begin{enumerate}[label=\rm(\roman{*}), ref=\rm(\roman{*})]  \itemsep=3pt 
     \item \label{itm:SIG_existence} With probability one, for every initial point $\mbf x \in \Z \times \R$ and every direction $\theta > 0$, there exists a $\theta-$directed semi-infinite geodesic starting from $\mbf x$.
    \item{\rm(}Uniqueness for fixed initial points and directions{\rm)} \label{itm:uniqueness of geodesic for fixed point and direction} For each fixed $\mbf x \in \Z \times \R$ and $\theta > 0$, there exists an event, $\Omega_{\mbf x}^{(\theta)}$, of probability one, on which there is exactly one $\theta$-directed semi-infinite geodesic starting from $\mbf x$.
    \item{\rm(}Non-uniqueness for fixed direction and random initial points{\rm)} \label{itm:size of non-uniqueness}For each $\theta > 0$, there exists an event $\wt \Omega^{(\theta)}$, of probability one, on which the set
    \[
    \{\mbf x \in \Z \times \R: \text{the }\theta\text{-directed semi-infinite geodesic starting from }\mbf x \text{ is not unique}\}
    \]
    is countably infinite. For every $(m,t)$ in this set, at most one of the $\theta$-directed semi-infinite geodesics passes through the point $(m,t +\ve)$ for some $\ve > 0$. All the others pass through $(m +1, t)$. 
    \item \label{itm:all geodesics are directed} With probability one, every semi-infinite geodesic is $\theta$-directed for some $\theta \in [0,\infty]$. That is, for any infinite sequence $t = t_{m - 1} \le t_m \le t_{m + 1} \le \cdots$ defining a semi-infinite geodesic starting from some point $(m,t) \in \Z \times \R$, the limit
    \be \label{eqn:general_sig_limit}
    \lim_{n \rightarrow \infty}\f{t_n}{n} \ \text{ exists in } [0,\infty]. 
    \ee
    \item \label{itm:only vertical or horizontal geodesics are trivial}
    With probability one, if for any $(m,t) \in \Z \times \R$ and any such sequence, the limit~\eqref{eqn:general_sig_limit} equals $0$, then $t_r = t$ for all $r \ge m$. Similarly, if this limit is $\infty$, then $t_m =t_{m + 1} = \cdots = \infty$. That is, the only semi-infinite geodesics that are asymptotically vertical or horizontal are trivial {\rm(}i.e. straight lines{\rm)}.
     \item{\rm(}Non-existence of bi-infinite geodesics for fixed directions{\rm)} \label{itm:no bi-infinite geodesics in given direction} Fix $\theta,\eta > 0$. Then, there exists an event, $\Omega^{(\theta,\eta)}$, of probability one, on which there are no bi-infinite geodesics defined by jump times $\cdots \le \tau_{-1} \le \tau_0 \le \tau_1 \le \cdots$ such that
 \begin{equation} \label{eqn:limit condition for bi-infintie geodesics}
 \lim_{n \rightarrow \infty}\f{\tau_n}{n} = \theta\qquad\text{and}\qquad\lim_{n \rightarrow \infty} \f{\tau_{-n}}{n}  = -\eta.
 \end{equation}
    \item \label{itm:coalescence} {\rm(}Coalescence of geodesics in a fixed direction{\rm)} For each $\theta > 0$, there exists an event $\widehat \Omega^{(\theta)}$, of probability one, on which all $\theta$-directed semi-infinite geodesics coalesce. 
\end{enumerate}
\end{theorem}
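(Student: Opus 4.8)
Everything is built on the global Busemann process $\{\B^\theta(\mbf x,\mbf y):\mbf x,\mbf y\in\Z\times\R,\ \theta>0\}$, defined on one event of probability one by upgrading Theorem~\ref{thm:existence of Busemann functions for fixed points} (this construction occupies Section~\ref{section:Busemann construction}). I will use, on this global event, the cocycle identity $\B^\theta(\mbf x,\mbf y)+\B^\theta(\mbf y,\mbf z)=\B^\theta(\mbf x,\mbf z)$; the bound $\B^\theta(\mbf x,\mbf y)\ge L_{\mbf x,\mbf y}$, immediate from superadditivity of $L$ and the limit~\eqref{eqn:limit definition of Busemann functions}; monotonicity and one-sided continuity of $\theta\mapsto\B^\theta$ and of the Busemann increments; and the distributional identities $\vv_m^\theta\sim\operatorname{Exp}(1/\sqrt\theta)$, $\h_m^\theta(s,t)\sim\Nor\bigl((t-s)/\sqrt\theta,\,|t-s|\bigr)$ of Theorem~\ref{thm:existence of Busemann functions for fixed points}, together with the fact that $\h_{r+1}^\theta$ is a two-sided Brownian motion of drift $1/\sqrt\theta$ independent of $B_r$. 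Given the cocycle, the natural \emph{Busemann geodesic} out of $\mbf x=(m,t)$ in direction $\theta$ is built greedily: at level $r$, having arrived at horizontal position $p$, travel right to a maximizer over $[p,\infty)$ of
\[
G_r^\theta(v):=B_r(v)-\h_{r+1}^\theta(v),
\]
then jump up to level $r+1$. That this is the right object comes from decomposing $L_{(m,t),(n,t_n)}$ by the first vertical step and sending $t_n/n\to\theta$ using the cocycle, which turns the first-step maximization into that of $G_m^\theta$ on $[t,\infty)$; note $G_r^\theta$ is a two-sided Brownian motion of drift $-1/\sqrt\theta$, so its maximizers over a right half-line are a.s.\ finite. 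Write $G^\theta_{\mbf x}$ for such a geodesic out of $\mbf x$, and $\tau_r^\theta(\cdot)$ for the leftmost-maximizer map (of $G_r^\theta$ over the corresponding right half-line), which is non-decreasing in its argument and (by monotonicity of the Busemann process in $\theta$) in $\theta$.

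\textbf{Parts (i), (ii), (iv), (v).} By finiteness of the maximizers $G^\theta_{\mbf x}$ is well defined for every $\mbf x$; the cocycle and the bound $\B^\theta\ge L$ make it a genuine semi-infinite geodesic (its energy between two of its points equals $L$ and $\B^\theta$ simultaneously), and $\theta$-directedness of the jump times follows from $\h_r^\theta(0,t)/t\to1/\sqrt\theta$ plus a Borel--Cantelli estimate on the fluctuations of $\tau_r^\theta$, giving~\eqref{itm:SIG_existence} once one also passes from fixed $\theta,\mbf x$ to all $\theta,\mbf x$ on a single event --- the one place the semi-discrete setting genuinely differs from $\Z^2$, handled by proving the pathwise statements for a countable dense set of directions and sandwiching arbitrary $(\mbf x,\theta)$ between neighbouring rational data by monotonicity of $\tau_r^\theta$. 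For~\eqref{itm:uniqueness of geodesic for fixed point and direction}: the only source of non-uniqueness of a Busemann geodesic is a tie in some $\tau_r^\theta$, and for fixed $\mbf x,\theta$ the first such maximization is of a Brownian motion minus an independent drifted Brownian motion, whose argmax is a.s.\ unique; moreover \emph{every} $\theta$-directed semi-infinite geodesic $\Gamma$ from $\mbf x$ is a Busemann geodesic, because for $\mbf y\in\Gamma$ the segment $\Gamma|_{\mbf x}^{\mbf y}$ has energy $L_{\mbf x,\mbf y}$, and sending the terminal point to $\infty$ along direction $\theta$ in~\eqref{eqn:limit definition of Busemann functions} shows $L_{\mbf x,\mbf y}=\B^\theta(\mbf x,\mbf y)$, whence equality holds in $\B^\theta\ge L$ all along $\Gamma$ and the cocycle forces $\Gamma$ to realize the $\B^\theta$-increments, i.e.\ $\Gamma=G^\theta_{\mbf x}$. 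This last observation also proves~\eqref{itm:all geodesics are directed}: for an arbitrary semi-infinite geodesic $\Gamma$ from $(m,t)$ and rational $\theta$ strictly between $\liminf t_n/n$ and $\limsup t_n/n$, the $\theta$-directed $G^\theta_{(m,t)}$ would be overtaken by $\Gamma$ at some levels and overtake it at others, so $\Gamma$ must touch $G^\theta_{(m,t)}$ at points $\mbf w$ of arbitrarily high level (two up-right paths can only swap their horizontal order through a common point); since $\mbf w\in G^\theta_{(m,t)}$ gives $L_{(m,t),\mbf w}=\B^\theta((m,t),\mbf w)$, the same argument makes each $\Gamma|_{(m,t)}^{\mbf w}$ a $\theta$-Busemann geodesic, hence $\Gamma$ itself is one, hence $\theta$-directed --- contradiction; so the limit in~\eqref{eqn:general_sig_limit} exists in $[0,\infty]$. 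Part~\eqref{itm:only vertical or horizontal geodesics are trivial} follows from the same comparison together with $\tau_r^\theta(p)\to p$ as $\theta\downarrow0$ (the drift $-1/\sqrt\theta\to-\infty$ makes any rightward move infinitely costly), so the $\theta$-Busemann geodesic tree decreases to the trivial vertical tree as $\theta\downarrow0$; a $0$-directed geodesic from $(m,t)$ then lies weakly left of $G^\theta_{(m,t)}$ for every $\theta>0$, forcing it to be the ray $\{(r,t):r\ge m\}$, and dually for direction $\infty$.

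\textbf{Part (iii).} Fix $\theta$ and let $M_r(t):=\sup_{v\ge t}G_r^\theta(v)$, a finite continuous non-increasing function. Discarding the probability-zero event that some $G_r^\theta$ has three maximizers over a common right half-line, an elementary analysis shows that the maximizer of $G_r^\theta$ over $[p,\infty)$ is non-unique \emph{precisely when} $p$ is a left endpoint of a maximal nondegenerate flat interval of $M_r$, and then there are exactly two maximizers: $p$ itself and the right endpoint $v^\star$ of that flat interval. Let $\mathcal A_r\subseteq\R$ denote this set of left endpoints; since distinct maximal flat intervals of a monotone function carry distinct values, $\mathcal A_r$ is countable. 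The $\theta$-geodesic from $(m,t)$ is non-unique iff, along the greedy path, some $\tau_r^\theta$ it uses has a tie; this requires it either to start at a level-$m$ branch point ($t\in\mathcal A_m$) or to arrive at some higher level $r>m$ at a point of $\mathcal A_r$. But the set of possible arrival positions at level $r$ is the range of $\tau_{r-1}^\theta$, namely the future-argmax set of $G_{r-1}^\theta$, and a fixed point lies in that set with probability zero (the future increment of $G_{r-1}^\theta$ immediately becomes positive); conditioning on $\{B_j\}_{j\ge r}$, which fixes the countable set $\mathcal A_r$ while leaving $B_{r-1}$ free, that set is a.s.\ missed. Hence on one event of probability one the non-uniqueness set equals $\bigcup_{m\in\Z}\bigl(\{m\}\times\mathcal A_m\bigr)$ --- countable, and infinite because each $\mathcal A_m$ is a.s.\ infinite (stationarity of $G_m^\theta$ under real shifts and ergodicity; alternatively, nonemptiness of some $\mathcal A_m$ is already forced by the coalescence in~\eqref{itm:coalescence}). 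Finally, at a branch point $(m,\alpha)$ the geodesic sits at position $\alpha$ and either jumps straight up, through $(m+1,\alpha)$, or moves right to $(m,v^\star)$, i.e.\ through $(m,\alpha+\ve)$ with $\ve=v^\star-\alpha>0$; since a third maximizer is excluded, at most one geodesic takes the rightward option and the rest pass through $(m+1,\alpha)$, which is exactly the stated dichotomy.

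\textbf{Parts (vi), (vii), and the main obstacle.} These form the technical core and I would prove them together through a dual environment. From $\B^\theta$ and the weights $\mbf B$ one defines dual Brownian weights $\Da,\Ra$ via the dual queuing maps of~\eqref{definition of dual weights from Busemanns}; using that the joint forward queuing map is inverted by its reverse-time counterpart (Theorem~\ref{bijectivity of D R joint mapping}) one shows the dual weights again form a BLPP-type environment, in which variational formulas produce a full family of southwest-directed semi-infinite geodesics. The crucial input is Theorem~\ref{strong crossing theorem for geodesics and dual geodesics}: an original $\theta$-directed Busemann geodesic and a dual southwest geodesic cannot cross, proved by comparing the maximizers in the two variational problems. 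Granting this, part~\eqref{itm:no bi-infinite geodesics in given direction}: a bi-infinite geodesic with forward direction $\theta$ and backward direction $-\eta$ has a forward half that is a $\theta$-Busemann geodesic (by the identification in~\eqref{itm:uniqueness of geodesic for fixed point and direction}) and a backward half that, transported to the dual environment, is a southwest-directed dual geodesic; these two share \emph{every} point of the bi-infinite geodesic, contradicting non-intersection, so no such geodesic exists on an event of probability one depending only on $(\theta,\eta)$. Part~\eqref{itm:coalescence}: every $\theta$-directed semi-infinite geodesic is a $\theta$-Busemann geodesic, so it suffices to coalesce the Busemann geodesics in direction $\theta$; if two of them never coalesced, the infinite region trapped between them would carry, via the dual construction, a bi-infinite geodesic in the dual environment with fixed northwest and southeast directions determined by $\theta$, impossible by the non-existence result applied in the dual environment --- contradiction. \emph{The main obstacle} is twofold: first, the globalization underlying~\eqref{itm:SIG_existence}--\eqref{itm:only vertical or horizontal geodesics are trivial}, where each ``for all $\mbf x$'' statement must be squeezed out of monotonicity in $\mbf x$ and $\theta$ and one-sided continuity of the Busemann process rather than a union bound, and the fine structure at the exceptional points of~\eqref{itm:size of non-uniqueness} must be controlled through Brownian argmax properties; and second, in~\eqref{itm:no bi-infinite geodesics in given direction}--\eqref{itm:coalescence}, verifying that the dual queuing construction genuinely inverts the forward one --- so that the dual object is an honest BLPP-type environment with its own semi-infinite geodesics --- and proving the non-intersection theorem by matching forward and dual maximizers, which is where the semi-discrete model departs most sharply from the lattice argument of~\cite{Timo_Coalescence}.
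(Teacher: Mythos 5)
Your parts \ref{itm:SIG_existence}, \ref{itm:uniqueness of geodesic for fixed point and direction}, \ref{itm:all geodesics are directed}, \ref{itm:only vertical or horizontal geodesics are trivial} and \ref{itm:coalescence} are essentially sound and close to the paper's route; in particular, your direct identification of an arbitrary $\theta$-directed geodesic as a Busemann geodesic (via $L_{\mbf x,\mbf y}=\B^\theta(\mbf x,\mbf y)$ for $\mbf y$ on the geodesic, then term-by-term equality $B_r=\h_r^\theta$, $\vv_{r+1}^\theta=0$ along the path) is a legitimate alternative to the paper's sandwiching of arbitrary geodesics between the leftmost and rightmost Busemann geodesics (Theorem~\ref{thm:convergence and uniqueness}). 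The serious problem is part \ref{itm:no bi-infinite geodesics in given direction}. The dual environment $\mbf X^\theta$ is built from the Busemann process of $\mbf B$; the backward half of a bi-infinite geodesic in $\mbf B$ with backward direction $-\eta$ is a southwest geodesic for (a reflection of) $\mbf B$ itself, not for $\mbf X^\theta$, so it is not a dual geodesic and Theorem~\ref{strong crossing theorem for geodesics and dual geodesics} says nothing about it. That non-intersection theorem compares northeast Busemann geodesics of $\mbf B$ with southwest geodesics of the \emph{different} environment $\mbf X^\theta$, and it is an input to coalescence (part \ref{itm:coalescence}), not to part \ref{itm:no bi-infinite geodesics in given direction}. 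The paper's actual proof of \ref{itm:no bi-infinite geodesics in given direction} goes through the midpoint problem, Lemma~\ref{lemma:midpoint prob for BLPP}: for a fixed point one compares forward Busemann increments $\B^\theta((k,-k),\mbf 0)$ with backward Busemann increments $\wt\B^\eta(\mbf 0,(k,-k))$ along the antidiagonal, uses the Burke property to see that the difference is a mean-nonpositive random walk with independent increments, kills the event that it stays nonnegative by Donsker's theorem, and then observes that a bi-infinite geodesic avoiding every rational point must be a vertical line. None of this machinery appears in your proposal, and your proof of \ref{itm:coalescence}, which is otherwise correct, depends on \ref{itm:no bi-infinite geodesics in given direction} applied in the environment $\mbf X^\theta$.

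A second, smaller gap is in part \ref{itm:size of non-uniqueness}. Your claimed identity, that the non-uniqueness set equals $\bigcup_m\{m\}\times\mathcal A_m$ (so that all branching is immediate at the initial level), rests on the assertion that the arrival position at level $r>m$ a.s.\ misses $\mathcal A_r$. The conditioning argument does not establish this: when the geodesic from $(m,t)$ travels straight up to level $r$ its arrival position at level $r$ is $t$ itself, which can perfectly well lie in $\mathcal A_r$, and quantifying over all initial points $t$ makes every point of the future-argmax set of $G_{r-1}^\theta$ a possible arrival position (also, $\mathcal A_r$ and the arrival positions both depend on $\h_r^{\theta}$, so they are not decoupled by conditioning on $\{B_j\}_{j\ge r}$). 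The paper's Theorem~\ref{thm:non_unique_size} deliberately characterizes $\NU_0^\theta$ as the set of $(m,t)$ with $t=\tau_{(m,t),r}^{\theta,L}<\tau_{(m,t),r}^{\theta,R}$ for \emph{some} $r\ge m$, i.e.\ branching may occur strictly above the initial level, and obtains countability from the containment of each level-$m$ slice in $\bigcup_{r\ge m}\mathcal A_r$ rather than from your stronger (and unproven) identity. The countability and the dichotomy at the branch point survive, but the argument as written does not.
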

\begin{remark}
As discussed in the introduction, the non-uniqueness stated in Part~\ref{itm:size of non-uniqueness} is a new phenomenon that arises from the semi-discrete nature of the model. We refer the reader to Section~\ref{sec:non_unique} for further discussion on non-uniqueness.
\end{remark}
 Theorem~\ref{thm:general_SIG}\ref{itm:no bi-infinite geodesics in given direction} is proven by first solving the ``midpoint problem." This problem first appeared in the context of first-passage percolation in a paper of Benjamini, Kalai, and Schramm~\cite{benjamini2003}. In that context the problem asks whether
\[
\lim_{n \rightarrow \infty} \Pp\Bigl(\Bigl\lfloor\f{n}{2}\Bigr \rfloor e_1 \text{ lies on some geodesic between } 0 \text{ and } ne_1\Bigr) = 0.
\]
In 2016, Damron and Hanson~\cite{Damron_Hanson2016} proved that this is true, under the assumption that the boundary of the limit shape is differentiable. Later Ahlberg and Hoffman~\cite{Ahlberg_Hoffman} proved this result without the assumption of differentiability. The following formulation of the midpoint problem more closely matches that for exponential last-passage percolation in the arXiv version of~\cite{Timo_Coalescence}, Theorem 4.12.  
\begin{lemma}[Midpoint problem] \label{lemma:midpoint prob for BLPP}
 Let $\theta,\eta > 0$ and $(m,t) \in \Z \times \R$. Then, the following subset of $\Omega$ is contained in an event of probability zero: 
 \begin{align*}
     &\Big\{\text{there exists a sequence } \{t_n\}_{n \in \Z}  \text{ satisfying } \lim_{n \rightarrow \infty} \f{t_n}{n} = \theta \text{ and }\lim_{n \rightarrow \infty} \f{t_{-n}}{-n} = \eta \text{ and }  \\[1em]
     &\qquad\text{such that, for each } n \in \Z_{> 0}, \text{ some geodesic between } (-n,t_{-n}) \text{ and } (n,t_n) \text{ passes through } (m,t)\Big\}
 \end{align*}
 \end{lemma}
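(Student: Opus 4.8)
The plan is to play the forward Busemann process $\B^\theta$ against a \emph{backward} Busemann process in direction $\eta$, and to show that the midpoint event would force a Busemann increment process to attain a global minimum at a deterministic point — an event of probability zero. To obtain the backward object I would reflect the semi‑discrete lattice through the origin, $(k,s)\mapsto(-k,-s)$, with the matching reflection $B'_j=\widetilde{B_{-j}}$ of the environment: this turns a southwest $\eta$-directed picture into a forward $\eta$-directed picture in an environment of the same law. Applying the globally defined Busemann process of Theorem~\ref{thm:summary of properties of Busemanns for all theta} (the extension of Theorem~\ref{thm:existence of Busemann functions for fixed points} to all initial points and all admissible sequences) to $B'$ yields, on a single probability-one event, a backward Busemann function $\overleftarrow{\B}^{\eta}$ with
\[
\overleftarrow{\B}^{\eta}(\mbf x,\mbf y)=\lim_{n\to\infty}\bigl[L_{(-n,\,s_{-n}),\,\mbf x}-L_{(-n,\,s_{-n}),\,\mbf y}\bigr]\qquad\text{whenever } s_{-n}/(-n)\to\eta ,
\]
whose horizontal increments $\overleftarrow{\h}^{\eta}_k(s,u)$ form a two-sided Brownian motion of drift $-1/\sqrt{\eta}$ and whose vertical increments are $-\operatorname{Exp}(1/\sqrt{\eta})$ distributed. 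All the work is done on the intersection $\Omega_0$ of this event with the analogous full-probability event for $\B^\theta$.

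Next I would reduce the midpoint event to a statement about $\Psi(\mbf w):=\B^\theta(\mbf z,\mbf w)+\overleftarrow{\B}^{\eta}(\mbf z,\mbf w)$, where $\mbf z=(m,t)$. Fix $\omega\in\Omega_0$ in the midpoint event, with a sequence $\{t_n\}$ as in the statement, and set $\mbf a_n=(-n,t_{-n})$, $\mbf b_n=(n,t_n)$. Existence of a geodesic through $\mbf z$ gives $L_{\mbf a_n,\mbf b_n}=L_{\mbf a_n,\mbf z}+L_{\mbf z,\mbf b_n}$ for all large $n$, while for any fixed $\mbf w\in\Z\times\R$ the concatenated path through $\mbf w$ gives $L_{\mbf a_n,\mbf b_n}\ge L_{\mbf a_n,\mbf w}+L_{\mbf w,\mbf b_n}$ once $\mbf a_n\le\mbf w\le\mbf b_n$. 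Subtracting yields $[L_{\mbf a_n,\mbf w}-L_{\mbf a_n,\mbf z}]+[L_{\mbf w,\mbf b_n}-L_{\mbf z,\mbf b_n}]\le 0$, and letting $n\to\infty$ (using Theorem~\ref{thm:existence of Busemann functions for fixed points} and its backward counterpart) gives
\[
\overleftarrow{\B}^{\eta}(\mbf w,\mbf z)+\B^{\theta}(\mbf w,\mbf z)\ \le\ 0,
\]
equivalently $\Psi(\mbf w)\ge 0$, together with $\Psi(\mbf z)=0$. Thus the midpoint event is contained in the event that $\Psi$ attains a global minimum at the deterministic site $\mbf z$.

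The last step is to show that this event is null. Restricting $\Psi$ to the horizontal line through $\mbf z$ gives the continuous process $u\mapsto\h^\theta_m(t,u)+\overleftarrow{\h}^{\eta}_m(t,u)$, which would then have a global minimum at the fixed point $t$. If $\theta\ne\eta$ this is immediate: the drifts $1/\sqrt\theta$ and $-1/\sqrt\eta$ do not cancel, so by the strong law this process tends to $-\infty$ along $u\to+\infty$ or along $u\to-\infty$, contradicting $\Psi\ge 0$. If $\theta=\eta$ the drifts cancel and, near $u=t$, the process $\h^\theta_m(t,\cdot)+\overleftarrow{\h}^{\theta}_m(t,\cdot)$ is a continuous martingale started from $0$; provided its quadratic variation is not identically zero — equivalently, that the forward and backward horizontal Busemann increments at $\mbf z$ are not almost surely negatives of each other — it takes strictly negative values in every right-neighborhood of $t$, again contradicting $\Psi\ge 0$. (Equivalently one may restrict $\Psi$ to a vertical line and run the same argument for the mean-zero random walk with increments $\vv^\theta_j(t)+\overleftarrow{\vv}^{\,\eta}_j(t)$, whose partial sums a.s.\ have $\liminf=-\infty$ unless degenerate.) I expect the $\theta=\eta$ non-degeneracy — ruling out an exact cancellation between the forward and backward Busemann processes in a common direction, the case corresponding to an asymptotically straight bi-infinite geodesic — to be the main obstacle, and it should be extracted from the joint structure of these two processes furnished by the dual-environment/queuing construction used elsewhere in the paper. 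A second, more routine point is the careful identification, in the semi-discrete setting, of ``a geodesic passes through $(m,t)$'' with the additivity $L_{\mbf a_n,\mbf b_n}=L_{\mbf a_n,\mbf z}+L_{\mbf z,\mbf b_n}$.
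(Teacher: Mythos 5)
Your overall skeleton is the same as the paper's: reflect to get a backward (southwest) Busemann process in direction $\eta$, use the decomposition $L_{\mbf a_n,\mbf b_n}=L_{\mbf a_n,\mbf z}+L_{\mbf z,\mbf b_n}$ together with $L_{\mbf a_n,\mbf b_n}\ge L_{\mbf a_n,\mbf w}+L_{\mbf w,\mbf b_n}$ to show the midpoint event forces $\B^{\theta}(\mbf w,\mbf z)+\wt\B^{\eta}(\mbf w,\mbf z)\le 0$ for all test points $\mbf w$, and then show that this nonnegativity of $\Psi$ is a null event. The divergence, and the gap, is in the last step. Your $\theta\ne\eta$ case (drift mismatch along a horizontal line) is sound. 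But for $\theta=\eta$ your argument rests on the claim that $u\mapsto\h^{\theta}_m(t,u)+\wt\h^{\eta}_m(t,u)$ is a continuous martingale with nontrivial quadratic variation. Neither half of this is justified: the forward and backward horizontal Busemann processes are both deterministic functionals of the same field $\mbf B$, each is a Brownian motion with drift only in its own marginal law, and their sum is not a martingale in any filtration you have exhibited; and the non-degeneracy (no exact cancellation) is precisely the point you defer. So the case $\theta=\eta$ is genuinely open in your write-up, and your fallback along a vertical line has the same two holes (joint independence across levels of $\vv^{\theta}_j(t)+\wt\vv^{\eta}_j(t)$, and non-degeneracy).

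The paper closes exactly this gap by a different choice of test points: it evaluates $\Psi$ along the antidiagonal $(k,-k)$, $k\ge 1$, and sets $S_k=\wt\B^{\eta}(\mbf 0,(k,-k))-\B^{\theta}((k,-k),\mbf 0)$. The Burke property (Theorem~\ref{thm:dist of Busemann functions}\ref{Burke property}) with $t_i=-i$ makes the forward increments $\B^{\theta}((i,-i),(i-1,-i+1))=v^{\theta}_i(-i)+h^{\theta}_{i-1}(-i,-i+1)$ i.i.d.\ sums of an independent exponential and a normal — hence automatically nondegenerate — and the forward and backward families are independent because they are built from disjoint increments of $\mbf B$. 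After the reflection reduction one may assume $\Ee[S_1]\le 0$, and Donsker's invariance principle gives $\Pp(S_k\ge 0,\;k\le m)\to 0$ with no case distinction between $\theta=\eta$ and $\theta\ne\eta$. If you want to salvage your route, the move is to replace the horizontal (or vertical) line by the antidiagonal, where the independence and non-degeneracy you need are supplied by Burke's theorem rather than by an unproven martingale/quadratic-variation claim.
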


\subsection{Existence and properties of the Busemann process}
To prove Theorem~\ref{thm:general_SIG}, we extend the individual Busemann functions of  Theorem~\ref{thm:existence of Busemann functions for fixed points} to a global {\it Busemann process}. The following transformations are used to understand the structure of this process. For functions  $Z,B:\R \rightarrow \R$ satisfying 
$Z(0) = B(0) = 0$ and $
\limsup_{s \rightarrow \infty} (B(s) - Z(s)) = -\infty
$, 
define 
\begin{align}
Q(Z,B)(t) &= \sup_{t \le s < \infty}\{B(t,s)-Z(t,s)\}, \label{definition of Q} \\[1em]
D(Z,B)(t) &= Z(t) + Q(Z,B)(0) - Q(Z,B)(t), \label{definition of D}\\[1em]
R(Z,B)(t) &= B(t) + Q(Z,B)(t) - Q(Z,B)(0). \label{definition of R}
\end{align}
Reverse-time analogues of these  transformations are defined for continuous functions $Y,C:\R \rightarrow \R$ satisfying $Y(0) = C(0) = 0$ and $\limsup_{s \rightarrow -\infty} (Y(s) - C(s)) = -\infty$:  
\begin{align} 
\Qa(Y,C)(t) &= \sup_{-\infty < s \le t} \{C(s,t)-Y(s,t)\}, \label{reverse definition of Q} \\[1em]
\Da(Y,C)(t) &= Y(t) + \Qa(Y,C)(t) - \Qa(Y,C)(0),\label{reverse definition of D} \\[1em]
\Ra(Y,C)(t) &= C(t) + \Qa(Y,C)(0) - \Qa(Y,C)(t).\label{reverse definition of R}
\end{align}

These transformations originate from the Brownian queue, first studied by Glynn and Whitt~\cite{glynn1991}, and further expounded on by Harrison and Williams~\cite{Harrison1985,harrison1990,harrison1992} and O'Connell and Yor~\cite{brownian_queues}. See Section~\ref{section:queue} and Appendix~\ref{section:queue and stationary} for more about the queuing interpretation.
The  following lemma is a straightforward exercise. We state it for completeness, as we will refer to it later in the paper.  
\begin{lemma} \label{queue length is continuous function of t}
Let $B,Z:\R \rightarrow \R$ be continuous functions satisfying $\limsup_{s \rightarrow \infty} (B(s) - Z(s)) = -\infty$. Then, $Q(Z,B)$, $D(Z,B)$, and $R(Z,B)$ are continuous. Similarly, if  $Y,C:\R \rightarrow \R$ are continuous functions satisfying $\limsup_{s \rightarrow -\infty}(Y(s) - C(s)) = -\infty$, then $\Qa(Y,C),\Da(Y,C)$, and $\Ra(Y,C)$ are continuous.
\end{lemma}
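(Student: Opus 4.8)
The plan is to establish continuity of $Q(Z,B)$ first, and then deduce continuity of $D(Z,B)$ and $R(Z,B)$ as immediate consequences, since \eqref{definition of D} and \eqref{definition of R} express $D$ and $R$ as sums and differences of $Z$ (resp. $B$), which is continuous by hypothesis, and the continuous function $Q(Z,B)$ together with the constant $Q(Z,B)(0)$. So the whole content is in showing $t \mapsto Q(Z,B)(t) = \sup_{t \le s < \infty}\{B(t,s) - Z(t,s)\}$ is continuous. The reverse-time statement for $\Qa, \Da, \Ra$ follows by the identical argument with the obvious changes (supremum over $s \le t$ running to $-\infty$), or simply by applying the forward statement to the reflected functions $\wt Y, \wt C$ using the convention $\wt f(t) = -f(-t)$; I would just remark on this rather than rewrite it.

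First I would record the consequence of the hypothesis $\limsup_{s \to \infty}(B(s) - Z(s)) = -\infty$: writing $g(s) := B(s) - Z(s)$, which is continuous with $g(0)=0$, the supremum defining $Q(Z,B)(t)$ is $\sup_{s \ge t}(g(s) - g(t))$, and since $g(s) \to -\infty$ along every sequence $s \to \infty$ (equivalently $\sup_{s \ge u} g(s) \to -\infty$ as $u \to \infty$), the supremum over $s \in [t,\infty)$ is actually attained on a \emph{bounded} interval: there is a finite $M = M(t)$ with $g(s) < g(t)$ for all $s \ge M$, so $Q(Z,B)(t) = \sup_{t \le s \le M} (g(s) - g(t)) = \big(\max_{t \le s \le M} g(s)\big) - g(t)$, a genuine maximum by continuity of $g$ on the compact interval $[t,M]$.

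Next, the key step: fix $t_0 \in \R$ and show $Q(Z,B)(t) \to Q(Z,B)(t_0)$ as $t \to t_0$. By the hypothesis, pick $M$ finite with $\sup_{s \ge M} g(s) < g(u) - 1$ for all $u$ in a neighborhood of $t_0$ (possible since $g$ is bounded near $t_0$ and $\sup_{s \ge M} g(s) \to -\infty$); then for all $t$ near $t_0$ we have the uniform representation $Q(Z,B)(t) = \big(\max_{s \in [t, M]} g(s)\big) - g(t)$. Define $h(t) := \max_{s \in [t,M]} g(s)$ for $t \le M$. The function $h$ is continuous: indeed it is nonincreasing in $t$, and if $t_n \to t$ then $\max_{[t_n, M]} g \to \max_{[t,M]} g$ because $g$ is uniformly continuous on the compact set $[\min_n t_n \wedge t, M]$ and the intervals $[t_n, M]$ converge to $[t, M]$ in Hausdorff distance — concretely, $|\max_{[t_n,M]} g - \max_{[t,M]} g| \le \omega_g(|t_n - t|)$ where $\omega_g$ is the modulus of continuity of $g$ on a fixed compact interval containing all the $t_n$ and $t$. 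Hence $Q(Z,B)(t) = h(t) - g(t)$ is continuous at $t_0$ as a difference of continuous functions, and since $t_0$ was arbitrary, $Q(Z,B)$ is continuous on $\R$.

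Finally, continuity of $D(Z,B)$ and $R(Z,B)$ follows directly from \eqref{definition of D}--\eqref{definition of R}: $D(Z,B)(t) = Z(t) + Q(Z,B)(0) - Q(Z,B)(t)$ and $R(Z,B)(t) = B(t) + Q(Z,B)(t) - Q(Z,B)(0)$ are sums of the continuous functions $Z$, $B$, $\pm Q(Z,B)$ and a constant, hence continuous. The reverse-time assertions follow by the same reasoning applied to $\Qa, \Da, \Ra$ (with the supremum now running over $s \in (-\infty, t]$ and the localization coming from $\limsup_{s \to -\infty}(Y(s) - C(s)) = -\infty$), or by reflecting through $t \mapsto -t$. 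The only mild obstacle is the bookkeeping in the key step — verifying that the localization to a compact interval can be made uniform over a neighborhood of $t_0$ and that $t \mapsto \max_{[t,M]} g$ is continuous — but this is routine once one observes that the $\limsup$ hypothesis gives $\sup_{s \ge u} g(s) \to -\infty$, which is what lets one replace the infinite-interval supremum by a maximum over a compact set.
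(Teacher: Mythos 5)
Your proof is correct. The paper itself gives no argument for this lemma --- it is explicitly dismissed as ``a straightforward exercise'' --- so there is nothing to compare against; your write-up is the natural way to fill in that exercise. The two points that carry the content are exactly the ones you isolate: the hypothesis $\limsup_{s\to\infty}(B(s)-Z(s))=-\infty$ lets you replace the supremum over $[t,\infty)$ by a maximum over a compact interval $[t,M]$ uniformly for $t$ near a fixed $t_0$, and the map $t\mapsto\max_{[t,M]}g$ inherits the modulus of continuity of $g$ on a compact set (your bound $|\max_{[t_n,M]}g-\max_{[t,M]}g|\le\omega_g(|t_n-t|)$ checks out by comparing the maximizer's location to $t_n$). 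The reductions of $D$ and $R$ to $Q$, and of the reverse-time statements to the forward ones via $\wt f(t)=-f(-t)$ (the reflection identity the paper later records as Lemma~\ref{lemma:time reversal equality of queuing maps}), are exactly as routine as you say.
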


The next theorem summarizes the existence and properties of the Busemann process.  This process has discontinuities in the direction parameter $\theta$. Instead of a single cadlag process, it is useful to retain both a left- and a right-continuous version indicated by $\theta-$ and $\theta+$ because this distinction captures spatial limits.   

\begin{theorem} \label{thm:summary of properties of Busemanns for all theta}
There exists a process,
\[
\{\B^{\theta \sig}(\mbf x,\mbf y): \theta > 0, \, \sigg \in \{+,-\},  \, \mbf x,\mbf y \in \Z \times \R\},
\]
and for $\theta > 0$, there exist events  $\Omega^{(\theta)} \subseteq \Omega_1 \subseteq \Omega$, each of probability one, such that the following hold. Here, $\vv_{m + 1}^{\theta \sig}, \h_m^{\theta \sig}$ are defined as in~\eqref{vertical Busemann simple expression} and~\eqref{horizontal Busemann simple expression}, placing $\sigg$ in the appropriate superscripts. 
    \begin{enumerate} [label=\rm(\roman{*}), ref=\rm(\roman{*})]  \itemsep=3pt 
    \item{\rm(}Additivity{\rm)} \label{general additivity Busemanns} On $\Omega_1$, whenever $\mathbf x,\mathbf y,\mathbf z \in (\Z \times \R), \theta > 0$, and $\sigg \in \{+,-\}$,
    \[
    \B^{\theta \sig}(\mathbf x, \mathbf y) + \B^{\theta \sig}(\mathbf y,\mathbf z) = \B^{\theta\sig}(\mathbf x, \mathbf z). 
    \] 
    \item{\rm(}Monotonicity{\rm)} \label{general monotonicity Busemanns} On $\Omega_1$, whenever $0 <\gamma < \theta < \infty$, $m \in \Z$, and $s < t \in \R$,
    \[
    0 \le \vv_{m}^{\gamma -}(s) \leq \vv_{m}^{\gamma +}(s) \leq \vv_{m}^{\theta -}(s) \le \vv_{m}^{\theta +}(s), \text{ and }
    \]
    \[
    B_m(s,t) \le \h_m^{\theta +}(s,t) \le \h_m^{\theta -}(s,t) \le \h_m^{\gamma +}(s,t) \le   \h_m^{\gamma -}(s,t). 
    \]  
    \item{\rm(}Convergence{\rm)} \label{general uniform convergence Busemanns} On $\Omega_1$, for every $m \in \Z$, $\theta > 0$ and $\sigg \in \{+,-\}$, 
    \begin{enumerate} [label=\rm(\alph{*}), ref=\rm(\alph{*})]  \itemsep=3pt 
        \item  \label{general uniform convergence:limits from left} As $\gamma \nearrow \theta$,  $\h_m^{\gamma \sig} $ and $\vv_m^{\gamma \sig}$ converge uniformly, on compact subsets of $\R$, to $\h_m^{\theta -}$ and $\vv_m^{\theta -}$, respectively. 
        \item \label{general uniform convergence:limits from right} As $\delta \searrow \theta$, $\h_m^{\delta\sig}$ and $\vv_m^{\delta\sig}$ converge uniformly, on compact subsets of $\R$, to $\h_m^{\theta +}$ and $\vv_m^{\theta +}$, respectively.
        \item \label{general uniform convergence:limits to infinity}As $\gamma \rightarrow \infty$, $\h_m^{\gamma \sig}$ converges uniformly, on compact subsets of $\R$, to $B_m$. 
        \item \label{general uniform convergence:limits to 0}As $\delta \searrow 0$, $\vv_m^{\delta \sig}$ converges uniformly, on compact subsets of $\R$, to $0$. 
    \end{enumerate}   
    \item{\rm(}Continuity{\rm)} On $\Omega_1$, for any $r,m \in \Z$,  $\theta > 0$, and $\sigg \in \{+,-\}$, $(s,t) \mapsto \B^{\theta \sig}((m,s),(r,t))$ is a continuous function $\R^2 \rightarrow \R$. \label{general continuity of Busemanns}
    \item{\rm(}Limits{\rm)}  \label{limits of B_m minus \h_{m + 1}}
    On $\Omega_1$, for each $\theta > 0$ and $\sigg \in \{+,-\}$,
    \[
    \lim_{s \rightarrow \pm \infty} \bigl[ B_m(s) - \h_{m + 1}^{\theta \sig}(s)\bigr]  = \mp \infty. 
    \]
    \item{\rm(}Queuing relationships between Busemann functions{\rm)} \label{general queuing relations Busemanns} On $\Omega_1$, for all $m \in \Z,\theta > 0$, and signs $ \sigg \in \{+,-\}$,  
    \[
    \vv_{m + 1}^{\theta \sig} = Q(\h_{m + 1}^{\theta \sig},B_m) \qquad\text{and}\qquad \h_m^{\theta \sig} =D(\h_{m + 1}^{\theta \sig},B_m). 
    \] 
    \item{\rm(}Independence{\rm)} \label{independence structure of Busemann functions on levels}  
    For any $m \in \Z$,
    \[
    \{\h_r^{\theta \sig}: \theta > 0, \sigg \in \{+,-\}, r > m\} \text{ is independent of } \{B_r: r \le m\} 
    \]
\item{\rm(}Equality for fixed directions{\rm)} \label{busemann functions agree for fixed theta} Fix $\theta > 0$. Then, on the event $\Omega^{(\theta)}$, for all $\mbf x,\mbf y \in \Z \times \R$ and all sequences $\{t_n\}$ with $t_n/n \rightarrow \theta$, 
\[
\B^{\theta-}(\mbf x,\mbf y) = \lim_{n \rightarrow \infty} \bigl[L_{\mbf x,(n,t_n)} - L_{\mbf y,(n,t_n)}\bigr] =  \B^{\theta+}(\mbf x,\mbf y). 
\] 
\item{\rm(}Shift invariance{\rm)} \label{itm:shift_invariance}
For each $\mbf z \in \Z \times \R$,
\[
\{\B^{\theta \sig}(\mbf x,\mbf y):\mbf x,\mbf y \in \Z \times \R,\theta > 0, \sigg \in \{+,-\}\} \deq \{\B^{\theta \sig}(\mbf x + \mbf z,\mbf y + \mbf z):\mbf x,\mbf y \in \Z \times \R,\theta > 0, \sigg \in \{+,-\}\}. 
\]
\end{enumerate}
\end{theorem}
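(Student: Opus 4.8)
The plan is to build the process first on a countable dense set of directions, propagate all the structural properties there, and only at the very end deal with the genuinely fixed-$\theta$ statement (item~\ref{busemann functions agree for fixed theta}). Fix a countable dense $\Theta_0\subset(0,\infty)$ and a countable dense $\mathcal D\subset\R$. Intersecting, over $\theta\in\Theta_0$, $m\in\Z$ and base points with coordinates in $\mathcal D$, the probability-one events of Theorem~\ref{thm:existence of Busemann functions for fixed points} produces one event carrying $\B^\theta(\mbf x,\mbf y)$, as the stated limit along any $t_n/n\to\theta$, for every $\theta\in\Theta_0$ and every such base point. On this event: additivity is immediate from the telescoping identity for differences of $L$; the Normal increment law of $\h_m^\theta$ together with Kolmogorov's criterion gives a continuous version of $t\mapsto\h_m^\theta(t)$ and hence, via additivity, a spatially continuous extension of $\B^\theta$ to all of $(\Z\times\R)^2$; the identity $\lim_{s\to\pm\infty}[B_m(s)-\h_{m+1}^\theta(s)]=\mp\infty$ follows from the law of large numbers for the drifted Brownian motion $\h_{m+1}^\theta$ against $B_m(s)=o(|s|)$; the queuing relations $\vv_{m+1}^\theta=Q(\h_{m+1}^\theta,B_m)$ and $\h_m^\theta=D(\h_{m+1}^\theta,B_m)$ come from additivity plus the one-step recursion $L_{(m,t),(n,t')}=\sup_{t\le u\le t'}\{B_m(t,u)+L_{(m+1,u),(n,t')}\}$ passed to $n\to\infty$; and independence is just the observation that $\h_r^\theta$ is $\sigma(B_s:s\ge r)$-measurable, hence $\sigma(B_s:s>m)$-measurable when $r>m$, a $\sigma$-algebra independent of $\sigma(B_s:s\le m)$ because the $B_r$ are independent.

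The linchpin is item~\ref{general monotonicity Busemanns} for $\gamma<\theta$ in $\Theta_0$, and this is where the real work lies: \cite{blpp_utah} supplies Busemann functions one direction at a time with no joint information, so a monotone coupling across directions must be produced here. I would deduce it from a deterministic monotonicity of finite last-passage increments: for $m\le n$, $s<t$ and $u\in\R$, the map $r\mapsto L_{(m-1,u),(n,r)}-L_{(m,u),(n,r)}$ is nondecreasing while $r\mapsto L_{(m,s),(n,r)}-L_{(m,t),(n,r)}$ is nonincreasing. Both reduce for $n=m+1$ to a difference of suprema of a fixed function over nested intervals, and the general case follows by induction on $n$ using $L_{\mbf x,(n,r)}=\sup_{v\le r}\{L_{\mbf x,(n-1,v)}-B_n(v)\}+B_n(r)$. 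Taking terminal sequences $t_n^\gamma=\gamma n$ and $t_n^\theta=\theta n$, so $t_n^\gamma<t_n^\theta$ for large $n$, and passing to the limit yields $\vv_m^\gamma(u)\le\vv_m^\theta(u)$ and $\h_m^\gamma(s,t)\ge\h_m^\theta(s,t)$; the bounds $\vv_m^\theta\ge0$ and $\h_m^\theta(s,t)\ge B_m(s,t)$ hold because the single-level path is admissible in the last-passage problem. I expect this monotonicity step to be the main obstacle.

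For arbitrary $\theta>0$ put $\h_m^{\theta-}:=\lim_{\Theta_0\ni\gamma\nearrow\theta}\h_m^\gamma$, $\h_m^{\theta+}:=\lim_{\Theta_0\ni\delta\searrow\theta}\h_m^\delta$, and similarly $\vv_m^{\theta\pm}$; these exist pointwise by the previous step (monotone, and sandwiched between skeleton values), and $\B^{\theta\pm}$ is defined on $(\Z\times\R)^2$ by additivity. An a.s.\ monotone limit of variables with laws $\Nor((t-s)/\sqrt\gamma,|t-s|)$ (resp.\ $\operatorname{Exp}(1/\sqrt\gamma)$) as $\gamma\to\theta$ has the corresponding law with parameter $\theta$, so the limiting increment processes have a continuous version, whereupon Dini's theorem (a monotone sequence of continuous functions with continuous limit converges uniformly on compacts) upgrades the convergence to locally uniform, giving items~\ref{general uniform convergence Busemanns} and \ref{general continuity of Busemanns}; the limits $\gamma\to\infty$ and $\delta\to0$ there use the bounds $\h_m^\gamma(s,t)\ge B_m(s,t)$ and $\vv_m^\delta\ge0$ together with the degeneration of the $\Nor$ and $\operatorname{Exp}$ marginals. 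Additivity, the limit identity $\lim_{s\to\pm\infty}[B_m(s)-\h_{m+1}^{\theta\sig}(s)]=\mp\infty$ (robust under the ordering of the previous step), the queuing relations (the maps $Q,D$ are continuous under locally uniform convergence of their arguments, by Lemma~\ref{queue length is continuous function of t} once the $\limsup=-\infty$ condition is inherited), and independence (pointwise limits preserve measurability) all pass from $\Theta_0$ to every $\theta$; call the resulting event $\Omega_1$. Checking that these limits are locally uniform, so that spatial continuity and the queuing structure survive, is the main secondary technical point.

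For $\theta\in\Theta_0$, $\B^{\theta-}=\B^{\theta+}=\B^\theta$ on $\Omega_1$ and item~\ref{busemann functions agree for fixed theta} holds by construction. For a general fixed $\theta$: monotonicity gives $\vv_m^{\theta-}\le\vv_m^{\theta+}$ with both marginals $\operatorname{Exp}(1/\sqrt\theta)$, forcing $\vv_m^{\theta-}=\vv_m^{\theta+}$ a.s., and similarly $\h_m^{\theta-}=\h_m^{\theta+}$ a.s., hence $\B^{\theta-}=\B^{\theta+}$ a.s.; let $\Omega^{(\theta)}\subseteq\Omega_1$ be this full-probability event. On $\Omega^{(\theta)}$, for $\gamma<\theta<\delta$ in $\Theta_0$ and large $n$ one has $\gamma n<t_n<\delta n$, so the monotonicity of the second step bounds the vertical increment $L_{(m-1,t),(n,t_n)}-L_{(m,t),(n,t_n)}$ and the horizontal increment $L_{(m,s),(n,t_n)}-L_{(m,t),(n,t_n)}$ between their values at terminal coordinates $\gamma n$ and $\delta n$, which converge to the corresponding increments of $\B^\gamma$ and $\B^\delta$; letting $\gamma\nearrow\theta$, $\delta\searrow\theta$ and using $\B^{\theta-}=\B^{\theta+}$ forces convergence to that common value, and additivity lifts this from the basic increments to all $\mbf x,\mbf y$. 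Finally, shift invariance holds because for $\mbf z=(k,t_0)$ the substitution $s_r\mapsto s_r+t_0$ in the last-passage variational problem gives $L_{\mbf x+\mbf z,\mbf y+\mbf z}(\mbf B)=L_{\mbf x,\mbf y}(\wt{\mbf B})$ with $\wt B_j(u):=B_{j+k}(t_0+u)-B_{j+k}(t_0)$, where $\wt{\mbf B}\deq\mbf B$; since $\Theta_0$ and $\mathcal D$ may be taken shift-invariant and every step above is a fixed measurable functional of the driving field commuting with translation by $\mbf z$, the Busemann process of $\mbf B$ at shifted arguments equals the Busemann process of $\wt{\mbf B}$, which has the same law.
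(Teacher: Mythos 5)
Your overall architecture coincides with the paper's: a countable skeleton in direction and space, the deterministic monotone coupling of finite last-passage increments (Lemma~\ref{lemma:bounds for differences of Brownian LPP}) as the linchpin, monotone $\theta\pm$ limits upgraded by Dini to local uniformity, and distribution-matching of the $\operatorname{Exp}$ and $\Nor$ marginals to collapse $\theta-$ and $\theta+$ at a fixed direction. Your sandwich argument for item~\ref{busemann functions agree for fixed theta} (trapping $L_{\mbf x,(n,t_n)}-L_{\mbf y,(n,t_n)}$ between the increments at terminal times $\gamma n$ and $\delta n$) is a clean variant of the paper's. However, your derivation of the queueing relation $\vv_{m+1}^{\theta}=Q(\h_{m+1}^{\theta},B_m)$ has a genuine gap. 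Passing the recursion $L_{(m,t),(n,n\theta)}=\sup_{t\le u\le n\theta}\{B_m(t,u)+L_{(m+1,u),(n,n\theta)}\}$ to the limit only yields one inequality: for each fixed $u$ one has $B_m(t,u)+L_{(m+1,u),(n,n\theta)}-L_{(m+1,t),(n,n\theta)}\le L_{(m,t),(n,n\theta)}-L_{(m+1,t),(n,n\theta)}$, hence $\sup_{u\ge t}\{B_m(t,u)-\h_{m+1}^\theta(t,u)\}\le \vv_{m+1}^\theta(t)$. The reverse inequality requires interchanging the supremum with $n\to\infty$, i.e.\ preventing the finite-$n$ maximizer from escaping, and nothing in your sketch does that. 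The paper closes this by a distributional argument: both sides are $\operatorname{Exp}(1/\sqrt\theta)$ (the supremum side by independence of $\h_{m+1}^\theta$ from $B_m$ and Lemma~\ref{lemma:sup of BM with drift}), and an a.s.\ ordered pair of integrable variables with equal laws must coincide. You have these ingredients elsewhere in your write-up, but the step as stated would not go through.

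A second, smaller gap sits in item~\ref{general uniform convergence Busemanns}\ref{general uniform convergence:limits to 0}. Degeneration of the $\operatorname{Exp}(1/\sqrt\delta)$ marginal gives $\vv_m^\delta(t)\to0$ a.s.\ only for each fixed $t$. Since $\vv_m^\delta$ is not monotone in $t$, and a decreasing limit of continuous functions is merely upper semicontinuous, vanishing on a countable dense set does not force the limit function to vanish everywhere, so Dini cannot yet be invoked. The paper instead localizes the supremum in $Q(\h_m^\delta,B_{m-1})(t)$ to $[t,\tau^\delta_{(m,t)}]$ and proves $\tau^\delta_{(m,t)}\to t$ using the explicit argmax law of drifted Brownian motion (Theorem~\ref{distribtution of argmax BM with drift}) together with monotonicity of maximizers in $t$ (Lemma~\ref{monotonicity of maximizers from function monotonicity}); some such localization of maximizers is needed to make the locally uniform convergence — which you correctly identify as the delicate secondary point — actually work.
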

\begin{remark}
 On account of Part~\ref{busemann functions agree for fixed theta},  when working on the event $\Omega^{(\theta)}$ we write $\B^\theta=\B^{\theta-}= \B^{\theta+}$. The fact that the limits exist for all initial points $\mbf x,\mbf y \in \Z \times \R$ and fixed $\theta > 0$ on a single event of probability one gives a generalization of Theorem~\ref{thm:existence of Busemann functions for fixed points}. As will be seen in Section~\ref{section:Busemann construction}, the key is Lemma~\ref{lemma:equality of Busemann function distribution at rationals}, which generalizes a proof in~\cite{blpp_utah}. 
\end{remark}

 \noindent We introduce a dual field of Brownian motions used later in the proof of coalescence of semi-infinite geodesics. Let $\theta > 0$. Recall the definition \eqref{definition of R} of the mapping $R$, and on the event $\Omega^{(\theta)}$, set
\begin{equation}\label{definition of dual weights from Busemanns} 
X_m^{\theta} = R(\h_m^{\theta},B_{m - 1}). 
\end{equation}

\noindent Denote the field of these random functions by $\mbf X^\theta := \{X_m^\theta\}_{m \in \Z}$. The following theorem describes the distribution of the Busemann functions for a fixed direction $\theta$. 
\begin{theorem} \label{thm:dist of Busemann functions}
Fix $\theta > 0$. 
\begin{enumerate} [label=\rm(\roman{*}), ref=\rm(\roman{*})]  \itemsep=3pt 
    \item The process $t \mapsto \h_m^\theta(t)$ is a two-sided Brownian motion with drift $\f{1}{\sqrt \theta}$. \label{BM_drift}
    \item $\mbf X^\theta$ is a field of independent two-sided Brownian motions. For each $n \in \Z$, $\{X_m^\theta\}_{m > n}$ is independent of $\{\h_m^\theta\}_{m  \le n}$.   \label{mutual independence of the X_m}
    \item  The process  $t \mapsto \vv_m^\theta(t)$  is a stationary and reversible strong Markov process. For each $t \in \R$, $\vv_m^\theta(t) \sim \operatorname{Exp}\bigl(\f{1}{\sqrt \theta}\bigr)$. \label{v_m process}
\item{\rm(}Burke property{\rm)} Fix $m<n \in \Z$ and $-\infty < t_n \leq t_{n - 1} \leq \cdots \leq t_{m + 1} < \infty$. Then, the following random variables and processes are mutually independent:
\begin{multline*}
\{B_m(u,t_{m + 1}):u \le t_{m + 1}\}, \;\; v_{m + 1}^\theta(t_{m + 1}),\;\;
 \{h_m^\theta(t_{m + 1},u): u \ge t_{m + 1}\}, \; \\[1em]
\{B_{r}(u,t_{r + 1}): u \leq t_{r + 1}\},\; v_{r + 1}^\theta(t_{r + 1}),\;\{h_r^\theta(t_{r + 1},u): t_{r + 1} \le u \le t_r\}, \\[1em]\text{ and } \{X_r^\theta(t_r,u): u \geq t_r  \},\; \text{ for }\; m + 1 \leq r \leq n - 1,\;\; 
 \{h_n^\theta(u,t_n):u \le t_n\},\; \text{ and }\;\{X_n^\theta(t_n,u): u \ge t_n\}.
\end{multline*}
As a special case, $\{v_m^\theta(t)\}_{m \in \Z}$ is an i.i.d sequence for each $t \in \R$. Refer to Figure~\ref{fig:Independence structure for Busemanns} for clarity. \label{Burke property}
\end{enumerate}
\end{theorem}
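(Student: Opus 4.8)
The plan is to prove Theorem~\ref{thm:dist of Busemann functions} by reducing each part to the queuing relations of Theorem~\ref{thm:summary of properties of Busemanns for all theta}\ref{general queuing relations Busemanns} together with the stationarity of the Brownian queue. For part~\ref{BM_drift}, I would invoke Theorem~\ref{thm:existence of Busemann functions for fixed points}: it already tells us that $\h_m^\theta(s,t) \sim \Nor\bigl(\f{t-s}{\sqrt\theta}, |t-s|\bigr)$, and the limit definition~\eqref{eqn:limit definition of Busemann functions} combined with additivity gives independent increments over disjoint intervals. Continuity comes from Theorem~\ref{thm:summary of properties of Busemanns for all theta}\ref{general continuity of Busemanns}, so $t\mapsto\h_m^\theta(t)$ is a continuous process with stationary independent Gaussian increments of the right variance and drift, hence a two-sided Brownian motion with drift $\f1{\sqrt\theta}$. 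The only subtlety is matching up the sign/drift conventions from item~(vii) of the Preliminaries.

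For parts~\ref{mutual independence of the X_m} and~\ref{v_m process}, the key input is the Burke-type output theorem for the stationary Brownian queue, which I would take from the appendix (Appendix~\ref{section:queue and stationary}) and which, in the present notation, says the following: if $B$ is a two-sided Brownian motion and $\h$ an independent two-sided Brownian motion with positive drift, then the triple $\bigl(Q(\h,B),\, D(\h,B),\, R(\h,B)\bigr)$ has the property that $Q(\h,B)(t)$ is marginally exponential, $D(\h,B)$ is again a Brownian motion with the same drift as $\h$ ("the departure process"), $R(\h,B)$ is a Brownian motion (the "unused service"), and moreover $D(\h,B)$ and $R(\h,B)$ are independent, with $\{R(\h,B)\}$ independent of the future of $\h$ past any reference time and $D(\h,B)$ independent of the past. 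Applying this with $\h = \h_{m+1}^\theta$ and $B = B_m$, the queuing relations give $\vv_{m+1}^\theta = Q(\h_{m+1}^\theta, B_m)$, $\h_m^\theta = D(\h_{m+1}^\theta, B_m)$, and $X_{m+1}^\theta = R(\h_{m+1}^\theta, B_m)$, so part~\ref{v_m process} (stationarity, reversibility, the strong Markov property of $\vv_m^\theta$, and the $\operatorname{Exp}(1/\sqrt\theta)$ marginal) follows from the corresponding queue statement, and part~\ref{mutual independence of the X_m} follows by induction on $m$: $X_{m+1}^\theta$ is independent of $\h_m^\theta$ and of all $X_r^\theta$, $r\le m$ (the latter being functions of $\{B_r: r<m\}$ and $\{\h_r^\theta: r\le m\}$, which the independence structure of Theorem~\ref{thm:summary of properties of Busemanns for all theta}\ref{independence structure of Busemann functions on levels} keeps separate from the "fresh" Brownian motion $B_m$ feeding into level $m+1$). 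One runs this down from $+\infty$ using the convergence $\h_m^{\gamma}\to B_m$ as $\gamma\to\infty$ only if needed; more simply, one notes that at each level the pair of inputs $(\h_{m+1}^\theta, B_m)$ is the right independent pair by Theorem~\ref{thm:summary of properties of Busemanns for all theta}\ref{independence structure of Busemann functions on levels} and part~\ref{BM_drift}.

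Part~\ref{Burke property} is the substantial one, and it is essentially a spatial Burke theorem: one fixes the decreasing sequence of times $t_n\le\cdots\le t_{m+1}$ and reads the environment "level by level from the top." At level $n$, the input is $B_n$ restricted to $(-\infty, t_n]$ (run backward from $t_n$) plus the service process, and the queueing map produces $\h_n^\theta$ on $[t_n,\infty)$ and $X_n^\theta$ on $[t_n,\infty)$; feeding $\h_n^\theta$ into level $n-1$ together with the independent increment $B_{n-1}(\cdot, t_n)$ and then $B_{n-1}(t_n,\cdot)$ on $[t_n, t_{n-1}]$, the queue at level $n-1$ emits $v_n^\theta(t_n)$, $\h_{n-1}^\theta$ on $[t_n, t_{n-1}]$ and on $[t_{n-1},\infty)$, and $X_{n-1}^\theta$. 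Iterating, the listed collection is exactly the family of "outputs" of a cascade of queues, each of which, by the Burke output property, is independent of the backward-in-time inputs that have not yet been consumed; independence across levels comes from Theorem~\ref{thm:summary of properties of Busemanns for all theta}\ref{independence structure of Busemann functions on levels}. The special case $\{v_m^\theta(t)\}_{m}$ i.i.d.\ then follows by taking $t_{m+1}=\cdots=t_n=t$. The main obstacle I anticipate is bookkeeping: one must carefully split each $B_r$ at the relevant times $t_{r+1}$ and $t_r$, verify that the queueing maps only ever use the "already revealed" pieces, and phrase the Burke output property in a two-sided/reverse-time form strong enough to conclude independence of $\{X_r^\theta(t_r,u):u\ge t_r\}$ from everything to its southwest. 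This is exactly the content that Figure~\ref{fig:Independence structure for Busemanns} is meant to make transparent, and the honest work is in stating the single-queue Burke lemma in the appendix precisely enough that the induction is a formality.
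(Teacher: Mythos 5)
Your treatment of parts~\ref{mutual independence of the X_m} and~\ref{v_m process} is essentially the paper's argument: iterate the single-queue output theorem (Theorem~\ref{O Connell Yor BM independence theorem queues}) on the independent pairs $(\h_{m+1}^\theta, B_m)$ supplied by Theorem~\ref{thm:summary of properties of Busemanns for all theta}\ref{independence structure of Busemann functions on levels}. But part~\ref{BM_drift} contains a genuine gap. You claim that ``the limit definition combined with additivity gives independent increments over disjoint intervals.'' Additivity is a deterministic telescoping identity and says nothing about independence; independence of the increments of $\h_m^\theta$ is itself a nontrivial Burke-type fact, and Theorem~\ref{thm:existence of Busemann functions for fixed points} only gives the one-dimensional marginals. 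The paper closes this by identifying the \emph{joint} finite-dimensional distributions of $\{\h_0^\theta(t)\}$ with those of $t\mapsto -B_0(t)+t/\sqrt\theta$ in Lemma~\ref{lemma:equality of Busemann function distribution at rationals}: one sandwiches the Busemann increments between increments of stationary LPP processes with rates $\lambda$ on either side of $1/\sqrt\theta$ (Theorem~\ref{thm:busemann sandwich theorem}), uses monotonicity in $\lambda$ to squeeze the multivariate distribution functions, and only then reads off that the process has independent Gaussian increments. Without some such multivariate coupling your part~\ref{BM_drift} does not go through, and since parts~\ref{mutual independence of the X_m}--\ref{Burke property} all feed on the fact that $\h_{m+1}^\theta$ is a genuine Brownian motion with drift independent of $B_m$, the gap propagates.

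For part~\ref{Burke property}, the obstacle you flag at the end --- ``phrase the Burke output property in a two-sided/reverse-time form strong enough'' to run the cascade --- is exactly the step the paper does not leave as bookkeeping. It is resolved by Theorem~\ref{bijectivity of D R joint mapping}, which inverts the joint mapping $(D,R)$ by the reverse-time maps $(\Da,\Ra)$ and yields $\h_m^\theta=\Da(\h_{m-1}^\theta,X_m^\theta)$, $B_{m-1}=\Ra(\h_{m-1}^\theta,X_m^\theta)$, $\vv_m^\theta=\Qa(\h_{m-1}^\theta,X_m^\theta)$. These relations let one identify $\{\h_{r+m}^\theta,\vv_{r+m+1}^\theta,X_{r+m+1}^\theta,B_{r+m}\}_{r\ge0}$ in distribution with the forward-iterated stationary queue system $\{Y_r^\lambda,q_{r+1}^\lambda,B_{r+1},W_r^\lambda\}$ of~\eqref{eqn:stationary BLPP definitions}, after which the entire independence structure is quoted from Theorem~\ref{Burke-type Theorem}. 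Your level-by-level cascade is in effect a re-proof of that appendix theorem, but as written the induction does not close: the Busemann system is built by iterating \emph{downward} ($\h_{m-1}^\theta=D(\h_m^\theta,B_{m-1})$) while the listed objects ($\vv_{r+1}^\theta(t_{r+1})$ as a function of the past of $X_{r+1}^\theta$ and $\h_r^\theta$, the increments of $X_r^\theta$ to the right of $t_r$) are naturally outputs of the \emph{upward} dual iteration, so you need the inversion before the single-queue lemma applies to the pieces you have actually revealed.
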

\begin{remark}
 Using the representation of Equation~\eqref{v_m queue notation} below, many formulas for the process $t \mapsto v_m^\theta(t)$ are well known. See~\cite{BM_handbook}, page 129 and~\cite{Norros-Salminen}, specifically equations (4) and (5), for more on this process, including the transition density. However, we caution that when comparing formulas, in the setting of~\cite{BM_handbook,Norros-Salminen}, the process is \[
 t \mapsto \sup_{-\infty < s \le t}\{B(s,t) - \mu(t- s)\},
 \]
 where $B$ is a two-sided Brownian motion and $\mu > 0$. In our setting, there is a factor of $\sqrt 2$ multiplied to $B$. 
\end{remark}
 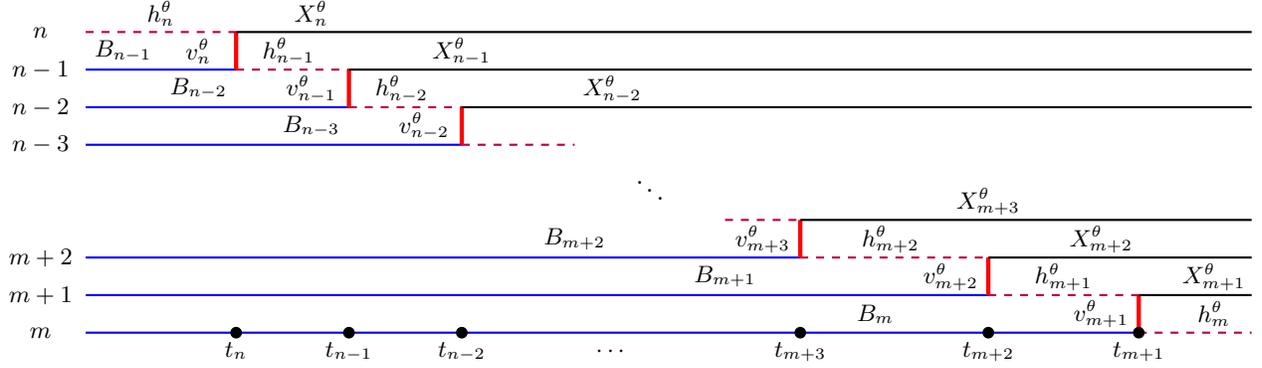
\begin{figure}[t]
\begin{tikzpicture}
\draw[blue,thick] (-0.5,0) -- (13.5,0);
\node at (-1.1,0) {\small $m$};
\node at (-1.1,0.5) {\small $m + 1$};
\node at (-1.1,1) {\small $m + 2$};
\node at (-1.1,2.5) {\small $n - 3$};
\node at (-1.1,3) {\small $n - 2$};
\node at (-1.1,3.5) {\small $n -1$};
\node at (-1.1,4) {\small $n$};
\draw[purple, thick, dashed] (13.5,0)--(15,0);
\node at (10,0.25) {\small $B_m$};
\draw[red, ultra thick] (13.5,0)--(13.5,0.5);
\node at(13,0.25) {\small $v_{m + 1}^\theta$};
\draw[blue,thick] (-0.5,0.5) --(11.5,0.5);
\draw[red, ultra thick] (11.5,0.5)--(11.5,1);
\draw[purple,thick, dashed] (11.5,0.5)--(13.5,0.5);
\draw[black,thick] (13.5,0.5)--(15,0.5);
\node at (8,0.75) {\small$B_{m + 1}$};
\node at (11,0.75) {\small$v^\theta_{m + 2}$};
\node at(12.5,0.75) {\small$h_{m +1}^\theta$};
\node at (14.5,0.75) {\small$X_{m + 1}^\theta$};
\node at (14.5,0.25) {\small $h_m^\theta$};
\draw[blue,thick] (-0.5,1) --(9,1);
\draw[purple,thick, dashed] (9,1)--(11.5,1);
\draw[black, thick] (11.5,1)--(15,1);
\draw[red, ultra thick] (9,1)--(9,1.5);
\node at (6,1.25) {\small$B_{m + 2}$};
\node at (10.2,1.25) {\small$h_{m + 2}^\theta$};
\node at (8.5,1.25) {\small$v^\theta_{m + 3}$};
\node at (13,1.25) {\small$X^\theta_{m + 2}$};
\draw[black,thick] (9,1.5)--(15,1.5);
\draw[purple, thick, dashed] (8,1.5)--(9,1.5);
\node at (11.5,1.75) {\small$X^\theta_{m + 3}$};
\draw[blue,thick] (-0.5,2.5)--(4.5,2.5);
\draw[purple, thick, dashed] (4.5,2.5)--(6,2.5);
\draw[red, ultra thick] (4.5,2.5)--(4.5,3);
\node at (4,2.75) {\small$v_{n - 2}^\theta$};
\node at(2.5,2.75) {\small$B_{n - 3}$};
\draw[blue, thick] (-0.5,3)--(3,3);
\draw[purple,thick,dashed] (3,3)--(4.5,3);
\node at (7,2) {$\ddots$};
\draw[black,thick] (4.5,3)--(15,3);
\draw[red, ultra thick] (3,3)--(3,3.5);
\node at(2.5,3.25) {\small$v^\theta_{n - 1}$};
\node at (3.7,3.25) {\small$h^\theta_{n - 2}$};
\node at (1,3.25) {\small$B_{n - 2}$};
\node at (6.5,3.25) {\small$X^\theta_{n - 2}$};
\draw[blue,thick] (-0.5,3.5)--(1.5,3.5);
\draw[purple,thick, dashed] (1.5,3.5)--(3,3.5);
\draw[black, thick] (3,3.5)--(15,3.5);
\draw[red, ultra thick] (1.5,3.5)--(1.5,4);
\node at (1,3.75) {\small$v_{n}^\theta$};
\node at (0,3.75) {\small$B_{n - 1}$};
\node at (2.2,3.75) {\small$h^\theta_{n - 1}$};
\node at (4.5,3.75) {\small$ X^\theta_{n - 1}$};
\draw[purple, thick,dashed] (-0.5,4)--(1.5,4);
\draw[black, thick] (1.5,4)--(15,4);
\node at (0.5,4.25) {\small$h_n^\theta$};
\node at (2.5,4.25) {\small$X_n^\theta$};
\filldraw[black] (1.5,0) circle (2pt) node[anchor = north] {\small $t_n$};
\filldraw[black] (3,0) circle (2pt) node[anchor = north] {\small $t_{n -1}$};
\filldraw[black] (4.5,0) circle (2pt) node[anchor = north] {\small $t_{n - 2}$};
\node at (6.5,-0.25) {\small $\cdots$};
\filldraw[black] (9,0) circle (2pt) node[anchor = north] {\small $t_{m + 3}$};
\filldraw[black] (11.5,0) circle (2pt) node[anchor = north] {\small $t_{m + 2}$};
\filldraw[black] (13.5,0) circle (2pt) node[anchor = north] {\small $t_{m + 1}$};
\end{tikzpicture}
\caption{\small Independence structure for Busemann functions. Each process $h^\theta_r$ is associated to the (purple/dotted) segment on level $r$, processes $B_r$ and $X^\theta_r$ cover the remaining portions of horizontal level $r$, and the process $v_r^\theta$ is associated to the (red/vertical) edge from level $r-1$ to $r$ at time point $t_r$.  }
\label{fig:Independence structure for Busemanns}
\end{figure}

The construction of the Busemann process and the proof of Theorem~\ref{thm:summary of properties of Busemanns for all theta} can be found in Section~\ref{section:Busemann construction}. We prove Theorem~\ref{thm:dist of Busemann functions} here, assuming Theorem~\ref{thm:summary of properties of Busemanns for all theta} and with the help of the results of the appendix. 
\begin{proof}[Proof of Theorem~\ref{thm:dist of Busemann functions}]
\noindent \textbf{Part~\ref{BM_drift}}: As will be seen from the construction in Section~\ref{section:Busemann construction}, $t \mapsto h_m^\theta(t)$ has the proper finite-dimensional distributions and is continuous by Theorem~\ref{thm:summary of properties of Busemanns for all theta}\ref{general continuity of Busemanns}.

\medskip \noindent \textbf{Part~\ref{mutual independence of the X_m}}: 
Fix integers $p<n<m$.  
By Theorem~\ref{thm:summary of properties of Busemanns for all theta}\ref{independence structure of Busemann functions on levels}, 
\[  (B_{p},\ldots,B_n, \dots, B_{m - 2},  B_{m - 1}, \h_m^\theta) \quad\text{are independent. } \]  
Theorem~\ref{thm:summary of properties of Busemanns for all theta}\ref{general queuing relations Busemanns} and definition \eqref{definition of dual weights from Busemanns} give   $\h_{m - 1}^\theta = D(\h_m^\theta,B_{m - 1})$ and  $X_m^\theta= R(\h_m^\theta,B_{m - 1})$,  so  by Theorem~\ref{O Connell Yor BM independence theorem queues}, $\h_{m - 1}^\theta$ and $X_m^\theta$ are independent  
and  $X_m^\theta$ is a two-sided Brownian motion. 
In particular, now  

 \[  (B_{p},\ldots,B_n, \dots, B_{m - 2}, \h_{m - 1}^\theta, X_m^\theta) \quad\text{are independent. } \] 
 Continue inductively by applying the  transformation $(D,R)$  to successive pairs $(B_{j-1}, \h_{j }^\theta)$ for $j= m-1, m-2,\dotsc, n+1$ after which  
 \[  (B_{p},\ldots, B_{n-1}, \h_{n}^\theta, X_{n+1}^\theta, \dotsc, X_m^\theta) \quad\text{are independent. } \]  
To conclude, note that $(\h_{p}^\theta,\ldots,   \h_{n}^\theta)$ is a function of 
$(B_{p},\ldots, B_{n-1}, \h_{n}^\theta)$  through iteration of $\h_{k}^\theta = D(\h_{k+1}^\theta,B_{k})$. 

\medskip \noindent \textbf{Part~\ref{v_m process}}:  By Theorem~\ref{thm:summary of properties of Busemanns for all theta}\ref{general queuing relations Busemanns} and   \eqref{definition of dual weights from Busemanns}, on the event $\Omega^{(\theta)}$ we have these relations
$\forall m\in\Z$:
 \be\label{380}   \h_{m - 1}^{\theta } = D(\h_m^{\theta},B_{m - 1}),  \qquad  X_m^{\theta} = R(\h_m^{\theta },B_{m - 1}), \qquad\text{and} \qquad \vv_m^{\theta }= Q(\h_m^{\theta},B_{m - 1}). \ee
 The fact that $v_m^\theta(t)$ is exponential with rate $\f{1}{\sqrt \theta}$ then follows from Lemma~\ref{lemma:sup of BM with drift}.
   Theorem~\ref{bijectivity of D R joint mapping} allows us to reverse these mappings, so $\forall m\in\Z$:
\begin{align}\label{eqn:dual weights reverse relations}
\h_m^{\theta} = \Da(\h_{m - 1}^{\theta},X_m^{\theta}),\qquad B_{m - 1} = \Ra(\h_{m - 1}^{\theta },X_m^{\theta}), \qquad\text{and}\qquad \vv_m^{\theta} = \Qa(\h_{m - 1}^{\theta },X_m^{\theta }).
\end{align}
Then, for $t \in \R$,
\be \label{v_m queue notation}
v_m^\theta(t) = \sup_{-\infty < u \le t}\{X_m^\theta(u,t) - h_{m - 1}^\theta(u,t)\}. 
\ee
By Parts~\ref{BM_drift} and~\ref{mutual independence of the X_m}, $t \mapsto X_m^\theta(t) - h_{m - 1}^\theta(t)$ is equal in distribution to a two-sided Brownian motion with negative drift, multiplied by a factor of $\sqrt{2}$. Represented this way, $t \mapsto v_m^\theta(t)$ is known as a stationary, reflected Brownian motion with drift. Stationarity follows from the stationarity of increments.  The fact that $X$ is a reversible strong Markov process is proven in~\cite{Harrison1985}, pg. 81 (see also pg. 49-50 in~\cite{Harrison1985} and Equations (4) and (5) in~\cite{Norros-Salminen} for a more directly applicable statement).

\medskip \noindent \textbf{Part~\ref{Burke property}}: By Part~\ref{mutual independence of the X_m} and~\eqref{eqn:dual weights reverse relations}, for any initial level $m$, the process $\bigl\{\h_{r + m}^\theta,\vv_{r +m +  1}^\theta, X_{r +m +  1}^\theta,B_{r + m}\bigr\}_{r \ge 0}$
    has the same distribution as
    $\bigl\{Y_{r}^{\f{1}{\sqrt \theta}}, q_{r + 1}^{\f{1}{\sqrt \theta}},B_{r + 1},W_r^\lambda\bigr\}_{r \ge 0}$ 
    as defined in~\eqref{eqn:stationary BLPP definitions}. Note that $B_{r + m}$ now plays the role of the $W_r^\lambda$, as stated in the definition. Therefore, the independence structure of Theorem~\ref{Burke-type Theorem} holds.
\end{proof}

\section{Construction and properties of the semi-infinite geodesics} \label{sec:SIG_constr}
\subsection{Heuristic for construction of semi-infinite geodesics} \label{section:geodesic construction intro}
The next task is the construction of  semi-infinite geodesics  from each initial point and   in each asymptotic direction. For each given point $(m,t) \in \Z \times \R$ and direction parameter $\theta > 0$, we want to find a semi-infinite geodesic, defined by jump times $t = \tau_{m - 1} \le \tau_m \le \cdots$ that satisfies
\[
\lim_{n \rightarrow \infty} \f{\tau_n}{n} = \theta.
\]

We argue heuristically to motivate  the useful construction. 
Start by finding a maximal path for $L_{(m,t),(n,n\theta)}$ for a large value of $n$. Note that
\[
L_{(m,t),(n,n\theta)} = \max_{s \in[t,n\theta]} \bigl(B_{m}(t,s) + L_{(m + 1,s),(n,n\theta)}\bigr),
\]
and the maximizer $s = \tau_{m}$ is the location where   the geodesic jumps from level $m$ to $m + 1$. For all $t \le s \le n\theta$,
\[
B_{m}(t,s) + L_{(m + 1,s),(n,n\theta)} \le B_{m}(t,\tau_{m}) + L_{(m + 1,\tau_{m}),(n,n\theta)}.
\]
Rearranging yields
\[
B_{m}(\tau_{m},s) \le L_{(m + 1,\tau_{m}),(n,n\theta)} - L_{(m + 1,s),(n,n\theta)}.
\]
As $n$ changes, so could $\tau_m$, but for the sake of heuristic we hold $\tau_m$ constant. Take limits as $n \rightarrow \infty$ and rearrange again to get, for some sign $\sigg \in \{+,-\}$, 
\[
 B_{m}(s) - \h_{m + 1}^{\theta \sig}(s)  \le B_{m}(\tau_{m}) - \h_{m + 1}^{\theta\sig}(\tau_{m}).  
\]

\subsection{Busemann geodesics}\label{sec:Buse_geod}
The discussion of the previous section   motivates this rigorous definition.  


\begin{definition} \label{def:semi-infinite geodesics}
On the event $\Omega_1$, for all $(m,t) \in \Z$, $\theta>0$ and  $\sig \in \{+,-\}$, let $\mbf T^{\theta\sig}_{(m,t)}$ denote the set of sequences 
\[
t = \tau_{m - 1} \le \tau_m \le \tau_{m + 1}\le \cdots
\]
that satisfy
\begin{equation} \label{semi-infinite geodesic succesive jumps}
 B_{r}(\tau_{r})- \h_{r + 1}^{\theta \sig}(\tau_{r})  = \sup_{s \in [\tau_{r - 1},\infty)}\{B_r(s) - \h_{r + 1}^{\theta \sig}(s) \} \qquad\text{for each $r \ge m$.}  
\end{equation} 
Theorem~\ref{thm:summary of properties of Busemanns for all theta}\ref{general continuity of Busemanns}--\ref{limits of B_m minus \h_{m + 1}} imply that such sequences exist.  
At each level $r$, there exist leftmost and rightmost maximizers.  Let 
\[
t = \tau_{(m,t),m - 1}^{\theta \sig,L} \le \tau_{(m,t),m}^{\theta\sig,L} \le \tau_{(m,t),m + 1}^{\theta\sig,L} \le \cdots \qquad\text{and}\qquad t = \tau_{(m,t),m - 1}^{\theta \sig,R} \le  \tau_{(m,t),m}^{\theta\sig,R} \le \tau_{(m,t),m + 1}^{\theta\sig,R} \le \cdots
\]
denote the leftmost and rightmost sequences in $\mbf T_{(m,t)}^{\theta\sig}$. Furthermore, define
\[
\mbf T_{(m,t)}^{\theta} := \mbf T_{(m,t)}^{\theta +} \cup \mbf T_{(m,t)}^{\theta -}. 
\]
\end{definition}
\begin{remark} \label{rmk:theta+ = theta-}
Since every non-decreasing sequence in $\mbf T_{(m,t)}^{\theta}$ defines a semi-infinite up-right path, $\mbf T_{(m,t)}^\theta$ will be used to denote the set of up-right paths constructed in this way. Theorem~\ref{existence of semi-infinite geodesics intro version}\ref{energy of path along semi-infinte geodesic} below shows that all sequences in $\mbf T_{(m,t)}^\theta$ are semi-infinite geodesics starting from $(m,t)$. By Theorem~\ref{thm:summary of properties of Busemanns for all theta}\ref{busemann functions agree for fixed theta}, on the event $\Omega^{(\theta)} \subseteq \Omega_1$, $\mbf T_{\mbf x}^{\theta -} = \mbf T_{\mbf x}^{\theta +} = \mbf T_{\mbf x}^{\theta}$ for all $\mbf x \in \Z \times \R$. This does not imply that $\mbf T_{\mbf x}^\theta$ contains only one element, so the leftmost and rightmost distinction is still necessary in general for a fixed direction $\theta$. See Theorem~\ref{thm:non_unique_size} below. However, by Theorem~\ref{thm:general_SIG}\ref{itm:uniqueness of geodesic for fixed point and direction}, for fixed $\theta > 0$ and fixed $
(m,t)\in \Z \times \R$, $\mbf T_{(m,t)}^\theta$ almost surely contains a single element. In this case, Theorem~\ref{distribtution of argmax BM with drift} gives the distribution of the first jump time $\tau_m$.  
\end{remark}

\begin{figure}[t]
\begin{tikzpicture}
\draw[gray,thin] (0.5,0) -- (15.5,0);
\draw[gray,thin] (0.5,0.5) --(15.5,0.5);
\draw[gray, thin] (0.5,1)--(15.5,1);
\draw[gray,thin] (0.5,1.5)--(15.5,1.5);
\draw[gray,thin] (0.5,2)--(15.5,2);
\draw[red, ultra thick,->] (1.5,0)--(4.5,0)--(4.5,0.5)--(7,0.5)--(7,1)--(9.5,1)--(9.5,1.5)--(13,1.5)--(13,2)--(15.5,2);
\filldraw[black] (1.5,0) circle (2pt) node[anchor = north] {$(m,t)$};
\node at (4.5,-0.5) {$\tau_{m}^\theta$};
\node at (7,-0.5) {$\tau_{m + 1}^\theta$};
\node at (9.5,-0.5) {$\tau_{m + 2}^\theta$};
\node at (13,-0.5) {$\cdots$};
\node at (0,0) {$m$};
\node at (0,0.5) {$m + 1$};
\node at (0,1) {$m + 2$};
\node at (0,1.5) {$\vdots$};
\end{tikzpicture}
\caption{\small Example of an element of $\mbf T_{(m,t)}^\theta$}
\label{fig:BLPP_semi-infinite_geodesic}
\end{figure}
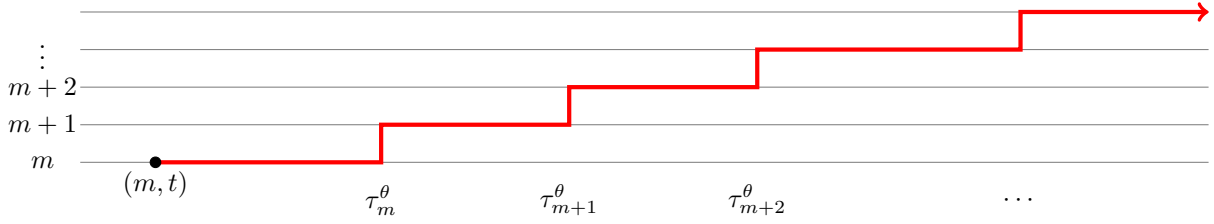

Two elements $\{\tau_r\}_{r\ge m-1}$ and  $\{\tau'_r\}_{r\ge m-1}$  of $\mbf T_{\mbf x}^{\theta \sig}$ are distinct if $\tau_r\ne\tau'_r$ for at least one index $r$.  Uniqueness of the $\theta \sig$ Busemann geodesic from $\mbf x$ means that $\mbf T_{\mbf x}^{\theta \sig}$ contains exactly one sequence.

The following theorems collect the properties of the Busemann semi-infinite geodesics, to be  proved in Section~\ref{section:main proofs}.
\begin{theorem} \label{existence of semi-infinite geodesics intro version}
There exists an event, $\Omega_2$, of full probability, on  which the following hold. 
 \begin{enumerate} [label=\rm(\roman{*}), ref=\rm(\roman{*})]  \itemsep=3pt 
     \item{\rm(}Existence{\rm)} \label{energy of path along semi-infinte geodesic} For all $\mbf x \in \Z \times \R, \theta > 0$, and $\sigg \in \{+,-\}$, every element of $\mbf T_{\mbf x}^{\theta \sig}$ defines a semi-infinite geodesic starting from $\mbf x$. More specifically, for any two points $\mbf y \le \mbf z$ along a path in $\mbf T_{\mbf x}^{\theta}$, the energy of this path between $\mbf y$ and $\mbf z$ is $\B^{\theta\sig}(\mbf y,\mbf z)$, and this energy is maximal over all paths between $\mbf y$ and $\mbf z$. 
     \item{\rm(}Leftmost and rightmost finite geodesics along paths{\rm)} \label{Leftandrightmost} If, for some $\theta > 0$, $\sigg \in \{+,-\}$, and $\mbf x \in \Z \times \R$, the points $\mbf y \le \mbf z \in \Z \times \R$ both lie on the leftmost semi-infinite geodesic in $\mbf T_{\mbf x}^{\theta \sig}$, then the portion of this geodesic between $\mbf y$ and $\mbf z$ coincides with  the leftmost finite geodesic between these two points. Similarly, the rightmost semi-infinite geodesic is the rightmost geodesic between any two of its points.  
     \item{\rm(}Monotonicity{\rm)} \label{itm:monotonicity of semi-infinite jump times} The following inequalities hold.
     \begin{enumerate} [label=\rm(\alph{*}), ref=\rm(\alph{*})]  \itemsep=3pt 
         \item \label{itm:monotonicity in theta} For all $0 < \gamma < \theta$, all $(m,t) \in \Z \times \R$, and $r \ge m$,
    \[
     t \le \tau_{(m,t),r}^{\gamma -,L} \le \tau_{(m,t),r}^{\gamma +,L} \le \tau_{(m,t),r}^{\theta -,L} \le \tau_{(m,t),r}^{\theta +,L}  \qquad\text{and}\qquad t \le \tau_{(m,t),r}^{\gamma -,R} \le \tau_{(m,t),r}^{\gamma +,R} \le \tau_{(m,t),r}^{\theta -,R} \le \tau_{(m,t),r}^{\theta +,R}.
    \]
    \item\label{itm:monotonicity in t} For all $\theta > 0$, $m \le r \in \Z$, $s < t \in \R$, and $\sig \in \{+,-\}$, 
    \[
    \tau_{(m,s),r}^{\theta \sig,L} \le \tau_{(m,t),r}^{\theta\sig,L}\qquad \text{and} \qquad \tau_{(m,s),r}^{\theta \sig,R} \le \tau_{(m,t),r}^{\theta \sig, R}.
    \] 
     \item \label{itm:strong monotonicity in t} For $\theta > 0 $, on the $\theta$-dependent full-probability event $\wt \Omega^{(\theta)}$ of Theorem~\ref{thm:general_SIG}\ref{itm:size of non-uniqueness},
    for all pairs of  initial points $(m,s)$ and $(m,t)$ in $\Z\times\R$ that satisfy $s<t$,
      we have 
    \[
    \tau_{(m,s),r}^{\theta,R} \le \tau_{(m,t),r}^{\theta,L} \quad \text{ for all $r \ge m$.} 
    \]
     \end{enumerate}
    
    \item{\rm(}Convergence{\rm)} \label{itm:convergence of geodesics}
    The following limits hold.
    \begin{enumerate} [label=\rm(\alph{*}), ref=\rm(\alph{*})]  \itemsep=3pt
         \item \label{itm:limits in theta}
         For all $(m,t) \in \Z \times \R$, $r \ge m$, $\theta > 0$, and $\sigg \in \{+,-\}$, 
        \[
     \lim_{\gamma \nearrow \theta} \tau_{(m,t),r}^{\gamma \sig,L} = \tau_{(m,t),r}^{\theta -,L}\qquad\text{and}\qquad \lim_{\delta \searrow \theta} \tau_{(m,t),r}^{\delta \sig,R} = \tau_{(m,t),r}^{\theta +,R}. 
    \]
    \item \label{itm:limits in theta to infty}
    For all $(m,t) \in \Z \times \R$, $r \ge m$, $\sigg \in \{+,-\}$, and $S \in \{L,R\}$,
    \[
    \lim_{\theta \searrow 0} \tau_{(m,t),r}^{\theta \sig,S} = t\qquad\text{and}\qquad\lim_{\theta \rightarrow \infty} \tau_{(m,t),r}^{\theta \sig,S} = \infty. 
    \]
    \item \label{itm:limits in t} For all $(m,t) \in \Z \times \R$, $r \ge m$, $\theta > 0$, and $\sigg \in \{+,-\}$, 
    \[
    \lim_{s \nearrow t} \tau_{(m,s),r}^{\theta \sig,L} =  \tau_{(m,t),r}^{\theta \sig,L}\qquad\text{and}\qquad \lim_{u \searrow t} \tau_{(m,u),r}^{\theta \sig,R} = \tau_{(m,t),r}^{\theta \sig,R},
    \]
     \end{enumerate}
    \item \label{general limits for semi-infinite geodesics} {\rm(}Directness{\rm)} For all $\mbf x \in \Z \times \R$, $\theta > 0$, $\sigg \in \{+,-\}$, and all $\{\tau_r\}_{r \ge m} \in \mbf T_{\mbf x}^{\theta \sig}$, 
    \[
    \lim_{n\rightarrow \infty} \f{\tau_n}{n} = \theta. 
    \]
    \end{enumerate}
    \end{theorem}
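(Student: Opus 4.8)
The plan is to peel off the items in the stated order, feeding the structure of the global Busemann process (Theorems~\ref{thm:summary of properties of Busemanns for all theta} and~\ref{thm:dist of Busemann functions}) into variational arguments for the jump times of Definition~\ref{def:semi-infinite geodesics}. For~\ref{energy of path along semi-infinte geodesic}, the first thing I would record is that along any $\{\tau_r\}\in\mbf T^{\theta\sig}_{\mbf x}$ the vertical Busemann vanishes at each jump, $\vv^{\theta\sig}_{r+1}(\tau_r)=0$: writing $Q(\h^{\theta\sig}_{r+1},B_r)(t)=\sup_{s\ge t}\{g(s)-g(t)\}$ with $g=B_r-\h^{\theta\sig}_{r+1}$, choosing $\tau_r$ to maximize $g$ over $[\tau_{r-1},\infty)$ forces $\sup_{s\ge\tau_r}g(s)=g(\tau_r)$, while $\vv^{\theta\sig}_{r+1}\ge0$; the same observation shows the running supremum $\sup_{s\ge t}g(s)$ is constant for $t\in[\tau_{r-1},\tau_r]$, so by the queuing relation $\h^{\theta\sig}_r=D(\h^{\theta\sig}_{r+1},B_r)$ (Theorem~\ref{thm:summary of properties of Busemanns for all theta}\ref{general queuing relations Busemanns}) the functions $\h^{\theta\sig}_r$ and $B_r$ have identical increments on $[\tau_{r-1},\tau_r]$. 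Decomposing $\B^{\theta\sig}(\mbf y,\mbf z)$ along the path by additivity (Theorem~\ref{thm:summary of properties of Busemanns for all theta}\ref{general additivity Busemanns}), the vertical pieces drop out and the horizontal pieces equal the Brownian increments, so the energy of the path between $\mbf y$ and $\mbf z$ is exactly $\B^{\theta\sig}(\mbf y,\mbf z)$; and any competing up-right path has energy $\le\B^{\theta\sig}(\mbf y,\mbf z)$ by $B_m(s,t)\le\h^{\theta\sig}_m(s,t)$, $\vv^{\theta\sig}_m\ge0$ (Theorem~\ref{thm:summary of properties of Busemanns for all theta}\ref{general monotonicity Busemanns}) and telescoping, so the $\mbf T$-path is a geodesic and $\B^{\theta\sig}(\mbf y,\mbf z)=L_{\mbf y,\mbf z}$ on it. Item~\ref{Leftandrightmost} then follows from~\ref{energy of path along semi-infinte geodesic}, the nesting of finite geodesics (Lemma~\ref{existence of leftmost and rightmost geodesics}), and the consistency of the leftmost/rightmost choices in Definition~\ref{def:semi-infinite geodesics}.

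For~\ref{itm:monotonicity of semi-infinite jump times} and~\ref{itm:convergence of geodesics} I would use, level by level, the elementary stability of the map ``leftmost (resp.\ rightmost) maximizer of a continuous function decaying to $-\infty$ over a half-line $[a,\infty)$'': it is nondecreasing and left- (resp.\ right-) continuous in $a$, nondecreasing in the integrand when the comparison increments are ordered, and jointly stable under uniform-on-compacts convergence of the integrand together with convergence of $a$. Applying this to the integrand $B_r-\h^{\theta\sig}_{r+1}$ and inserting the monotonicity and uniform convergence of $\h^{\theta\sig}$ in $\theta$ and in the spatial variable (Theorem~\ref{thm:summary of properties of Busemanns for all theta}\ref{general monotonicity Busemanns}--\ref{general uniform convergence Busemanns}), an induction on $r$ delivers~\ref{itm:monotonicity in theta},~\ref{itm:monotonicity in t} and~\ref{itm:convergence of geodesics}; for the $\theta\searrow0$ limit one uses that the drift $1/\sqrt\theta\to\infty$ pins the maximizer at the left endpoint, and for the $\theta\to\infty$ limit that $\h^{\theta\sig}_{r+1}\to B_{r+1}$, whose difference with $B_r$ has no drift and hence unbounded supremum. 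The strict separation~\ref{itm:strong monotonicity in t} is the only step requiring more: I would argue that an equality $\tau^{\theta,R}_{(m,s),r}=\tau^{\theta,L}_{(m,t),r}$ with $s<t$ would force the Busemann geodesic from $(m,u)$ to be non-unique for every $u$ in a nondegenerate interval, contradicting the countability (on $\wt\Omega^{(\theta)}$) of exceptional initial points from the non-uniqueness analysis.

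Item~\ref{general limits for semi-infinite geodesics} (directness) is the heart, and I would prove it by a shape-function comparison. After reindexing take $\mbf x=(0,0)$; by~\ref{energy of path along semi-infinte geodesic} the $\mbf T^{\theta\sig}$-path from $(0,0)$ to $(n,\tau_n)$ is a geodesic, so $L_{(0,0),(n,\tau_n)}=\B^{\theta\sig}((0,0),(n,\tau_n))$. I would combine: (a) the uniform shape theorem, $L_{(0,0),(n,cn)}/n\to\gamma(c)=2\sqrt c$ uniformly for $c$ in compact subsets of $[0,\infty)$, which holds for BLPP via the GUE identity and superadditivity; (b) a law of large numbers for the Busemann process, $\B^{\theta\sig}((0,0),(n,cn))/n\to\sqrt\theta+c/\sqrt\theta=:\ell_\theta(c)$ uniformly on compacts, obtained by decomposing $\B^{\theta\sig}$ through a vertical-then-horizontal segment, using that $\{\vv^{\theta\sig}_m(t)\}_m$ is an i.i.d.\ $\operatorname{Exp}(1/\sqrt\theta)$ sequence and $\h^{\theta}_n$ a Brownian motion with drift $1/\sqrt\theta$ (Theorem~\ref{thm:dist of Busemann functions}\ref{BM_drift},\ref{Burke property}), plus additivity/monotonicity to interpolate in $c$; and (c) the a priori bound $\limsup_n\tau_n/n<\infty$, since $L_{(0,0),(n,\tau_n)}=\B^{\theta\sig}((0,0),(n,\tau_n))\ge\h^{\theta}_0(0,\tau_n)$ (the vertical increments being nonnegative) grows linearly in $\tau_n$, whereas $\tau_n\ge Tn$ infinitely often would give $2\sqrt T=\lim_nL_{(0,0),(n,Tn)}/n\ge T/\sqrt\theta$ for every $T>0$. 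Granting these, any subsequential limit $c\in[0,\infty)$ of $\tau_n/n$ satisfies $\gamma(c)=\ell_\theta(c)$, and since $\ell_\theta(c)-\gamma(c)=(\sqrt\theta-\sqrt c)^2/\sqrt\theta\ge0$ vanishes only at $c=\theta$ (an inequality also forced by $\B^{\theta\sig}\ge L$), we conclude $c=\theta$, i.e.\ $\tau_n/n\to\theta$; the restriction in step (b) to the dense set of directions where $\mbf T^{\theta-}=\mbf T^{\theta+}$ is removed afterward by sandwiching between $\gamma\sig$- and $\delta\sig$-geodesics for rational $\gamma<\theta<\delta$ via~\ref{itm:monotonicity in theta} and letting $\gamma\nearrow\theta$, $\delta\searrow\theta$.

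I expect the main obstacle to lie in~\ref{general limits for semi-infinite geodesics}: making the shape theorem and the Busemann law of large numbers uniform enough to be evaluated at the random endpoint $(n,\tau_n)$, and securing the a priori localization of the jump times; the strict separation~\ref{itm:strong monotonicity in t} is the secondary place where pure Busemann/queuing algebra must be supplemented by the finer non-uniqueness structure.
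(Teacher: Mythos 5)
Your treatment of items \ref{energy of path along semi-infinte geodesic}, \ref{Leftandrightmost}, \ref{itm:monotonicity of semi-infinite jump times}\ref{itm:monotonicity in theta}--\ref{itm:monotonicity in t}, and \ref{itm:convergence of geodesics} tracks the paper's proof closely (the paper routes \ref{itm:convergence of geodesics}\ref{itm:limits in theta} through the point-to-line variational problem of Lemma~\ref{lemma:ptl_sig} rather than level-by-level argmax stability, but the mechanism you invoke — a limit of maximizers is a maximizer, squeezed against the monotone bound by $\tau^{\theta-,L}$ — is the same). The one genuine gap is item~\ref{itm:monotonicity of semi-infinite jump times}\ref{itm:strong monotonicity in t}. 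The claim is a \emph{weak} inequality $\tau^{\theta,R}_{(m,s),r}\le\tau^{\theta,L}_{(m,t),r}$, and equality is not contradictory: coalescence (Theorem~\ref{thm:general_SIG}\ref{itm:coalescence}) forces equality for all large $r$, and equality of the two maximizers at one level does not produce non-uniqueness at any nearby initial point — if $B_r-\h^{\theta}_{r+1}$ has a unique global maximizer $a$ on $[s,\infty)$, then $\tau^{R}_{(m,s),r}=\tau^{L}_{(m,t),r}=a$ for every $t\in(s,a]$ while each intermediate point has a unique geodesic. So the contradiction you propose to derive from equality cannot exist. Reading you charitably as ruling out a strict crossing $\tau^{\theta,R}_{(m,s),r}>\tau^{\theta,L}_{(m,t),r}$: the countability of $\NU_0^\theta$ does dispose of the case where the previous level still has strict separation (both times are then distinct maximizers over $[q,\infty)$ for a nondegenerate interval of $q$), but it says nothing about the levels after the two geodesics first meet, where that interval degenerates to a single random point at which uniqueness cannot be invoked. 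The paper's device is precisely for this: at the first level $r$ with $\tau^{R}_{(m,s),r}\ge\tau^{L}_{(m,t),r}$, both tails $\{\tau^{R}_{(m,s),k}\}_{k\ge r}$ and $\{\tau^{L}_{(m,t),k}\}_{k\ge r}$ are Busemann geodesics started from $(r,q)$ for a \emph{rational} $q$ strictly between the level-$(r-1)$ jump times, and uniqueness at rational initial points on $\wt\Omega^{(\theta)}$ forces the two sequences to coincide from level $r$ onward. You need this step (or an equivalent) to close the induction.

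For item~\ref{general limits for semi-infinite geodesics} you take a genuinely different route. The paper fixes a rational direction and initial point, identifies the geodesic's restriction to levels $\le n$ with the maximizer of the point-to-line problem $\overline L^{\lambda}_{(m,s),n}(\mbf B,\overline{\mbf B})$, and localizes the terminal time via Lemmas~\ref{point to line shape theorem} and~\ref{Utah Lemma 4.7}; monotonicity in $\theta$ and $t$ then upgrades to all directions and initial points. You instead compare the point-to-point shape function $2\sqrt c$ with the Busemann law of large numbers $\sqrt\theta+c/\sqrt\theta$, using $L=\B^{\theta}$ along the geodesic and the fact that $(\sqrt\theta-\sqrt c)^2=0$ only at $c=\theta$. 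This is the standard cocycle shape-comparison argument from lattice LPP and is viable here; it buys independence from the point-to-line machinery of~\cite{blpp_utah}, at the cost of establishing an almost sure shape theorem uniform enough to be evaluated at the random endpoint $(n,\tau_n)$ (including an upper bound valid when $\tau_n/n$ is large, needed for your a priori localization) and a Busemann LLN uniform in the spatial argument — both fillable by Borel--Cantelli over a grid plus monotonicity, and you correctly identify them as the load-bearing steps. The final sandwiching of a general $\theta$ between rational $\gamma<\theta<\delta$ is identical in both arguments.
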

    \begin{remark}[A look ahead]  
In future work, we will use the joint distribution of Busemann functions to build on these results and strengthen parts of Theorem~\ref{existence of semi-infinite geodesics intro version}. Specifically, Part~\ref{itm:convergence of geodesics}\ref{itm:limits in theta} can be made stronger in the following way: There exists an event of full probability on which, for all $\theta > 0, m \le r \in \Z,s < t \in \R, S \in \{L,R\}$ and $\sigg \in \{+,-\}$, there exists $\ve > 0$ such that 
\[
\tau_{(m,s),r}^{\gamma \sig,S} = \tau_{(m,s),r}^{\theta -,S} \text{ for all }\theta - \ve < \gamma < \theta,\qquad\text{and}\qquad \tau_{(m,s),r}^{\delta \sig,S} = \tau_{(m,s),r}^{\theta +,S} \text{ for all }\theta < \delta < \theta + \ve.
\]
This is used to strengthen Part~\ref{itm:monotonicity of semi-infinite jump times}\ref{itm:strong monotonicity in t} to show that, on this event, for each $m \in \Z$, $m \le r \in \Z, s < t \in \R, \theta > 0$, and $\sigg \in \{+,-\}$,
\[
\tau_{(m,s),r}^{\theta \sig,R} \le \tau_{(m,t),r}^{\theta \sig,L}.
\]
Part~\ref{itm:monotonicity of semi-infinite jump times}\ref{itm:monotonicity in theta} cannot be strengthened to compare $\tau_{(m,t),r}^{\gamma \sig,R}$ and $\tau_{(m,t),r}^{\theta \sig,L}$ for general $\gamma < \theta$. Specifically, there exists $\gamma < \theta$ such that
\[
\tau_{(m,t),r}^{\gamma -,R} = \tau_{(m,t),r}^{\gamma +,R} > \tau_{(m,t),r}^{\theta -,L} = \tau_{(m,t),r}^{\theta +,L}.
\]
\end{remark}
    The following theorem shows that the Busemann geodesics give control over all semi-infinite geodesics.
    \begin{theorem}  \label{thm:convergence and uniqueness}
   The following hold on the full probability event $\Omega_2$.
    \begin{enumerate}[label=\rm(\roman{*}), ref=\rm(\roman{*})]  \itemsep=3pt
    \item {\rm(}Control of finite geodesics{\rm)} \label{control of finite geodesics} Let $\theta > 0, (m,t) \in \Z \times \R$, and let $\{t_n\}$ be any sequence that has direction $\theta$. For all $n$ sufficiently large so that $n \ge m$ and $t_n \ge t$, let $t = t_{n,m - 1} \le t_{n,m} \le \cdots \le t_{n,n} = t_n$ be any sequence that defines a {\rm(}finite{\rm)} geodesic between $(m,t)$ and $(n,t_n)$. Then, for each $r \ge m$,
    \[
    \tau_{(m,s),r}^{\theta-,L} \le \liminf_{n \rightarrow \infty} t_{n,r} \le \limsup_{n \rightarrow \infty} t_{n,r} \le \tau_{(m,s),r}^{\theta+,R}. 
    \]
    \item{\rm(}Control of semi-infinite geodesics{\rm)} \label{all semi-infinite geodesics lie between leftmost and rightmost} If, for some $\theta > 0$ and $(m,t) \in \Z \times \R$, any other geodesic {\rm(}constructed from the Busemann functions or not{\rm)} is defined by the sequence $t = t_{m - 1} \le t_m \le \cdots$, starts at $(m,t)$, and has direction $\theta$, then for all $r \ge m$, 
    \[
    \tau_{(m,t),r}^{\theta -,L} \le t_r \le \tau_{(m,t),r}^{\theta +,R}.  
    \]
    \item{\rm(}Convergence of finite geodesics{\rm)} \label{convergence to unique semi-infinite geodesic}  Assume that $\mbf T_{\mbf (m,t)}^\theta$ contains a single element  $\{\tau_r\}_{r \ge m - 1}$. If $\{t_n\}$ is a $\theta$-directed sequence and, for each $n$, the sequence $t =t_{n,m - 1} \le t_{n,m} \le \cdots \le t_{n,n}$ defines a finite geodesic between $(m,t)$ and $(n,t_n)$, then
    \[
    \lim_{n \rightarrow \infty} t_{n,r} = \tau_r\qquad \text{for all } r \ge m. 
    \]
    \end{enumerate}
    \end{theorem}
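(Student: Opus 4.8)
The plan is to prove the three parts in order, with Part~\ref{control of finite geodesics} being the main workhorse and Parts~\ref{all semi-infinite geodesics lie between leftmost and rightmost} and~\ref{convergence to unique semi-infinite geodesic} following quickly from it. For Part~\ref{control of finite geodesics}, fix $\theta>0$, $(m,t)\in\Z\times\R$, and a $\theta$-directed sequence $\{t_n\}$. Let $t_{n,m-1}\le t_{n,m}\le\cdots\le t_{n,n}=t_n$ define a geodesic between $(m,t)$ and $(n,t_n)$. I would work level by level, starting at level $m$. The first jump time $t_{n,m}$ is the (a) maximizer of $s\mapsto B_m(t,s)+L_{(m+1,s),(n,t_n)}$ over $s\in[t,t_n]$. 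The key comparison is: for any $s\in[t,t_n]$, the geodesic inequality gives $B_m(t_{n,m},s)\le L_{(m+1,t_{n,m}),(n,t_n)}-L_{(m+1,s),(n,t_n)}$. By Theorem~\ref{thm:summary of properties of Busemanns for all theta}\ref{general continuity of Busemanns} and~\ref{busemann functions agree for fixed theta}, along a subsequence where $t_n/n\to\theta$ and after passing to a further subsequence so that $t_{n,m}\to\ell$ (using tightness, which needs the directedness of $\{t_n\}$ together with the directedness of the Busemann geodesics, Theorem~\ref{existence of semi-infinite geodesics intro version}\ref{general limits for semi-infinite geodesics}, to rule out escape to $\pm\infty$), the limit inequality becomes $B_m(\ell,s)\le \h_{m+1}^{\theta\sig}(s)-\h_{m+1}^{\theta\sig}(\ell)$ for some sign $\sigg$ (left- or right-continuous version, chosen depending on whether $s<\ell$ or $s>\ell$; this is the role of keeping both $\theta-$ and $\theta+$ versions). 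Hence $\ell$ is a maximizer of $s\mapsto B_m(s)-\h_{m+1}^{\theta\sig}(s)$ on $[t,\infty)$, so $\tau_{(m,t),m}^{\theta-,L}\le\ell\le\tau_{(m,t),m}^{\theta+,R}$. Since this holds for every subsequential limit $\ell$, one gets $\tau_{(m,t),m}^{\theta-,L}\le\liminf_n t_{n,m}\le\limsup_n t_{n,m}\le\tau_{(m,t),m}^{\theta+,R}$. Then iterate: restrict the geodesic to $[(m+1,t_{n,m+1-1}?),(n,t_n)]$—more precisely, use monotonicity in the initial point (Theorem~\ref{existence of semi-infinite geodesics intro version}\ref{itm:monotonicity of semi-infinite jump times}\ref{itm:monotonicity in t}) to sandwich $t_{n,r}$ between Busemann jump times started from the already-controlled previous-level point, and pass to the limit.

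For Part~\ref{all semi-infinite geodesics lie between leftmost and rightmost}, suppose $t=t_{m-1}\le t_m\le\cdots$ is any $\theta$-directed semi-infinite geodesic from $(m,t)$. For each $n\ge m$, restricting it to $[(m,t),(n,t_n)]$ where $t_n:=t_n$ (the value of the geodesic at level $n$) produces a finite geodesic, and since the whole path is $\theta$-directed the sequence $\{t_n\}_n$ so produced has direction $\theta$. Applying Part~\ref{control of finite geodesics} to this sequence—noting $t_{n,r}=t_r$ is constant in $n$ for each fixed $r$—gives $\tau_{(m,t),r}^{\theta-,L}\le t_r\le\tau_{(m,t),r}^{\theta+,R}$ directly. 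Part~\ref{convergence to unique semi-infinite geodesic} is then immediate: if $\mbf T_{(m,t)}^\theta=\{\{\tau_r\}\}$ is a singleton, then on $\Omega^{(\theta)}$ we have $\mbf T_{(m,t)}^{\theta-}=\mbf T_{(m,t)}^{\theta+}$, so $\tau_{(m,t),r}^{\theta-,L}=\tau_{(m,t),r}^{\theta+,R}=\tau_r$, and Part~\ref{control of finite geodesics} squeezes $\lim_n t_{n,r}=\tau_r$.

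The main obstacle is making the passage to the limit in Part~\ref{control of finite geodesics} rigorous, specifically two points. First, the tightness/pre-compactness of $\{t_{n,r}\}_n$: one must show these jump times cannot drift to $+\infty$ or stay bounded away from the Busemann picture. The lower bound is easy ($t_{n,r}\ge t$); for the upper bound I expect to compare against a Busemann geodesic from $(m,t)$ in a slightly larger direction $\theta'>\theta$, using the finite-geodesic-versus-Busemann-geodesic ordering and the directedness (Theorem~\ref{existence of semi-infinite geodesics intro version}\ref{general limits for semi-infinite geodesics}), so that eventually $t_{n,r}$ is bounded by the $\theta'$ Busemann jump time, which is finite. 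Second, handling the sign $\sigg$ correctly: the inequality $B_m(\ell,s)\le\h_{m+1}^{\theta\sig}(s,\ell)$-type limit may only be obtained for the left- or right-continuous Busemann version depending on the side of $\ell$ and on whether the approximating directions approach $\theta$ from below or above; this is exactly why the statement has $\tau^{\theta-,L}$ on the left and $\tau^{\theta+,R}$ on the right, and I would be careful to invoke Theorem~\ref{thm:summary of properties of Busemanns for all theta}\ref{general monotonicity Busemanns} and~\ref{busemann functions agree for fixed theta} (equality on $\Omega^{(\theta)}$) to reconcile the versions. The rest—additivity of Busemann functions along the limiting path, maximality of its energy—is routine given Theorem~\ref{existence of semi-infinite geodesics intro version}\ref{energy of path along semi-infinte geodesic}.
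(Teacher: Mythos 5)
Your Parts~\ref{all semi-infinite geodesics lie between leftmost and rightmost} and~\ref{convergence to unique semi-infinite geodesic} are exactly the paper's: both are read off from Part~\ref{control of finite geodesics}. The divergence, and the problem, is in Part~\ref{control of finite geodesics}. The paper does not take subsequential limits of the jump times at all. It picks $\gamma<\theta<\delta$, uses directedness (Theorem~\ref{existence of semi-infinite geodesics intro version}\ref{general limits for semi-infinite geodesics}) to get $\tau_{(m,t),n}^{\gamma-,L}<t_n<\tau_{(m,t),n}^{\delta+,R}$ for large $n$, invokes the fact that the Busemann geodesics are the leftmost/rightmost finite geodesics between their points (Theorem~\ref{existence of semi-infinite geodesics intro version}\ref{Leftandrightmost}) together with the planar ordering of Lemma~\ref{existence of leftmost and rightmost geodesics} to sandwich $t_{n,r}$ between $\tau_{(m,t),r}^{\gamma-,L}$ and $\tau_{(m,t),r}^{\delta+,R}$ for every $r$ simultaneously, and then sends $\gamma\nearrow\theta$, $\delta\searrow\theta$ using Theorem~\ref{existence of semi-infinite geodesics intro version}\ref{itm:convergence of geodesics}\ref{itm:limits in theta}. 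Note that your own tightness step --- comparison against Busemann geodesics in directions $\gamma<\theta<\theta'$ via ``the finite-geodesic-versus-Busemann-geodesic ordering'' --- \emph{is} this argument; carried out properly it already finishes the proof, making the level-by-level variational analysis superfluous.

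The genuine gap is in that variational analysis as you propose to execute it. To turn the limit of the geodesic inequality into ``$\ell$ is a maximizer of $B_m-\h_{m+1}^{\theta\sig}$ on $[t,\infty)$'' you need $L_{(m+1,u),(n,t_n)}-L_{(m+1,s),(n,t_n)}\to\B^{\theta\sig}((m+1,u),(m+1,s))$ along the given $\theta$-directed sequence. Theorem~\ref{thm:summary of properties of Busemanns for all theta}\ref{busemann functions agree for fixed theta} supplies this only on the $\theta$-dependent event $\Omega^{(\theta)}$, and $\Omega_2$ contains $\Omega^{(\theta)}$ only for $\theta$ in the countable dense set; the theorem is asserted on $\Omega_2$ for \emph{all} $\theta>0$ simultaneously, so ``invoke equality on $\Omega^{(\theta)}$ to reconcile the versions'' is not available. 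On $\Omega_2$ the most one gets (by squeezing with rational directions and Lemma~\ref{lemma:bounds for differences of Brownian LPP}) is that the subsequential limits of the LPP differences lie between $\h_{m+1}^{\theta+}$ and $\h_{m+1}^{\theta-}$ increments, with the sign depending on which side of $\ell$ the comparison point $s$ sits; this yields that $\ell$ maximizes $B_m-\h_{m+1}^{\theta-}$ only on $[\ell,\infty)$ and $B_m-\h_{m+1}^{\theta+}$ only on $[t,\ell]$, which is not yet the two-sided statement $\tau_{(m,t),m}^{\theta-,L}\le\ell\le\tau_{(m,t),m}^{\theta+,R}$ and would require additional argument (beyond the monotonicity of Theorem~\ref{thm:summary of properties of Busemanns for all theta}\ref{general monotonicity Busemanns}) to close. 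There is also an unaddressed technical point that the initial point $(m+1,t_{n,m})$ of the LPP difference moves with $n$, so even on $\Omega^{(\theta)}$ one must interpose fixed rational points before passing to the limit. I recommend abandoning the subsequential-limit route and promoting your tightness comparison to the main argument.
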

    \begin{remark}
    In Part~\ref{convergence to unique semi-infinite geodesic}, the assumption of uniqueness holds, for example, on the event $\Omega_{(m,t)}^{(\theta)}$ of Theorem~\ref{thm:general_SIG}\ref{itm:uniqueness of geodesic for fixed point and direction}. However, this assumption does not extend to all $(m,t)\in \Z \times \R$ and $\theta > 0$ simultaneously with probability one, as discussed in the following section.  
    \end{remark}

\subsection{Non-uniqueness of semi-infinite geodesics}   \label{sec:non_unique}
There are two types of non-uniqueness of semi-infinite geodesics from an initial point $\mbf x$ into an asymptotic direction $\theta$. 
\smallskip 

(i) The first type, described in Theorem~\ref{thm:general_SIG}\ref{itm:size of non-uniqueness} and  in the next  Theorem~\ref{thm:non_unique_size},  is caused by the continuum time variable and does {\it not} appear in the lattice corner growth model.   It is captured by the $L/R$ distinction.  For each fixed direction $\theta>0$ and level $m \in \Z$, this happens with probability one at infinitely many locations. 
To illustrate, let $s^\star \ge 0$ be the maximizer below:  
\[
B_m(s^\star) - \h_{m +1}^\theta(s^\star) = \sup_{0 \le s < \infty}\{B_m(s) - \h_{m +1}^\theta(s)\}.
\]
By Theorem~\ref{thm:summary of properties of Busemanns for all theta}\ref{independence structure of Busemann functions on levels} and Lemma~\ref{lm:point-to-line uniqueness}, with probability one, the maximizer $s^\star$ is unique. By Theorem~\ref{distribtution of argmax BM with drift}, $s^\star > 0$ with probability one. By Theorem~\ref{thm:summary of properties of Busemanns for all theta}\ref{limits of B_m minus \h_{m + 1}},
\[
t^\star := \sup\{t < 0: B_m(t) - \h_{m + 1}^\theta(t) = B_m(s^\star) - \h_{m +1}^\theta(s^\star)\} \qquad\text{exists in }\R_{<0}.
\]
Then, both $t^\star$ and $s^\star$ are maximizers of $B_m(s) - \h_{m + 1}^\theta(s)$ on $[t^\star,\infty)$. This gives at least two distinct sequences in the set $\mbf T_{(m,t^\star)}^\theta$, with $\tau_{(m,t^\star),m}^{\theta,L}=t^\star$ and $\tau_{(m,t^\star),m}^{\theta,R}=s^\star$. 

This presents a new type of non-uniqneness that is not present in discrete last-passage percolation with exponential weights. However, when $\theta$ is fixed,   $\theta$-directed geodesics can disagree only for a finite amount of time, because  Theorem~\ref{thm:general_SIG}\ref{itm:coalescence} forces them to eventually come back together. 

\smallskip 

(ii) The second type of non-uniqueness of semi-infinite geodesics is captured by the $\theta \pm$ distinction.   Hence, it happens with probability zero at a fixed $\theta$ and thereby requires investigation of the full Busemann process and the full collection of all semi-infinite geodesics.  In contrast to the first type of non-uniqueness, this bears some similarity to the behavior present in discrete last-passage percolation shown in~\cite{Geometry_of_Geodesics}.

In future work, we show that there exists a random countable set of directions $\theta$ such that, out of every initial point, there are two $\theta$-directed geodesics. These geodesics may initially  stay together for a while, but eventually they separate for good and never come back together. Furthermore, there is a distinguished subset of initial  points at which geodesics with the same direction split immediately. This set will be shown to have  almost surely Hausdorff dimension $\f{1}{2}$. 

The following theorem clarifies the non-uniqueness described by (i) above. Fix $\theta > 0$. On a full probability event where the $\theta\pm$ distinction is not present, define the following sets:
\begin{align*}
\NU_0^\theta &= \{(m,t) \in \Z \times \R: \tau_{(m,t),r}^{\theta,L} < \tau_{(m,t),r}^{\theta ,R} \text{ for some }r \ge m\}, \qquad \text{and} \\
\NU_1^\theta &= \{(m,t) \in \NU_0^\theta: \tau_{(m,t),m}^{\theta,L} < \tau_{(m,t),m}^{\theta,R} \}.
\end{align*}
Since $\theta > 0$ is fixed, by Theorem~\ref{thm:convergence and uniqueness}\ref{all semi-infinite geodesics lie between leftmost and rightmost}, $\NU_0^\theta$ is almost surely the set of points $\mbf x \in \Z \times \R$ such that the $\theta$-directed semi-infinite geodesic from $\mbf x$ is not unique.  Its subset $\NU_1^\theta$ is the set of   initial points from which  two $\theta$-directed geodesics separate on the first level.  
    \begin{theorem} \label{thm:non_unique_size}
    There exists a full probability event $\wt \Omega^{(\theta)}$ on which the following hold. 
    \begin{enumerate} [label=\rm(\roman{*}), ref=\rm(\roman{*})]  \itemsep=3pt
    \item \label{decomposition} The sets $\NU_0^\theta$ and $\NU_1^\theta$ are countably infinite and can be written as 
    \begin{align*}
    \NU_0^\theta &= \{(m,t) \in \Z \times \R: t = \tau_{(m,t),r}^{\theta,L} < \tau_{(m,t),r}^{\theta,R} \text{ for some }r \ge m\}, \qquad\text{and}\\[1em]
    \NU_1^\theta &= \{(m,t) \in \NU_0^\theta: t = \tau_{(m,t),m}^{\theta,L} < \tau_{(m,t),m}^{\theta,R} \}.
    \end{align*}
    For each $(m,t) \in \Z \times \R$ and $\theta > 0$,  at most one geodesic in $\mbf T_{(m,t)}^{\theta}$ passes horizontally through $(m,t + \ve)$ for some $\ve > 0$.
    \item \label{non-discrete or dense} The set $\NU_1^\theta$ is neither discrete nor dense in $\Z \times \R$. More specifically, for each point $(m,t) \in \NU_1^\theta$ and every $\ve > 0$, there exists $s \in (t - \ve,t)$ such that $(m,s) \in \NU_1^\theta$. For each $(m,t) \in \NU_1^\theta$, there exists $\delta > 0$ such that, for all $s \in (t,t+\delta)$, $(m,s) \notin \NU_0^\theta$. 
    \end{enumerate}
    \end{theorem}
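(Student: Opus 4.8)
The plan is to reduce the whole statement to the record structure of the single process $f_m:=B_m-\h_{m+1}^\theta$ on each level $m$. By Theorem~\ref{thm:summary of properties of Busemanns for all theta}\ref{independence structure of Busemann functions on levels} and Theorem~\ref{thm:dist of Busemann functions}\ref{BM_drift}, $f_m$ is a two‑sided Brownian motion with negative drift that tends to $-\infty$ at $+\infty$ and to $+\infty$ at $-\infty$ (Theorem~\ref{thm:summary of properties of Busemanns for all theta}\ref{limits of B_m minus \h_{m + 1}}); the $L/R$ jump times of Definition~\ref{def:semi-infinite geodesics} are the successive leftmost/rightmost maximizers of the $f_r$'s over half‑lines, and since Theorem~\ref{thm:summary of properties of Busemanns for all theta}\ref{general queuing relations Busemanns} gives $\vv_{m+1}^\theta(t)=\sup_{s\ge t}f_m(s)-f_m(t)$, the zero set $\mathcal Z_m:=\{t:\vv_{m+1}^\theta(t)=0\}$ is exactly the set of $t$ from which the leftmost $\theta$‑geodesic leaves vertically at level $m$. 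By Theorem~\ref{thm:dist of Busemann functions}\ref{v_m process}, $\vv_{m+1}^\theta$ is a stationary reflected Brownian motion with drift, so a.s.\ $\mathcal Z_m$ is a perfect set with empty interior and zero Lebesgue measure whose complement has only bounded components, infinitely many of them. The sole probabilistic input for part~\ref{decomposition} is that, by Theorem~\ref{thm:general_SIG}\ref{itm:uniqueness of geodesic for fixed point and direction} and a countable union over $(r,q)\in\Z\times\Q$, a.s.\ $\mbf T_{(r,q)}^\theta$ is a singleton at every rational initial point.

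The characterization of $\NU_0^\theta$ and $\NU_1^\theta$ will then follow deterministically from monotonicity of leftmost/rightmost maximizers in the left endpoint of the half‑line. Given $(m,t)\in\NU_0^\theta$, let $r$ be the first level at which the leftmost and rightmost geodesics from $(m,t)$ split. If $\tau_{(m,t),m}^{\theta,L}>t$, a short computation with \eqref{semi-infinite geodesic succesive jumps} shows that an entire open interval of initial points $(m,t')$ inherits the same first‑level non‑uniqueness (or, when $r>m$, the same non‑uniqueness higher up, since their geodesics agree above level $m$), forcing a rational initial point into $\NU_0^\theta$ and contradicting the previous paragraph; hence $\tau_{(m,t),m}^{\theta,L}=t$. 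Stripping the first vertical segment then gives a bijection $\mbf T_{(m,t)}^\theta\to\mbf T_{(m+1,t)}^\theta$ preserving the first disagreement level, so after $r-m$ such steps one reaches $(r,t)\in\NU_1^\theta$, whence $t=\tau_{(r,t),r}^{\theta,L}=\tau_{(m,t),r}^{\theta,L}<\tau_{(m,t),r}^{\theta,R}$, the stated description. I then identify $\NU_1^\theta$ on level $m$ with the (countable) set of left endpoints of components of $\R\setminus\mathcal Z_m$: for such a left endpoint $\alpha$ the forward supremum of $f_m$ is attained at $\alpha$ and at the right endpoint of the component, giving two maximizers, and conversely, using that distinct local maxima of the Brownian path $f_m$ a.s.\ take distinct values, one rules out a third maximizer and shows any $\NU_1^\theta$ point is isolated on the right in $\mathcal Z_m$. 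Countability of $\NU_0^\theta\subseteq\bigcup_{r\ge m}\bigl(\NU_1^\theta\cap(\{r\}\times\R)\bigr)$ follows, and infinitude comes from shift invariance (Theorem~\ref{thm:summary of properties of Busemanns for all theta}\ref{itm:shift_invariance}) together with the unboundedness of $\mathcal Z_m$.

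For the last sentence of~\ref{decomposition} and for part~\ref{non-discrete or dense}, fix $(m,t)\in\NU_1^\theta$ and let $(t,p)$ be the component of $\R\setminus\mathcal Z_m$ with left endpoint $t$, so $p=\tau_{(m,t),m}^{\theta,R}$; the preceding paragraph shows that the maximizer set of $f_m$ on $[t,\infty)$ is exactly $\{t,p\}$. Hence for every $s\in(t,p)$ the geodesic from $(m,s)$ has a unique level‑$m$ jump, to $p$, and $\mbf T_{(m,s)}^\theta$ — as well as the set of geodesics in $\mbf T_{(m,t)}^\theta$ passing horizontally through some $(m,t+\ve)$ — is in bijection with $\mbf T_{(m+1,p)}^\theta$. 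Both claims therefore reduce to showing $(m+1,p)\notin\NU_0^\theta$, and by the characterization just proved (which would force $\tau_{(m+1,p),m+1}^{\theta,L}=p$) it suffices to show that a.s.\ $\vv_{m+2}^\theta(p)>0$ for \emph{every} such $p$. I would prove this by conditioning on $\sigma(B_r:r\ge m+1)$, with respect to which $\h_{m+1}^\theta$ and the Lebesgue‑null set $\{\vv_{m+2}^\theta=0\}$ are measurable and from which $B_m$ is independent: conditionally, the right endpoints of the components of $\R\setminus\mathcal Z_m$ form a point process driven by the fresh Brownian motion $B_m$, and once one knows its conditional intensity is absolutely continuous with respect to Lebesgue measure, the conditional expected number of these endpoints lying in the fixed null set $\{\vv_{m+2}^\theta=0\}$ vanishes, and integrating removes the conditioning. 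I expect this absolute‑continuity statement — an excursion‑theory fact about the zero set of the reflected process $\vv_{m+1}^\theta$ — to be the genuine obstacle; the jump‑time combinatorics and the reduction to rational initial points are routine by comparison.

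Part~\ref{non-discrete or dense} is then immediate: if $(m,t)\in\NU_1^\theta$ then $t$ is a left endpoint of a component of $\R\setminus\mathcal Z_m$, hence not isolated on the right in $\mathcal Z_m$; since $\mathcal Z_m$ is perfect, $t$ is accumulated from the left by $\mathcal Z_m$, and since $\mathcal Z_m$ is closed with empty interior, every point of $\mathcal Z_m$ has left endpoints of complementary components arbitrarily close from above, so $\NU_1^\theta$ on level $m$ accumulates at $t$ from the left; the right‑isolation statement is exactly the $(m+1,p)\notin\NU_0^\theta$ conclusion of the previous paragraph applied on the interval $(t,p)$. Finally, $\wt\Omega^{(\theta)}$ is taken to be the intersection of the named full‑probability events with the a.s.\ events used above: perfectness, empty interior and Lebesgue‑nullity of each $\mathcal Z_m$, distinctness of local‑maxima values of each $f_m$, and the absolute‑continuity/avoidance event.
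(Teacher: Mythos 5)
Your treatment of the decomposition in Part~\ref{decomposition}, and of countability and infinitude, is essentially the paper's proof: the only probabilistic input is a.s.\ uniqueness of the $\theta$-directed geodesic from each rational initial point, the ``splitting only on the vertical ray'' characterization follows by the contradiction at rational points exactly as you describe, and the identification of $\NU_1^\theta$ on level $m$ with the non-unique-maximizer set of $f_m=B_m-\h_{m+1}^\theta$ is the content of Theorem~\ref{thm:countable non unique maximizers}. The gap is in how you handle the last sentence of Part~\ref{decomposition} and the right-isolation claim of Part~\ref{non-discrete or dense}. You reduce both to showing $(m+1,p)\notin\NU_0^\theta$ for \emph{every} right endpoint $p$ of a component of $\R\setminus\mathcal Z_m$, i.e.\ to $\vv_{m+2}^\theta(p)>0$ simultaneously for all such $p$, and you propose to prove this by a conditional-intensity / excursion-theory computation that you explicitly leave open. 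As written, both conclusions rest on this unproven step, so the proof is incomplete.

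The step is also unnecessary: both conclusions follow from facts you already have. For ``at most one geodesic passes horizontally through $(m,t+\ve)$'': two distinct such geodesics would both pass through $(m,q)$ for some rational $q\in(t,t+\ve)$; they agree on the horizontal segment from $(m,t)$ to $(m,q)$, so their continuations are two distinct $\theta$-directed geodesics from $(m,q)$, contradicting uniqueness at rational initial points --- no appeal to level $m+1$ is needed. For the right-isolation: once the maximizer set of $f_m$ over $[t,\infty)$ is known to be exactly $\{t,p\}$ (you get this from distinctness of local-maximum values; the paper gets it from the impossibility of three maximizers, itself a consequence of rational-point uniqueness of maximizers), every $s\in(t,p)$ satisfies $f_m(s)<f_m(p)=\sup_{u\ge s}f_m(u)$, hence $s\notin\mathcal Z_m$ and $\tau_{(m,s),m}^{\theta,L}>s$; your own characterization of $\NU_0^\theta$, which requires $s=\tau_{(m,s),m}^{\theta,L}$, then gives $(m,s)\notin\NU_0^\theta$ directly, with $\delta=p-t$. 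This is precisely the paper's route (Theorem~\ref{thm:countable non unique maximizers}\ref{non_discrete MN} combined with Part~\ref{decomposition}), and it never mentions the zero set of $\vv_{m+2}^\theta$. A smaller point: you assert perfectness, empty interior, and boundedness of complementary components of $\mathcal Z_m$ as known facts about the stationary reflected process; these are true, but the interleaving structure actually needed is derived elementarily from the same rational-point uniqueness, so these assertions should be proved or replaced by that argument.
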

    \begin{remark}
    Part~\ref{decomposition} states that, on $\wt \Omega^{(\theta)}$, if there exist multiple $\theta$-directed geodesics out of $(m,t)$, then these geodesics separate one by one from the upward vertical ray at $(m,t)$. 
    The set $\NU_1^\theta$ is the subset of $\NU_0^\theta$ such that two geodesics separate immediately at the initial point. See Figures~\ref{fig:NU_0} and~\ref{fig:NU_1}.
    \end{remark}
    \begin{figure}[h!]
 \centering
            \begin{tikzpicture}
            \draw[gray,thin] (0,0)--(10,0);
            \draw[gray,thin] (0,1)--(10,1);
            \draw[gray,thin] (0,2)--(10,2);
            \draw[gray,thin] (0,3)--(10,3);
            \draw[gray,thin] (0,4)--(10,4);
            \draw[gray,thin] (0,5)--(10,5);
            \draw[red,ultra thick] plot coordinates {(0,2)(5,2)(5,3)(6,3)(7,3)(7,4)(8,4)(8,5)};
            \draw[red,ultra thick,->] plot coordinates {(0,0)(0,4)(3,4)(3,5)(10,5)};
            \filldraw[black] (5,2) circle (2pt) node[anchor = north] {$(r,\tau_{(m,t),r}^{\theta,R})$};
            \filldraw[black] (0,2) circle (2pt) node[anchor = east] {$(r,t) = (r,\tau_{(m,t),r}^{\theta,L})$};
            \filldraw[black] (0,0) circle (2pt) node[anchor = north] {$(m,t)$};
            \end{tikzpicture}
            \caption{\small In this figure, $(m,t) \in \NU_0^\theta \setminus \NU_1^\theta$. The two geodesics split on the vertical line containing the initial point, but they must come back together to coalesce.}
            \label{fig:NU_0}
            \end{figure}
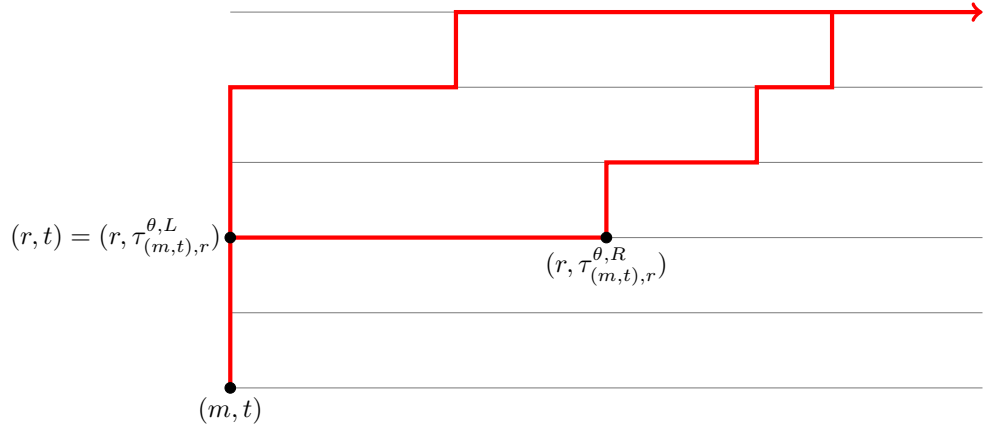
            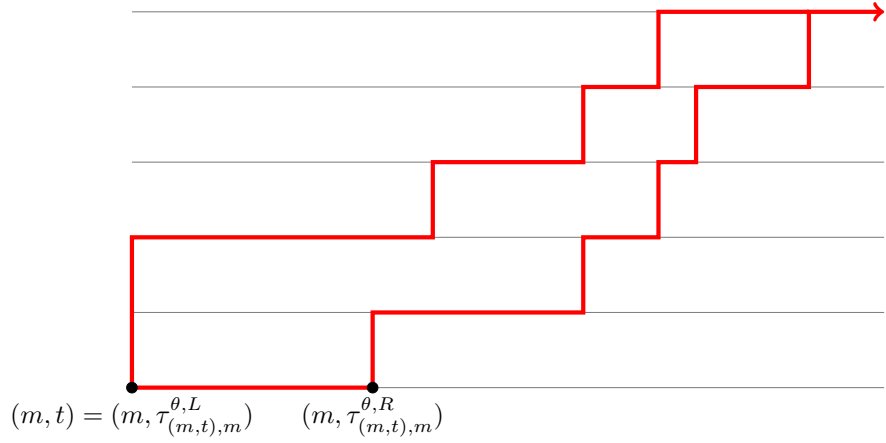
\begin{figure}[h!]
            \centering
            \begin{tikzpicture}
            \draw[gray,thin] (0,0)--(10,0);
            \draw[gray,thin] (0,1)--(10,1);
            \draw[gray,thin] (0,2)--(10,2);
            \draw[gray,thin] (0,3)--(10,3);
            \draw[gray,thin] (0,4)--(10,4);
            \draw[gray,thin] (0,5)--(10,5);
            \draw[red,ultra thick,->] plot coordinates {(0,0)(3.2,0)(3.2,1)(6,1)(6,2)(7,2)(7,3)(7.5,3)(7.5,4)(9,4)(9,5)(10,5)};
            \draw[red,ultra thick] plot coordinates {(0,0)(0,2)(4,2)(4,3)(5,3)(6,3)(6,4)(7,4)(7,5)(9,5)};
            \filldraw[black] (0,0) circle (2pt) node[anchor = north] {$(m,t) = (m,\tau_{(m,t),m}^{\theta,L})$};
            \filldraw[black] (3.2,0) circle (2pt) node[anchor = north] {$(m,\tau_{(m,t),m}^{\theta,R})$};
            \end{tikzpicture}
            \caption{\small In this figure, $(m,t) \in \NU_1^\theta$. The two geodesics split immediately from the initial point, but later coalesce. }
            \label{fig:NU_1}
            \bigskip
        \end{figure}

\subsection{Dual geodesics and coalescence} \label{sec:intro_dual_geod} To prove the coalescence of Theorem~\ref{thm:general_SIG}\ref{itm:coalescence}, we use the dual field $\mbf X^\theta$ of independent Brownian motions from~\eqref{definition of dual weights from Busemanns} and their southwest-directed semi-infinite geodesics. We use this to construct the BLPP analogue of Pimentel's dual tree~\cite{pimentel2016} and then adapt the argument of~\cite{Timo_Coalescence}. 

Since Brownian motion is symmetric, in distribution, about the origin, there exist $\theta$-directed dual southwest semi-infinite geodesics for the environment $\mbf X^\theta$. These are constructed in a very similar manner as the northeast geodesics in Definition~\ref{def:semi-infinite geodesics}. Specifically, for $(m,t) \in \Z \times \R$, let $\mbf T_{(m,t)}^{\theta, \star}$ be the set of sequences $t  = \tau_{m}^\star \ge \tau_{m - 1}^\star \ge \cdots$ satisfying
\begin{equation} \label{eqn:def of southwest geodesics}
\h_{r - 1}^{\theta}(\tau_{r - 1}^\star) - X_{r}^{\theta}(\tau_{r - 1}^\star) = \sup_{-\infty < s \le \tau_{r}^\star}\{ \h_{r - 1}^{\theta}(s) - X_{r}^{\theta}(s)\}\qquad\text{for each }r \le m.
\end{equation}
Define the leftmost and rightmost sequences similarly, by $\tau_{(m,t),r}^{\theta,L \star}$ and $\tau_{(m,t),r}^{\theta,R \star}$. These sequences define southwest semi-infinite paths, similar as for the northeast paths. We graphically represent southwest paths on the plane, where the continuous coordinate is not changed, but the discrete coordinate is shifted down by $\f{1}{2}$. That is, $\text{for }m \in \Z, \text{ denote }m^\star = m-\f{1}{2}$, and for $\mbf x = (m,t) \in \Z \times \R, \text{ denote } \mbf x^\star = \bigl(m - \f{1}{2},t\bigr)$. Then, for $\{\tau_r^\star\}_{r \le m} \in \mbf T_{(m,t)}^{\theta,\star}$, the southwest path consists of horizontal and vertical line segments, where $\tau_r^\star$ denotes the position of the vertical segment connecting levels $(r + 1)^\star$ and $r^\star$. Specifically, the path consists of the points
\[
\bigcup_{r = -\infty}^m \{(r^\star,u): u \in [\tau_{r - 1}^\star,\tau_r^\star]\} \cup \bigcup_{r = -\infty}^{m - 1} \{(v,\tau_r^\star): v \in [(r - 1)^\star,r^\star]\}.
\]
Figure~\ref{fig:reg axes and dual axes} shows the regular axes and the dual axes together, with a southwest dual geodesic traveling on this dual plane. Each element of $\mbf T_{\mbf (m,t)}^{\theta \star}$ is a southwest semi-infinite geodesic for the dual environment $\mbf X^\theta$. This fact is recorded in Theorem~\ref{existence of backwards semi-infinite geodesics}.  

\begin{figure}[t]
\begin{tikzpicture}
\draw[black,thick] (-0.5,0) -- (15,0);
\draw[black,thick] (-0.5,1)--(15,1);
\draw[black,thick] (-0.5,2)--(15,2);
\draw[black,thick] (-0.5,3)--(15,3);
\draw[dashed] (-0.5,-0.5)--(15,-0.5);
\draw[dashed] (-0.5,0.5)--(15,0.5);
\draw[dashed] (-0.5,1.5)--(15,1.5);
\draw[dashed] (-0.5,2.5)--(15,2.5);
\draw[blue, ultra thick,->] (14.5,2.5)--(12,2.5)--(12,1.5)--(7,1.5)--(7,0.5)--(3,0.5)--(3,-0.5)--(1,-0.5)--(1,-1);
\filldraw[black] (14.5,2.5) circle (2pt);
\node at (-1.3,1.5) {\small $(m - 1)^\star$};
\node at (-1.3,2) {\small $m - 1$};
\node at (-1.3,2.5) {\small $m^\star$};
\node at (-1.3,3) {\small $m$};
\node at (-1.3,0.5) {\small $\vdots$};
\node at (14.5,-1) {\small $t$};
\node at (12,-1) {\small $\tau_{(m,t),m - 1}^\star$};
\node at (7,-1) {\small $\tau_{(m,t),m - 2}^\star$};
\node at (4,-1) {$\cdots$};
\end{tikzpicture}
\caption{\small The original discrete levels (solid) and the dual levels (dashed). Dual levels are labeled with a $\star$.}
\label{fig:reg axes and dual axes}
\end{figure}
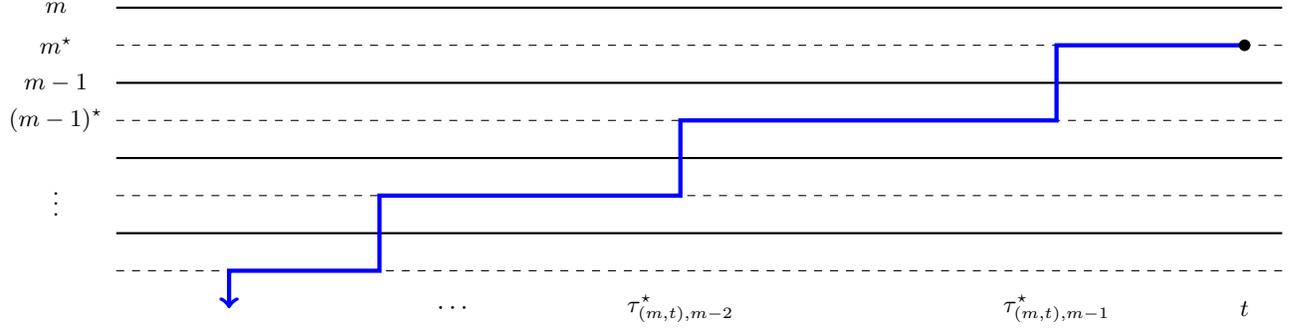

Since $\mbf X^\theta$ is an environment of i.i.d.\ Brownian motions, the following theorem allows us to conclude Part~\ref{itm:coalescence} of Theorem~\ref{thm:general_SIG} from Part~\ref{itm:no bi-infinite geodesics in given direction}. Full details of this connection are found in the proofs. Refer to Figure~\ref{fig:bi-infinite dual path from disjoint geodesics} for clarity.
\begin{figure}[t]
\centering
\begin{tikzpicture}
\draw[gray,thin] (-0.5,0) -- (15,0);
\draw[gray,thin] (-0.5,1)--(15,1);
\draw[gray,thin] (-0.5,2)--(15,2);
\draw[gray,thin] (-0.5,3)--(15,3);
\draw[gray,thin] (-0.5,4)--(15,4);
\draw[gray,thin] (-0.5,4)--(15,4);
\draw[gray,thin] (-0.5,5)--(15,5);
\draw[gray,thin,dashed] (-0.5,0.5)--(15,0.5);
\draw[gray,thin,dashed] (-0.5,1.5)--(15,1.5);
\draw[gray,thin,dashed] (-0.5,2.5)--(15,2.5);
\draw[gray,thin,dashed] (-0.5,3.5)--(15,3.5);
\draw[gray,thin,dashed] (-0.5,4.5)--(15,4.5);
\draw[red, ultra thick,->] (1,0)--(3,0)--(3,1)--(6,1)--(7,1)--(7,2)--(9,2)--(11,2)--(11,3)--(13,3)--(14.5,3)--(14.5,4)--(15,4);
\draw[red, ultra thick,->] (-0.5,1)--(0.5,1)--(0.5,2)--(2.5,2)--(2.5,3)--(5,3)--(5,4)--(12,4)--(12,5)--(15,5);
\draw[blue,thick,<->] (15,4.5)--(13,4.5)--(13,3.5)--(6,3.5)--(6,2.5)--(4.5,2.5)--(4.5,1.5)--(1.5,1.5)--(1.5,0.5)--(0.5,0.5)--(0.5,-0.5)--(-0.5,-0.5);
\filldraw[black] (1,0) circle (2pt);
\node at (1.3,-0.5) {$\mbf x$};
\filldraw[black] (-0.5,1) circle (2pt) node[anchor = east] {$\mbf y$};
\end{tikzpicture}
\caption{\small The outcome of Theorem \ref{thm:bi-infinite geodesic between disjoint geodesics}:  two disjoint semi-infinite northeast paths (red/thick) and the dual bi-infinite path (blue/thin)}
\label{fig:bi-infinite dual path from disjoint geodesics}
\end{figure}
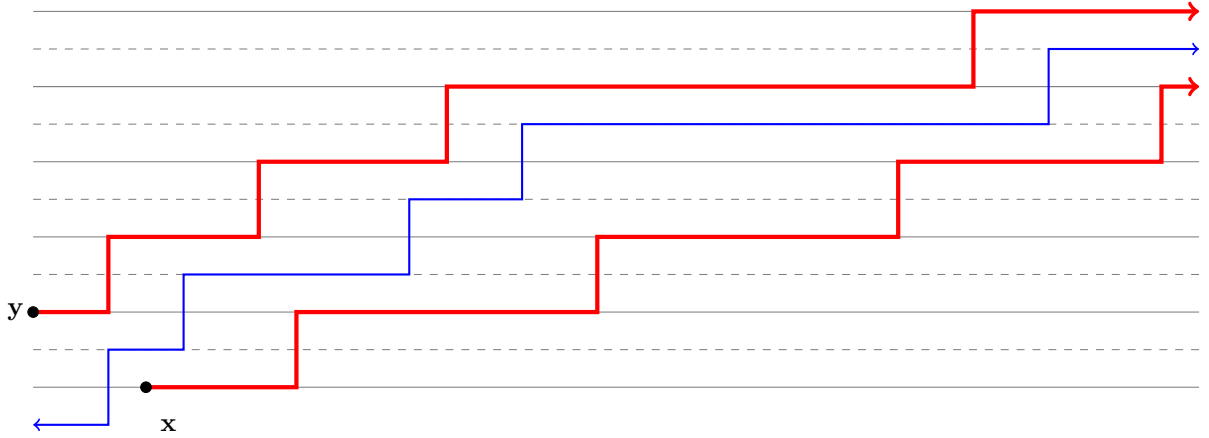

\begin{theorem} \label{thm:bi-infinite geodesic between disjoint geodesics}
Fix $\theta > 0$. With probability one, if for any $\mbf x,\mbf y \in \Z \times \R$, the rightmost semi-infinite geodesics in $\mbf T_{\mbf x}^{\theta }$ and $\mbf T_{\mbf y}^{\theta }$ are disjoint (i.e. the paths share no points), then there exists a bi-infinite, upright path defined by jump times $\cdots \le \tau_{-1}^\star \le \tau_0^\star \le \tau_1^\star \le \cdots$ {\rm(}where $\tau_i^\star$ denotes the jump time from level $i$ to level $i + 1${\rm)} that satisfies the following:
\begin{enumerate} [label=\rm(\roman{*}), ref=\rm(\roman{*})]  \itemsep=3pt 
    \item For any point $\mbf x^\star$ along the path, the portion of that path to the south and west of $\mbf x^\star$ is the leftmost semi-infinite geodesic in the set $\mbf T_{\mbf x}^{\theta, \star}$. Specifically, when shifted back up by $\f{1}{2}$, the path is a bi-infinite geodesic for the environment $\mbf X^\theta$. 
    \item The sequence $\{\tau_n^\star\}_{n \in \Z}$ satisfies
    \[
    \lim_{n \rightarrow \infty} \f{\tau_n^\star}{n} = \theta = \lim_{n \rightarrow \infty} \f{\tau_{-n}^\star}{-n}.
    \]
\end{enumerate}  

The analogous result holds if we assume that two leftmost $\theta$-directed semi-infinite geodesics are disjoint. In this case,  the portion of the path to the south and west of each of its points  is the rightmost dual semi-infinite geodesic.  
\end{theorem}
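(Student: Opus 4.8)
The plan is to realize the bi-infinite dual path as a monotone limit of leftmost southwest dual geodesics trapped inside the infinite corridor between the two disjoint rightmost original geodesics. I would work on the intersection of $\Omega_2$, the event $\Omega^{(\theta)}$, the full-probability event carrying the non-intersection statement of Theorem~\ref{strong crossing theorem for geodesics and dual geodesics}, and the full-probability event of Theorem~\ref{existence of backwards semi-infinite geodesics} on which the dual southwest geodesics exist, are $\mbf X^\theta$-geodesics between any two of their points, are $\theta$-directed, and satisfy the monotonicity and one-sided continuity in the initial point that are the duals of the properties in Theorem~\ref{existence of semi-infinite geodesics intro version}; this intersection still has probability one. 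It suffices to treat disjoint rightmost geodesics, the leftmost case being the mirror image under $L\leftrightarrow R$. So suppose the rightmost geodesics $g_{\mbf x}\in\mbf T_{\mbf x}^\theta$ and $g_{\mbf y}\in\mbf T_{\mbf y}^\theta$ share no point. Both are $\theta$-directed (Theorem~\ref{existence of semi-infinite geodesics intro version}\ref{general limits for semi-infinite geodesics}), so as disjoint up-right paths of equal asymptotic slope they are linearly ordered; after possibly swapping $\mbf x$ and $\mbf y$ I assume $g_{\mbf y}$ lies strictly to the upper-left of $g_{\mbf x}$. Let $M$ be large enough that above level $M$ the two paths leave a nonempty open gap, and let $U$ be the region strictly between them.

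For each $n>M$ I would take $\mbf z_n^\star$ on dual level $n^\star$ at the left edge of $U$ and let $\gamma_n$ be the leftmost southwest dual semi-infinite geodesic from $\mbf z_n^\star$. By Theorem~\ref{strong crossing theorem for geodesics and dual geodesics}, $\gamma_n$ crosses neither $g_{\mbf x}$ nor $g_{\mbf y}$, and since it issues from a point strictly between them it stays inside $U$ on all levels between $M$ and $n$, hence has jump times confined to the bounded gaps of $U$ there; below level $M$ it is a leftmost dual geodesic issuing from a point of the bounded closure of the gap of $U$ at level $M$, so by monotonicity of leftmost dual geodesics in the initial point the family $\gamma_n$ is uniformly bounded at every level. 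Using that the restriction of a leftmost dual geodesic below one of its points is again the leftmost dual geodesic from that point (the maximization step in \eqref{eqn:def of southwest geodesics} depends only on the current position), and that $\gamma_{n+1}$ meets level $n^\star$ inside $U$ and so to the right of $\mbf z_n^\star$, the same monotonicity gives $\gamma_{n+1}\ge\gamma_n$ on all levels $\le n^\star$. Thus at each level the jump time of $\gamma_n$ increases to a finite limit $\tau_r^\star$, and $\{\tau_r^\star\}_{r\in\Z}$ defines a bi-infinite up-right path $\gamma_\infty$.

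To identify $\gamma_\infty$, I would pass \eqref{eqn:def of southwest geodesics} for $\gamma_n$ to the limit, using continuity of $\h_\bullet^\theta$ and $X_\bullet^\theta$ (Theorem~\ref{thm:summary of properties of Busemanns for all theta}\ref{general continuity of Busemanns}) and the fact that $s\mapsto\h_{r-1}^\theta(s)-X_r^\theta(s)\to-\infty$ as $s\to-\infty$, so that the supremum in \eqref{eqn:def of southwest geodesics} depends continuously on its upper endpoint; this shows $\gamma_\infty$ satisfies \eqref{eqn:def of southwest geodesics} at every level. Consequently, for any point $\mbf u^\star$ of $\gamma_\infty$, the part of $\gamma_\infty$ to the south and west of $\mbf u^\star$ lies in $\mbf T_{\mbf u^\star}^{\theta,\star}$; it is the leftmost element because the trace of $\gamma_n$ at $\mbf u^\star$'s level increases to $\mbf u^\star$ from the left, each $\gamma_n$ below that level is the leftmost dual geodesic from that trace, and leftmost dual geodesics are left-continuous in the initial point. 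Since a leftmost dual semi-infinite geodesic coincides, between any two of its points, with the leftmost finite $\mbf X^\theta$-geodesic there, the portion of $\gamma_\infty$ between any two of its points is an $\mbf X^\theta$-geodesic, so $\gamma_\infty$ shifted up by $\f12$ is a bi-infinite $\mbf X^\theta$-geodesic, which is part~(i). For part~(ii), on levels above $M$ the path $\gamma_\infty$ is squeezed between the $\theta$-directed $g_{\mbf x}$ and $g_{\mbf y}$, giving $\tau_n^\star/n\to\theta$; and $\gamma_\infty$ restricted below any of its points is a leftmost dual semi-infinite geodesic, hence $\theta$-directed by Theorem~\ref{existence of backwards semi-infinite geodesics}, giving $\tau_{-n}^\star/(-n)\to\theta$.

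The main obstacle is the confinement of $\gamma_n$ to the corridor $U$, which is exactly the content of Theorem~\ref{strong crossing theorem for geodesics and dual geodesics} — the genuinely new technical ingredient, proved by comparing the maximizers of the competing variational formulas, and the source of the rightmost-original / leftmost-dual pairing in the statement. A secondary, more routine point is that the initial points $\mbf z_n^\star$ must sit on the \emph{left} edge of $U$ so that $\gamma_n\uparrow\gamma_\infty$: this is precisely the one-sided direction in which leftmost dual geodesics are continuous in their initial point, and it is what lets the limit be identified as the leftmost dual geodesic — rather than merely some dual geodesic — from each of its points.
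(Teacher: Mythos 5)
Your overall strategy is the paper's: trap leftmost southwest dual geodesics in the corridor between the two disjoint rightmost geodesics via Theorem~\ref{strong crossing theorem for geodesics and dual geodesics}, pass to a limit, and identify the limit level by level through the recursion \eqref{eqn:def of southwest geodesics}. The boundedness discussion, the passage of \eqref{eqn:def of southwest geodesics} to an increasing limit, the restriction property of leftmost dual geodesics, and both directional limits are all fine. The problem is the extraction of the limit itself.

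Your monotone scheme needs the starting points $\mbf z_n^\star$ to sit at the left edge of $U$, but the left edge of $U$ at dual level $n^\star$ is exactly the jump time $\sigma_{n-1}$ at which the upper-left geodesic $g_{\mbf y}$ passes from level $n-1$ to level $n$, i.e.\ the point where $g_{\mbf y}$ itself crosses dual level $n^\star$. The confinement you invoke is Theorem~\ref{strong crossing theorem for geodesics and dual geodesics}\ref{itm:general dual left horizontal does not cross right vertical}, which requires the \emph{strict} inequality $\tau^{\theta,R}_{(m,s),m} < t$; at the left edge you have equality, so part~\ref{itm:weak dual left vertical does not cross right horizontal} applies instead and forces the first descent of $\gamma_n$ to occur weakly to the \emph{left} of where $g_{\mbf y}$ entered level $n-1$. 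Thus $\gamma_n$ immediately crosses to the wrong side of $g_{\mbf y}$, the corridor confinement fails (so there is no lower bound making the limit finite), and the monotonicity is in fact reversed: $\gamma_{n+1}$ meets level $n^\star$ at a time $\le \sigma_{n-1}$, giving $\gamma_{n+1}\le\gamma_n$. If instead you place $\mbf z_n^\star$ strictly inside $U$, confinement is restored, but $\gamma_{n+1}$'s crossing of level $n^\star$ is then only known to lie somewhere in the open gap and need not be to the right of an interior $\mbf z_n^\star$, so $\gamma_{n+1}\ge\gamma_n$ is lost — and with it your identification of the limit as the \emph{leftmost} dual geodesic, since a non-monotone subsequential limit of leftmost maximizers over varying domains need not be a leftmost maximizer. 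The paper sidesteps this tension entirely: it plants the trapped points at integer-spaced times and half-integer levels strictly between the two geodesics, uses the \emph{finiteness} of the integer levels in the gap at each such time to run a diagonal extraction, and obtains a subsequence of dual geodesics that literally pass through a common sequence of points; consistency of leftmost dual geodesics then assembles the bi-infinite path with no limit taken on real-valued jump times at all. Your argument would need either that combinatorial stabilization or a separate argument that the (non-monotone) limit is still leftmost.
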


 \section{Connections to other models} \label{section:connections}
 \subsection{Connection to infinite-length polymer measures for the Brownian polymer}
 Recalling the definitions at the beginning of Section~\ref{section:def of BLPP}, for $(m,s) \le (n,t) \in \Z \times \R$, the point-to-point partition function of the Brownian polymer with unit temperature is
 \[
 Z_{(m,s),(n,t)}(\mbf B) = \int e^{\E(\mbf s_{m,n})} \mbf 1\{\mbf s_{m,n - 1} \in \Pi_{(m,s),(n,t)}\}d \mbf s_{m,n - 1}.
 \]
 The associated quenched polymer measure on $\Pi_{(m,s),(n,t)}$ is
\[
 Q^{\mbf B}(\tau_m \in d s_m,\ldots, \tau_{n - 1} \in d s_{n - 1}) = \f{1}{Z_{(m,s),(n,t)}(\mbf B)}\exp \Bigl(\sum_{k = m}^n B_k(s_{k - 1},s_k)\Bigr)1\{\mbf s_{m,n - 1} \in \Pi_{(m,s),(n,t)}\}d \mbf s_{m,n - 1}.
 \]
 Brownian last-passage percolation is the zero-temperature analogue of the Brownian or O'Connell-Yor polymer. This is made precise by the limit
 \[
 \lim_{\beta \rightarrow \infty} \f{1}{\beta} \log Z_{(m,s),(n,t)}(\beta \mbf B) = L_{(m,s),(n,t)}(\mbf B),
\]
which, by convergence of $L^p$ norms, holds in a deterministic sense as long as $\mbf B$ is a field of continuous functions $\R\rightarrow \R$. The parameter $\beta$ is used to denote inverse temperature.

Alberts, Rassoul-Agha, and Simper~\cite{blpp_utah} showed the existence of Busemann functions and infinite-length limits of the quenched measures when the right endpoint $(n,t_n)$ satisfies $t_n/n \rightarrow \theta$ for some fixed $\theta > 0$. The Busemann functions are defined as 
\[
\widehat  \B^\theta(\mbf x,\mbf y) := \lim_{n \rightarrow \infty} \log \f{ Z_{\mbf x,(n,t_n)}(\mbf B)}{Z_{\mbf y,(n,t_n)}(\mbf B)}. 
\]
Similarly as in~\eqref{horizontal Busemann simple expression}, define $\widehat \h_m^\theta(t) = \widehat \B^\theta((m,0),(m,t))$. Using the ``Proof of Theorem 2.5, assuming Theorem 3.1" on page 1937 of~\cite{blpp_utah} and an analogous construction of the Busemann functions for all initial points as in Section~\ref{section:Busemann construction} of the present paper, it can be shown that, the process $t \mapsto \widehat h_m^\theta(t)$ is a two-sided Brownian motion with drift, independent of $\{B_r\}_{r \le m}$. By Theorems~\ref{thm:summary of properties of Busemanns for all theta}\ref{independence structure of Busemann functions on levels} and~\ref{thm:dist of Busemann functions}\ref{BM_drift}, the same is true for the zero temperature Busemann process $t \mapsto h_m^\theta(t)$. 
An infinite, up-right path under the quenched infinite-length polymer measure is a continuous-time Markov chain, starting from a point $(m,t)$ and defined by jump times $t = \tau_{m - 1} \le \tau_m \le \cdots$. By Equation (2.3) in~\cite{blpp_utah}, the quenched conditional distribution of $\tau_{r}$ given $\tau_{r - 1}$ is
\begin{align*}
Q^{\mbf B}_{(m,s),(n,t_n)}(\tau_{r} \in d s_r| \tau_{r - 1} = s_{r - 1})
&= e^{B_{r}(s_{r - 1},s_r)}\f{Z_{(r + 1,s_r),(n,t_n)}(\mbf B)}{Z_{(r,s_{r - 1}),(n,t_n)}(\mbf B)} \mbf 1\{s_{r - 1} \le s_r\}\, ds_r.
\end{align*}
The proof of existence of infinite length measures requires a rigorous tightness argument, but to motivate the connection to BLPP, we formally take limits as $n \rightarrow \infty$ to yield the conditional measure
\begin{align*}
&\;\exp\bigl(B_r(s_{r - 1},s_r) + \widehat \B^\theta((r +1,s_r),(r,s_{r - 1}))\bigr)\mbf 1\{s_{r - 1} \le s_r\}\, ds_r \\[1em]
&= \exp\bigl(B_r(s_{r - 1},s_r) - \widehat \h_{r + 1}(s_r) + \widehat \B^\theta((r + 1,0),(r ,s_{r - 1})\bigr)\mbf 1\{s_{r - 1} \le s_r\}\, ds_r.
\end{align*}
Now, note that the term $\widehat \B^\theta((r + 1,0),(r ,s_{r - 1})$ does not depend on $s_r$. We can then think of the connection between BLPP and the O'Connell-Yor polymer in the following sense: In the positive temperature case, given the environment $\mbf B$, the transition density from $\tau_{r - 1}$ to $\tau_r$ is given by
\[
C\exp\bigl(B_r(s) - \widehat \h_{r + 1}^\theta(s)\bigr)\mbf 1\{\tau_{r - 1} \le s\}\,ds,
\]
where $C$ is a normalizing constant depending on $r$ and $\tau_{r - 1}$. On the other hand, by Definition~\ref{def:semi-infinite geodesics}, in the zero-temperature case, given the environment $\mbf B$ and the previous jump $\tau_{r - 1}$, the jump $\tau_r$ is chosen in a deterministic fashion by maximizing $B_r(s) - h_{r +1}^\theta(s)$ over $s \in [\tau_{r - 1},\infty)$.

\subsection{Connection to semi-infinite geodesics in discrete last-passage percolation} \label{sec:discreteLPP}
The corner growth model, or discrete last-passage percolation, is defined as follows. Let $\{Y_{\mbf x}\}_{\mbf x \in \Z^2}$ be a collection of nonnegative i.i.d random variables, each associated to a vertex on the integer lattice. For $\mbf x \le \mbf y \in \Z \times \Z$, define the last-passage time as
\[
G_{\mbf x,\mbf y} = \sup_{\mbf x_\centerdot \in \Pi_{\mbf x,\mbf y}} \sum_{k = 0}^{|\mbf y - \mbf x|_1} Y_{\mbf x_k}, 
\]
where $\Pi_{x,y}$ is the set of up-right paths $\{\mbf x_k\}_{k = 0}^{n}$ that satisfy $\mbf x_0 = \mbf x,\mbf x_{n} = \mbf y$, and $\mbf x_k - \mbf x_{k - 1} \in \{\mbf e_1,\mbf e_2\}$. Under the assumption that $Y_0$ has finite second moment, Theorem 3.1 and Corollary 3.1 of~\cite{glynn1991} introduced BLPP as a universal scaling limit of the corner growth model, where one variable is scaled, and the other is held constant. That is, if $Y_0$ is normalized to have unit mean and variance,
\[
\{L_{(m,s),(n,t)}: (m,s) \le (n,t) \in \Z \times \R\}
\]
is the functional limit, as $k \rightarrow \infty$, of the properly interpolated version of the process
\[
\bigl\{\f{1}{\sqrt k} G_{(m,\floor{sk}),(n,\floor{tk})} - (t - s)k: (m,s) \le (n,t) \in \Z \times \R \bigr\}.
\]

The most tractable case of discrete last-passage percolation is the case where $Y_0$ has the exponential distribution with rate $1$. In this case, Busemann functions exist and are indexed by a direction vector $\mbf u$. They are defined by
\[
U^{\mbf u}(\mbf x,\mbf y) := \lim_{n \rightarrow \infty} G_{(\mbf x,\mbf z_n)} - G_{(\mbf y,\mbf z_n)},
\]
where $\mbf z_n$ satisfies $\mbf z_n/n \rightarrow \mbf u$ for a fixed direction $\mbf u$. For a given $\omega \in \Omega$, $\mbf x \in \Z^2$, and direction $\mbf u$, a semi-infinite geodesic $\mbf \gamma$ is defined by the sequence $\{\mbf \gamma_k^{\mbf u,\mbf x}\}_{k \in \Z_{\ge 0}}$. At each step, a choice is made to move upward or to the right. First, set $\mbf \gamma^{\mbf u,\mbf x}_0 = \mbf x$, and for $k \ge 0$,
\be \label{eqn:cgm geod}
\mbf \gamma_{k + 1}^{\mbf u,\mbf x} = \begin{cases}
\mbf \gamma_{k}^{\mbf u,\mbf x} + \mbf e_1, &\text{if}\quad U^{\mbf u}\bigl(\mbf \gamma_{k}^{\mbf u,\mbf x},\mbf \gamma_{k}^{\mbf u,\mbf x} + \mbf e_1\bigr) \le U^{\mbf u}\bigl(\mbf \gamma_{k}^{\mbf u,\mbf x},\mbf \gamma_{k}^{\mbf u,\mbf x} + \mbf e_2\bigr), \\[1em]
\mbf \gamma_{k}^{\mbf u,\mbf x} + \mbf e_2, &\text{if}\quad U^{\mbf u}\bigl(\mbf \gamma_{k}^{\mbf u,\mbf x},\mbf \gamma_{k}^{\mbf u,\mbf x} + \mbf e_2\bigr) < U^{\mbf u}\bigl(\mbf \gamma_{k}^{\mbf u,\mbf x},\mbf \gamma_{k}^{\mbf u,\mbf x} + \mbf e_1\bigr).
\end{cases}
\ee
In the case of exponential weights, this sequence is a semi-infinite geodesic with direction $\mbf u$. This construction is inherently discrete, so it does not extend directly to the case of BLPP. However, this construction is equivalent to taking a sequence of maximizers of the appropriate function, analogous to Definition~\ref{def:semi-infinite geodesics}. The following is a discrete analogue of Theorem~\ref{thm:summary of properties of Busemanns for all theta}\ref{general queuing relations Busemanns} that holds in the case of exponential weights. 
\begin{equation} \label{CGM queue relations}
U^{\mbf u}((m,r),(m,r + 1)) = \max_{\substack{m \le k < \infty \\k \in \Z}} \Big\{\sum_{i = m}^k Y_{(i,r)} + U^{\mbf u}((k,r + 1),(m,r + 1)) \Big\}.
\end{equation}

Now, suppose that, starting at the point $(m,r)$, the semi-infinite geodesic constructed in~\eqref{eqn:cgm geod} makes an $\mbf e_2$ step from $(k,r)$ to $(k,r + 1)$ for some $k \ge m$. Then, we show that $k$ is maximal for~\eqref{CGM queue relations}. Indeed, by~\eqref{eqn:cgm geod}, if the path $\mbf \gamma$ makes an $\mbf e_2$ step from $(k,r)$ to $(k,r + 1)$, then  
\begin{align*}
&U^{\mbf u}((i,r),(i + 1,r)) \le U^{\mbf u}((i,r),(i,r + 1)) \qquad\text{ for }\qquad m \le i \le k - 1, \qquad\text {and } \\[1em]
&U^{\mbf u}((k,r),(k + 1,r)) >U^{\mbf u}((k,r),(k,r + 1)).
\end{align*}
Using the identity $Y_{\mbf x} = U^{\mbf u}(\mbf{x,x + e_1}) \wedge U^{\mbf u}(\mbf{x,x + e_2})$
and additivity of the Busemann functions,
\begin{align*}
&\quad\sum_{i = m}^{k}Y_{(i,r)} + U^{\mbf u}((k,r + 1),(m,r + 1)) \\[1em]
&=\sum_{i = m}^{k - 1} U^{\mbf u}((i,r),(i + 1,r)) + U^{\mbf u}((k,r),(k, r + 1)) + U^{\mbf u}((k,r + 1),(m,r + 1)) \\[1em]
&= U^{\mbf u}((m,r),(m,r + 1)),
\end{align*}
so $k$ is indeed maximal in for the right hand-side of~\eqref{CGM queue relations}. The inductive step follows in the same manner. Hence, we see that the construction of semi-infinite geodesics in Definition~\ref{def:semi-infinite geodesics} is a continuous analogue of the procedure for the discrete case when viewed from the perspective of maximizers.

\subsection{Connection to queuing theory} \label{section:queue}
Fix $\theta > 0$, and consider the almost surely unique $\theta$-directed semi-infinite geodesic (Theorem~\ref{thm:general_SIG}\ref{itm:uniqueness of geodesic for fixed point and direction}) starting from $(0,0)$ and defined by the sequence of jump times $\{\tau_r\}_{r \ge -1}$ with $\tau_{-1} = 0$.  For each $r \ge 0$, associate a queuing station as follows.  For $s <t$, let $\f{1}{\sqrt \theta}(t - s) - X_{r + 1}^\theta(s,t)$ denote the service available in the interval $(s,t]$ and let $\f{1}{\sqrt \theta}(t - s) - h_r^\theta(s,t)$ denote the arrivals to the queue in the interval $(s,t]$. Here, the parameter $\f{1}{\sqrt \theta}$ dictates the rate of service. See Appendix~\ref{section:queue and stationary} for more details about the queuing setup. By~\eqref{eqn:dual weights reverse relations}, $h_{r + 1}^\theta = \Da(h_r^\theta,X_{r + 1})$, so as in the proof of Theorem~\ref{O Connell Yor BM independence theorem queues}, $\f{1}{\sqrt \theta} (t - s) - h_{r + 1}^\theta(s,t)$ gives the departures from the $r$th station in the interval $(s,t]$, and the departures process from the $r$th station becomes the arrivals process for the $r + 1$st station. In other words, once a customer is served at the $r$th station, they move into the queue at the $r + 1$st station. By Equations~\eqref{380} and~\eqref{eqn:dual weights reverse relations}, for all $r \in \Z$ and $t \in \R$, 
\[
v_{r + 1}^\theta(t) = \sup_{t \le u < \infty}\{B_r(t,u) - h_{r + 1}^\theta(t,u)\}= \sup_{-\infty < u \le t}\{X_{r +1}^\theta(t,u) - h_{r}^\theta(t,u)\} = \Qa(h_r^\theta,X_{r+1}^\theta)(t). 
\]
In queuing terms, $v_{r + 1}^\theta(t)$ gives the length of the $r$th queue at time $t$. 
 As $\tau_r$ is the maximizer of $B_r(u) - h_{r + 1}^\theta(u)$ over $u \in [\tau_{r - 1},\infty)$, $\tau_r$ is the first time $u \ge \tau_{r - 1}$ such that $v_{r + 1}^\theta(u) = 0$ (see Lemma~\ref{lemma:equality of busemann to weights of BLPP}). In queuing terms, $\tau_r$ is the first time greater than or equal to $\tau_{r - 1}$ at which the queue is empty. Thus, the semi-infinite geodesic represents the movement of a customer through the infinite series of queuing stations: the customer starts at station $0$ at time $0$ and is served at the $r$th station at a time no later than $\tau_r$. 
 
\section{Construction and proofs for the Busemann process} \label{section:Busemann construction}
The remainder of the paper is devoted to the proofs of the theorems. This section constructs the global Busemann process, with the proof of Theorem~\ref{thm:summary of properties of Busemanns for all theta} as the ultimate result. Section~\ref{section:main proofs} proves all results about the semi-infinite geodesics, culminating in the proof of Theorem~\ref{thm:general_SIG}.

\subsection{Construction of Busemann functions for a fixed direction}

\begin{definition}[Definition of Busemann functions for fixed $\theta$ and a countable dense set of points] \label{def:Busemann functions on countable set}

For a fixed $\theta > 0$, and $\mbf x,\mbf y \in \Z \times \Q$, let 
\[
\Omega_0^{(\theta)} = \bigcap_{\mbf x,\mbf y \in \Z \times \Q} \Omega_{\mbf x,\mbf y}^{(\theta)},
\]
where the $\Omega_{\mbf x,\mbf y}^{(\theta)}$ are the events of Theorem~\ref{thm:existence of Busemann functions for fixed points}. Then, $\Pp(\Omega_0^{(\theta)}) = 1$. On this event, for $\mbf x,\mbf y \in \Z \times \Q$, define $\B^\theta(\mbf x,\mbf y)$ by~\eqref{eqn:limit definition of Busemann functions}. By definition in terms of limits, it is clear that the Busemann functions are additive, that is, for $ \omega \in \Omega_0^{(\theta)}$ and $\mbf x,\mbf y,\mbf z \in \Z \times \Q$, 
\begin{equation} \label{eqn:additivity for countable dense set}
\B^\theta(\mbf x,\mbf y) + \B^\theta(\mbf y,\mbf z) = \B^\theta(\mbf x,\mbf y).
\end{equation}
Hence, the entire collection of Busemann functions is constructed from the collection of horizontal and vertical Busemann functions $\{\h_m^\theta,\vv_m^\theta\}_{m \in \Z}$ (Equations~\eqref{horizontal Busemann simple expression} and~\eqref{vertical Busemann simple expression}). 
\end{definition}
Set $Y(t) = -B_0(t) + \f{1}{\sqrt \theta} t$. By Lemma~\ref{lemma:equality of Busemann function distribution at rationals}, as elements of the space of functions $\Q\rightarrow \R$, equipped with the standard product $\sigma-$algebra,  
\[
    \bigl\{\h_0^\theta(t): t \in \Q \bigr\}
    \overset{d}{=} \bigl\{Y(t): t \in \Q \bigr\} \qquad \text{and} \qquad \bigl\{\vv_{1}^\theta(t): t \in \Q\bigr\} \overset{d}{=} \bigl\{\Qa(Y,B_1)(t): t \in \Q\bigr\}.
\]
For $\omega \in \Omega_0^{(\theta)}$, $m \in \Z$, and $t \in \Q$,
\[
h_m^\theta(t) = \lim_{n \rightarrow \infty} L_{(m,0),(n,n\theta)}(\mbf B) - L_{(m,t),(n,n\theta)}(\mbf B) = \lim_{n \rightarrow \infty} L_{(m,0),(n + m,(n + m)\theta)}(\mbf B) - L_{(m,t),(n + m,(n + m)\theta)}(\mbf B).
\]
Since the environment $\mbf B = \{B_m\}_{m \in \Z}$ is a field of i.i.d. two-sided Brownian motions, the right-hand side implies that $h_m^\theta(t)$ has the same distribution for all $m$. By this same reasoning, for each $m \in \Z$,
\[
\{\h_m^\theta(t), \vv_{m + 1}^\theta(t): t \in \Q\} \deq \{\h_0^\theta(t),\vv_1^\theta(t): t \in \Q\}.
\]
Since Brownian motion satisfies $\lim_{s \rightarrow \infty} \f{B(s)}{s} = 0$ almost surely, for all $m \in \Z$, the following limits also hold almost surely:
\[
\lim_{\Q \ni s \rightarrow  \pm \infty} \bigl[B_m(s) - \h_{m + 1}^\theta(s)\bigr]= \mp \infty.
\]
Set
    \begin{align}
       \Omega^{(\theta)} = &\bigcap_{m,r \in \Z}\bigl\{(s,t) \mapsto \B^\theta((m,s),(r,t)) \text{ is uniformly continuous on all bounded subsets of }\Q \times \Q\bigr\} \nonumber \\[1em]
       &\qquad\qquad\cap\bigcap_{m \in \Z}\{\lim_{\Q \ni s \rightarrow \pm \infty}\bigl[B_m(s) - \h_{m + 1}^\theta(s)\bigr] = \mp \infty\}.
    \end{align}
    By almost sure continuity of the functions $Y$ and $\Qa(Y,B_1)$ (Lemma~\ref{queue length is continuous function of t}), $\Pp(\Omega^{(\theta)}) = 1$. 
\begin{definition}[Definition of Busemann functions for fixed $\theta$, arbitrary points] \label{def:Busemann functions, fixed theta}
On the event $\Omega^{(\theta)}$, for arbitrary $\mbf x,\mbf y \in \Z \times \R$, define $\B^\theta(\mbf x,\mbf y)$ such that, for each $m,r \in \Z$,
\[
(s,t) \mapsto \B^\theta((m,s),(r,t))
\]
is the unique continuous extension of this function from $\Q \times \Q$ to $\R^2$.
\end{definition} The following 
lemma states properties for a fixed $\theta$, as a
 precursor to the more general Theorems~\ref{thm:summary of properties of Busemanns for all theta} and Theorem~\ref{thm:dist of Busemann functions}.
\begin{lemma} \label{lemma:summary of Busemanns fixed theta}
Let $\theta>0$. Then, the following hold. 
\begin{enumerate} [label=\rm(\roman{*}), ref=\rm(\roman{*})]  \itemsep=3pt 
    \item \label{itm:Busemann limits, fixed theta} On the event $\Omega^{(\theta)}$, for all $\mbf x,\mbf y \in \Z \times \R$ and all sequences $\{t_n\}$ satisfying $t_n/n \rightarrow \theta$,
    \[
    \B^\theta(\mbf x,\mbf y) = \lim_{n \rightarrow \infty} \bigl[L_{\mbf x,(n,t_n)} - L_{\mbf y,(n,t_n)}\bigr]. 
    \]
    \item \label{itm:fixed theta additivity} On the event $\Omega^{(\theta)}$, whenever $\mbf x,\mbf y,\mbf z \in \Z \times \R$,
    \[
    \B^\theta(\mbf x,\mbf y) + \B^\theta(\mbf y,\mbf z) = \B^\theta(\mbf x,\mbf z). 
    \]
    \item \label{itm:fixed theta monotonicity} For $0 < \gamma < \theta < \infty$, on the event $\Omega^{(\theta)} \cap \Omega^{(\gamma)}$, for all $m \in \Z$ and $s < t \in \R$,
\[
0 \le \vv_{m}^\gamma(s) \leq \vv_m^\theta(s), \text{ and } 
B_m(s,t) \leq \h_m^\theta(s,t) \leq \h_m^\gamma(s,t). 
\]
\item \label{itm:fixed theta limit of B_m - \h_{m + 1}} On the event $\Omega^{(\theta)}$, for each $m \in \Z$,
\[
\lim_{s \rightarrow \pm \infty} B_m(s) - \h_{m + 1}^\theta(s) = \mp \infty. 
\]
\item For every $m \in \Z$, the process $t \mapsto \h_m^\theta(t)$ is a two-sided Brownian motion with drift $\f{1}{\sqrt \theta}$. For each $m \in \Z$ and $t \in \R$, $\vv_m^\theta(t) \sim \operatorname{Exp}\bigl(\f{1}{\sqrt \theta}\bigr)$. \label{itm: general distribution of h v busemann functions} 
\item \label{item: fixed theta independence of busemann on higher levels} For any $m \in \Z$, $\{\h_r^\theta\}_{r > m}$ is independent of $\{B_r\}_{r \le m}$. 
\item \label{itm:fixed theta Busemann queuing relations} There exists a full-probability event $\Omega_1^{(\theta)} \subseteq \Omega^{(\theta)}$  on which the following hold for all $m\in\Z$ and $t \in \R:$
\[
\vv_{m + 1}^\theta(t) = Q(\h_{m +1}^\theta,B_m)(t)\qquad\text{and}\qquad \h_m^\theta(t) = D(\h_{m + 1}^\theta,B_m)(t). 
\]
\end{enumerate}
\end{lemma}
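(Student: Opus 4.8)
The plan is to prove the parts in roughly the order (i)--(ii), then (iv)--(vi) and (iii), and finally (vii), using Theorem~\ref{thm:existence of Busemann functions for fixed points}, Lemma~\ref{lemma:equality of Busemann function distribution at rationals}, the appendix lemmas, and two elementary facts about Brownian last-passage times. The first is \emph{prepending}: for $(m,s)\le(n,v)$ and $s\le s'\le v$, attaching the level-$m$ segment from $s$ to $s'$ in front of an optimal path from $(m,s')$ gives $L_{(m,s),(n,v)}\ge B_m(s,s')+L_{(m,s'),(n,v)}$, and in particular $L_{(m-1,s),(n,v)}\ge L_{(m,s),(n,v)}$. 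The second is \emph{submodularity}: by the crossing property of last-passage geodesics across continuous fields (a deterministic fact in the spirit of Lemma~\ref{existence of leftmost and rightmost geodesics}), $(a,v)\mapsto L_{(m,a),(n,v)}$ is submodular, so for $a\le a'$ the difference $v\mapsto L_{(m,a),(n,v)}-L_{(m,a'),(n,v)}$ is non-increasing. For (i), by additivity of the differences $L_{\mbf x,\cdot}-L_{\mbf y,\cdot}$ and of $\B^\theta$ (on $\Z\times\Q$ by Definition~\ref{def:Busemann functions on countable set}, on $\R^2$ by continuity of the extension in Definition~\ref{def:Busemann functions, fixed theta}), it suffices to show $L_{(m,s),(n,t_n)}-L_{(m,0),(n,t_n)}\to-\h_m^\theta(s)$; taking $s>0$ and rationals $0<q<s<q'$, the prepending inequalities sandwich this quantity between $B_m(s,q')+[L_{(m,q'),(n,t_n)}-L_{(m,0),(n,t_n)}]$ and $-B_m(q,s)+[L_{(m,q),(n,t_n)}-L_{(m,0),(n,t_n)}]$, and letting $n\to\infty$ (the bracketed terms converge by Theorem~\ref{thm:existence of Busemann functions for fixed points}, since $(m,q),(m,q')\in\Z\times\Q$) and then $q\nearrow s$, $q'\searrow s$, with continuity of $B_m$ and of $\h_m^\theta$, pins the limit. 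Part (ii) follows at once from (i).

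Parts (iv)--(vi) and (iii) then follow without new ideas. Part (iv) is built into $\Omega^{(\theta)}$, where the limits hold along $\Q$, and continuity of $s\mapsto B_m(s)-\h_{m+1}^\theta(s)$ upgrades them to limits along $\R$. For (v), Lemma~\ref{lemma:equality of Busemann function distribution at rationals} together with the i.i.d.\ shift in the level index gives $\{\h_m^\theta(t):t\in\Q\}\deq\{Y(t):t\in\Q\}$ for a two-sided Brownian motion $Y$ of drift $\f{1}{\sqrt\theta}$; since both $\h_m^\theta$ and $Y$ are a.s.\ the continuous extension of their restrictions to $\Q$, they agree in law on $C(\R)$, and the companion identity $\{\vv_1^\theta(t):t\in\R\}\deq\{\Qa(Y,B_1)(t):t\in\R\}$ (also by continuous extension), whose one-dimensional marginal is the supremum of a negatively drifted two-sided Brownian motion, yields $\vv_m^\theta(t)\sim\operatorname{Exp}(\f{1}{\sqrt\theta})$ via Lemma~\ref{lemma:sup of BM with drift} and a further level shift. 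Part (vi) is immediate since $L_{(m,0),(n,t_n)}$ and $L_{(m,t),(n,t_n)}$ depend only on $\{B_r:r\ge m\}$, so each $\h_r^\theta$ is a.s.\ $\sigma(B_j:j\ge r)$-measurable, and for $r>m$ these are functions of $\{B_j:j>m\}$, independent of $\{B_j:j\le m\}$ by hypothesis. For (iii), the bounds $0\le\vv_m^\gamma(s)$ and $B_m(s,t)\le\h_m^\theta(s,t)$ come from the prepending inequalities by letting $n\to\infty$; submodularity makes $v\mapsto L_{(m,s),(n,v)}-L_{(m,t),(n,v)}$ non-increasing for $s<t$, so comparing $v=n\gamma$ with $v=n\theta$ and letting $n\to\infty$ gives $\h_m^\gamma(s,t)\ge\h_m^\theta(s,t)$; and since $L_{(m-1,s),(n,v)}-L_{(m,s),(n,v)}=\sup_{s\le u\le v}\{B_{m-1}(s,u)+(L_{(m,u),(n,v)}-L_{(m,s),(n,v)})\}$ has each summand non-decreasing in $v$ (submodularity with $u\ge s$) while its index set grows, the same comparison gives $\vv_m^\gamma(s)\le\vv_m^\theta(s)$.

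Part (vii) is the crux. Fix $t$, use the one-step recursion $L_{(m,t),(n,t_n)}=\sup_{t\le u\le t_n}\{B_m(t,u)+L_{(m+1,u),(n,t_n)}\}$, subtract $L_{(m+1,0),(n,t_n)}$, and set $f_n(u):=L_{(m+1,u),(n,t_n)}-L_{(m+1,0),(n,t_n)}$, which tends to $-\h_{m+1}^\theta(u)$ for each fixed $u$ by (i); this yields
\[
L_{(m,t),(n,t_n)}-L_{(m+1,0),(n,t_n)}=-B_m(t)+\sup_{t\le u\le t_n}\{B_m(u)+f_n(u)\}.
\]
The left side converges by (i), so the task is to show the supremum converges to $\sup_{u\ge t}\{B_m(u)-\h_{m+1}^\theta(u)\}$: the lower bound is immediate from pointwise convergence, while for the upper bound one splits the range at a large fixed $t'$, treats $[t,t']$ by local uniform convergence of $f_n$, and notes the part over $[t',t_n]$ converges to $\B^\theta((m,t'),(m+1,0))+B_m(t')$, which by (v) tends to $-\infty$ as $t'\to\infty$; together with (iv) this identifies the limit. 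Matching against \eqref{definition of Q} and using additivity of $\B^\theta$ gives $\vv_{m+1}^\theta(t)=Q(\h_{m+1}^\theta,B_m)(t)$, whence $\h_m^\theta=D(\h_{m+1}^\theta,B_m)$ follows by additivity and \eqref{definition of D}; doing this for $t$ in a countable dense set and extending by continuity of $Q$, $D$ (Lemma~\ref{queue length is continuous function of t}) and of the Busemann functions produces the full-probability event $\Omega_1^{(\theta)}$. The step I expect to be the main obstacle is the uniform-in-$n$ estimate underlying the local uniform convergence of $f_n$: naively iterating the prepending bounds accumulates one boundary-oscillation term per level and fails to close, so one must instead use that finite geodesics from nearby base points coalesce quickly, or otherwise exploit the finer structure of the last-passage times; for a fixed rational base point this estimate is essentially already contained in the Busemann construction of~\cite{blpp_utah}.
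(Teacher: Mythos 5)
Parts (i)--(vi) of your proposal follow essentially the paper's own route: the prepending and crossing (submodularity) inequalities are exactly the content of Lemma~\ref{lemma:bounds for differences of Brownian LPP}, the rational sandwich plus continuity gives (i), and (iv)--(vi) are handled the same way. The problem is Part (vii), and you have correctly identified where your argument breaks: it does not close.

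Your plan for (vii) is to pass the limit $n\to\infty$ through the variational formula $L_{(m,t),(n,t_n)}=\sup_{t\le u\le t_n}\{B_m(t,u)+L_{(m+1,u),(n,t_n)}\}$, which requires (a) \emph{locally uniform} convergence of $f_n(u)=L_{(m+1,u),(n,t_n)}-L_{(m+1,0),(n,t_n)}$ and (b) an almost-sure tail bound forcing the supremum over $[t',t_n]$ to $-\infty$ as $t'\to\infty$. You concede you cannot establish (a); and (b) is also not ``by (v)'': the tail term equals $B_m(t')-\h_{m+1}^\theta(t')+\vv_{m+1}^\theta(t')$, and Part (v) only gives the fixed-$t'$ distribution of $\vv_{m+1}^\theta(t')$, not the a.s.\ sublinear growth in $t'$ needed here (indeed the natural bound $\vv_{m+1}^\theta(t')=\sup_{s\ge t'}\{B_m(t',s)-\h_{m+1}^\theta(t',s)\}$ is precisely the identity you are trying to prove). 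The paper avoids both difficulties with a soft argument you should adopt: the finite-$n$ inequality $B_m(t,s)+L_{(m+1,s),(n,n\theta)}-L_{(m+1,t),(n,n\theta)}\le L_{(m,t),(n,n\theta)}-L_{(m+1,t),(n,n\theta)}$ passes to the limit for each fixed $s$ and yields only the one-sided pathwise bound $\vv_{m+1}^\theta(t)\ge\sup_{s\ge t}\{B_m(t,s)-\h_{m+1}^\theta(t,s)\}$; then Parts (v) and (vi) together with Lemma~\ref{lemma:sup of BM with drift} show that the right-hand side is distributed $\operatorname{Exp}(1/\sqrt\theta)$, the same as the left-hand side, so a random variable dominating another with the same law must equal it a.s. One then takes the intersection over $t\in\Q$ and extends to all $t$ by continuity of both sides (Definition~\ref{def:Busemann functions, fixed theta} and Lemma~\ref{queue length is continuous function of t}), which is exactly what produces the event $\Omega_1^{(\theta)}$; the $D$-identity follows from additivity as you say. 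Without this (or some other) replacement for your exchange-of-limits step, your proof of (vii) is incomplete.
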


\begin{proof}
\textbf{Part~\ref{itm:Busemann limits, fixed theta}}: On $\Omega^{(\theta)}$, let $(m,s),(r,t) \in \Z \times \R$, and let $\{t_n\}$ be any sequence with $t_n/n \rightarrow \theta$. Note that, whenever $t_1 < t_2, m \in \Z$ and $\mbf y \ge (m,t_2)$,
    \[
    L_{(m,t_1),\mbf y} \ge B_m(t_1,t_2) + L_{(m,t_2),\mbf y}.
    \]
    Then, let $q_1,q_2 \in \Q$ be such that $q_1 < s$ and $q_2 > t$. Then, 
    \[
    L_{(m,s),(n,t_n)} - L_{(r,t),(n,t_n)} \le L_{(m,q_1),(n,t_n)} - B_m(q_1,s) - L_{(r,q_2),(n,t_n)} - B_r(t,q_2),
    \]
    and therefore,
    \[
    \limsup_{n \rightarrow \infty} \bigl[L_{(m,s),(n,t_n)} - L_{(r,t),(n,t_n)}\bigr] \le \B^{\theta}((m,q_1),(r,q_2))- B_m(q_1,s)- B_r(t,q_2).
    \]
    Taking $q_1 \nearrow s$ and $q_2 \searrow t$ and using the continuity of Brownian motion and the Busemann functions,
    \[
    \limsup_{n \rightarrow \infty} \bigl[L_{(m,s),(n,t_n)} - L_{(r,t),(n,t_n)}\bigr] \le \B^{\theta }((m,s),(r,t)).
    \]
    A similar procedure gives the appropriate lower bound.

\medskip \noindent \textbf{Part~\ref{itm:fixed theta additivity}}: This follows from the additivity of the process on the countable dense set (Equation~\eqref{eqn:additivity for countable dense set}) and the construction of Definition~\ref{def:Busemann functions, fixed theta} as the unique continuous extension.

\medskip

\noindent \textbf{Part~\ref{itm:fixed theta monotonicity}}: By Lemma~\ref{lemma:bounds for differences of Brownian LPP}, for $s, t \in \R$ and all $n$ such that $n \geq m$ and $n\gamma \ge s \vee t$, 
\begin{align*} 
 0 &\le L_{(m,s),(n,n\gamma)} - L_{(m + 1,s),(n,n\gamma)} \leq L_{(m,s),(n,n\theta)} - L_{(m + 1,s),(n,n\theta)} \\
\text{and}\qquad  
B_m(s,t) &\leq  L_{(m,s),(n,n\theta)} - L_{(m,t),(n,n\theta)} \leq L_{(m,s),(n,n\gamma)} - L_{(m,t),(n,n\gamma)}.
\end{align*} 
The proof is complete by Part~\ref{itm:Busemann limits, fixed theta}, taking limits as $n \rightarrow \infty$.

\medskip

\noindent \textbf{Part~\ref{itm:fixed theta limit of B_m - \h_{m + 1}}}: This follows from the definition of $\Omega^{(\theta)}$ and continuity.
\medskip

\noindent \textbf{Part~\ref{itm: general distribution of h v busemann functions}}:
Observe that $t \mapsto \h_m^{\theta}(t)$ is a continuous process with the correct finite-dimensional distributions by Lemma~\ref{lemma:equality of Busemann function distribution at rationals}, taking limits when necessary. Part~\ref{itm:Busemann limits, fixed theta} and Theorem~\ref{thm:existence of Busemann functions for fixed points} guarantee that $\vv_m^\theta(t) \sim\operatorname{Exp}\bigl(\f{1}{\sqrt \theta}\bigr)$ for all $t \in \R$. 

\medskip
\noindent \textbf{Part~\ref{item: fixed theta independence of busemann on higher levels}}:
 By Part~\ref{itm:Busemann limits, fixed theta},
\[
\{\h_r^\theta(t): r > m, t \in \R \} = \bigl\{\lim_{n \rightarrow \infty} \bigl[L_{(r,0),(n,n\theta)} - L_{(r,t),(n,n\theta)}\bigr]:r > m, t \in \R\bigr\}.
\]
The right-hand side is a function of $\{B_r\}_{r > m}$, which is independent of $\{B_r\}_{r \le m}$.

\medskip
\noindent \textbf{Part~\ref{itm:fixed theta Busemann queuing relations}}:
We start with the first statement. We need to show that
\begin{equation} \label{inductive equality for Busemann functions}
\vv_{m + 1}^\theta(t) = \sup_{t \leq s < \infty} \{B_m(t,s) - \h_{m + 1}^\theta(t,s)  \}.
\end{equation}
By Part~\ref{itm:Busemann limits, fixed theta},
\begin{align} \label{D}
\sup_{t \leq s < \infty} \{B_m(t,s) - \h_{m + 1}^\theta(t,s)  \} = \sup_{t \leq s < \infty} \{B_m(t,s) + \lim_{n \rightarrow \infty}\bigl[L_{(m + 1,s),(n,n\theta)} - L_{(m + 1,t),(n,n\theta)}\bigr]  \}.
\end{align}
On the other hand,
\be \label{C}
\vv_{m +1}^\theta(t) = \lim_{n \rightarrow \infty} \bigl[L_{(m,t),(n,n\theta)} - L_{(m + 1,t),(n,n\theta)}\bigr].
\ee 
For each $s \ge t$ and all $n$ sufficiently large so that $n\theta \geq s$ and $n \ge m + 1$, 
\[
B_m(t,s) + L_{(m + 1,s),(n,n\theta)} - L_{(m + 1,t),(n,n\theta)} \leq L_{(m,t),(n,n\theta)} - L_{(m + 1,t),(n,n\theta)}.
\]
Taking limits as $n \rightarrow \infty$ and comparing~\eqref{D} and~\eqref{C} establishes \[\vv_{m +1}^\theta(t) \ge \sup_{t \leq s < \infty} \{B_m(t,s) - \h_{m + 1}^\theta(t,s)  \}.\] Next, by Part~\ref{itm: general distribution of h v busemann functions}, $\vv_{m +1}^\theta(t)\sim \operatorname{Exp}(\frac{1}{\sqrt \theta})$. By Parts~\ref{itm: general distribution of h v busemann functions} and~\ref{item: fixed theta independence of busemann on higher levels} and Lemma~\ref{lemma:sup of BM with drift},
\begin{align*}
\sup_{t \leq s < \infty} \{B_m(t,s) - \h_{m + 1}^\theta(t,s)  \}  \overset{d}{=} \sup_{0 \leq s < \infty} \bigl\{\sqrt 2 B(s) - \frac{1}{\sqrt \theta}s \bigr\} 
\deq v_{m + 1}^\theta(t),
\end{align*}
where $B$ is a standard Brownian motion. Thus, since $v_{m +1}^\theta(t) \ge \sup_{t \leq s < \infty} \{B_m(t,s) - \h_{m + 1}^\theta(t,s)  \}$, equality holds with probability one for each fixed $t \in \R$. Let $\Omega_1^{(\theta)} \subseteq \Omega^{(\theta)}$, be the event of full probability on which~\eqref{inductive equality for Busemann functions} holds for all $t \in \Q$. Continuity of both sides of~\eqref{inductive equality for Busemann functions} (Definition~\ref{def:Busemann functions, fixed theta} and Lemma~\ref{queue length is continuous function of t}) extend the result to all $t \in \R$ on $\Omega_1^{(\theta)}$. The equality for $\h_m^\theta$ then follows by the definitions and additivity of the Busemann functions, as shown below.  
\begin{align*}
D(\h_{m + 1}^\theta,B_m)(t) &= \h_{m + 1}^\theta(t) + Q(\h_{m + 1}^\theta,B_m)(0) - Q(\h_{m + 1}^\theta,B_m)(t) \\[1em]
&= h_{m + 1}^\theta(t) + v_{m + 1}^\theta(0) - v_{m +1}^\theta(t) \\[1em]
&= \;\B^\theta((m + 1,0),(m + 1,t)) + \B^\theta((m,0),(m + 1,0)) - \B^\theta((m,t),(m + 1,t)) \\[1em]
&= \B^\theta((m,0),(m,t)) = \h_m^\theta(t). \qedhere
\end{align*}
\end{proof}

\subsection{Global construction of the Busemann process}
We now have the proper framework to define the global Busemann process of Theorem~\ref{thm:summary of properties of Busemanns for all theta}. Fix a countable dense subset $D$ of $(0,\infty)$. Let 
\[
\Omega_0 = \bigcap_{\theta \in D} \Omega_1^{(\theta)},
\]
 and on the event $\Omega_0$, define $\B^\theta$ for all $\theta \in D$, as in Definition~\ref{def:Busemann functions, fixed theta}. 
Then, the conclusions of Lemma~\ref{lemma:summary of Busemanns fixed theta} hold for all $\theta \in D$. 

\begin{lemma} \label{lemma:Omega 1 prob 1}
On $\Omega_0$, for each $\theta \in D,m \in \Z$, and $t \in \R$, let $\tau_{(m,t)}^{\theta}$ denote the rightmost maximizer of $B_m(s) - \h_{m + 1}^\theta(s)$ over $s \in [t,\infty)$. Such a maximizer exists by continuity and Lemma~\ref{lemma:summary of Busemanns fixed theta}\ref{itm:fixed theta limit of B_m - \h_{m + 1}}. 
Let $\Omega_1$ be the subset of $\Omega_0$ on which, for each $\theta \in D,m,N \in \Z$, and $t \in \R$, the following limits hold: 
\begin{enumerate} [label=\rm(\roman{*}), ref=\rm(\roman{*})]  \itemsep=6pt 
    \item $\lim_{D \ni \gamma \rightarrow \theta} \h_m^{\gamma}(N) = \h_m^\theta(N)$  
    \item $\lim_{D \ni \gamma \rightarrow \infty} \h_m^{\gamma}(N) = B_m(N)$. \label{itm: converge to Bm}
    \item $\lim_{D \ni \delta \searrow 0} \tau_{(m,t)}^\delta = t$. \label{itm: convergence of maxes to t} 
    \item $\lim_{D \ni \gamma \rightarrow \infty} \tau_{(m,t)}^\gamma = \infty$.  \label{itm:convergence of maxes to infinity}
\end{enumerate}
Then, $\Pp(\Omega_1) = 1$.
\end{lemma}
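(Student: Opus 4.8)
The plan is to establish each of the four listed limits on a full-probability event and then intersect. Throughout, recall that $D$ is countable, so a countable intersection of full-probability events remains full-probability; this is what makes the argument tractable despite the uncountably many $(m,t)$ appearing in the statement of $\Omega_1$. Crucially, items (i) and (ii) are stated only at integer spatial coordinates $N$, so only countably many random variables $\h_m^\gamma(N)$, $\gamma\in D$, are involved. For items (iii) and (iv), the maximizer $\tau_{(m,t)}^\delta$ is monotone in $t$ and in $\delta$ (monotonicity in $\delta$ follows from Lemma~\ref{lemma:summary of Busemanns fixed theta}\ref{itm:fixed theta monotonicity}, which forces $B_m(s)-\h_{m+1}^\delta(s)$ to increase in the ``increment'' sense as $\delta$ grows, pushing the rightmost maximizer right), so I expect to need the limits only along rational $t$ and then upgrade by monotonicity — again reducing to a countable family.

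First I would handle (i). Fix $m\in\Z$ and $N\in\Z$. For $\gamma,\theta\in D$ with $\gamma<\theta$, Lemma~\ref{lemma:summary of Busemanns fixed theta}\ref{itm:fixed theta monotonicity} gives $B_m(0,N)\le \h_m^\theta(0,N)\le\h_m^\gamma(0,N)$ when $N>0$ (with the reversed chain for $N<0$), so $\gamma\mapsto\h_m^\gamma(N)$ is monotone and hence has one-sided limits as $\gamma\nearrow\theta$ and $\gamma\searrow\theta$ along $D$. To identify these limits with $\h_m^\theta(N)$ I would use the distributional identity of Lemma~\ref{lemma:equality of Busemann function distribution at rationals}: $\h_m^\gamma(N)\sim\Nor\bigl(N/\sqrt\gamma,|N|\bigr)$, and $N/\sqrt\gamma\to N/\sqrt\theta$ as $\gamma\to\theta$, so the monotone limit and $\h_m^\theta(N)$ have the same (Gaussian) law; since the monotone limit is sandwiched on the correct side of $\h_m^\theta(N)$, an ordered pair of random variables with equal marginals and a sure inequality must be a.s. equal. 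This is the standard ``squeeze via equal distributions'' trick used elsewhere in the paper, and I'd expect it to be the cleanest route. Item (ii) is the same argument taking $\gamma\to\infty$ along $D$: monotonicity gives a limit $\ge B_m(N)$, and $\h_m^\gamma(N)\sim\Nor(N/\sqrt\gamma,|N|)\Rightarrow\Nor(0,|N|)\deq B_m(N)$, so the limit equals $B_m(N)$ a.s.

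For (iii) and (iv), fix $m\in\Z$ and a rational $t$. By monotonicity in $\delta$, $\lim_{D\ni\delta\searrow 0}\tau_{(m,t)}^\delta$ exists and is $\ge t$; I must show it equals $t$. Suppose it exceeds $t$, say the limit is $\ge t+\epsilon$. Then for every $\delta\in D$ the rightmost maximizer of $B_m(s)-\h_{m+1}^\delta(s)$ on $[t,\infty)$ is at least $t+\epsilon$, which forces $B_m(t+\epsilon)-\h_{m+1}^\delta(t+\epsilon)\ge \sup_{t\le s\le t+\epsilon}\{B_m(s)-\h_{m+1}^\delta(s)\}$, in particular $\ge B_m(t)-\h_{m+1}^\delta(t)$, i.e. $B_m(t,t+\epsilon)\ge \h_{m+1}^\delta(t,t+\epsilon)$. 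But as $\delta\searrow 0$, $\h_{m+1}^\delta(t,t+\epsilon)\sim\Nor(\epsilon/\sqrt\delta,\epsilon)\to+\infty$ in probability, and by the monotonicity in $\delta$ the quantities $\h_{m+1}^\delta(t,t+\epsilon)$ are a.s. eventually larger than the fixed finite random variable $B_m(t,t+\epsilon)$; this contradicts $B_m(t,t+\epsilon)\ge\h_{m+1}^\delta(t,t+\epsilon)$ for all $\delta$. Hence the limit is $t$ a.s. For (iv), as $\gamma\to\infty$ along $D$, $\h_{m+1}^\gamma\to B_m$ uniformly on compacts (by (ii) applied at all integers, plus monotonicity and Dini-type control, or by a direct Gaussian estimate), so $B_m(s)-\h_{m+1}^\gamma(s)\to 0$ uniformly on compacts; then, since $B_m-B_m\equiv 0$ has no rightmost finite maximizer but $B_m(s)-\h_{m+1}^\gamma(s)$ has negative drift $-1/\sqrt\gamma\to 0$, the rightmost maximizer must escape to $+\infty$. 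Concretely: for any fixed $M$, for all large $\gamma$ the supremum of $B_m(s)-\h_{m+1}^\gamma(s)$ over $[t,M]$ is within $o(1)$ of $0$, while $\limsup_{s\to\infty}(B_m(s)-\h_{m+1}^\gamma(s))$ can be shown to exceed that value (the drift is tiny and Brownian fluctuations are unbounded), forcing $\tau_{(m,t)}^\gamma>M$. Upgrading (iii) and (iv) from rational $t$ to all real $t$ uses monotonicity in $t$: if $t_k\nearrow t$ are rational then $\tau_{(m,t_k)}^\delta\le\tau_{(m,t)}^\delta$, and sandwiching gives the claim; similarly for $t_k\searrow t$.

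The main obstacle I anticipate is item (iv): unlike (iii), it is a statement that a maximizer runs off to infinity, which cannot be read off from a pointwise inequality and instead needs a genuine estimate comparing the behavior of $B_m(s)-\h_{m+1}^\gamma(s)$ on a compact set versus its $\limsup$ at $+\infty$ — i.e. one must quantify that a Brownian path with vanishing negative drift exceeds its value at any fixed point infinitely often, uniformly enough in $\gamma$. I'd expect to control this via the representation of $B_m(s)-\h_{m+1}^\gamma(s)$ (equal in law, by Lemma~\ref{lemma:equality of Busemann function distribution at rationals}, to a Brownian motion with drift $-1/\sqrt\gamma$, up to the $\sqrt 2$ factor) together with monotonicity in $\gamma$, so that the event $\{\tau_{(m,t)}^\gamma>M\ \forall\gamma\ge\gamma_0\}$ is forced for $\gamma_0$ large; everything else is the standard monotone-limit-plus-equal-distributions bookkeeping, and the final step is just $\Omega_1=\bigcap$ over the countably many $(m,N,t\in\Q,\theta\in D)$.
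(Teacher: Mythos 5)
Your treatment of items (i) and (ii) is exactly the paper's: monotonicity in the direction parameter from Lemma~\ref{lemma:summary of Busemanns fixed theta}\ref{itm:fixed theta monotonicity} gives existence of the one-sided limits, and the Gaussian marginals force the ordered limit to agree a.s.\ with $\h_m^\theta(N)$ (resp.\ $B_m(N)$). The reduction of (iii)--(iv) from all real $t$ to rational $t$ via monotonicity of $\tau_{(m,t)}^\delta$ in $t$ also matches the paper. Where you diverge is in proving (iii) and (iv) at a fixed $t$: the paper simply invokes the explicit law of the argmax of Brownian motion with drift (Theorem~\ref{distribtution of argmax BM with drift}) to get weak convergence of $\tau_{(m,t)}^\delta$ to the constant $t$ (resp.\ to $\infty$), and upgrades to a.s.\ convergence using the monotone existence of the limit. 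Your pathwise contradiction arguments avoid that formula, which is a legitimate alternative, but as written they contain two errors.

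For (iii): the assertion that the rightmost maximizer lying in $[t+\epsilon,\infty)$ ``forces $B_m(t+\epsilon)-\h_{m+1}^\delta(t+\epsilon)\ge \sup_{t\le s\le t+\epsilon}\{\cdots\}$'' is false --- the maximizer need not sit at $t+\epsilon$, so nothing is known about the value there, and the conclusion $B_m(t,t+\epsilon)\ge \h_{m+1}^\delta(t,t+\epsilon)$ does not follow. The repair is to evaluate at the actual maximizer $s^*_\delta=\tau_{(m,t)}^\delta\ge t+\epsilon$, which gives $B_m(t,s^*_\delta)\ge \h_{m+1}^\delta(t,s^*_\delta)\ge \h_{m+1}^\delta(t,t+\epsilon)+B_{m+1}(t+\epsilon,s^*_\delta)$; since $s^*_\delta$ is confined to the compact $[t+\epsilon,\tau_{(m,t)}^{\delta_0}]$ by monotonicity in $\delta$, the left side and the $B_{m+1}$ term are bounded while $\h_{m+1}^\delta(t,t+\epsilon)\to\infty$, a contradiction. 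For (iv): item (ii) gives $\h_{m+1}^\gamma\to B_{m+1}$, not $B_m$, so $B_m-\h_{m+1}^\gamma$ converges (uniformly on compacts, after the rearrangement trick of \eqref{eqn:bounding diffs of busemann functions}) to the \emph{driftless} scaled Brownian motion $B_m-B_{m+1}$, not to $0$. The escape of the maximizer still follows, but for a different reason than you give: since $\limsup_{s\to\infty}(B_m(s)-B_{m+1}(s))=+\infty$ a.s., for any $M$ one can fix $s_0>M$ at which the limit process strictly exceeds its supremum over $[t,M]$, and uniform convergence on $[t,s_0]$ then forces $\tau_{(m,t)}^\gamma>M$ for all large $\gamma$. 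With these two repairs your argument closes; as it stands, both steps are gaps.
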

\begin{proof}
The almost sure uniqueness of maximizers follows from Lemma~\ref{lemma:summary of Busemanns fixed theta}\ref{item: fixed theta independence of busemann on higher levels} and the $n = m$ case of Lemma~\ref{lm:point-to-line uniqueness}. By Lemma~\ref{lemma:summary of Busemanns fixed theta}\ref{itm:fixed theta monotonicity}, $\h_m^\gamma(N) = \h_m^\gamma(0,N)$ is monotone as $D \ni \gamma \nearrow \theta$, and for $N > 0$, 
\begin{equation} \label{eqn:limits of busemann in theta monotonicity}
\lim_{D \ni \gamma \nearrow \theta} \h_m^\gamma(N) \ge \h_m^\theta(N),
\end{equation}
For negative $N$, the inequality flips. By Theorem~\ref{thm:existence of Busemann functions for fixed points}, $\h_m^\gamma(N) \sim \Nor\bigl(\f{N}{\sqrt \gamma},|N|\bigr)$. As $\gamma \nearrow \theta$, this converges in distribution to $\Nor\bigl(\f{N}{\sqrt \theta},|N|\bigr)$. Then, by~\eqref{eqn:limits of busemann in theta monotonicity},  $
\lim_{D \ni \gamma \nearrow \theta} \h_m^\gamma(N) = \h_m^\theta(N) \text{ with probability one}$. An analogous argument proves the almost sure convergence for limits from the right and the convergence to $B_m(N)$.

Next, we show that, on $\Omega_0$, Parts~\ref{itm: convergence of maxes to t} and~\ref{itm:convergence of maxes to infinity} hold if and only if they hold for all $t \in \Q$. Assume the statement holds for all $t \in \Q$.  By Lemmas~\ref{lemma:summary of Busemanns fixed theta}\ref{itm:fixed theta monotonicity} and~\ref{monotonicity of maximizers from function monotonicity}, $\tau_{(m,t)}^\delta$ is monotone as $\delta \searrow 0$, and so the limit exists. By definition of $\tau_{(m,t)}^\theta$, we have the inequality $\tau_{(m,s)}^\theta \le \tau_{(m,t)}^\theta$ whenever $s < t$. Then, for any $t \in \R$ and any $q_1,q_2 \in \Q$ with $q_1 < t < q_2$, 
\[
q_1 = \lim_{D \ni \delta \searrow 0} \tau_{(m,q_1)}^{\delta}  \le \lim_{D \ni \delta \searrow 0} \tau_{(m,t)}^{\delta} \le \lim_{D \ni \delta \searrow 0} \tau_{(m,q_2)}^{\delta} = q_2.
\]
Taking limits as $q_1 \nearrow t$ and $q_2 \searrow t$ shows the statement for all $t \in \R$. The same argument can be applied to the limits as $\gamma \rightarrow \infty$.

Lastly, we show that the limit in~\ref{itm: convergence of maxes to t} holds with probability one for fixed $t \in \R$. By Lemma~\ref{lemma:summary of Busemanns fixed theta}, Parts~\ref{itm: general distribution of h v busemann functions} and~\ref{item: fixed theta independence of busemann on higher levels}, 
\[
\bigl\{B_m(s) - \h_{m + 1}^\delta(s): s \in \R \bigr\} \deq \bigl\{\sqrt 2 B(s) - \f{s}{\sqrt \delta}:s \in \R\bigr\},
\]
where $B$ is a standard, two-sided Brownian motion. Then, by Theorem~\ref{distribtution of argmax BM with drift}, for $(m,t) \in \Z \times \R$ and $s \ge 0$, 
\[
\Pp(\tau_{(m,t)}^{\delta} > s + t) = \left(2 + \f{t}{\delta}\right) \Phi\left(-\sqrt{\f{t}{2 \delta}}\right) - \sqrt{\f{t}{\pi \delta}}\,e^{-\f{t}{4 \delta}}.
\]
Taking limits as $\delta \searrow 0$,  it follows that $\tau_{(m,t)}^{\theta}$ converges weakly to the constant $t$. Since the limit exists almost surely by monotonicity, the desired conclusion follows. A similar argument applies to show $\lim_{\gamma \rightarrow \infty} \tau_{(m,t)}^{\gamma} = \infty$ with probability one. 
\end{proof}

On $\Omega_1$, we extend the definition of $\B^\theta$ to all $\theta > 0$. We proceed similarly as shown for the exponential corner growth model in~\cite{Sepp_lecture_notes}. For BLPP, there is an additional step that must be taken to guarantee the convergence and continuity of Theorem~\ref{thm:summary of properties of Busemanns for all theta}, Parts~\ref{general uniform convergence Busemanns} and \ref{general continuity of Busemanns}. 
\begin{lemma} \label{lemma:uniform convergence of Busemann functions}
On $\Omega_1$, for each $\theta \in D$, $m \in \Z$, and $t \in \R$,
\[
\lim_{D \ni \gamma \rightarrow \theta} \h_{m}^\gamma(t) = \h_m^{\theta }(t)\qquad\text{and}\qquad\lim_{D \ni \gamma \rightarrow \theta} \vv_m^\gamma(t) = \vv_m^{\theta}(t).
\]
For each $m \in \Z$, the convergence is uniform in $t$ on compact subsets of $\R$.  Additionally, for each $m \in \Z$,
\[
\lim_{D \ni \gamma \rightarrow \infty} \h_m^\gamma(t) = B_m(t),\qquad\text{and}\qquad\lim_{D \ni \delta \searrow 0} \vv_m^\theta(t) =  0,
\]
uniformly in $t$ on compact subsets of $\R$. 
\end{lemma}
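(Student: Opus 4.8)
The plan is to leverage monotonicity (Lemma~\ref{lemma:summary of Busemanns fixed theta}\ref{itm:fixed theta monotonicity}) to upgrade the pointwise convergence already established in Lemma~\ref{lemma:Omega 1 prob 1} to locally uniform convergence, using the standard Dini-type argument: a monotone net of continuous functions converging pointwise to a continuous limit converges uniformly on compacts. First I would fix $\theta \in D$ and $m \in \Z$, and consider the net $\gamma \nearrow \theta$ along $D$. By Lemma~\ref{lemma:summary of Busemanns fixed theta}\ref{itm:fixed theta monotonicity}, for $s<t$ the increments $\h_m^\gamma(s,t)$ are monotone decreasing in $\gamma$ toward $\h_m^\theta(s,t)$, but $\h_m^\gamma(0)=0$ for all $\gamma$, so $\h_m^\gamma(t)=\h_m^\gamma(0,t)$ is monotone in $\gamma$ for $t>0$ (decreasing) and for $t<0$ (increasing), with the monotone limit being $\h_m^\theta(t)$ by Lemma~\ref{lemma:Omega 1 prob 1}. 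Each $\h_m^\gamma$ and the limit $\h_m^\theta$ are continuous (Definition~\ref{def:Busemann functions, fixed theta}). Hence on any compact interval $[-N,N]$, applying Dini's theorem separately on $[0,N]$ and $[-N,0]$ gives uniform convergence of $\h_m^\gamma \to \h_m^\theta$. The same argument handles $\gamma \searrow \theta$ from the right, and handles $\gamma \to \infty$ with limit $B_m$ (monotone by Lemma~\ref{lemma:summary of Busemanns fixed theta}\ref{itm:fixed theta monotonicity}, pointwise convergent by Lemma~\ref{lemma:Omega 1 prob 1}\ref{itm: converge to Bm}, continuous limit since $B_m$ is continuous).

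For the vertical Busemann functions, I would use the queuing relation $\vv_{m+1}^\gamma = Q(\h_{m+1}^\gamma, B_m)$ from Lemma~\ref{lemma:summary of Busemanns fixed theta}\ref{itm:fixed theta Busemann queuing relations}, valid on $\Omega_1 \subseteq \Omega_1^{(\theta)}$, i.e.\ $\vv_{m+1}^\gamma(t) = \sup_{s \ge t}\{B_m(t,s) - \h_{m+1}^\gamma(t,s)\}$. Alternatively, and more directly, additivity gives $\vv_m^\gamma(t) = \B^\gamma((m-1,t),(m,t)) = \h_m^\gamma(t) + \vv_m^\gamma(0) - \h_{m-1}^\gamma(t)$ — wait, better to write it as $\vv_m^\gamma(t) - \vv_m^\gamma(0)$ via the additivity $\B^\gamma((m-1,0),(m-1,t)) + \B^\gamma((m-1,t),(m,t)) = \B^\gamma((m-1,0),(m,0)) + \B^\gamma((m,0),(m,t))$, which rearranges to $\vv_m^\gamma(t) = \vv_m^\gamma(0) + \h_{m-1}^\gamma(t) - \h_m^\gamma(t)$. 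Since $\h_{m-1}^\gamma, \h_m^\gamma$ converge uniformly on compacts by the first part, it suffices to show $\vv_m^\gamma(0) \to \vv_m^\theta(0)$, which is the pointwise statement already in Lemma~\ref{lemma:Omega 1 prob 1} (or follows from the $t$-pointwise convergence of $\vv_m^\gamma$ noted there combined with Lemma~\ref{lemma:summary of Busemanns fixed theta}\ref{itm:fixed theta monotonicity} and the exponential-distribution weak-convergence argument used in the proof of Lemma~\ref{lemma:Omega 1 prob 1}). For the limit $\vv_m^\delta(t) \to 0$ as $\delta \searrow 0$, monotonicity (Lemma~\ref{lemma:summary of Busemanns fixed theta}\ref{itm:fixed theta monotonicity}) gives $0 \le \vv_m^\delta(t)$, and combining with the decomposition $\vv_m^\delta(t) = \vv_m^\delta(0) + \h_{m-1}^\delta(t) - \h_m^\delta(t)$: here both $\h_{m-1}^\delta$ and $\h_m^\delta$ converge uniformly on compacts to $B_{m-1}$ and $B_m$ respectively as $\delta \to 0$, so $\h_{m-1}^\delta(t) - \h_m^\delta(t) \to B_{m-1}(t) - B_m(t)$, while $\vv_m^\delta(0) \to 0$ by the weak convergence of $\operatorname{Exp}(1/\sqrt\delta)$ to $0$ together with monotonicity as in Lemma~\ref{lemma:Omega 1 prob 1}; but this forces $B_{m-1}(t) = B_m(t)$ which is false — so instead I must use the direct monotone structure: $\vv_m^\delta(t) = \sup_{s \ge t}\{B_{m-1}(t,s) - \h_m^\delta(t,s)\}$ decreases as $\delta \searrow 0$ since $\h_m^\delta(t,s)$ increases (Lemma~\ref{lemma:summary of Busemanns fixed theta}\ref{itm:fixed theta monotonicity}), it is continuous in $t$, and its pointwise limit $\inf_\delta \vv_m^\delta(t)$ is $0$ for each fixed $t$ (by the $\operatorname{Exp}(1/\sqrt\delta) \Rightarrow 0$ argument from Lemma~\ref{lemma:Omega 1 prob 1}); the limit function $0$ is continuous, so Dini's theorem gives uniform convergence on compacts.

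The main obstacle is a subtle point of logical organization rather than deep mathematics: one must be careful to invoke the correct monotone quantity in each case. The increments $\h_m^\gamma(s,t)$ are the naturally monotone objects, whereas the functions $\h_m^\gamma(t)$ themselves are monotone only because they are pinned at $0$; and for $\vv_m^\gamma$ one should work with the $\sup$-representation rather than the value $\vv_m^\gamma(t)$ to get the clean monotonicity needed for Dini. I would also verify at the outset that Dini's theorem applies in the form I want — namely, that a monotone (not necessarily increasing) sequence/net of continuous real functions on a compact set converging pointwise to a continuous limit converges uniformly; this is classical, with the caveat that one passes to a sequence $\gamma_k \to \theta$ in $D$ and uses monotonicity to conclude the net limit, since $D$ is only a countable dense set. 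No new probabilistic input beyond Lemmas~\ref{lemma:summary of Busemanns fixed theta} and~\ref{lemma:Omega 1 prob 1} is needed; the content is purely the Dini upgrade.
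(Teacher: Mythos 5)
Your strategy for the horizontal functions (monotonicity in the direction parameter plus a Dini upgrade) is workable, but the pointwise limits that Dini requires are not actually available on $\Omega_1$ in the form you assume, and this is exactly where the paper's proof does its work. First, Lemma~\ref{lemma:Omega 1 prob 1} only builds $\lim_{D\ni\gamma\to\theta}\h_m^\gamma(N)=\h_m^\theta(N)$ into $\Omega_1$ for \emph{integer} $N$; to identify the monotone limit at a non-integer $t\in[a,b]$ you need the increment inequality $\h_m^\gamma(a)-\h_m^\theta(a)\le \h_m^\gamma(t)-\h_m^\theta(t)\le \h_m^\gamma(b)-\h_m^\theta(b)$, which is the paper's one-line sandwich argument --- and once you have it, it yields uniform convergence on $[a,b]$ directly, making Dini superfluous for $\h_m$. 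That part is easily repaired. The serious gap is the claim that $\vv_m^\delta(t)\to 0$ pointwise for every $t$ on $\Omega_1$ ``by the $\operatorname{Exp}(1/\sqrt\delta)\Rightarrow 0$ argument'': the weak-convergence-plus-monotonicity argument gives an almost sure statement for each \emph{fixed} $t$, i.e.\ off a $t$-dependent null set, and since $t\mapsto\vv_m^\delta(t)$ is not monotone you cannot pass from a countable dense set of $t$'s to all of $\R$. Dini needs the pointwise limit at every point of the compact set simultaneously on the single event $\Omega_1$. This is precisely why the paper puts the maximizer limits $\tau_{(m,t)}^\delta\to t$ into the definition of $\Omega_1$ (these \emph{do} extend from rational to all real $t$, because $\tau_{(m,t)}^\delta$ is monotone in $t$) and then bounds deterministically $\vv_m^\delta(t)=\sup_{t\le s\le\tau_{(m,t)}^\delta}\{B_{m-1}(t,s)-\h_m^\delta(t,s)\}\le\sup_{t\le s\le\tau_{(m,t)}^\delta}\{B_{m-1}(t,s)-\h_m^{1}(t,s)\}\to 0$. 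Your proposal never invokes these maximizers, so the $\delta\searrow 0$ statement remains unproved.

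For the convergence $\vv_m^\gamma\to\vv_m^\theta$ you do propose a genuinely different reduction via additivity, $\vv_m^\gamma(t)=\vv_m^\gamma(0)+\h_m^\gamma(t)-\h_{m-1}^\gamma(t)$ (note the sign: your first formula was correct and your ``rearranged'' version has $\h_{m-1}$ and $\h_m$ swapped), which converts the problem into the already-proved uniform convergence of the $\h$'s plus the single scalar limit $\vv_m^\gamma(0)\to\vv_m^\theta(0)$. This is an attractive shortcut, but that scalar limit is also not among the defining properties of $\Omega_1$; establishing it on $\Omega_1$ requires restricting the supremum $\vv_m^\gamma(0)=\sup_{0\le s<\infty}\{B_{m-1}(s)-\h_m^\gamma(s)\}$ to the compact interval $[0,\tau_{(m,0)}^{\theta+1}]$ via Lemma~\ref{monotonicity of maximizers from function monotonicity} and then using uniform convergence of $\h_m^\gamma$ there --- which is the paper's argument for general $t$, specialized to $t=0$. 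Alternatively you could enlarge $\Omega_1$ by the countable family of full-probability events $\{\vv_m^\gamma(0)\to\vv_m^\theta(0)\}$ (justified by monotonicity in $\gamma$ plus convergence of the $\operatorname{Exp}(1/\sqrt\gamma)$ laws), but then you must say so explicitly rather than attribute the statement to Lemma~\ref{lemma:Omega 1 prob 1}, which does not contain it.
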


\begin{proof}
We first prove the statements for the $\h_m$. We show that 
\[
\lim_{D \ni \gamma \nearrow \theta} \h_m^\gamma(t)
\]
exists and equals $\h_m^\theta(t)$, uniformly in $t$ on compact subsets of $\R$.
The limits from the right (as well as the case of $B_m$ in place of $\h_m^\theta$) follow by analogous arguments. 

By rearranging the inequality of Lemma~\ref{lemma:summary of Busemanns fixed theta}\ref{itm:fixed theta monotonicity}, for $0 < \gamma < \theta < \infty$ and $a < b$ and any $t \in [a,b]$,
\begin{equation} \label{eqn:bounding diffs of busemann functions}
\h_m^\gamma(a) - \h_m^{\theta}(a) \le \h_m^\gamma(t) - \h_m^{\theta}(t) \le \h_m^\gamma(b) - \h_m^{\theta}(b), 
\end{equation}
and the inequality still holds if we replace $\h_m^\theta$ with $B_m$. Thus, on the event $\Omega_1$, $h_m^\gamma$ converges to $h_m^\theta$, uniformly in $t$ on compact subsets of $\R$. Now, we prove the convergence statements for $\vv_m$. By Lemma~\ref{lemma:summary of Busemanns fixed theta}\ref{itm:fixed theta Busemann queuing relations}, on $\Omega_0$, for all $t \in \R$, and all $\theta \in D$.
\begin{equation} \label{expression for v, h pulled out}
\vv_{m}^\theta(t) = Q(\h_{m}^\theta,B_{m - 1})(t) = \sup_{t \le s < \infty}\{ B_{m-1}(t,s)-\h_m^\theta(t,s)\}.
\end{equation}
Let $\omega \in \Omega_1$, and $t \in \R$. By the monotonicity of Lemma~\ref{lemma:summary of Busemanns fixed theta}\ref{itm:fixed theta monotonicity} and Lemma~\ref{monotonicity of maximizers from function monotonicity}, since $\tau_{(m,t)}^{\theta + 1}$ is a maximizer of $B_{m - 1}(s) - \h_m^{\theta + 1}(s)$ over $s \in [t,\infty)$, for all $\gamma \in D$ with $\gamma < \theta + 1$,
\[
\vv_m^\gamma(t) = \sup_{t \le s < \infty}\{B_{m -1}(t,s)-\h_{m}^\gamma(t,s) \} = \sup_{t \le s \le \tau_{(m,t)}^{\theta + 1}}\{B_{m -1}(t,s)-\h_{m}^\gamma(t,s)\}. 
\]
Since $\h_m^\gamma$ converges uniformly on compact sets to $\h_m^\theta$, $\vv_m^\gamma(t)$ converges pointwise to $\vv_m^\theta(t)$. By Lemma~\ref{lemma:summary of Busemanns fixed theta}\ref{itm:fixed theta monotonicity}, the convergence from both right and left is monotone, so by the continuity of Definition~\ref{def:Busemann functions, fixed theta} and  Dini's Theorem, the convergence is uniform.   

Lastly, for limits as $\delta \searrow 0$, we again apply Lemmas~\ref{lemma:summary of Busemanns fixed theta}\ref{itm:fixed theta monotonicity} and~\ref{monotonicity of maximizers from function monotonicity} so that, for $\delta \in D$ with $\delta \le 1$,
\begin{align*}
\vv_m^\delta(t) &= \sup_{t \le s < \infty}\{B_{m - 1}(t,s) - \h_{m}^\delta(t,s)\} = \sup_{t \le s \le \tau_{(m,t)}^\delta} \{B_{m - 1}(t,s) - \h_m^{\delta}(t,s)\} \\
&\le \sup_{t \le s \le \tau_{(m,t)}^\delta} \{B_{m - 1}(t,s) - \h_m^{1}(t,s)\},
\end{align*}
and the right-hand side converges to $0$ as $\delta \searrow 0$ by the continuity of $B_{m - 1} - \h_m^1$ and the convergence of $\tau_{(m,t)}^\delta$ to $t$ given in the definition of $\Omega_1$. Dini's Theorem again strengthens the pointwise convergence to uniform convergence. 
\end{proof}

By the additivity of Lemma~\ref{lemma:summary of Busemanns fixed theta}\ref{itm:fixed theta additivity}, for arbitrary $(m,s),(r,t) \in \Z \times \R$ with $m \le r$, 
\[
\B^\theta((m,s),(r,t)) = \h_m^\theta(s,t) + \sum_{k = m}^{r - 1} \vv_{k  + 1}(t).
\]
For $m > r$,  $\B^\theta((m,s),(r,t)) = -\B^\theta((r,t),(m,s))$, so $\B^\theta((m,s),(r,t))$ is still a sum of horizontal and vertical increments. Then, by Lemma~\ref{lemma:uniform convergence of Busemann functions},  on the event $\Omega_1$, for all $\theta \in D$ and $\mbf x,\mbf y \in \Z \times \R$,
\begin{equation} \label{eqn:contnuity of Busemann process on countable dense set} 
\lim_{D \ni\gamma \rightarrow \theta} \B^\gamma(\mbf x,\mbf y) = \B^\theta(\mbf x,\mbf y).
\end{equation}

\begin{definition} \label{general Busemann definition}
On the event $\Omega_1$, for an arbitrary $\theta > 0$ and $\mbf x,\mbf y \in \Z \times \R$, define the following. 
\begin{align*} 
&\B^{\theta-}(\mbf x,\mbf y) = \lim_{D \ni \gamma \nearrow \theta} \B^\gamma(\mbf x,\mbf y),\qquad \text{and}\qquad \B^{\theta+}(\mbf x,\mbf y) = \lim_{D \ni \delta \searrow \theta} \B^\delta(\mbf x,\mbf y). 
\end{align*}
\end{definition}
\begin{remark}
By additivity of Lemma~\ref{lemma:summary of Busemanns fixed theta}\ref{itm:fixed theta additivity} and the monotonicity of Lemma~\ref{lemma:summary of Busemanns fixed theta}\ref{itm:fixed theta monotonicity}, these limits exist for all $\mbf x,\mbf y \in \Z\times \R$ and $\theta > 0$. By~\eqref{eqn:contnuity of Busemann process on countable dense set}, on $\Omega_1$, for all $\theta \in D$ and $\mbf x,\mbf y \in \Z \times \R$, $\B^{\theta + }(\mbf x,\mbf y) = \B^{\theta-}(\mbf x,\mbf y) = \B^\theta(\mbf x,\mbf y)$.
\end{remark} 
\begin{proof}[Proof of Theorem~\ref{thm:summary of properties of Busemanns for all theta}]

\medskip
\noindent \textbf{Parts~\ref{general additivity Busemanns}--\ref{general monotonicity Busemanns}}: These follow by taking limits in Parts~\ref{itm:fixed theta additivity} and~\ref{itm:fixed theta monotonicity} of Lemma~\ref{lemma:summary of Busemanns fixed theta}. 

\medskip
\noindent \textbf{Parts~\ref{general uniform convergence Busemanns}--\ref{general continuity of Busemanns}}:
By Part~\ref{general monotonicity Busemanns}, it suffices to take limits along the countable dense set $D$. Then, Lemma~~\ref{lemma:uniform convergence of Busemann functions} establishes Parts~\ref{general uniform convergence:limits to infinity} and~\ref{general uniform convergence:limits to 0}. The monotonicity of Part~\ref{general monotonicity Busemanns} can be rearranged, just as in Equation~\eqref{eqn:bounding diffs of busemann functions}, which strengthens the pointwise convergence of the $\h_m^{\gamma \sig}$ to uniform convergence on compact sets. Thus, since  $\h_m^{\gamma}$ is continuous for $\gamma \in D$, $\h_m^{\theta \sig}$ is also continuous for $\sigg \in \{+,-\}$. Now, for the convergence of the $\vv_m^{\theta \sig}$, recall that on the event $\Omega_1$, for all $t \in \R$ and $\gamma \in D$,
\[
\vv_m^\gamma(t) = Q(\h_m^\gamma,B_{m - 1})(t) = \sup_{t \le s <\infty}\bigl\{B_{m - 1}(t,s) - \h_m^\gamma(t,s)\bigr\}.
\]
 By Part~\ref{general monotonicity Busemanns} and Lemma~\ref{monotonicity of maximizers from function monotonicity}, this supremum may be restricted to a common compact set for all $\gamma < \theta$. Then, since $\h_m^{\gamma}$ converges uniformly to $\h_m^{\theta -}$ as $D \ni \gamma \nearrow \theta$, $\vv_m^\gamma(t)$ converges pointwise to $Q(\h_m^{\theta -},B_{m - 1})(t)$ as $\gamma \nearrow \theta$. But, by definition, $\vv_m^{\theta - }(t) = \lim_{D \ni \gamma \nearrow \theta} \vv_m^\gamma(t)$, so $\vv_m^{\theta - }(t) = Q(\h_m^{\theta -},B_{m - 1})(t)$, which is continuous by continuity of $\h_m^{\theta -}$ and $B_{m - 1}$. Since the convergence is monotone, Dini's Theorem implies the convergence is uniform on compact sets. The proof for $\delta \searrow \theta$ is analogous. Part~\ref{general continuity of Busemanns} follows by continuity of the Busemann functions from Definition~\ref{def:Busemann functions, fixed theta}, the uniform convergence of Part~\ref{general uniform convergence Busemanns}, and the additivity in Part~\ref{general additivity Busemanns}.

\medskip
\noindent \textbf{Part~\ref{limits of B_m minus \h_{m + 1}}}:
 This follows from Lemma~\ref{lemma:summary of Busemanns fixed theta}\ref{itm:fixed theta limit of B_m - \h_{m + 1}} and the monotonicity of Part~\ref{general monotonicity Busemanns}.

 \medskip
 \noindent \textbf{Part~\ref{general queuing relations Busemanns}}:
 The equality for $\vv_{m + 1}^{\theta\sig}$ was shown in the proof of Part~\ref{general uniform convergence Busemanns}. The equality for $\h_m^\theta$ follows by the additivity of Part~\ref{general additivity Busemanns} and the same argument as in the proof of Lemma~\ref{lemma:summary of Busemanns fixed theta}\ref{itm:fixed theta Busemann queuing relations}.

\medskip \noindent \textbf{Part~\ref{independence structure of Busemann functions on levels}}: This follows because $\{h_r^{\theta \sig}:\theta > 0, \sigg \in \{+,-\},r > m\}$ is a function of $\{B_r\}_{r > m}$.

\medskip
\noindent \textbf{Part~\ref{busemann functions agree for fixed theta}}:
Fix $\theta > 0$. We first show that, for each $\theta > 0$ and $\mbf x,\mbf y \in \Z \times \R$, there exists an event $\Omega_{\mbf x,\mbf y}^{(\theta)}$ on which
    \begin{equation} \label{eqn:busemann equality}
    \B^{\theta -}(\mbf x,\mbf y)= \lim_{n \rightarrow \infty} \bigl[L_{\mbf x,(n,t_n)} - L_{\mbf y,(n,t_n)}\bigr] = \B^{\theta +}(\mbf x,\mbf y),
    \end{equation}
    for all sequences $\{t_n\}$ with $\lim_{n \rightarrow \infty} \f{t_n}{n} = \theta$.  By Theorem~\ref{thm:existence of Busemann functions for fixed points}, for each $\mbf x,\mbf y$, and $\theta > 0$, there exists an event of full probability on which the limit in~\eqref{eqn:busemann equality} exists and is independent of the choice of sequence. Then, by additivity, it is sufficient to show that, for each fixed $\theta > 0,m \in \Z$, and $t \in \R$, with probability one, 
    \begin{align*} 
    &\h_m^{\theta -}(t) = \lim_{n \rightarrow \infty} \bigl[L_{(m,0),(n,n\theta)} - L_{(m,t),(n,n\theta)}\bigr]= \h_m^{\theta+}(t), \text{ and} \\[1em]
    &\vv_m^{\theta -}(t) = \lim_{n \rightarrow \infty} \bigl[L_{(m - 1,t),(n,n\theta)} - L_{(m,t),(n,n\theta)}\bigr] = \vv_m^{\theta +}(t).
    \end{align*}
     We show that 
     \begin{equation} \label{horizontal limit, general theta}
     \h_m^{\theta -}(t) = \lim_{n \rightarrow \infty} \left[L_{(m,0),(n,n\theta)} - L_{(m,t),(n,n\theta)}\right] \qquad\text{a.s.},
     \end{equation}
     and the other statements follow by analogous arguments. By Theorem~\ref{thm:existence of Busemann functions for fixed points}, the right-hand side of~\eqref{horizontal limit, general theta} has distribution~$\Nor(\f{t}{\sqrt \theta},|t|)$. By definition, $
    \h_m^{\theta -}(t)$ is the limit, as $\gamma \nearrow \theta$, of $\h_m^\gamma(t)$, and $\h_m^\gamma(t)\sim \Nor(\f{t}{\sqrt \gamma},|t|)$, which converges weakly to $\Nor(\f{t}{\sqrt \theta},|t|)$. Using Theorem~\ref{lemma:summary of Busemanns fixed theta}\ref{itm:Busemann limits, fixed theta} and Lemma~\ref{lemma:bounds for differences of Brownian LPP}, for each fixed $t > 0$ and all $D \ni\gamma < \theta$,
    \[
    \h_m^{\gamma}(t) = \lim_{n \rightarrow \infty} \left[L_{(m,0),(n,n\gamma)} - L_{(m,t),(n,n\gamma)}\right] \ge \lim_{n \rightarrow \infty} \left[L_{(m,0),(n,n\theta)} - L_{(m,t),(n,n\theta)}\right] \qquad\text{a.s.}
    \]
    The inequality flips for $t < 0$. Hence, both sides in~\eqref{horizontal limit, general theta} have the same distribution, while one dominates the other, so they are equal with probability one. 
    
    \noindent Next, set
    \[
    \Omega^{(\theta)} = \bigcap_{\mbf x,\mbf y \in \Z \times \Q} \Omega_{\mbf x,\mbf y}^{(\theta)}.
    \]
     Then, using the continuity of Part~\ref{general continuity of Busemanns}, the desired conclusion follows by the same reasoning as in the proof of Lemma~\ref{lemma:summary of Busemanns fixed theta}\ref{itm:Busemann limits, fixed theta}. 
     
     \medskip \noindent \textbf{Part~\ref{itm:shift_invariance}} This follows from the construction of the Busemann functions as limits of BLPP times and the shift invariance of BLPP.
\end{proof}

\section{Proofs of the results for semi-infinite geodesics} \label{section:main proofs}
\noindent Throughout this section, $\Omega_1$ is the event defined in Lemma~\ref{lemma:Omega 1 prob 1} and referenced in Theorem~\ref{thm:summary of properties of Busemanns for all theta}.

\subsection{Key lemmas}
For two fields of Brownian motions, $\mathbf B$ and $\overline{\mathbf B}$, $\lambda > 0$, and an initial point $(m,s) \in \Z \times \R$, define the point-to-line last passage time:
\begin{equation} \label{point-to-line definition}
\overline L^\lambda_{(m,s),n}(\mathbf B,\overline{\mathbf B}) = \sup\Bigl\{\sum_{r = m}^n B_r(s_{k - 1},s_k) - \overline B_{n+1}(s_n) - \lambda s_n:\mathbf s_{m,n} \in \Pi_{(m,s),n}   \Bigr\},
\end{equation}
where $\Pi_{(m,s),n}$ denotes the set of sequences $s = s_{m - 1} \le s_m \le \cdots \le s_n$.  The following is due to Hammond~\cite{Hammond4}
\begin{lemma}[\cite{Hammond4}, Lemma B.2] \label{lm:point-to-line uniqueness}
Let $\mbf B$ be a field of independent, two-sided Brownian motions and $\overline{\mbf B}$ be any other field of Brownian motions such that, for each $n$, $\overline B_{n + 1}$ is independent of $\{B_m\}_{m \le n}$. Fix an initial point $(m,s)$ and let $n \ge m$. Then, with probability one, the quantity in~\eqref{point-to-line definition} is finite, and there is a unique sequence $\mbf s_{m,n} \in \Pi_{(m,s),n}$ that is maximal for~\eqref{point-to-line definition}.
\end{lemma}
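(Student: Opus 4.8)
The plan is to handle the two assertions separately: finiteness together with attainment is routine, while uniqueness is the real content and I would prove it by induction on the number of levels.

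\textbf{Finiteness and attainment.} Write
\[
\overline L^\lambda_{(m,s),n}(\mbf B,\overline{\mbf B}) = \sup_{t \ge s}\bigl[L_{(m,s),(n,t)}(\mbf B) - \overline B_{n+1}(t) - \lambda t\bigr],
\]
since optimizing the intermediate jump times $s_m \le \cdots \le s_{n-1} \le t$ for a fixed endpoint $t$ produces exactly $L_{(m,s),(n,t)}(\mbf B)$. For a fixed realization, $t \mapsto L_{(m,s),(n,t)}(\mbf B)$ is continuous (a supremum, over a compact set of intermediate jump times, of functions jointly continuous in those times and in $t$), and the strong law of large numbers for Brownian motion gives $B_r(t)/t \to 0$ a.s.\ as $t\to\infty$ for every $r$, hence $L_{(m,s),(n,t)}(\mbf B) = o(t)$ and $\overline B_{n+1}(t) = o(t)$. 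Thus $t \mapsto L_{(m,s),(n,t)}(\mbf B) - \overline B_{n+1}(t) - \lambda t$ is continuous on $[s,\infty)$ and tends to $-\infty$, so it attains a finite maximum at some point $t^\star$ of a compact subinterval; this gives finiteness of $\overline L^\lambda_{(m,s),n}$ and a maximizing sequence (endpoint $t^\star$, and on $[s,t^\star]$ any point-to-point geodesic).

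\textbf{Uniqueness by induction on the number of levels $k=n-m$.} For $k=0$ the problem reduces, since $\overline L^\lambda_{(m,s),m} = \sup_{t\ge s}\bigl[B_m(t)-\overline B_{m+1}(t)-\lambda t\bigr] - B_m(s)$ and $\overline B_{m+1}\perp B_m$, to the a.s.\ uniqueness of the argmax of a two-sided Brownian motion with negative drift over the half-line $[s,\infty)$ — a special case of the argmax sublemma discussed below. For the inductive step, peel off the bottom level:
\[
\overline L^\lambda_{(m,s),n}(\mbf B,\overline{\mbf B}) = \sup_{u\ge s}\bigl[B_m(u) + \Phi(u)\bigr] - B_m(s), \qquad \Phi(u) := \overline L^\lambda_{(m+1,u),n}(\mbf B,\overline{\mbf B}).
\]
One checks that $\Phi$ is continuous in $u$ with $\Phi(u)\to-\infty$ as $u\to\infty$, and it depends only on $\{B_r\}_{r\ge m+1}$ and $\overline B_{n+1}$, so it is independent of $B_m$. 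Conditioning on $\Phi$ and using once more the a.s.\ uniqueness of the argmax of a Brownian motion plus an independent continuous function that decays to $-\infty$, the location $u^\star$ of the first jump is a.s.\ the unique maximizer of $u\mapsto B_m(u)+\Phi(u)$ on $[s,\infty)$. Any maximizing sequence, with its first coordinate now pinned to $u^\star$, restricts on levels $m+1,\dots,n$ to a maximizer of $\overline L^\lambda_{(m+1,u^\star),n}$; by the inductive hypothesis this is unique, and chaining gives uniqueness of the whole sequence.

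\textbf{The main obstacle.} The subtlety is that $u^\star$ is \emph{random}, so the inductive hypothesis for the \emph{fixed} starting point $(m+1,u^\star)$ cannot be invoked verbatim. There are two remedies, both reducing everything to one technical fact. Either one strengthens the induction to hold simultaneously for all starting points, which forces the argmax sublemma in the stronger form ``a.s., for every $s$, the argmax of $W+\phi$ over $[s,\infty)$ is unique''; or one keeps the fixed-starting-point hypothesis and notes, via Fubini, that conditionally on the higher-level environment the set of starting values $v$ for which $\overline L^\lambda_{(m+1,v),n}$ has a non-unique maximizer is Lebesgue-null, and then shows that the conditional law of $u^\star$ is absolutely continuous, so that $u^\star$ a.s.\ avoids that null set (the same device recombines the unique endpoint $t^\star$ of the first part with the point-to-point uniqueness of Lemma~\ref{thm:uniqueness of LPP time}). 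Either way the heart of the matter is: the argmax, and the running suprema, of a Brownian motion perturbed by an independent continuous function are a.s.\ unique and non-atomic. I would prove this by the interval-splitting method: a non-unique argmax over $[a,b]$ forces, for some rational $q$ strictly between two maximizers, the equality $\sup_{[a,q]}(W+\phi) = \sup_{[q,b]}(W+\phi)$; after subtracting $W(q)$ the two sides become independent given $\phi$ (past versus future Brownian increments from $q$), and one concludes from the non-atomicity, above its minimal value, of the supremum of a Brownian motion plus a continuous function over a non-degenerate interval — which reduces by a bridge decomposition to the half-normal law of $\max_{[0,\ell]}W$. Upgrading ``non-atomic'' to ``absolutely continuous'', and doing so for the profiles $\Phi$ relevant here — which are only $\tfrac12$-H\"older and are intricate functionals of the Brownian environment — is where the real work resides.
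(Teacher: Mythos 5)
First, a point of orientation: the paper does not prove this lemma. It is imported verbatim from \cite{Hammond4} (Lemma B.2 there), and the only argument supplied in the text is the remark after the statement: since $\overline B_{n+1}$ is independent of $\{B_m\}_{m\le n}$, one conditions on $\overline B_{n+1}$ and invokes Hammond's version, which is stated for a deterministic terminal function $h$ with $\limsup_{t\to\infty}h(t)/t<0$. So you are attempting to reprove a black-box input rather than matching an in-paper argument.

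As for the proposal itself: the finiteness/attainment part and the base case are fine, and you have correctly located the genuine difficulty, namely chaining fixed-starting-point uniqueness through the random first jump $u^\star$. But neither of your remedies closes the gap. Remedy (a) is a dead end: the strengthened sublemma ``a.s., for every $s$ the argmax of $W+\phi$ over $[s,\infty)$ is unique'' is false, and in fact Theorem~\ref{thm:countable non unique maximizers} (and all of Theorem~\ref{thm:non_unique_size}) is devoted to showing that the exceptional set of starting points is a.s.\ countably infinite, hence nonempty. Remedy (b) hinges on absolute continuity of the conditional law of $u^\star=\argmax_{[s,\infty)}(B_m+\Phi)$, which is exactly the step you concede is ``where the real work resides''; it is genuinely not automatic, since for a general continuous $\phi$ independent of $W$ the argmax of $W+\phi$ can even carry atoms (take $\phi$ with a cusp at $u_0$ steeper than the local law-of-the-iterated-logarithm modulus of $W$), so one would have to extract quantitative regularity of the random profile $\Phi$. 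As written, the proposal therefore reduces the lemma to an unproven and nontrivial claim. The standard way to avoid all of this is to never condition on the location of a maximizer: if two maximizing sequences differ in coordinate $i$, then for some rational $q$ strictly between the two values of $s_i$ the suprema over $\{s_i\le q\}$ and over $\{s_i\ge q\}$ both equal $\overline L^\lambda_{(m,s),n}$; each decomposes as $\max_j\bigl[L_{(m,s),(j,q)}+\overline L^\lambda_{(j,q),n}\bigr]$ over complementary ranges of $j$, with the first factors measurable with respect to the environment on $[s,q]$ and the second with respect to the environment on $[q,\infty)$, hence independent; and the coincidence of the two maxima is a null event by the absolute continuity (above the bottom of its support) of the supremum of a continuous Gaussian process. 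A countable union over $i$ and $q\in\Q$ then finishes, with no control on the law of any argmax required.
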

\begin{remark}
Lemma~\ref{lm:point-to-line uniqueness} is stated slightly differently in~\cite{Hammond4}. In that paper, the function $s\mapsto-\overline B_{n + 1}(s) - \lambda s$ is replaced by an arbitrary deterministic and measurable function $h:\R \rightarrow \R \cup \{-\infty\}$ satisfying $h(t) > -\infty$ for some $t > s$ and $\limsup_{t \rightarrow \infty} h(t)/t < 0$. The assumption that $\{\overline B_{n + 1}\}_{n + 1}$ is independent of $\{B_m\}_{m \le n}$ allows us to condition on $\overline B_{n + 1}$ and obtain the desired result. 
\end{remark}
\pagebreak

\begin{lemma} \label{lemma:ptl_sig}
Let $\omega \in \Omega_1$, $(m,t) \in \Z \times \R$, $\theta > 0$, and $\sigg \in \{+,-\}$. Then, the following hold.
\begin{enumerate} [label=\rm(\roman{*}), ref=\rm(\roman{*})]  \itemsep=3pt
    \item \label{itm:geo_maxes} Let $\{\tau_r\}_{r = m - 1}^\infty$ be any sequence in $\mbf T_{(m,t)}^{\theta\sig}$. Then, for each $n \ge m$, the jump times $t = \tau_{m - 1} \le \tau_m \le \cdots \le \tau_n$ are a maximizing sequence for 
\begin{equation} \label{ptl_BLPP}
 \overline L_{(m,t),n}^\lambda(\mathbf B,\overline{\mathbf B}) := \sup\Biggl\{\sum_{r = m}^n B_r(s_{r - 1},s_r) - \h_{n +1}^{\theta\sig}(s_n): \mbf s_{m,n} \in \Pi_{(m,t),n}  \Biggr\}.
\end{equation}
Here,  $\overline L_{(m,t),n}^\lambda(\mathbf B,\overline{\mathbf B})$ is as in~\eqref{point-to-line definition}, where $\lambda = \f{1}{\sqrt \theta}$, $\mathbf B = \{B_r\}_{r \in \Z}$, and $\overline{\mathbf B} = \{\h_r^{\theta\sig}(t) - \f{t}{\sqrt \theta}: t \in \R\}_{r \in \Z}$.
\item \label{itm:maxes_geo}
 Conversely, for each $n \ge m$, whenever $t = t_{m - 1} \le t_m \le \cdots \le t_n$ is a maximizing sequence for~\eqref{ptl_BLPP}, there exists $\{\tau_r\}_{r = m - 1}^\infty \in \mbf T_{(m,t)}^{\theta \sig}$ such that $t_r = \tau_r$ for $m \le r \le n$.  
 \item \label{itm:LR_geo_max}
 For each $n \ge m$, the sequences $t = \tau_{(m,t),m - 1}^{\theta \sig,L} \le \cdots \le \tau_{(m,t),n}^{\theta \sig,L}$ and $t = \tau_{(m,t),m - 1}^{\theta \sig,R} \le \cdots \le \tau_{(m,t),n}^{\theta \sig,R}$ are, respectively, the leftmost and rightmost maximizing sequences for~\eqref{ptl_BLPP}.
 \end{enumerate}
\end{lemma}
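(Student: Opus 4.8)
The plan is to unwind the recursive structure of the point-to-line last-passage time in \eqref{ptl_BLPP} and match it, level by level, against the defining recursion \eqref{semi-infinite geodesic succesive jumps} of the sets $\mbf T_{(m,t)}^{\theta\sig}$. Fix $\omega\in\Omega_1$, $(m,t)$, $\theta>0$, $\sigg\in\{+,-\}$, and write $\lambda=\tfrac{1}{\sqrt\theta}$. The first observation is that \eqref{ptl_BLPP} is a genuine instance of \eqref{point-to-line definition}: with $\overline B_{n+1}(s):=\h_{n+1}^{\theta\sig}(s)-\lambda s$, the boundary term is $-\overline B_{n+1}(s_n)-\lambda s_n = -\h_{n+1}^{\theta\sig}(s_n)$, and Theorem~\ref{thm:summary of properties of Busemanns for all theta}\ref{limits of B_m minus \h_{m + 1}} guarantees $\limsup_{s\to\infty}[B_n(s)-\h_{n+1}^{\theta\sig}(s)]=-\infty$, so the supremum is attained and, by Theorem~\ref{thm:summary of properties of Busemanns for all theta}\ref{independence structure of Busemann functions on levels}, Lemma~\ref{lm:point-to-line uniqueness} applies: for a fixed $n$, the maximizing sequence for \eqref{ptl_BLPP} is almost surely unique (this is needed only to know leftmost $=$ rightmost a.s., but the deterministic leftmost/rightmost comparison in Lemma~\ref{existence of leftmost and rightmost geodesics} holds on $\Omega_1$ regardless).

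For Part~\ref{itm:geo_maxes}, I would argue by a finite backward induction on the level at which the geodesic last sits, or more cleanly, by a direct telescoping argument. Given $\{\tau_r\}\in\mbf T_{(m,t)}^{\theta\sig}$, decompose the objective in \eqref{ptl_BLPP} as
\[
\sum_{r=m}^n B_r(s_{r-1},s_r) - \h_{n+1}^{\theta\sig}(s_n)
= \sum_{r=m}^n \bigl[B_r(s_{r-1},s_r) - \h_{r+1}^{\theta\sig}(s_r) + \h_{r+1}^{\theta\sig}(s_{r-1})\bigr] + \h_{m}^{\theta\sig}(s_{m-1}),
\]
which I will obtain from additivity of $\B^{\theta\sig}$ (Theorem~\ref{thm:summary of properties of Busemanns for all theta}\ref{general additivity Busemanns}) applied to the identity $\h_{r+1}^{\theta\sig}=D(\h_{r+1}^{\theta\sig},B_r)$ and $\vv_{r+1}^{\theta\sig}=Q(\h_{r+1}^{\theta\sig},B_r)$ of Part~\ref{general queuing relations Busemanns}; more precisely I want to rewrite $B_r(s_{r-1},s_r)-\h_{r+1}^{\theta\sig}(s_r)$ in terms of the queue-length function $Q(\h_{r+1}^{\theta\sig},B_r)$ so that maximizing over $s_r\ge s_{r-1}$ recovers exactly \eqref{semi-infinite geodesic succesive jumps}. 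Since $s_{m-1}=t$ is fixed, the $\h_m^{\theta\sig}(t)$ term is constant, and the remaining sum is maximized term-by-term by choosing, for each $r=m,\dots,n$ in turn, $s_r$ to maximize $B_r(s)-\h_{r+1}^{\theta\sig}(s)$ over $s\in[s_{r-1},\infty)$ — which is precisely the condition defining $\mbf T_{(m,t)}^{\theta\sig}$. Hence any $\{\tau_r\}\in\mbf T_{(m,t)}^{\theta\sig}$ achieves the supremum in \eqref{ptl_BLPP}, and the value of that supremum is $\h_m^{\theta\sig}(t)+\sum_{r=m}^n \bigl[\sup_{s\ge\tau_{r-1}}\{B_r(s)-\h_{r+1}^{\theta\sig}(s)\}+\h_{r+1}^{\theta\sig}(\tau_{r-1})\bigr]$, which also equals $\B^{\theta\sig}((m,t),(n+1,\tau_n))+\,\text{const}$ after resumming. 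For Part~\ref{itm:maxes_geo}, the same decomposition runs in reverse: if $t=t_{m-1}\le\cdots\le t_n$ is a maximizer for \eqref{ptl_BLPP}, then for each $r\le n$ the value $t_r$ must maximize $B_r(s)-\h_{r+1}^{\theta\sig}(s)$ over $s\in[t_{r-1},\infty)$ — otherwise replacing $t_r,\dots,t_n$ by the greedy choices would strictly increase the objective, contradicting maximality (one uses monotonicity of the constraint $s_r\ge s_{r-1}$ and that improving $t_r$ cannot hurt the downstream supremum). Having fixed $\tau_r=t_r$ for $m\le r\le n$, extend the sequence for $r>n$ by taking any maximizers in \eqref{semi-infinite geodesic succesive jumps}, which exist by Theorem~\ref{thm:summary of properties of Busemanns for all theta}\ref{general continuity of Busemanns}--\ref{limits of B_m minus \h_{m + 1}}; the resulting sequence lies in $\mbf T_{(m,t)}^{\theta\sig}$ and agrees with $\{t_r\}$ on $m\le r\le n$.

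Part~\ref{itm:LR_geo_max} then follows by combining Parts~\ref{itm:geo_maxes}--\ref{itm:maxes_geo} with the leftmost/rightmost structure. By Lemma~\ref{existence of leftmost and rightmost geodesics} (applied to the field of continuous functions obtained by appending $s\mapsto-\h_{n+1}^{\theta\sig}(s)$ as the top boundary, or rather by the direct argument that \eqref{ptl_BLPP} is an LPP value across continuous functions), there is a leftmost and a rightmost maximizing sequence for \eqref{ptl_BLPP}. By Part~\ref{itm:maxes_geo} each of these extends to an element of $\mbf T_{(m,t)}^{\theta\sig}$, and the leftmost (resp. rightmost) maximizer of \eqref{ptl_BLPP} must coincide on levels $m,\dots,n$ with the leftmost (resp. rightmost) element of $\mbf T_{(m,t)}^{\theta\sig}$: the greedy characterization shows that taking leftmost maximizers in \eqref{semi-infinite geodesic succesive jumps} level by level produces the pointwise-smallest maximizing sequence for \eqref{ptl_BLPP} on those levels, and conversely by Part~\ref{itm:geo_maxes} the leftmost element of $\mbf T_{(m,t)}^{\theta\sig}$ restricts to a maximizer for \eqref{ptl_BLPP}, hence coordinatewise dominates (from below) the leftmost such maximizer; equality follows. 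The same runs for the rightmost. The main obstacle I anticipate is purely bookkeeping: carefully justifying that the term-by-term ("greedy") maximization of the telescoped sum is valid — i.e., that choosing $\tau_r$ to maximize $B_r(s)-\h_{r+1}^{\theta\sig}(s)$ over $[\tau_{r-1},\infty)$ is simultaneously optimal for all downstream choices — which rests on the monotonicity $\tau_{r-1}\le\tau_{r-1}' \Rightarrow \sup_{s\ge\tau_{r-1}}(\cdots)\ge\sup_{s\ge\tau_{r-1}'}(\cdots)$ and on keeping the additive constants straight; once that is set up cleanly, all three parts fall out together.
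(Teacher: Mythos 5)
Your telescoping identity is incorrect, and this is a genuine gap rather than a bookkeeping slip. Writing $s_{m-1}=t$ and specializing to $n=m$, your claimed identity reads
\[
B_m(t,s_m)-\h_{m+1}^{\theta\sig}(s_m)
=\bigl[B_m(t,s_m)-\h_{m+1}^{\theta\sig}(s_m)+\h_{m+1}^{\theta\sig}(t)\bigr]+\h_m^{\theta\sig}(t),
\]
which forces $\h_{m+1}^{\theta\sig}(t)+\h_m^{\theta\sig}(t)=0$, and for $n>m$ the discrepancy acquires genuine dependence on $s_m,\dots,s_{n-1}$, so it cannot even be dismissed as an additive constant. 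Without a correct decomposition, the claim that the objective "is maximized term-by-term" by the greedy choices in \eqref{semi-infinite geodesic succesive jumps} is unsupported; the coupling between the summand at level $r$ and the constraint set at level $r+1$ has to be controlled, and the fact that this coupling is harmless is precisely what needs proof. The same issue infects your converse argument for Part~\ref{itm:maxes_geo}: "replacing $t_r,\dots,t_n$ by the greedy choices would strictly increase the objective" is exactly the conclusion you need, not a self-evident step, and enlarging the feasible set downstream does not by itself increase the value once the $\h_{n+1}^{\theta\sig}(s_n)$ penalty term re-enters.

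The paper closes this gap by induction on $n$, and the engine of the induction is the identity obtained from $\h_{n+1}^{\theta\sig}=D(\h_{n+2}^{\theta\sig},B_{n+1})$:
\[
\h_{n+1}^{\theta\sig}(s_n)
=B_{n+1}(s_n)+\sup_{0\le s<\infty}\{B_{n+1}(s)-\h_{n+2}^{\theta\sig}(s)\}
-\sup_{s_n\le s<\infty}\{B_{n+1}(s)-\h_{n+2}^{\theta\sig}(s)\}.
\]
Substituting this into the level-$n$ point-to-line value, the first supremum is a constant in $s_m,\dots,s_n$ and drops out on both sides, while the second supremum reinstates exactly the missing level-$(n+1)$ variable; this cancellation is what legitimizes the sequential "greedy" choice and simultaneously delivers Parts~\ref{itm:geo_maxes} and \ref{itm:maxes_geo} in one inductive step. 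If you prefer to avoid induction, a correct one-shot version exists, but it goes through the vertical Busemann functions rather than horizontal ones: by Theorem~\ref{thm:summary of properties of Busemanns for all theta}\ref{general additivity Busemanns}--\ref{general monotonicity Busemanns} one has $\sum_{r=m}^n B_r(s_{r-1},s_r)\le\B^{\theta\sig}((m,t),(n,s_n))$, with equality characterized by Lemma~\ref{lemma:equality of busemann to weights of BLPP}, and then additivity gives $\B^{\theta\sig}((m,t),(n,s_n))-\h_{n+1}^{\theta\sig}(s_n)=\B^{\theta\sig}((m,t),(n+1,0))-\vv_{n+1}^{\theta\sig}(s_n)$, so the point-to-line value is bounded above by the constant $\B^{\theta\sig}((m,t),(n+1,0))$ and is attained precisely when the $\vv$-zeros and $B=\h$ increment conditions hold along the path — but this is not the decomposition you wrote. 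Your reasoning for Part~\ref{itm:LR_geo_max} is fine once Parts~\ref{itm:geo_maxes}--\ref{itm:maxes_geo} are secured.
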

\begin{proof}
\textbf{Part~\ref{itm:geo_maxes}:}
Recall by \eqref{semi-infinite geodesic succesive jumps}, that $\tau_m \ge t$ is a maximizer of
\[
B_m(t,s) - \h_{m + 1}^{\theta\sig}(s) \qquad \text{ over }\qquad s \in[t,\infty).
\]
Hence, the statement holds for $n = m$. Now, assume that the statement holds for some $n \ge m$. Then, $t = \tau_{m - 1} \le \cdots \le \tau_n$ satisfies
\begin{equation} \label{eqn:point-to-line maximizer}
\sum_{r = m}^n B_r(\tau_{r - 1},\tau_r) - \h_{n +1}^{\theta\sig}(\tau_n) = \sup\Biggl\{\sum_{r = m}^n B_r(s_{r - 1},s_r) - \h_{n +1}^{\theta\sig}(s_n): \mbf s_{m,n} \in \Pi_{(m,t),n}  \Biggr\}. 
\end{equation}
Using Theorem~\ref{thm:summary of properties of Busemanns for all theta}\ref{general queuing relations Busemanns} and rearranging the terms in the definition ~\eqref{definition of D} of the operator $D$, 
\begin{equation} \label{eqn:h n + 1 Busemann}
\h_{n + 1}^{\theta\sig}(s_n) = B_{n + 1}(s_n) + \sup_{0 \le s < \infty}\{B_{n + 1}(s) - \h_{n + 2}^{\theta\sig}(s)\} - \sup_{s_n \le s_{n + 1} < \infty}\{B_{n + 1}(s_{n + 1}) - \h_{n + 2}^{\theta\sig}(s_{n + 1})\},
\end{equation}
for $s_n \in \R$. Specifically, since $\tau_{n + 1}$ is a maximizer of $B_{n + 1}(s) - h_{n + 2}^{\theta\sig}(s)$ over $s \in [\tau_n,\infty)$,
\begin{equation} \label{eqn: h n + 1 Busemann substitute maximizer}
h_{n + 1}^{\theta \sig}(\tau_n) = -B_{n + 1}(\tau_n,\tau_{n + 1}) + h_{n + 2}^{\theta \sig}(\tau_{n +1}) + \sup_{0\le s < \infty}\{B_{n + 1}(s) - h_{n + 2}^{\theta \sig}(s)\}
\end{equation}
Substituting~\eqref{eqn:h n + 1 Busemann} and~\eqref{eqn: h n + 1 Busemann substitute maximizer} into~\eqref{eqn:point-to-line maximizer} and discarding the term $\sup_{0 \le s < \infty}\{B_{n + 1}(s) - \h_{n + 2}^{\theta\sig}(s)\}$ on both sides
\begin{align*}
&\qquad\sum_{r = m}^{n + 1}B_r(\tau_{r - 1},\tau_r) - h_{n + 2}^{\theta \sig}(\tau_{n +1}) \\[1em]
= &\sup\Biggl\{\sum_{r = m}^{n} B_r(s_{r - 1},s_r) - B_{n + 1}(s_n) + \sup_{s_n \le s_{n + 1} < \infty}\{B_{n + 1}(s_{n + 1}) - \h_{n + 2}^{\theta\sig}(s_{n + 1})\}: \mbf s_{m,n} \in \Pi_{(m,t),n}  \Biggr\} \\[1em]
= &\sup\Biggl\{\sum_{r = m}^{n +1 } B_r(s_{r - 1},s_r) - \h_{n +2}^{\theta\sig}(s_{n + 1}): \mbf s_{m,n + 1} \in \Pi_{(m,t),n + 1}  \Biggr\}. 
\end{align*}

\medskip \noindent \textbf{Part~\ref{itm:maxes_geo}:} We prove this part by induction. First, note that in the case $n = m$, maximizers of $B_n(t,s_m) - h_{m + 1}^{\theta\sig}(s_m)$ over $s_m \in [t,\infty)$ are precisely those that are the first jump times of the Busemann semi-infinite geodesics. Now, assume that the statement holds for $n$. We show that if $t = t_{m - 1}\le \ldots \le t_{n + 1}$ is a maximizing sequence for~\eqref{eqn:point-to-line maximizer} (with $n$ replaced by $n +1$), then $t_{n +1}$ is a maximizer of $B_{n + 1}(u) - h_{n + 2}^{\theta \sig}(u)$ over $u \in [t_n,\infty)$, and $t = t_{m - 1} \le \cdots \le t_{n}$ is a maximizing sequence for~\eqref{eqn:point-to-line maximizer}.

With this procedure mapped out, observe that, as in the proof of Part~\ref{itm:geo_maxes}, 
\begin{align*}
&\sup\Biggl\{\sum_{r = m}^{n +1 } B_r(s_{r - 1},s_r) - \h_{n +2}^{\theta\sig}(s_{n + 1}): \mbf s_{m,n + 1} \in \Pi_{(m,t),n + 1}  \Biggr\} \\[1em]
=&\sup\Biggl\{\sum_{r = m}^{n} B_r(s_{r - 1},s_r) - B_{n + 1}(s_n) + \sup_{s_n \le s_{n + 1} < \infty}\{B_{n + 1}(s_{n + 1}) - \h_{n + 2}^{\theta\sig}(s_{n + 1})\}: \mbf s_{m,n} \in \Pi_{(m,t),n}  \Biggr\} 
\end{align*}
Hence, any maximizing sequence $t = t_{m - 1}\le \cdots \le t_{n + 1}$ must satisfy 
\[
B_{n +1}(t_{n + 1}) - h_{n + 2}^{\theta\sig}(t_{n + 1}) = \sup_{t_{n} \le u < \infty}\{B_{n + 1}(u) - h_{n + 2}^{\theta \sig}(u)\}.
\]
Furthermore, $t = t_{m - 1} \le \cdots \le t_{n}$ is a maximizing sequence for 
\[
\sum_{r = m}^{n}B_r(s_{r - 1},s_r) - B_{n + 1}(s_{n}) + \sup_{s_{n} \le u <\infty}\{B_{n + 1}(u) - h_{n + 2}^{\theta \sig}(u)\}
\]
over all sequences $\mbf s_{m,n} \in \Pi_{(m,t),n}$ Subtracting off a constant, $t = t_{m - 1} \le \cdots \le t_{n}$ is also a maximizing sequence for
\begin{align*}
&\qquad \sum_{r = m}^{n} B_r(s_{r - 1},s_r) -\Big( B_{n + 1}(s_{n}) + \sup_{0 \le u <\infty}\{B_{n + 1}(u) - h_{n + 2}^{\theta \sig}(u)\} - \sup_{s_{n} \le u <\infty}\{B_{n + 1}(u) - h_{n + 2}^{\theta \sig}(u)\}\Big)  \\[1em]
&= \sum_{r = m}^{n} B_r(s_{r - 1},s_r) - h_{n + 1 }^{\theta \sig}(s_{n}),
\end{align*}
where the last line comes by~\eqref{eqn:h n + 1 Busemann}. This completes the inductive step. 

\medskip \noindent \textbf{Part~\ref{itm:LR_geo_max}:} This follows by Parts~\ref{itm:geo_maxes} and~\ref{itm:maxes_geo} since $t = \tau_{(m,t),m - 1}^{\theta \sig,L} \le \tau_{(m,t),m}^{\theta \sig,L} \le \cdots$ and $t = \tau_{(m,t),m - 1}^{\theta \sig,R} \le \tau_{(m,t),m}^{\theta \sig,R} \le \cdots$ are respectively the leftmost and rightmost sequences in $\mbf T_{(m,t)}^{\theta \sig}$. 
\end{proof}

\begin{lemma} \label{lemma:equality of busemann to weights of BLPP}
Let $\omega \in \Omega_1$, $(m,t) \in \Z \times \R, \theta > 0$, and $\sigg \in \{+,-\}$, and $\{\tau_r\}_{r \ge m - 1} \in \mbf T_{(m,t)}^{\theta \sig}$. Then, for all $r \ge m$,
\[
\vv_{r + 1}^{\theta \sig}(\tau_r) = 0,\qquad\text{and}\qquad \h_r^{\theta \sig}(u,v) = B_r(u,v) \text{ for all }u,v \in [\tau_{r - 1},\tau_r].
\]
Furthermore, the following identities hold for $r \ge m$.
     \begin{align}
     &\tau_{(m,t),r}^{\theta \sig,L} = \inf\bigl\{u \ge \tau^{\theta \sig,L}_{(m,t),r - 1}: \vv_{r + 1}^{\theta \sig}(u) = 0\bigr\} \text{ and } \label{eqn:inf_sig} \\[1em]
     &\tau^{\theta \sig,R}_{(m,t),r} = \sup\bigl\{u \ge \tau^{\theta \sig,R}_{(m,t),r - 1}: \h_r^{\theta \sig}(\tau^{\theta \sig,R}_{(m,t),r - 1},u) = B_r(\tau^{\theta \sig,R}_{(m,t),r - 1},u)\bigr\} \label{eqn:sup_sig}
     \end{align}
     More specifically, if $u \ge \tau^{\theta \sig,R}_{(m,t),r - 1}$, then $\h_r^{\theta \sig}(\tau^{\theta \sig,R}_{(m,t),r - 1},u) = B_r(\tau^{\theta \sig,R}_{(m,t),r - 1},u)$ if and only if $u \le \tau^{\theta \sig,R}_{(m,t),r}$.
\end{lemma}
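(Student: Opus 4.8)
\textbf{Proof plan for Lemma~\ref{lemma:equality of busemann to weights of BLPP}.}

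The plan is to exploit the queuing relations from Theorem~\ref{thm:summary of properties of Busemanns for all theta}\ref{general queuing relations Busemanns}, namely $\vv_{r+1}^{\theta\sig} = Q(\h_{r+1}^{\theta\sig},B_r)$ and $\h_r^{\theta\sig} = D(\h_{r+1}^{\theta\sig},B_r)$, together with the characterization of the jump times $\tau_r$ as maximizers of $B_r(s) - \h_{r+1}^{\theta\sig}(s)$ over $s\in[\tau_{r-1},\infty)$ coming from \eqref{semi-infinite geodesic succesive jumps}. First I would establish the two pointwise identities. Recall $Q(\h_{r+1}^{\theta\sig},B_r)(t) = \sup_{t\le s<\infty}\{B_r(t,s) - \h_{r+1}^{\theta\sig}(t,s)\}$. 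Since $\tau_r \ge \tau_{r-1}$ is a maximizer of $B_r(s) - \h_{r+1}^{\theta\sig}(s)$ over $[\tau_{r-1},\infty)$, and $\tau_r$ itself lies in $[\tau_r,\infty)$, evaluating $Q$ at $t = \tau_r$ and using that $s = \tau_r$ is then the maximizer over $[\tau_r,\infty)$ (it was already maximal over the larger set $[\tau_{r-1},\infty)$), we get $\vv_{r+1}^{\theta\sig}(\tau_r) = Q(\h_{r+1}^{\theta\sig},B_r)(\tau_r) = B_r(\tau_r,\tau_r) - \h_{r+1}^{\theta\sig}(\tau_r,\tau_r) = 0$. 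For the second identity, I would unfold $D$: for $u,v \in [\tau_{r-1},\tau_r]$, write $\h_r^{\theta\sig}(u,v) = B_r(u,v) + Q(\h_{r+1}^{\theta\sig},B_r)(u) - Q(\h_{r+1}^{\theta\sig},B_r)(v)$ by the additive form of \eqref{definition of D}, so it suffices to show $\vv_{r+1}^{\theta\sig}(u) = \vv_{r+1}^{\theta\sig}(v)$, or really that $u \mapsto Q(\h_{r+1}^{\theta\sig},B_r)(u) - B_r(u)$... Actually the cleanest route: since $B_r - \h_{r+1}^{\theta\sig}$ attains its sup over $[\tau_{r-1},\infty)$ at $\tau_r$, for any $u \in [\tau_{r-1},\tau_r]$ the restricted sup $Q(\h_{r+1}^{\theta\sig},B_r)(u) = \sup_{u\le s}\{B_r(u,s)-\h_{r+1}^{\theta\sig}(u,s)\}$ is still attained at $s = \tau_r$, because $\tau_r \ge u$ and $B_r - \h_{r+1}^{\theta\sig}$ is maximal there. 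Hence $Q(\h_{r+1}^{\theta\sig},B_r)(u) = (B_r(\tau_r) - \h_{r+1}^{\theta\sig}(\tau_r)) - (B_r(u) - \h_{r+1}^{\theta\sig}(u))$, and substituting this expression (and the analogous one at $v$) into the formula for $D$ makes the $B_r(\tau_r)-\h_{r+1}^{\theta\sig}(\tau_r)$ terms cancel and leaves exactly $\h_r^{\theta\sig}(u,v) = B_r(u,v)$.

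Next I would prove \eqref{eqn:inf_sig} and \eqref{eqn:sup_sig}. By Lemma~\ref{lemma:ptl_sig}\ref{itm:LR_geo_max}, $\tau_{(m,t),r}^{\theta\sig,L}$ is characterized by leftmost maximizers, so it is enough to argue level-by-level, i.e. for the single-step problem with previous jump fixed at $\tau^{\theta\sig,L}_{(m,t),r-1}$. Write $a = \tau^{\theta\sig,L}_{(m,t),r-1}$. From the first identity, $\vv_{r+1}^{\theta\sig}(\tau_r^L)=0$ for the leftmost jump, so $\tau_r^{\theta\sig,L} \ge \inf\{u \ge a: \vv_{r+1}^{\theta\sig}(u) = 0\}$. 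For the reverse inequality: if $u^\* := \inf\{u\ge a: \vv_{r+1}^{\theta\sig}(u)=0\}$, then $\vv_{r+1}^{\theta\sig}(u^\*) = 0$ by continuity of $\vv_{r+1}^{\theta\sig}$ (Theorem~\ref{thm:summary of properties of Busemanns for all theta}\ref{general continuity of Busemanns}), and I claim $u^\*$ is a maximizer of $B_r(s) - \h_{r+1}^{\theta\sig}(s)$ on $[a,\infty)$. Indeed $\vv_{r+1}^{\theta\sig}(a) = Q(\h_{r+1}^{\theta\sig},B_r)(a) = \sup_{s\ge a}\{B_r(a,s) - \h_{r+1}^{\theta\sig}(a,s)\}$; a standard queuing fact (the queue first empties at the location where the running supremum of the driving process over $[a,\infty)$ is first attained) shows that $\vv_{r+1}^{\theta\sig}(u^\*) = 0$ precisely says the supremum $\sup_{s\ge a}\{B_r(s) - \h_{r+1}^{\theta\sig}(s)\}$ equals $B_r(u^\*) - \h_{r+1}^{\theta\sig}(u^\*)$, and $u^\*$ is the leftmost such point, hence $u^\* = \tau_r^{\theta\sig,L}$. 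The identity \eqref{eqn:sup_sig} is the dual statement: using $\h_r^{\theta\sig}(a,u) = B_r(a,u)$ iff $Q(\h_{r+1}^{\theta\sig},B_r)(u) = Q(\h_{r+1}^{\theta\sig},B_r)(a)$ (from the $D$ formula), and $Q(\h_{r+1}^{\theta\sig},B_r)$ is non-increasing exactly on the interval $[a, \tau_r^{\theta\sig,R}]$ where the restricted sup is still achieved via $\tau_r^R$, one reads off that $\h_r^{\theta\sig}(a,u)=B_r(a,u)$ holds iff $u \le \tau_r^{\theta\sig,R}$. I would phrase the "iff" claim at the end of the lemma directly from this monotonicity description of $u\mapsto Q(\h_{r+1}^{\theta\sig},B_r)(u)$ on $[a,\infty)$: it is constant on $[a,\tau_r^R]$ and strictly below that constant for $u > \tau_r^R$ (by rightmost-maximizer definition).

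The main obstacle I expect is making the queuing-theory facts rigorous in the two-sided setting with the precise left/right maximizer bookkeeping: specifically, pinning down that "$\vv_{r+1}^{\theta\sig}(u^\*) = 0$" is equivalent to "$u^\*$ achieves the supremum of $B_r - \h_{r+1}^{\theta\sig}$ over $[a,\infty)$", and that the leftmost zero of $\vv$ corresponds to the leftmost maximizer while the structure of the zero set to the right of $\tau_r^R$ is governed by whether $B_r - \h_{r+1}^{\theta\sig}$ regains its running maximum. This is essentially the classical Reich/Loynes identification of a reflected process's zero set with record times of the net input, but I would need to state and apply it carefully — likely citing or reproving a short lemma that for continuous $f$ with $\limsup_{s\to\infty}f(s) = -\infty$, the function $u \mapsto \sup_{s \ge u} f(s) - f(u)$ vanishes at $u$ iff $f(u) = \sup_{s\ge a}f(s)$ for $u$ in the appropriate range, and is non-increasing until the last such record point. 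Everything else (continuity inputs, the $D$/$Q$ algebra, reduction to single steps via Lemma~\ref{lemma:ptl_sig}) is routine.
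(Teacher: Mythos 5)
Your proposal is correct and follows essentially the same route as the paper: the pointwise identities come from the queuing relation $\vv_{r+1}^{\theta\sig}=Q(\h_{r+1}^{\theta\sig},B_r)$ together with the observation that for $u\in[\tau_{r-1},\tau_r]$ the restricted suprema of $B_r-\h_{r+1}^{\theta\sig}$ over $[u,\infty)$ are all attained at $\tau_r$, and \eqref{eqn:inf_sig}--\eqref{eqn:sup_sig} follow from the leftmost/rightmost-maximizer definitions exactly as you describe (the paper phrases these last two steps as short contradiction arguments, which is logically the same). One small notational caution: the quantity that is constant on $[\tau^{\theta \sig,R}_{(m,t),r-1},\tau^{\theta \sig,R}_{(m,t),r}]$ and strictly smaller beyond is $u\mapsto\sup_{s\ge u}\{B_r(s)-\h_{r+1}^{\theta\sig}(s)\}$, not $Q(\h_{r+1}^{\theta\sig},B_r)(u)=\vv_{r+1}^{\theta\sig}(u)$ itself (the two differ by $B_r(u)-\h_{r+1}^{\theta\sig}(u)$), and it is the former that enters the paper's displayed identity for $\h_r^{\theta\sig}(u,v)$.
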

\begin{remark}
The first part of Lemma~\ref{lemma:equality of busemann to weights of BLPP} says that, along any semi-infinite path in $\mbf T_{(m,t)}^{\theta \sig}$, the Busemann process agrees with the energy of the semi-infinite geodesic. That is, at every vertical jump from $(r,\tau_r)$ to $(r + 1,\tau_r)$, the vertical Busemann function $v_{r + 1}^{\theta \sig}$ equals zero. This is as it should  because, according to \eqref{E10}, a  vertical step of a path  does not collect any energy from the environment. Along each horizontal step from $(r,\tau_{r - 1})$ to $(r,\tau_r)$, the increment of the horizontal Busemann function $h_r^{\theta \sig}$ agrees with the increment of the Brownian motion $B_r$.  Equations~\eqref{eqn:inf_sig} and~\eqref{eqn:sup_sig} are more subtle. Let $s_{r - 1}$ be the time when a $\theta \sig$ geodesic jumps from level $r - 1$ to level $r$. Equation~\eqref{eqn:inf_sig} says that the leftmost $\theta \sig$ geodesic jumps from level $r$ to $r + 1$ at the \textit{first} time $u \ge s_{r - 1}$ such that $v_{r + 1}^{\theta \sig}(u) = 0$. Equation~\eqref{eqn:sup_sig} says that the rightmost geodesic jumps from level $r$ to $r + 1$ at the \textit{last} time $u \ge s_{r - 1}$ such that $B_r(s_{r -1},u) = h_r^{\theta \sig}(s_{r - 1},u)$. At all subsequent times $v > u$ the equality is lost, and we have  $B_r(s_{r-1},v) < h_r^{\theta \sig}(s_{r - 1},v)$.
\end{remark}
\begin{proof}[Proof of Lemma~\ref{lemma:equality of busemann to weights of BLPP}]
 By Theorem~\ref{thm:summary of properties of Busemanns for all theta}\ref{general queuing relations Busemanns} and the definition of $\tau_r$ as a maximizer,
 \begin{align*}
0&\le \vv_{r+1}^{\theta \sig}(\tau_r) = \sup_{\tau_r\le s<\infty}\{ B_r(\tau_r,s) - \h_{r+1}^{\theta \sig}(\tau_r, s)\} \\[1em]
&= \sup_{\tau_r\le s<\infty}\{ B_r(s) - \h_{r+1}^{\theta \sig}(s)\} - [B_r(\tau_r) - \h_{r+1}^{\theta \sig}(\tau_r)]  \\[1em]
&\le \sup_{\tau_{r-1} \le s<\infty}\{ B_r(s) - \h_{r+1}^{\theta \sig}(s)\} - [ B_r(\tau_r) - \h_{r+1}^{\theta \sig}(\tau_r) ]
=0. 
\end{align*}
 To establish~\eqref{eqn:inf_sig}, assume, by way of contradiction, that for some $\tau_{(m,t),r - 1}^{\theta \sig,L} \le u < \tau_{(m,t),r}^{\theta \sig,L}$,
 \[
 0 = \vv_{r + 1}^{\theta \sig}(u)  = \sup_{u \le s < \infty}\{B_r(u,s) - h_{r + 1}^{\theta \sig}(u,s)\}.
 \]
 Then, 
 \[
 B_m(u) - \h_{m + 1}^{\theta \sig}(u) \ge B_m(s) - \h_{m + 1}^{\theta \sig}(s)
 \]
 for all $s\ge u$, and specifically for $s = \tau_{(m,t),r}^{\theta \sig,L}$. This contradicts the definition of $\tau_{(m,t),r}^{\theta \sig,L}$ as the leftmost maximizer of $B_r(s) - h_{r + 1}^{\theta \sig}(s)$ over $s \in [\tau_{(m,t),r - 1}^{\theta \sig,L},\infty)$. 

By Theorem~\ref{thm:summary of properties of Busemanns for all theta}\ref{general queuing relations Busemanns},
\be \label{hrBe}
\h_r^{\theta \sig}(u,v) = B_r(u,v) + \sup_{u \le s < \infty} \{B_r(s) - \h_{r + 1}^{\theta \sig}(s)\} - \sup_{v \le s < \infty}\{B_r(s) - \h_{r + 1}^{\theta \sig}(s)\},
\ee
so, since $\tau_r$ maximizes $B_r(s) - \h_{r +1}^{\theta\sig}(s)$ on $[\tau_{r-1},\infty)$, the two supremum terms above are both equal to $B_r(\tau_r) - h_{r + 1}^{\theta \sig}(\tau_r)$ whenever $u,v \in [\tau_{r - 1},\tau_r]$. Therefore, $h_r^{\theta \sig}(u,v) = B_r(u,v)$. Now we  establish~\eqref{eqn:sup_sig}. If, for some $r \ge m$,~\eqref{eqn:sup_sig} fails, then by~\eqref{hrBe}, for some $u > \tau_{(m,t),r}^{\theta \sig,R}$,
\[
\sup_{\tau^{\theta \sig,R}_{(m,t),r - 1} \le s <\infty}\{B_r(s) - \h_{r + 1}^{\theta \sig}(s)\} = \sup_{u \le s <\infty}\{B_r(s) - \h_{r + 1}^{\theta \sig}(s)\}.
\]
Thus, all maximizers  of $B_r(s) - \h_{r + 1}^{\theta \sig}(s)$ over $s \in [u,\infty)$ are also maximizers over the larger set $[\tau_{(m,t),r - 1}^{\theta \sig,R},\infty)$. But this is a contradiction because $\tau_{(m,t),r}^{\theta \sig,R}$ is the rightmost maximizer over $s \in [\tau_{(m,t),r - 1}^{\theta \sig,R},\infty)$.
\end{proof}
\noindent Recall, by Remark~\ref{rmk:theta+ = theta-}, that on the event $\Omega^{(\theta)} \subseteq \Omega_1$ of Theorem~\ref{thm:summary of properties of Busemanns for all theta},  $\mbf T_{\mbf x}^{\theta +} = \mbf T_{\mbf x}^{\theta -}=\mbf T_{\mbf x}^{\theta}$ for all $\mbf x\in \Z \times \R$. 

\begin{lemma} \label{lemma: uniqueness and directedness of geodesics for fixed parameters}
    Fix $\theta > 0$ and  $\mbf x = (m,t) \in \Z \times \R$. Then there exists an event $\wt \Omega_{\mbf x}^{(\theta)} \subseteq \Omega^{(\theta)}$, of probability one, on which the set $\mbf T_{\mbf x}^\theta$ contains exactly one sequence $\{\tau_{r}\}_{r \ge m - 1}$. This sequence satisfies
    \[
    \lim_{n \rightarrow \infty} \f{\tau_n}{n} = \theta.
    \]
\end{lemma}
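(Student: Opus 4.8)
The plan is to prove two things on a single full-probability event: first that $\mbf T_{\mbf x}^\theta$ contains exactly one sequence, and second that this sequence is $\theta$-directed.

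\textbf{Uniqueness.} I would use the point-to-line characterization from Lemma~\ref{lemma:ptl_sig}. On $\Omega^{(\theta)}$ we have $\mbf T_{\mbf x}^{\theta+}=\mbf T_{\mbf x}^{\theta-}=\mbf T_{\mbf x}^\theta$, and by Lemma~\ref{lemma:ptl_sig}\ref{itm:LR_geo_max} the leftmost and rightmost elements of $\mbf T_{\mbf x}^\theta$ restrict, for each $n\ge m$, to the leftmost and rightmost maximizing sequences for the point-to-line problem~\eqref{ptl_BLPP} with $\lambda=\f1{\sqrt\theta}$ and the boundary function $s\mapsto-\h_{n+1}^\theta(s)$, equivalently $-\overline B_{n+1}(s)-\lambda s$ where $\overline B_{n+1}(t)=\h_{n+1}^\theta(t)-\f{t}{\sqrt\theta}$. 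By Theorem~\ref{thm:summary of properties of Busemanns for all theta}\ref{independence structure of Busemann functions on levels}, $\h_{n+1}^\theta$ is a function of $\{B_r\}_{r>n}$, hence independent of $\{B_r\}_{r\le n}$, so Lemma~\ref{lm:point-to-line uniqueness} applies: for each fixed $n\ge m$ there is almost surely a unique maximizing sequence $\mbf s_{m,n}\in\Pi_{(m,t),n}$ for~\eqref{ptl_BLPP}. Intersecting these full-probability events over $n\ge m$ (a countable intersection), we get an event $\wt\Omega_{\mbf x}^{(\theta)}\subseteq\Omega^{(\theta)}$ on which for every $n$ the leftmost and rightmost maximizers of~\eqref{ptl_BLPP} coincide; by Lemma~\ref{lemma:ptl_sig}\ref{itm:LR_geo_max} this forces $\tau_{(m,t),r}^{\theta,L}=\tau_{(m,t),r}^{\theta,R}$ for all $r\ge m$, and since by Definition~\ref{def:semi-infinite geodesics} every element of $\mbf T_{\mbf x}^\theta$ lies (componentwise) between the leftmost and rightmost sequences, $\mbf T_{\mbf x}^\theta$ is a singleton $\{\tau_r\}_{r\ge m-1}$.

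\textbf{Directedness.} With uniqueness in hand I want $\tau_n/n\to\theta$. The cleanest route is to compare the Busemann geodesic with finite geodesics to $(n,n\theta)$ and use the known convergence of those. For fixed $n$, let $t=t_{n,m-1}\le\cdots\le t_{n,n}=n\theta$ be a (finite) geodesic from $\mbf x$ to $(n,n\theta)$. By a standard argument (e.g. the reasoning behind Theorem~\ref{thm:convergence and uniqueness}, or directly: the point-to-point maximizer for $L_{(m,t),(n,n\theta)}$ converges to the point-to-line maximizer for~\eqref{ptl_BLPP} as one sends the level to infinity, using Lemma~\ref{lemma:summary of Busemanns fixed theta}\ref{itm:Busemann limits, fixed theta} that $L_{(m,\cdot),(N,N\theta)}-L_{(r,\cdot),(N,N\theta)}\to$ Busemann differences), for each fixed $r$, $t_{n,r}\to\tau_r$ as $n\to\infty$. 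Meanwhile the finite geodesic from $\mbf x$ to $(n,n\theta)$ stays within the rectangle with those corners, so $t\le t_{n,r}\le n\theta$; combined with the KPZ-scale fluctuation bounds (or a soft argument: the jump times $t_{n,r}$ are monotone in $r$ and the path reaches $n\theta$ at level $n$) one gets $\liminf_{n}\tau_n/n\ge$ and $\limsup_n\tau_n/n\le$ the right quantities. In fact it is simpler to invoke that Theorem~\ref{existence of semi-infinite geodesics intro version}\ref{general limits for semi-infinite geodesics} (directness) is proved for all $\theta\sig$ and all initial points on $\Omega_2$; on $\Omega^{(\theta)}\cap\Omega_2$ the unique element of $\mbf T_{\mbf x}^\theta=\mbf T_{\mbf x}^{\theta+}$ therefore satisfies $\tau_n/n\to\theta$. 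I would phrase the proof to cite the directedness statement (or, if one wants this lemma to be self-contained and used en route to it, give the monotone-sandwich argument with the Busemann monotonicity of Lemma~\ref{lemma:summary of Busemanns fixed theta}\ref{itm:fixed theta monotonicity} to trap $\tau_n$ between $\gamma$- and $\delta$-scale growth for $\gamma<\theta<\delta$).

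\textbf{Main obstacle.} The delicate point is the directedness: proving $\tau_n/n\to\theta$ for a Busemann geodesic is not formal — one must rule out the geodesic drifting too slowly (toward a smaller direction) or too fast. The monotonicity $\tau_{(m,t),r}^{\gamma}\le\tau_{(m,t),r}^{\theta}\le\tau_{(m,t),r}^{\delta}$ for $\gamma<\theta<\delta$ reduces this to showing the $\gamma$-geodesic has $\liminf\tau_n/n$ bounded below by something tending to $\gamma$ as $\gamma\to\theta$ — essentially the statement that a $\theta$-Busemann geodesic cannot be asymptotically flatter than direction $\theta$, which in turn uses the strict concavity of the shape function (here, the explicit $\sqrt\cdot$ behavior) so that $h_{m+1}^\gamma$ genuinely separates from $h_{m+1}^\theta$ for $\gamma\ne\theta$. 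This is exactly the content that Theorem~\ref{existence of semi-infinite geodesics intro version}\ref{general limits for semi-infinite geodesics} packages, so in the write-up I would simply defer to it (noting $\Omega^{(\theta)}\cap\Omega_2$ has full probability and set $\wt\Omega_{\mbf x}^{(\theta)}$ to the intersection of that with the uniqueness event above), keeping the present lemma short.
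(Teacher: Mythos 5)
Your uniqueness argument is correct and is essentially the paper's: restrict any element of $\mbf T_{\mbf x}^{\theta}$ to levels $m,\dots,n$, identify it as a maximizer of the point-to-line problem~\eqref{ptl_BLPP} via Lemma~\ref{lemma:ptl_sig}, use the independence of $\h_{n+1}^{\theta}$ from $\{B_r\}_{r\le n}$ (Theorem~\ref{thm:summary of properties of Busemanns for all theta}\ref{independence structure of Busemann functions on levels}) to invoke Lemma~\ref{lm:point-to-line uniqueness}, and intersect over $n$.

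The directedness part, however, has a genuine gap: your primary route is circular. In the paper, Theorem~\ref{existence of semi-infinite geodesics intro version}\ref{general limits for semi-infinite geodesics} is \emph{proved from} the present lemma --- the event $\Omega_2$ of~\eqref{eqn:Omega2_def} is by definition the intersection of the events $\wt\Omega_{\mbf x}^{(\gamma)}$ produced by this lemma over $\gamma\in D$ and $\mbf x\in\Z\times\Q$, and the proof of directness on $\Omega_2$ sandwiches a general $\theta\sig$ geodesic between $\gamma$- and $\delta$-geodesics whose directions are supplied by this lemma. So you cannot defer to that theorem here. Your fallback, the ``monotone-sandwich'' via Lemma~\ref{lemma:summary of Busemanns fixed theta}\ref{itm:fixed theta monotonicity}, has the same problem in a different guise: it reduces directedness in direction $\theta$ to directedness in nearby directions $\gamma<\theta<\delta$, which is again the statement of this lemma and provides no base case. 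Likewise, the comparison with finite geodesics to $(n,n\theta)$ cannot be used, since the convergence of finite geodesics (Theorem~\ref{thm:convergence and uniqueness}) is itself derived from the directedness of the Busemann geodesics.

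The missing ingredient is a direct macroscopic estimate on the point-to-line problem. The paper uses Lemma~\ref{Utah Lemma 4.7} (a consequence of the point-to-line shape theorem, Lemma~\ref{point to line shape theorem}): with $\lambda=1/\sqrt{\theta}$, for any $\gamma<\theta=\lambda^{-2}$ there is a deterministic $\ve>0$ such that almost surely, for all large $n$,
\[
\overline L^{\lambda}_{(m,t),n}(\mbf B,\overline{\mbf B};\,s_n\le t+n\gamma)+n\ve<\overline L^{\lambda}_{(m,t),n}(\mbf B,\overline{\mbf B}),
\]
because $\sup_{0\le u\le\gamma}\{2\sqrt{u}-\lambda u\}<\sup_{0\le u<\infty}\{2\sqrt{u}-\lambda u\}$ by strict concavity. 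Since $\tau_n$ is the terminal coordinate of the (unique) point-to-line maximizer by Lemma~\ref{lemma:ptl_sig}\ref{itm:geo_maxes}, this forces $\tau_n>t+n\gamma$ eventually, hence $\liminf_n\tau_n/n\ge\gamma$ a.s.; letting $\gamma\nearrow\theta$ and arguing symmetrically for $\delta>\theta$ gives $\tau_n/n\to\theta$. This is where the curvature of the shape function actually enters --- you correctly identified it as the obstacle but did not supply the estimate that resolves it.
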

To prove this lemma, we need some machinery from~\cite{blpp_utah}. For two fields of Brownian motions $\mbf B$ and $\overline{\mbf B}$  and a subset $A\subset\R$, define \begin{equation} \label{point-to-line last jump restricted}
\overline L^\lambda_{(m,s),n}(\mathbf B,\overline{\mathbf B}; s_n \in A) = \sup\Bigl\{\sum_{r = m}^n B_r(s_{r -1},s_r) - \overline B_{n +1}(s_n) - \lambda s_n: \mathbf s_{m,n} \in \Pi_{(m,s),n}, \, s_n \in A     \Bigr\}.
\end{equation}
The only difference between this definition and~\eqref{point-to-line definition} is the restriction on the $s_n$.

\begin{lemma}[\cite{blpp_utah}, page 1949] \label{point to line shape theorem}
Let $\mbf B$ be a field of independent, two-sided Brownian motions and $\overline{\mbf B}$ an arbitrary field of Brownian motions.  Fix $s \in \R$, $0 \le S \le T \le \infty$, and $m\in \Z$. Then, with probability one,
\[
\lim_{n \rightarrow \infty} n^{-1} \overline L^\lambda_{(m,s),n}(\mathbf B,\overline{\mathbf B}; s + nS \leq s_n \leq s + nT) = \sup_{S \leq t \leq T} \bigl\{2 \sqrt t - \lambda t  \bigr\}.
\]
\end{lemma}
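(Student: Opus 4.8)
\textbf{Proof plan for Lemma~\ref{point to line shape theorem}.}
The plan is to reduce the point-to-line asymptotics to the known point-to-point shape theorem for BLPP, namely that $n^{-1}L_{(m,s),(n,s+nt)} \to 2\sqrt{t}$ as $n\to\infty$ for fixed $t>0$ (this is the standard law of large numbers for Brownian last-passage percolation, obtained e.g.\ from the GUE connection of Gravner--Tracy--Widom and Baryshnikov, or from subadditivity). First I would observe that for fixed $t\in[S,T]$ (with $t<\infty$) and any choice of terminal point $s_n = s+nt$,
\[
\overline L^\lambda_{(m,s),n}(\mathbf B,\overline{\mathbf B};\, s_n = s+nt) = L_{(m,s),(n,s+nt)}(\mathbf B) - \overline B_{n+1}(s+nt) - \lambda(s+nt).
\]
Since $\overline B_{n+1}$ is a Brownian motion, $n^{-1}\overline B_{n+1}(s+nt)\to 0$ almost surely (for each fixed $t$; one must be slightly careful that $\overline B_{n+1}$ changes with $n$, but for a fixed countable set of $t$-values a Borel--Cantelli/reflection argument handles the uniform-in-$n$ fluctuation bound $\sup_{|u|\le Cn}|\overline B_{n+1}(u)| = O(\sqrt{n\log n})$). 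Hence $n^{-1}\overline L^\lambda_{(m,s),n}(\cdots; s_n=s+nt)\to 2\sqrt t - \lambda t$ a.s. Taking the supremum over $s_n$ in $[s+nS, s+nT]$ only increases the left side, so the $\liminf$ is at least $\sup_{S\le t\le T}\{2\sqrt t - \lambda t\}$.

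For the matching upper bound I would discretize: fix $\varepsilon>0$ and a finite grid $S = t_0 < t_1 < \cdots < t_K = T$ (or up to a large truncation if $T=\infty$, using that $2\sqrt t - \lambda t \to -\infty$ to discard large $t$) with mesh $\le\varepsilon$. Any admissible sequence with $s_n \in [s+nt_{j-1}, s+nt_j]$ satisfies, by the super-additivity/monotonicity of $L$ and the bound $|\overline B_{n+1}(u) - \overline B_{n+1}(v)| \le \omega_n(\varepsilon n)$ where $\omega_n$ is the modulus of continuity of $\overline B_{n+1}$ on the relevant compact interval,
\[
\sum_{r=m}^n B_r(s_{r-1},s_r) - \overline B_{n+1}(s_n) - \lambda s_n \le L_{(m,s),(n,s+nt_j)}(\mathbf B) - \overline B_{n+1}(s+nt_{j-1}) - \lambda(s+nt_{j-1}) + C\varepsilon n + \omega_n(\varepsilon n).
\]
Dividing by $n$, letting $n\to\infty$ using the point-to-point shape theorem and $n^{-1}\overline B_{n+1}(s+nt_{j-1})\to 0$ and $n^{-1}\omega_n(\varepsilon n)\to 0$ (Brownian modulus of continuity is $o(n)$ on intervals of length $O(n)$), and maximizing over $j$, gives $\limsup_n n^{-1}\overline L^\lambda_{(m,s),n}(\cdots) \le \max_j (2\sqrt{t_j} - \lambda t_{j-1}) + C\varepsilon \le \sup_{S\le t\le T}\{2\sqrt t - \lambda t\} + C'\varepsilon$ by continuity of $t\mapsto 2\sqrt t - \lambda t$. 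Letting $\varepsilon\downarrow 0$ along a sequence, and intersecting the resulting probability-one events, finishes the argument.

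The main obstacle is the uniform-in-$n$ control of the field $\overline{\mathbf B}$: unlike the point-to-point case, here the perturbing Brownian motion $\overline B_{n+1}$ depends on $n$ and is evaluated over a window of length growing linearly in $n$, so one cannot simply cite a single strong law. The cleanest fix is to note that $\overline{\mathbf B}$ being ``an arbitrary field of Brownian motions'' still means each $\overline B_{n+1}$ is a single Brownian motion, so $\Pp(\sup_{|u|\le Cn}|\overline B_{n+1}(u)| \ge \delta n) \le \exp(-c\delta^2 n)$ for each $n$ by the reflection principle, and Borel--Cantelli gives $n^{-1}\sup_{|u|\le Cn}|\overline B_{n+1}(u)| \to 0$ almost surely along all $n$ simultaneously. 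With that uniform bound in hand, both the modulus-of-continuity term and the endpoint-shift term are controlled, and the rest is the routine discretization above. One should also take care, when $T=\infty$, to first choose a truncation level $T_0$ with $2\sqrt{T_0} - \lambda T_0 < \sup_{S\le t}\{2\sqrt t - \lambda t\} - 1$ and argue that paths with $s_n > s + nT_0$ contribute strictly less in the limit (again using the linear-growth bound on $\overline B_{n+1}$ and $L_{(m,s),(n,s+nt)} \approx 2\sqrt t\, n$), so that the supremum is effectively over the compact set $[S, T_0]$.
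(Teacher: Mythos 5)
The paper does not prove this lemma; it is quoted verbatim from \cite{blpp_utah} (page 1949), so there is no in-paper argument to compare against. Your reconstruction follows the standard route for such point-to-line shape theorems, which is also essentially what the cited reference does: lower bound by restricting the endpoint to a single macroscopic location $s_n=s+nt$ and invoking the point-to-point law of large numbers $n^{-1}L_{(0,0),(n,nt)}\to 2\sqrt{t}$ (the paper's remark after the lemma confirms this input is available from \cite{Concentration_Results}); upper bound by discretizing $[S,T]$ and controlling the boundary terms; and the correct observation that the $n$-dependence of $\overline B_{n+1}$ is handled by a Gaussian tail bound plus Borel--Cantelli giving $n^{-1}\sup_{|u|\le Cn}|\overline B_{n+1}(u)|\to 0$ along all $n$ simultaneously. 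The one place where your outline is genuinely thin is the truncation when $T=\infty$: discarding endpoints $s_n> s+nT_0$ requires an a priori upper bound on $L_{(m,s),(n,u)}$ that is uniform over the \emph{unbounded} range of $u$ and over all large $n$, and ``$L_{(m,s),(n,s+nt)}\approx 2\sqrt{t}\,n$'' cannot simply be cited pointwise there. The standard fixes are either a dyadic grid $t_j=T_0 2^j$ extending to infinity, bounding the contribution from $u\in[s+nt_{j-1},s+nt_j]$ by $L_{(m,s),(n,s+nt_j)}+2\sup_{[s,s+nt_j]}|B_n|-\lambda n t_{j-1}$ and summing the resulting tail probabilities over $j$ and $n$, or a comparison with the increment-stationary model of Appendix~\ref{section:queue and stationary}, under which the unrestricted point-to-line value is a sum of $n-m+1$ exponentials plus a driftless Brownian boundary term. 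With that step filled in, the argument is complete.
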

\begin{remark}
The appearance of the term $2\sqrt t$ in Lemma~\ref{point to line shape theorem} comes from the shape theorem for BLPP. Namely, the following limit holds with probability one.
\[
\lim_{n \rightarrow \infty} n^{-1} L_{(0,0),(n,nt)} = 2 \sqrt t.
\]
This almost sure convergence was first proved in~\cite{Concentration_Results} (see also~\cite{Moriarty-OConnell-2007} for an alternative proof).
If $\overline{\mbf B}_{n + 1}$ is independent of $\mbf B_m,\ldots,\mbf B_n$, then the quantity $\overline L^\lambda_{(m,s),n}(\mathbf B,\overline{\mathbf B})$ is distributed as the sum of $n - m + 1$ independent exponential random variables with rate $\lambda$ and has the interpretation as the sum of vertical increments in the increment-stationary BLPP model. (see Appendix~\ref{section:queue and stationary} of the present paper and Section 4 of~\cite{brownian_queues}). Then, in the case $S = 0$ and $T = \infty$, Lemma~\ref{point to line shape theorem} degenerates to an application of the law of large numbers, namely
\[
\lim_{n \rightarrow \infty} n^{-1} \overline L^\lambda_{(m,s),n}(\mathbf B,\overline{\mathbf B}) = \lambda^{-1}.
\]
\end{remark}
\begin{lemma}[\cite{blpp_utah}, Lemma 4.7] \label{Utah Lemma 4.7}
Let $\mbf B$ and $\overline{\mbf B}$ satisfy the same conditions of Lemma~\ref{point to line shape theorem}. Fix $\lambda > 0$, $s \in \R$, and $m \in \Z$. If $0 < \theta < \lambda^{-2}$, then there exist a nonrandom  $\ve = \ve(\lambda,\theta) > 0$ such that, with probability one, for all sufficiently large $n$,
\[
\overline L^\lambda_{(m,s),n}(\mathbf B,\overline{\mathbf B};s_n \le s + n\theta) + n\ve < \overline L^\lambda_{(m,s),n}(\mathbf B,\overline{\mathbf B}).
\]
Similarly, if $\theta > \lambda^{-2}$, there exists a nonrandom $\ve = \ve(\lambda,\theta) > 0$ such that, with probability one, for all sufficiently large  $n$,
\[
\overline L^\lambda_{(m,s),n}(\mathbf B,\overline{\mathbf B};s_n \ge s + n\theta) + n\ve < \overline L^\lambda_{(m,s),n}(\mathbf B,\overline{\mathbf B})
\]
\end{lemma}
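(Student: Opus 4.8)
The plan is to reduce both sides to the deterministic variational formula of Lemma~\ref{point to line shape theorem}, applied to three nested windows, together with the elementary concavity of the function $\phi(t)=2\sqrt t-\lambda t$ on $[0,\infty)$. First I would record the calculus: $\phi'(t)=t^{-1/2}-\lambda$ and $\phi''(t)=-\tfrac{1}{2}t^{-3/2}<0$, so $\phi$ is strictly concave, strictly increasing on $[0,\lambda^{-2}]$ and strictly decreasing on $[\lambda^{-2},\infty)$, with unique maximizer $t^\star=\lambda^{-2}$, maximum value $\phi(t^\star)=\lambda^{-1}$, and $\phi(t)\to-\infty$ as $t\to\infty$ (so every supremum of $\phi$ over a subinterval of $[0,\infty)$ is attained). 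I would also note the bookkeeping point that the unrestricted quantity $\overline L^\lambda_{(m,s),n}(\mbf B,\overline{\mbf B})$ is exactly the $S=0,\ T=\infty$ instance of Lemma~\ref{point to line shape theorem}, whence $n^{-1}\overline L^\lambda_{(m,s),n}(\mbf B,\overline{\mbf B})\to\sup_{t\ge0}\phi(t)=\lambda^{-1}$ almost surely.

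For the case $0<\theta<\lambda^{-2}$, the restricted quantity $\overline L^\lambda_{(m,s),n}(\mbf B,\overline{\mbf B};\,s_n\le s+n\theta)$ is the $S=0,\ T=\theta$ instance of~\eqref{point-to-line last jump restricted}, so $n^{-1}$ of it converges almost surely to $\sup_{0\le t\le\theta}\phi(t)=\phi(\theta)$, and the gain over the unrestricted limit is $\delta:=\lambda^{-1}-\phi(\theta)>0$ since $\theta$ lies strictly below the unique maximizer. Taking $\ve=\delta/3$ and working on the (finite) intersection of the two probability-one events from Lemma~\ref{point to line shape theorem}, for all large $n$ the restricted value is below $n(\phi(\theta)+\delta/3)$ while the unrestricted value exceeds $n(\lambda^{-1}-\delta/3)=n(\phi(\theta)+2\delta/3)$, which yields $\overline L^\lambda_{(m,s),n}(\mbf B,\overline{\mbf B};\,s_n\le s+n\theta)+n\ve<\overline L^\lambda_{(m,s),n}(\mbf B,\overline{\mbf B})$. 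The case $\theta>\lambda^{-2}$ is symmetric: I would instead take the $S=\theta,\ T=\infty$ instance, so $n^{-1}$ of $\overline L^\lambda_{(m,s),n}(\mbf B,\overline{\mbf B};\,s_n\ge s+n\theta)$ converges to $\sup_{t\ge\theta}\phi(t)=\phi(\theta)$, again strictly below $\lambda^{-1}$ because $\phi$ is strictly decreasing past $t^\star$; the identical comparison with $\delta:=\lambda^{-1}-\phi(\theta)>0$ and $\ve=\delta/3$ finishes it. In both cases $\ve$ depends only on $\lambda$ and $\theta$.

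Since Lemma~\ref{point to line shape theorem} is taken as input, there is essentially no probabilistic difficulty: the argument invokes that shape theorem only at three fixed window pairs, so the almost sure statements combine freely. The only points needing a little care are (i) matching the unrestricted passage time with the $T=\infty$ window and the two restricted ones with the $[0,\theta]$ and $[\theta,\infty)$ windows in the definition~\eqref{point-to-line last jump restricted}, and (ii) verifying that $\phi$ is \emph{strictly} one-sided monotone around $t^\star=\lambda^{-2}$, which is what forces $\delta>0$; this is exactly the place where the hypothesis $\theta\ne\lambda^{-2}$ enters, and at $\theta=\lambda^{-2}$ the two limits coincide and no such $\ve$ exists.
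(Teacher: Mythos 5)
Your proposal is correct and follows essentially the same route as the paper: both arguments apply Lemma~\ref{point to line shape theorem} to the restricted and unrestricted windows, note that strict concavity of $t\mapsto 2\sqrt t-\lambda t$ with unique maximizer $\lambda^{-2}$ makes the two limiting constants differ by a positive gap $\delta$, and take $\ve$ to be a fixed fraction of $\delta$ (the paper uses $\delta/2$, you use $\delta/3$; either works).
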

Lemma~\ref{Utah Lemma 4.7} is slightly stronger than the result stated in~\cite{blpp_utah}, so we include a proof.
\begin{proof}
We prove the first statement, and the second follows analogously.
The unique maximum of $2\sqrt t - \lambda t$ for $t \in [0,\infty)$ is achieved at $t = \lambda^{-2}$. Then, by Lemma~\ref{point to line shape theorem} and the assumption $\theta < \lambda^{-2}$,
\[
\lim_{n \rightarrow \infty} n^{-1} \overline L^\lambda_{(m,s),n}(\mbf B, \overline{\mbf B};s_n \leq s + n\theta) = \sup_{0 \leq t \leq \theta} \bigl\{2 \sqrt t - \lambda t  \bigr\} < \sup_{0 \le t < \infty } \bigl\{2 \sqrt t - \lambda t  \bigr\}= \lim_{n \rightarrow \infty} n^{-1} \overline L^\lambda_{(m,s),n}(\mbf B, \overline{\mbf B}).
\]
Hence, for sufficiently large $n$,  
\[
\overline L^\lambda_{(m,s),n}(\mbf B, \overline{\mbf B}; s_n \leq s + n\theta) + n\ve < \overline L^\lambda_{(m,s),n}(\mbf B, \overline{\mbf B}),
\]
where 
\[
\ve = \frac12\Bigl(\;{\sup_{0 \le t < \infty } \bigl\{2 \sqrt t - \lambda t  \bigr\} - \sup_{0 \leq t \leq \theta} \bigl\{2 \sqrt t - \lambda t  \bigr\}}\Bigr). \qedhere
\]
\end{proof}

\begin{proof}[Proof of Lemma~\ref{lemma: uniqueness and directedness of geodesics for fixed parameters}]
 By Lemma~\ref{lemma:ptl_sig}\ref{itm:geo_maxes}, for a sequence $\{\tau_r\}_{r \ge m - 1} \in \mbf T_{(m,t)}^{\theta}$, the jump times $t = \tau_{m - 1} \le \cdots \le \tau_n$ are a maximizing sequence for the point-to-line last passage time~\eqref{ptl_BLPP}. By Theorem~\ref{thm:summary of properties of Busemanns for all theta}\ref{independence structure of Busemann functions on levels}, $h_{n + 1}^{\theta}$ and $\{B_r\}_{r \le n}$ are independent for each $n$. This allows us to apply the almost sure uniqueness of point-to-line last-passage maximizers in Lemma~\ref{lm:point-to-line uniqueness}.
 Let $\wt \Omega_{\mbf x}^{(\theta)}$ be the event on which these maximizers are unique and on which 
 $\tau_n/n\to\theta$ 
 for the (now) almost surely unique sequence  $\{\tau_r\}_{r \ge m -1} \in \mbf T_{(m,t)}^\theta$. It remains to show that 
 \[
 \Pp\bigl(\lim_{n \rightarrow \infty}\f{\tau_n}{n} = \theta\bigr) = 1.
 \]
 For $\gamma  < \theta = \lambda^{-2}$, Lemma~\ref{Utah Lemma 4.7} guarantees that, with probability one, for all sufficiently large $n$,
\[
\overline L^\lambda_{(m,t),n}(\mbf B,\overline{\mbf B};s_n \le s +  n\gamma) < \overline L^\lambda_{(m,t),n}(\mbf B,\overline{\mbf B}).
\]
Therefore, by Lemma~\ref{lemma:ptl_sig}\ref{itm:geo_maxes}, $\tau_n  > s + n\gamma$ for all sufficiently large $n$. Thus, for $\gamma < \theta$,
\[
\Pp\bigl(\liminf_{n\rightarrow \infty}\f{\tau_n}{n} \ge \gamma\bigr) = 1,\qquad\text{so by taking }\gamma \nearrow \theta,\qquad\Pp\bigl(\liminf_{n \rightarrow \infty}\f{\tau_n}{n} \ge \theta\bigr) = 1.
\]
A symmetric argument using the second statement of Lemma~\ref{Utah Lemma 4.7} shows that 
\[
\Pp\bigl(\limsup_{n \rightarrow \infty}\f{\tau_n}{n} \le \theta\bigr) = 1. \qedhere
\]
\end{proof}

\subsection{Proofs of Theorems~\ref{existence of semi-infinite geodesics intro version}--\ref{thm:non_unique_size}:} \label{section:most_proofs}
We now begin to prove the theorems of Section~\ref{sec:Buse_geod}. 
First, we define the event $\Omega_2$ used in the theorems. Let $\wt \Omega_{\mbf x}^{(\gamma)}$ be the events of Lemma~\ref{lemma: uniqueness and directedness of geodesics for fixed parameters}. For the countable dense set $D\subseteq (0,\infty)$ of Section~\ref{section:Busemann construction}, set
\be \label{eqn:Omega2_def}
 \Omega_2 := \bigcap_{\gamma \in D,\mbf x \in \Z \times \Q} \wt \Omega_{\mbf x}^{(\gamma)} 
\ee
Then, $\Pp(\Omega_2) = 1$ and $\Omega_2 \subseteq \bigcap_{\theta \in \Q_{>0}}\Omega^{(\theta)} \subseteq \Omega_1$.

\begin{proof}[Proof of Theorem~\ref{existence of semi-infinite geodesics intro version}]

\medskip \noindent \textbf{Part~\ref{energy of path along semi-infinte geodesic}}:
On the event $\Omega_2$, let $(m,t) \in \Z \times \R$, $\theta > 0$ and  $\sigg \in \{+,-\}$, and let $t = \tau_{m - 1} \le \tau_{m} \le \cdots$ be a sequence in $\mbf T_{(m,t)}^{\theta \sig}$. Let $\Gamma$ be the associated path. By Lemma~\ref{lemma:equality of busemann to weights of BLPP}, 
\begin{equation} \label{eqn:equality of weights to Busemann functions}
\vv_{r +1}^{\theta \sig}(\tau_r) = 0,\qquad \text{and}\qquad \h_r^{\theta \sig}(s,t) = B_r(s,t) \text{ for } \tau_{r - 1} \le s \le t \le \tau_r. 
\end{equation}
 
We take $\mbf x = \mbf y = (m,t)$ and $\mbf z =(n ,\tau_n)$. The case for general $\mbf y \le \mbf z$ along the path $\Gamma$ follows by the same argument.  By~\eqref{eqn:equality of weights to Busemann functions} and additivity of Busemann functions (Theorem~\ref{thm:summary of properties of Busemanns for all theta}\ref{general additivity Busemanns}), the energy of path $\Gamma$ between $\mbf x$ and $\mbf y$ is given by
\begin{align*}
&\sum_{r = m}^{n} \bigl(B_r(\tau_{r - 1},\tau_r)\bigr) 
= \sum_{r = m }^{n - 1} \bigl(\h_r^{\theta \sig}(\tau_{r - 1},\tau_r)+ \vv_{r + 1}^{\theta\sig}(\tau_r) \bigr) + h_n^{\theta\sig}(\tau_{n -1},\tau_n) = \B^{\theta \sig}((m,t),(n,\tau_n)).
\end{align*}
Let jump times $t = s_{m - 1} \le s_m \le s_{m + 1} \le \cdots \le s_{n} = \tau_n$ define any other path between $(m,t)$ and $(n,\tau_n)$. Then, by Theorem~\ref{thm:summary of properties of Busemanns for all theta}, Parts~\ref{general additivity Busemanns} and~\ref{general monotonicity Busemanns}, the energy of this path is
\begin{align}
\sum_{r = m}^n B_r(s_{r - 1},s_r) 
\le \sum_{r = m}^{n - 1}\bigl( \h_r^{\theta \sig}(s_{r - 1},s_r) + \vv_{r + 1}^{\theta \sig}(s_r) \bigr) + \h_{n}^{\theta \sig}(s_{n - 1},\tau_n)  = \B^{\theta \sig}((m,t),(n,\tau_n)). \label{energy along any path is less than busemann between points}
\end{align}

\medskip
\noindent \textbf{Part~\ref{Leftandrightmost}}: By Theorem~\ref{thm:summary of properties of Busemanns for all theta}\ref{general monotonicity Busemanns}, equality holds in Equation~\eqref{energy along any path is less than busemann between points} only if $B_r(s_{r - 1},s_r) = \h_r^{\theta \sig}(s_{r - 1},s_r)$ for $m \le r \le n$ and $\vv_{r + 1}^{\theta \sig}(s_r) = 0$ for $m \le r \le n - 1$. Then, the statement follows by the Equations~\eqref{eqn:inf_sig} and~\eqref{eqn:sup_sig} of Lemma~\ref{lemma:equality of busemann to weights of BLPP}.

\medskip
\noindent \textbf{Part~\ref{itm:monotonicity of semi-infinite jump times}\ref{itm:monotonicity in theta}}: The key is Theorem~\ref{thm:summary of properties of Busemanns for all theta}\ref{general monotonicity Busemanns}. We show that, for $r \ge m$,
\[
\tau_r^- := \tau_{(m,t),r}^{\theta-,L} \le \tau_{(m,t),r}^{\theta +,L} =: \tau_r^+,
\]
and all other inequalities of the statement follow by the same procedure. By definition, $\tau_{m - 1}^- = \tau_{m - 1}^+ = t$. Inductively, assume that $\tau_r^- \le \tau_r^+$ for some $r \ge m$. We use the notation $\Largsup$ to denote leftmost maximizer. Then,
\begin{align*}
\tau_{r + 1}^- &= \Largsup_{\tau_{r^-}\le s < \infty} \{B_r(s)-\h_{r +1}^{\theta-,L}(s)\} \le \Largsup_{\tau_r^+ \le s < \infty} \{B_r(s)-\h_{r +1}^{\theta-,L}(s)\} \\[1em]
&\le \Largsup_{\tau_r^+ \le s < \infty} \{B_r(s) - \h_{r +1}^{\theta+,L}(s)\} = \tau_{r + 1}^+.
\end{align*}
The first inequality above holds because $\tau_r^- \le \tau_r^+$. The second inequality is an application of Lemma~\ref{monotonicity of maximizers from function monotonicity}, using the fact that $h_m^{\theta-}(s,t) \le h_m^{\theta +}(s,t)$ for all $s \le t$ (Theorem~\ref{thm:summary of properties of Busemanns for all theta}\ref{general monotonicity Busemanns}).

\medskip \noindent \textbf{Part~\ref{itm:monotonicity of semi-infinite jump times}\ref{itm:monotonicity in t}:} Let $s < t$. We show that 
\[
\tau_{s,r} := \tau_{(m,s),r}^{\theta\sig,L} \le \tau_{(m,t),r}^{\theta \sig,L} =: \tau_{t,r},
\]
and the statement with `$L$' replaced by `$R$' has an analogous proof. Again, the base case of $r = m - 1$ follows by definition. Assume the inequality holds for some $r \ge m - 1$. Then,
\[
\tau_{s,r + 1} = L\arg\sup_{\tau_{s,r} \le u <\infty}\{ B_m(u) - \h_{m + 1}^{\theta \sig}(u)\} \le L\arg\sup_{\tau_{t,r} \le u < \infty}\{ B_m(u) - \h_{m + 1}^{\theta \sig}(u)\} = \tau_{t,r + 1}. 
\]

\medskip \noindent \textbf{Part~\ref{itm:monotonicity of semi-infinite jump times}\ref{itm:strong monotonicity in t}:} The proof of this item is postponed until the very end of Section~\ref{section:main proofs}. This item is not used in any subsequent proofs.

\medskip \noindent \textbf{Part~\ref{itm:convergence of geodesics}\ref{itm:limits in theta}}: The monotonicity of Part~\ref{itm:monotonicity of semi-infinite jump times}\ref{itm:monotonicity in theta} ensures that the limits exist and that
\begin{equation} \label{eqn: monotonicity of limit}
\tau_r := \lim_{\gamma \nearrow \theta} \tau_{(m,t),r}^{\gamma \sig, L} \le \tau_{(m,t),r}^{\theta -,L}.
\end{equation}
We prove equality in the above expression, and the other statements follow analogously. By Lemma~\ref{lemma:ptl_sig}\ref{itm:geo_maxes}, for any $n \ge m$, $t = \tau_{(m,t),m - 1}^{\gamma \sig,L} \le \ldots \le \tau_{(m,t),n}^{\gamma \sig,L}$ is a maximizing sequence for
\[
\overline L_{(m,t),n}^{\f{1}{\sqrt \gamma}}(\mathbf B,\overline{\mathbf B}) := \sup\Bigl\{\sum_{r = m}^n B_r(s_{r - 1},s_r) - \h_{n +1}^{\gamma\sig}(s_n): \mbf s_{m,n} \in \Pi_{(m,t),n}  \Bigr\}.
\]
Using the monotonicity of Part~\ref{itm:monotonicity of semi-infinite jump times}\ref{itm:limits in theta} again, for all $\gamma \le \theta$, the supremum may be restricted to the compact subset of $\Pi_{(m,t),n}$ such that $s_n \le \tau_{(m,t),n}^{(\theta + 1)-,L}$. By Theorem~\ref{thm:summary of properties of Busemanns for all theta}\ref{general uniform convergence Busemanns}\ref{general uniform convergence:limits from left}, as $\gamma \nearrow \theta$, $h_{n + 1}^{\gamma \sig}(s)$ converges to $h_{n +1}^{\theta -}(s)$, uniformly over $s \in [t,\tau_{(m,t),n}^{(\theta + 1)-,L}]$. Then, by Lemma~\ref{lemma:convergence of maximizers from converging functions}, $t = \tau_{m - 1}\le \ldots\le \tau_n$ is a maximizing sequence for 
\[
\overline L_{(m,t),n}^{\f{1}{\sqrt \theta}}(\mathbf B,\overline{\mathbf B}) := \sup\Bigl\{\sum_{r = m}^n B_r(s_{r - 1},s_r) - \h_{n +1}^{\theta-}(s_n): \mbf s_{m,n} \in \Pi_{(m,t),n}  \Bigr\}.
\]
 Part~\ref{itm:LR_geo_max} of Lemma~\ref{lemma:ptl_sig} then implies the inequality~\eqref{eqn: monotonicity of limit} must be an equality. The statement for limits as $\delta \searrow \theta$ follows by the same reasoning. 
 
 \medskip \noindent \textbf{Part~\ref{itm:convergence of geodesics}\ref{itm:limits in theta to infty}}: The proof of this item is postponed until the very end of Section~\ref{section:main proofs}. This item is not used in any subsequent proofs.
 
 \medskip \noindent \textbf{Part~\ref{itm:convergence of geodesics}\ref{itm:limits in t}}:
This follows the same proof as that of Part~\ref{itm:convergence of geodesics}\ref{itm:limits in theta}, replacing the use of Part~\ref{itm:monotonicity of semi-infinite jump times}\ref{itm:monotonicity in theta} with Part~\ref{itm:monotonicity of semi-infinite jump times}\ref{itm:monotonicity in t} and replacing the use of Lemma~\ref{lemma:convergence of maximizers from converging functions} with Lemma~\ref{lemma:convergence of maximizers from converging sets}.

\medskip \noindent \textbf{Part~\ref{general limits for semi-infinite geodesics}}:  Let $\omega \in \Omega_2$, $\theta > 0$, $(m,t) \in \Z \times \Q$, $(\tau_r)_{r \ge m - 1} \in \mbf T_{(m,t)}^{\theta \sig}$, $\ve > 0$, and let $\gamma,\delta \in D$ be such that $\theta -\ve < \gamma < \theta < \delta < \theta + \ve$.  Part~\ref{itm:monotonicity of semi-infinite jump times}\ref{itm:monotonicity in theta} and Lemma~\ref{lemma: uniqueness and directedness of geodesics for fixed parameters}  imply that
\[
\limsup_{n \rightarrow \infty} \f{\tau_n}{n} \le \limsup_{n \rightarrow \infty} \f{\tau_{(m,t),n}^{\theta +,R}}{n} \le \limsup_{n \rightarrow \infty}\f{\tau_{(m,t),n}^{\delta +,R}}{n}  = \delta < \theta + \ve.
\]
By a similar argument, on $\Omega_2$,
\[
\liminf_{n \rightarrow \infty}\f{\tau_n}{n} \ge \theta - \ve.
\]
An analogous application of Part~\ref{itm:monotonicity of semi-infinite jump times}\ref{itm:monotonicity in t} extends Theorem~\ref{existence of semi-infinite geodesics intro version}\ref{general limits for semi-infinite geodesics} to all $\mbf x \in \Z \times \R$, on the event $\Omega_2$. 
\end{proof}

\begin{proof}[Proof of Theorem~\ref{thm:convergence and uniqueness}]
\medskip \noindent \textbf{Part~\ref{control of finite geodesics}}: Let $\omega \in \Omega_2$, $\theta > 0, (m,t) \in \Z \times \R$, and $\{t_{n,r}\}_{n \ge m, m - 1 \le r \le n}$ satisfy the given assumptions. Let $0 < \gamma < \theta < \delta$ be arbitrary. Then, by assumption that $t_n/n \rightarrow \theta$ and Theorem~\ref{existence of semi-infinite geodesics intro version}\ref{general limits for semi-infinite geodesics}, there exists $N \in \Z$ such that for all $n \ge N$, 
\begin{equation} \label{eqn:squeeze finite geod}
\tau_{(m,t),n}^{\gamma-,L} < t_n < \tau_{(m,t),n}^{\delta+,R}.
\end{equation}
By Theorem~\ref{existence of semi-infinite geodesics intro version}\ref{Leftandrightmost}, the sequence $t = \tau_{(m,t),m - 1}^{\gamma-,L} \le \tau_{(m,t),m}^{\gamma-,L} \le \cdots \le \tau_{(m,t),n}^{\gamma-,L}$ defines the leftmost geodesic between $(m,t)$ and $(n,\tau_{(m,t),n}^{\gamma-,L})$, and $t = \tau_{(m,t),m - 1}^{\delta+,R} \le \tau_{(m,t),m}^{\delta+,R} \le \cdots \tau_{(m,t),n}^{\delta+,R}$ defines the rightmost geodesic between $(m,t)$ and $(n,\tau_{(m,t),n}^{\delta+,R})$.  Let $t = t_{m - 1}^L \le t_m^L \le \cdots t_n^L = t_n$ and $t = t_{m - 1}^R \le t_m^R \le \cdots t_n^R = t_n$ define the leftmost (resp. rightmost) geodesic between the points $(m,t)$ and $(n,t_n)$. Then, by~\eqref{eqn:squeeze finite geod} and Lemma~\ref{existence of leftmost and rightmost geodesics}, for all $n \ge N$ and $m \le r \le n$,
\[
\tau_{(m,t),r}^{\gamma-,L} \le t_r^L \le t_{n,r} \le t_r^R \le \tau_{(m,t),r}^{\delta+,R}.
\]
Taking limits as $n \rightarrow \infty$ produces
\[
\tau_{(m,t),r}^{\gamma-,L} \le \liminf_{n \rightarrow \infty} t_{n,r} \le \limsup_{n \rightarrow \infty} t_{n,r} \le \tau_{(m,t),r}^{\delta+,R}.
\]
Taking limits as $\gamma \nearrow \theta$ and $\delta \searrow \theta$ and using Theorem~\ref{existence of semi-infinite geodesics intro version}\ref{itm:convergence of geodesics}\ref{itm:limits in theta} completes the proof of Part~\ref{control of finite geodesics}.

\medskip \noindent \textbf{Part~\ref{all semi-infinite geodesics lie between leftmost and rightmost}}: This is an immediate consequence of Part~\ref{control of finite geodesics}, setting $t_{n,r} = t_r$ for any $\theta$-directed semi-infinite geodesic defined by the sequence $\{t_r\}_{r \ge m - 1}$. 

\medskip \noindent \textbf{Part~\ref{convergence to unique semi-infinite geodesic}:} This is also an immediate consequence of Part~\ref{control of finite geodesics}, because if $\{\tau_r\}_{r = m - 1}^\infty$ is the unique sequence in $\mbf T_{(m,t)}^\theta$, for each $r \ge m$,
$
\tau_r = \tau_{(m,t),r}^{\theta-,L} = \tau_{(m,t),r}^{\theta +,R}.
$
 \end{proof}
 
 We now define the events $\wt \Omega^{(\theta)}$ of Theorem~\ref{thm:non_unique_size}. First, let $\wt \Omega_{\mbf x}^{(\theta)}$ be the events of Lemma~\ref{lemma: uniqueness and directedness of geodesics for fixed parameters}, and for each $\mbf x \in \Z \times \R$, define the full probability events
  \be \label{eqn:omegaxtheta}
 \Omega_{\mbf x}^{(\theta)} = \wt \Omega_{\mbf x}^{(\theta)} \cap \Omega_2.
 \ee 
 By Theorem~\ref{thm:summary of properties of Busemanns for all theta}\ref{independence structure of Busemann functions on levels} and Theorem~\ref{thm:dist of Busemann functions}\ref{BM_drift}, for each $r \in \Z$, $s \mapsto B_r(s) - \h_{r + 1}^\theta(s)$ is a scaled, two-sided Brownian motion with strictly negative drift. By Theorem~\ref{thm:countable non unique maximizers}, for $\theta > 0$ and $r \in \Z$, there exists an event $\operatorname{CM}_r^{(\theta)}$, of probability one, on which the set 
 \[
 \{t \in \R: B_r(s) - \h_{r + 1}^\theta(s) \text{ over }s \in [t,\infty) \text{ has a non-unique maximum at } s =t\}
 \] 
 is countably infinite. Then, for $\theta > 0$, set
 \begin{align} \label{eqn:omega tilde space}
 \wt \Omega^{(\theta)} := \bigcap_{\mbf x \in \Z \times \Q} \Omega_{\mbf x}^{(\theta)} \cap \bigcap_{r \in \Z}\operatorname{CM}_r^{(\theta)}.
 \end{align}
 Because $\Omega_{\mbf x}^{(\theta)} = \wt \Omega_{\mbf x}^{(\theta)} \cap \Omega_2$ by definition and $\Omega_{\mbf x}^{(\theta)} \subseteq \Omega^{(\theta)}$ by  Lemma~\ref{lemma: uniqueness and directedness of geodesics for fixed parameters}, $\wt \Omega^{(\theta)} \subseteq \Omega^{(\theta)} \cap \Omega_2$ where $\Omega^{(\theta)}$ are the events of Theorem~\ref{thm:summary of properties of Busemanns for all theta}. Furthermore, $\Pp(\wt \Omega^{(\theta)}) = 1$.
 
 \begin{proof}[Proof of Theorem~\ref{thm:non_unique_size}]
 In this proof, since $\theta$ is fixed, we drop the $\pm$ distinction for $\theta$ in the superscript. 

 \medskip \noindent \textbf{Part~\ref{decomposition}}: 
By Theorem~\ref{thm:convergence and uniqueness}\ref{all semi-infinite geodesics lie between leftmost and rightmost}, on the event $\wt \Omega^{(\theta)}$, there exist multiple $\theta$-directed geodesics starting from $(m,t)$ if and only if $\tau_{(m,t),r}^{\theta,L} < \tau_{(m,t),r}^{\theta,R}$ for some $r \ge m$. If $\omega \in \wt \Omega^{(\theta)}\subseteq \Omega_2$ and $(m,t)$ is a point whose $\theta$-directed geodesic is not unique, then at most one such semi-infinite geodesic can pass through $(m,t + \ve)$ for some $\ve > 0$. Otherwise, two different geodesics would pass through $(m,q)$ for some $q \in \Q$, giving two $\theta$-directed geodesics starting from $(m,q)$. This cannot hold on the event $\Omega_{\mbf x}^{(\theta)} \supseteq \wt \Omega^{(\theta)}$.  Inductively, to get two different $\theta$-directed geodesics starting from $(m,t)$, there must be some level $r \ge m$ such that all $\theta$-directed geodesics pass through $(r,t)$ and then one geodesic passes through $(r + 1,t)$, and the other passes through $(r,t + \ve)$ for some $\ve > 0$.  Therefore, by Definition~\ref{def:semi-infinite geodesics}, on the event $\wt \Omega^{(\theta)}$,  there exists a point $(m,t) \in \Z \times \R$ whose semi-infinite geodesic in direction $\theta$ is not unique if and only if there exists $r \ge m$ such that, for $m \le k \le r - 1$, $B_k(s) - h_{k +1}^\theta(s)$ over $s \in [t,\infty)$ has a unique maximum at $s = t$, and $B_r(s) - \h_{r + 1}^\theta(s)$ over $s \in [t,\infty)$ has a non-unique maximum at $s = t$. Therefore, $t = \tau_{(m,t),r}^{\theta,L} < \tau_{(m,t),r}^{\theta,R}$ for some $r \ge m$. The countability of the sets then follows from Theorem~\ref{thm:countable non unique maximizers}\ref{countable} because, for each $k$, $B_k - h_{k + 1}^\theta$ is a (scaled) Brownian motion with negative drift (Theorem~\ref{thm:summary of properties of Busemanns for all theta}\ref{independence structure of Busemann functions on levels} and Theorem~\ref{thm:dist of Busemann functions}\ref{BM_drift}).

\medskip \noindent \textbf{Part~\ref{non-discrete or dense}}: By Theorem~\ref{thm:non_unique_size}\ref{decomposition}, $(m,t) \in \NU_1^\theta$ if and only if $B_m(s) - h_{m + 1}^\theta(s)$ has two maximizers over $s \in [t,\infty)$--one at $s = t$ and one at some $s > t$.  The result then follows from Theorem~\ref{thm:countable non unique maximizers}~\ref{non_discrete MN}.
\end{proof}

\subsection{The dual environment} \label{section:dual environment}
{\it Throughout  Sections~\ref{section:dual environment} and~\ref{section:dual geodesics}, $\theta > 0$ is fixed},  and we work on the full probability event $\Omega^{(\theta)}$ of Theorem~\ref{thm:summary of properties of Busemanns for all theta}. By Remark~\ref{rmk:theta+ = theta-}, on this event $\mbf T_{\mbf x}^{\theta} =\mbf T_{\mbf x}^{\theta +} = \mbf T_{\mbf x}^{\theta -}$ for all $\mbf x \in \Z \times \R$. Recall the dual environment $\mbf X^\theta$ of independent Brownian motions from Theorem~\ref{thm:dist of Busemann functions}

\begin{lemma} \label{lemma:Busemann limits for southwest geodesics}
    For each $\theta > 0$, there exists an event $\wt \Omega^{(\theta,\star)}$ of probability one, on which
    \[
    \lim_{n\rightarrow \infty} \bigl[ \,L_{(-n,-t_n),\mbf x}(\mbf X^\theta) - L_{(-n,-t_n),\mbf y}(\mbf X^\theta)\,\bigr]  =    \B^{\theta}(\mbf y,\mbf x) 
    \]
    for any $\mbf x,\mbf y \in \Z \times \R$ and for any sequence $\{t_n\}$ satisfying $\lim_{n \rightarrow \infty}\f{t_n}{n} = \theta$.
    To clarify, $L_{\mbf x,\mbf y}(\mbf X^\theta)$ is the last-passage process on the environment $\mbf X^\theta$ while  $\B^\theta$ is the original Busemann function  of  the environment $\mbf B$ as in Theorem \ref{thm:summary of properties of Busemanns for all theta}.
\end{lemma}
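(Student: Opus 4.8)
The goal is to identify the Busemann function of the dual environment $\mbf X^\theta$ with the original Busemann function $\B^\theta$ of $\mbf B$, read in the reverse (southwest) direction. The natural route is to exploit the reverse-time queuing relations~\eqref{eqn:dual weights reverse relations}, namely $\h_m^\theta = \Da(\h_{m-1}^\theta,X_m^\theta)$, $B_{m-1} = \Ra(\h_{m-1}^\theta,X_m^\theta)$, and $\vv_m^\theta = \Qa(\h_{m-1}^\theta,X_m^\theta)$, which say that the pair $(\h_{m-1}^\theta,X_m^\theta)$ is obtained from $(\h_m^\theta,B_{m-1})$ by the invertible joint mapping of Theorem~\ref{bijectivity of D R joint mapping}. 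In queuing language this means the field $\mbf X^\theta$, together with the horizontal Busemann functions $\{\h_m^\theta\}$, forms a stationary structure for the \emph{reverse} LPP process on $\mbf X^\theta$: the $\h_m^\theta$ play the role of boundary data for $L_{\,\cdot\,,\mbf x}(\mbf X^\theta)$ running southwest, exactly as $\h_m^\theta$ is boundary data for the forward process on $\mbf B$ running northeast.

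First I would reduce, by additivity of both sides (the left side is clearly additive in $\mbf x$, $\mbf y$ wherever the limits exist; the right side is additive by Theorem~\ref{thm:summary of properties of Busemanns for all theta}\ref{general additivity Busemanns}), to establishing the limit for the horizontal and vertical increments separately, i.e.\ it suffices to show that for fixed $m\in\Z$, $t\in\R$, with probability one
\[
\lim_{n\to\infty}\bigl[L_{(-n,-t_n),(m,0)}(\mbf X^\theta)-L_{(-n,-t_n),(m,t)}(\mbf X^\theta)\bigr] = \B^\theta((m,t),(m,0)) = -\h_m^\theta(t),
\]
and similarly that the vertical-increment limit equals $-\vv_{m}^\theta(\cdot)$ up to sign. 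Next, using that $\mbf X^\theta$ is, by Theorem~\ref{thm:dist of Busemann functions}\ref{mutual independence of the X_m}, a field of independent two-sided Brownian motions, I would invoke the existence result of Alberts--Rassoul-Agha--Simper (Theorem~\ref{thm:existence of Busemann functions for fixed points}), applied to the \emph{reflected} environment $\{\wt X^\theta_m\}$ (recall $\wt f(t)=-f(-t)$ turns northeast LPP into southwest LPP and a Brownian motion into a Brownian motion): for each fixed $\theta$ and fixed pair of points, the limit defining a dual Busemann function $\B^{\theta,\star}$ exists almost surely, is a.s.\ independent of the choice of sequence $\{t_n\}$, and its horizontal increment $t\mapsto\B^{\theta,\star}((m,0),(m,t))$ is a two-sided Brownian motion with drift $1/\sqrt\theta$, with the vertical increments being $\operatorname{Exp}(1/\sqrt\theta)$. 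This gives a candidate process whose marginal distributions match those of $-\h_m^\theta$ and $-\vv_m^\theta$.

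The crux is then to show $\B^{\theta,\star}=-\B^\theta$ (equivalently $\B^{\theta,\star}(\mbf x,\mbf y)=\B^\theta(\mbf y,\mbf x)$) rather than merely an equality in distribution. Here I would run a coupling/domination argument parallel to the one used for Part~\ref{busemann functions agree for fixed theta} of Theorem~\ref{thm:summary of properties of Busemanns for all theta} and for Lemma~\ref{lemma:summary of Busemanns fixed theta}\ref{itm:Busemann limits, fixed theta}: the queuing identities~\eqref{eqn:dual weights reverse relations} exhibit $-\h_m^\theta$ as a bona fide stationary boundary for the southwest LPP on $\mbf X^\theta$, so by the usual one-sided comparison between a boundary-driven (increment-stationary) LPP value and the free point-to-point value one gets an inequality between $\lim_n[L_{(-n,-t_n),(m,0)}(\mbf X^\theta)-L_{(-n,-t_n),(m,t)}(\mbf X^\theta)]$ and $-\h_m^\theta(t)$ valid a.s.\ for each fixed $t$, with the direction of the inequality flipping with the sign of $t$; since both quantities have the same $\Nor(-t/\sqrt\theta,|t|)$ law (the left by Theorem~\ref{thm:existence of Busemann functions for fixed points} applied to $\mbf X^\theta$, the right by Theorem~\ref{thm:dist of Busemann functions}\ref{BM_drift}), the inequality must be an a.s.\ equality. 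The analogous comparison handles the vertical increments, using $\vv_m^\theta=\Qa(\h_{m-1}^\theta,X_m^\theta)$ and Theorem~\ref{thm:dist of Busemann functions}\ref{v_m process}. Finally, intersecting the countably many (one per $\mbf x,\mbf y\in\Z\times\Q$) a.s.\ events, extending to all real endpoints by the continuity of $\B^\theta$ (Theorem~\ref{thm:summary of properties of Busemanns for all theta}\ref{general continuity of Busemanns}) and of the dual last-passage process, and invoking Lemma~\ref{point to line shape theorem}-type shape estimates to control the limit uniformly over sequences with $t_n/n\to\theta$, yields the event $\wt\Omega^{(\theta,\star)}$ with the asserted property. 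The main obstacle is the last point—upgrading equality in distribution to almost-sure equality via the stationarity/domination argument and making sure the southwest reflection is bookkept correctly so that the queuing identities~\eqref{eqn:dual weights reverse relations} really do furnish the reverse-time stationary boundary; once that is set up, the rest is the same template already used repeatedly in Section~\ref{section:Busemann construction}.
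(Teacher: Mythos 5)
Your proposal is correct and follows the same architecture as the paper's proof: reflect so that southwest LPP on $\mbf X^\theta$ becomes northeast LPP on $\mbf{\wt X}{}_{-}^\theta$, use the reverse queuing relations~\eqref{eqn:dual weights reverse relations} together with Lemma~\ref{lemma:time reversal equality of queuing maps} and the independence from Theorem~\ref{thm:dist of Busemann functions}\ref{mutual independence of the X_m} to identify the reflected dual system (environment plus boundary) in joint distribution with the original one, and then conclude. The one place you diverge is the final step. The paper, having established the joint distributional equality $\{\wt\h_{-(m-1)}^\theta,-\wt\vv_{-(m-1)}^\theta,\wt X_{-(m-1)}^\theta,\wt B_{-m}\}\deq\{\h_{m-1}^\theta,\vv_m^\theta,B_{m-1},X_m^\theta\}$, simply transfers the already-proven full-probability event of Theorem~\ref{thm:summary of properties of Busemanns for all theta}\ref{busemann functions agree for fixed theta} (that the LPP limits equal the Busemann functions, for \emph{all} points and all $\theta$-directed sequences simultaneously) from the right-hand process to the left-hand one; no new estimate is needed. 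You instead propose to rerun the template of Lemma~\ref{lemma:summary of Busemanns fixed theta}\ref{itm:Busemann limits, fixed theta} and Theorem~\ref{thm:summary of properties of Busemanns for all theta}\ref{busemann functions agree for fixed theta} from scratch: apply Theorem~\ref{thm:existence of Busemann functions for fixed points} to the reflected i.i.d.\ field, sandwich the dual limit against $-\h_m^\theta$ via the increment-stationary comparison (the analogue of Theorem~\ref{thm:busemann sandwich theorem}), and upgrade equality in law to almost-sure equality. This works, because the reflected pair $(\wt\h,\wt X)$ is precisely an instance of the stationary construction~\eqref{eqn:stationary BLPP definitions} in distribution, but it duplicates work: the distributional identification you need to justify the sandwich is exactly the identity from which the paper transfers the a.s.\ statement directly. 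The paper's route is therefore shorter; yours is self-contained at the cost of redoing the domination argument and the extension from fixed to arbitrary endpoints.
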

\begin{proof}
Recall from Theorem~\ref{thm:summary of properties of Busemanns for all theta}\ref{general queuing relations Busemanns} that
\[
\h_{m - 1}^{\theta } = D(\h_m^{\theta},B_{m - 1}),  \qquad  X_m^{\theta} = R(\h_m^{\theta },B_{m - 1}), \qquad\text{and} \qquad \vv_m^{\theta }= Q(\h_m^{\theta},B_{m - 1}).
\]
Further, recall~\eqref{eqn:dual weights reverse relations}, which states 
\begin{equation} \label{eqn:reverse}
\h_m^{\theta} = \Da(\h_{m - 1}^{\theta},X_m^{\theta}),\qquad B_{m - 1} = \Ra(\h_{m - 1}^{\theta },X_m^{\theta}), \qquad\text{and}\qquad \vv_m^{\theta} = \Qa(\h_{m - 1}^{\theta },X_m^{\theta }).
\end{equation}
Recall that  $\wt f(t)=-f(-t)$. Apply Lemma~\ref{lemma:time reversal equality of queuing maps} to deduce that 
 \be\label{382} 
\wt \h_{m - 1}^{\theta } = \Da(\wt \h_m^{\theta},\wt B_{m - 1}),\qquad\wt X_m^{\theta } = \Ra(\wt \h_m^{\theta},\wt B_{m - 1}),\qquad\text{and}\qquad - \wt \vv_m^{\theta }= \Qa(\wt \h_m^{\theta },\wt B_{m - 1}).
\ee 
From independence and matching marginals  (Part Theorem~\ref{thm:summary of properties of Busemanns for all theta}\ref{independence structure of Busemann functions on levels} and Theorem~\ref{thm:dist of Busemann functions}, Parts~\ref{BM_drift} and~\ref{mutual independence of the X_m}) follows 
  \be \label{eqn:equal_dist}   (\,\wt \h_k^{\theta},\wt B_{k - 1}, \wt B_{k - 2}, \wt B_{k - 3},\dotsc)  \deq  (\h_{m - 1}^\theta, X_m^\theta, X_{m+1}^\theta, X_{m+2}^\theta,\dotsc)    \qquad 
   \forall k,m\in\Z.   \ee
 Thus, iterating mappings \eqref{382} backward in the index $m$  and   mappings \eqref{eqn:reverse} forward in the index $m$ gives this equality in distribution: 
\[
\bigl\{\wt \h_{-(m - 1)}^{\theta}, -\wt \vv_{-(m - 1)}^{\theta}, \wt X_{-(m - 1)}^{\theta } ,\wt B_{-m} \bigr\}_{m \in \Z} \deq \bigl\{\h_{m - 1}^\theta,\vv_{m}^\theta,B_{m - 1}, X_m^\theta\bigr\}_{m \in \Z}
\]
Apply Theorem~\ref{thm:summary of properties of Busemanns for all theta}\ref{busemann functions agree for fixed theta}  to the environment $\mbf{\wt X_{-}^\theta} = \{\wt X_{-m}^\theta\}_{m \in \Z}$  in the process on the left above, to deduce that there exists a full probability event $\wt \Omega^{(\theta,\star)}$ on which,
 for any $t \in \R$ and $m \in \Z$, 
\begin{align*}
&\lim_{n \rightarrow \infty} L_{(-m,-t),(n,t_n)}(\mbf{\wt X_{-}^\theta})- L_{(-m,0),(n,t_n)}(\mbf{\wt X_{-}^\theta}) = -\wt \h_{-(-m)}^\theta(-t) = \h_m^\theta(t) \qquad\text{and}\\[1em]
&\lim_{n \rightarrow \infty}L_{(-m  ,-t),(n,t_n)}(\mbf{\wt X_{-}^\theta}) -  L_{(-m + 1 ,-t),(n,t_n)}(\mbf{\wt X_{-}^\theta})  = -\wt \vv_{-(-m)}^\theta(-t)  = \vv_{m}^{\theta}(t).
\end{align*}

Adding together horizontal and vertical steps for the general case, the proof is complete by noting that, for any $m,k \in \Z$ and $s,t \in \R$, 
\[
\lim_{n \rightarrow \infty} L_{(-m,-s),(n,t_n)}(\mbf{\wt X_{-}^\theta}) - L_{(-k,-t),(n,t_n)}(\mbf{\wt X_{-}^\theta}) = \lim_{n \rightarrow \infty} L_{(-n,-t_n),(m,s)}(\mbf X^\theta) -L_{(-n,-t_n),(k,t)}(\mbf X^\theta)  
\]
because
\begin{align*}
    &L_{(-m,-s),(n,t_n)}(\mbf{\wt X_{-}^\theta}) = \sup\Bigl\{\sum_{r = -m}^n \wt X_{-r}^\theta(s_{r - 1},s_r): -s = s_{-m - 1} \le \cdots \le s_n = t_n   \Bigr\} \\[1em]
    &= \sup \Bigl\{ \sum_{r = -m}^n  X_{-r}^\theta(-s_r,-s_{r - 1}): -s = s_{-m - 1} \le \cdots \le s_n = t_n    \Bigr\} \\[1em]
    &= \sup \Bigl\{\sum_{r = -m}^n X_{-r}^\theta(-s_r,-s_{r - 1}): -t_n = -s_n \le -s_{n - 1} \le \cdots \le -s_{m - 1} = s   \Bigr\} \\[1em]
    &= \sup \Bigl\{\sum_{r = -n}^m X_{r}^\theta(\wt s_{r - 1},\wt s_r): -t_n = \wt s_{-n - 1} \le \wt s_{-n} \le \cdots \le \wt s_{m} = s   \Bigr\}
    = L_{(-n,-t_n),(m,s)}(\mbf X^\theta). 
\end{align*}
To get the second-to-last line above, simply set $\wt s_k = -s_{-k - 1}$.
\end{proof}

\subsection{Dual geodesics} \label{section:dual geodesics}
Recall the definition of the sets $\mbf T_{\mbf x}^{\theta \star}$ from Section~\ref{sec:intro_dual_geod}. Analogous results as for the original northwest semi-infinite geodesics hold, as demonstrated by the following theorem. 
\begin{theorem} \label{existence of backwards semi-infinite geodesics}
 Fix $\theta > 0$. Then, for every $\mbf x \in \Z \times \R$, every semi-infinite path in $\mbf T_{\mbf x}^{\theta,\star}$ is a semi-infinite geodesic for Brownian last-passage percolation with environment $\mbf X^{\theta}$.   Specifically, the following hold. 
 \begin{enumerate} [label=\rm(\roman{*}), ref=\rm(\roman{*})]  \itemsep=3pt 
     \item \label{southwest energy of path along semi-infinte geodesic} On the full probability event $\Omega^{(\theta)}$, let $\mbf x \in \Z \times \R$, and let $\Gamma^\star$ be any semi-infinite path in $\mbf T_{\mbf x}^{\theta,\star}$. Then, for any $\mbf y \le \mbf z \in \Z \times \R$ with $\mbf y^\star,\mbf z^\star$ lying along the semi-infinite path, the energy of the portion of that path between $\mbf y^\star$ and $\mbf z^\star$, in the environment $\mbf X^\theta$, is 
     \[
      L_{\mbf y,\mbf z}(\mbf X^{\theta}) =   \B^{\theta}(\mbf y,\mbf z), 
     \]
     and this energy is maximal among all paths between $\mbf y$ and $\mbf z$ in the environment $\mbf X^\theta$. To be clear, $\B^\theta$ is the original Busemann function for the environment $\mbf B$.
     \item On $\Omega^{(\theta)}$, for all $(m,t) \in \Z \times \R$ and $r \le m$, 
     \begin{align*}
     &\tau_{(m,t),r - 1}^{\theta,R \star} = \sup\bigl\{u \le \tau^{\theta,R \star}_{(m,t),r }: \vv_{r }^{\theta}(u) = 0\bigr\} \text{ and } \\[1em]
     &\tau^{\theta,L \star}_{(m,t),r - 1} = \inf\bigl\{u \le \tau^{\theta,L \star}_{(m,t),r}: \h_r^{\theta}(u,\tau^{\theta,L \star}_{(m,t),r}) = X_r^{\theta}(u,\tau^{\theta,L \star}_{(m,t),r})\bigr\}
     \end{align*}
    More specifically, if $u \le \tau^{\theta,L \star}_{(m,t),r }$, then $\h_r^{\theta }(u,\tau^{\theta,L \star}_{(m,t),r}) = X_r^{\theta}(u,\tau^{\theta,L \star}_{(m,t),r })$ if and only if $u \ge \tau^{\theta,L \star}_{(m,t),r - 1}$.  \label{southwest identity for rightmost and leftmost geodesics}
     \item On $\Omega^{(\theta)}$, if, for some $\mbf z \in \Z \times \R$, $\mbf x^\star \ge \mbf y^\star$ lie along the leftmost semi-infinite geodesic in $\mbf T_{\mbf z}^{\theta,\star} $, then the portion of the path between $\mbf x^\star$ and $\mbf y^\star$, shifted back up by $\f{1}{2}$ to lie on integer levels, is the leftmost geodesic between $\mbf x$ and $\mbf y$ in the environment $\mbf X^\theta$. The analogous statement  holds for the rightmost geodesics. \label{southwest Leftandrightmost}
     \item \label{dual geodesics same dist as NW geodesics} The following distributional equality holds.
    \begin{align*}
    &\bigl\{\bigl(\tau_{(m,t),r}^{\theta,R},\tau_{(m,t),r}^{\theta,L}\bigr): (m,t) \in \Z, r \ge m \bigr\} \\[1em]
    &\qquad\qquad\qquad\qquad \deq\bigl\{\bigl(-\tau_{(-m,-t),-(r + 1)}^{\theta,L \star},-\tau_{(-m,-t),-(r + 1)}^{\theta,R \star}\bigr): (m,t) \in \Z, r \ge m     \bigr\} 
    \end{align*}
     \item \label{southwest geodesics limits} There exists an event of full probability, $\Omega^{(\theta,\star)} \subseteq \Omega^{(\theta)}$, on which, for every $(m,t) \in \Z \times \R$ and every sequence $\{\tau^\star_{r}\}_{r \le m} \in \mbf T_{(m,t)}^{\theta, \star}$,
    \[
    \lim_{n \rightarrow \infty} \f{\tau_{-n}^\star}{-n} = \theta. 
    \]
 \end{enumerate}
\end{theorem}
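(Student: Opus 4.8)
The proof of Theorem~\ref{existence of backwards semi-infinite geodesics} proceeds by transferring the corresponding statements for northeast geodesics, established in Theorems~\ref{existence of semi-infinite geodesics intro version} and~\ref{thm:convergence and uniqueness}, to the dual environment $\mbf X^\theta$ via the time-reversal and distributional identities from Section~\ref{section:dual environment}. For the final statement, Part~\ref{southwest geodesics limits}, my plan is as follows. First I would observe that, by definition~\eqref{eqn:def of southwest geodesics}, the jump times of a dual southwest geodesic in $\mbf T_{(m,t)}^{\theta,\star}$ are exactly the successive maximizers of $\h_{r-1}^\theta(s) - X_r^\theta(s)$ over $s \in (-\infty,\tau_r^\star]$, which is the mirror-image construction of Definition~\ref{def:semi-infinite geodesics} for the environment $\mbf X^\theta$ with the two-sided Brownian motions $\h_{r-1}^\theta$ replacing the role of the $\h_{r+1}^{\theta\sig}$. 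Reflecting through the origin in both coordinates, via $\wt f(t) = -f(-t)$ and $m^\star \mapsto -m$, this becomes precisely a northeast Busemann-geodesic construction for the reversed environment $\mbf{\wt X_-^\theta} = \{\wt X_{-m}^\theta\}_{m\in\Z}$ with the reversed horizontal Busemann functions $\wt \h_{-(m-1)}^\theta$; this is the content already encoded in~\eqref{382} and~\eqref{eqn:equal_dist} in the proof of Lemma~\ref{lemma:Busemann limits for southwest geodesics}.

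The key step is then to invoke Theorem~\ref{existence of semi-infinite geodesics intro version}\ref{general limits for semi-infinite geodesics} (directedness of northeast Busemann geodesics) for this reflected environment. That theorem is stated for the standard environment of i.i.d.\ two-sided Brownian motions together with its Busemann process, and by Theorem~\ref{thm:summary of properties of Busemanns for all theta}\ref{independence structure of Busemann functions on levels} and Theorem~\ref{thm:dist of Busemann functions}\ref{BM_drift}, \ref{mutual independence of the X_m}, the triple $(\mbf{\wt X_-^\theta}, \{\wt\h_{-(m-1)}^\theta\})$ has exactly the same joint law as the original environment-plus-Busemann-process pair at direction $\theta$. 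Hence there is a full-probability event on which every northeast geodesic in the reflected picture is $\theta$-directed, i.e.\ its $r$-th jump time divided by $r$ tends to $\theta$. Undoing the reflection, for $\{\tau_r^\star\}_{r\le m} \in \mbf T_{(m,t)}^{\theta,\star}$ the reflected jump time at index $n$ equals $-\tau_{-n-1}^\star$ (up to the $\star$-level bookkeeping), so $-\tau_{-n}^\star/n \to \theta$, which is the claim. I would define $\Omega^{(\theta,\star)}$ as the intersection of $\Omega^{(\theta)}$, the event $\wt\Omega^{(\theta,\star)}$ of Lemma~\ref{lemma:Busemann limits for southwest geodesics}, and the pullback under the reflection map of the full-probability directedness event of Theorem~\ref{existence of semi-infinite geodesics intro version}, intersected over a countable dense set of starting points $(m,t)\in\Z\times\Q$ and then extended to all real $t$ using the monotonicity of Theorem~\ref{existence of semi-infinite geodesics intro version}\ref{itm:monotonicity of semi-infinite jump times}\ref{itm:monotonicity in t} (transferred to the dual setting as in Part~\ref{dual geodesics same dist as NW geodesics}).

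The main obstacle is bookkeeping rather than analysis: one must be careful that the distributional identity~\eqref{eqn:equal_dist} couples not just the environments but the environments \emph{together with their Busemann processes}, so that the event ``all northeast geodesics are $\theta$-directed'' genuinely pulls back to ``all dual southwest geodesics are $\theta$-directed,'' and that the index shifts ($r \leftrightarrow -(r+1)$, the half-integer dual levels $m^\star = m - \tfrac12$) are tracked consistently with the convention in~\eqref{eqn:def of southwest geodesics} and the path description preceding Figure~\ref{fig:reg axes and dual axes}. An alternative, slightly more self-contained route that avoids the reflection argument would be to apply Lemma~\ref{Utah Lemma 4.7} directly to the reverse-time point-to-line last-passage problem for $\mbf X^\theta$ (the dual analogue of Lemma~\ref{lemma:ptl_sig}), exactly mimicking the proof of Lemma~\ref{lemma: uniqueness and directedness of geodesics for fixed parameters}, using that $\h_{r-1}^\theta$ is independent of $\{X_k^\theta\}_{k \ge r}$ by Theorem~\ref{thm:dist of Busemann functions}\ref{mutual independence of the X_m}; this yields $\liminf$ and $\limsup$ bounds pinching $-\tau_{-n}^\star/n$ to $\theta$. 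Either way the analytic input is already in hand, and the work is purely in the reduction.
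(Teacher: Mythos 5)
Your plan for Part~\ref{southwest geodesics limits} is exactly the paper's: establish the reflection correspondence of Part~\ref{dual geodesics same dist as NW geodesics} via the coupling~\eqref{eqn:equal_dist} of $(\wt \h^\theta,\wt X^\theta)$ with $(\h^\theta,B)$ (Busemann process included), and then invoke the directedness of northeast Busemann geodesics from Theorem~\ref{existence of semi-infinite geodesics intro version}\ref{general limits for semi-infinite geodesics}, which is precisely how the paper concludes. One small caution: Parts~\ref{southwest energy of path along semi-infinte geodesic}--\ref{southwest Leftandrightmost} are pathwise statements on $\Omega^{(\theta)}$ and rest on the exact inversion identities~\eqref{eqn:dual weights reverse relations} coming from Theorem~\ref{bijectivity of D R joint mapping} (which let one rerun the northeast arguments verbatim with $X_r^\theta$ and $\h_{r-1}^\theta$ in the roles of $B_r$ and $\h_{r+1}^\theta$), not on a distributional transfer, though your reference to the time-reversal identities of Section~\ref{section:dual environment} covers this.
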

\begin{proof}
\textbf{Parts~\ref{energy of path along semi-infinte geodesic}--\ref{southwest Leftandrightmost}}: 
On $\Omega^{(\theta)}$, let $(m,t) \in \Z \times \R$, and  $\{\tau_r^\star\}_{r \le m} \in \mbf T_{(m,t)}^{\theta,\star}$. By~\eqref{eqn:reverse} and the definitions~\eqref{reverse definition of Q}--\eqref{reverse definition of R}, for all $u,v \in \R$ and $r \in \Z$, 
\begin{align}
\vv_{r}^{\theta}(u) &= \sup_{-\infty < s \le u} \{X_r^\theta(s,u) - \h_{r - 1}^\theta(s,u) \}, \label{eqn:v_backwards}\\[1em]
\h_{r }^{\theta}(u,v) &= \h_{r - 1}^{\theta}(u,v) + \sup_{-\infty < s \le v}\{X_r^{\theta}(s,v) - \h_{r - 1}^{\theta}(s,v)\} - \sup_{-\infty < s \le u}\{X_r^{\theta}(s,u) - \h_{r - 1}^{\theta}(s,u)\} \nonumber \\[1em]
&= X_{r}^{\theta}(u,v) + \sup_{-\infty < s \le v}\{\h_{r - 1 }^{\theta}(s) -X_{r}^{\theta}(s)\} - \sup_{-\infty < s \le u}\{\h_{r - 1 }^{\theta}(s) -X_{r}^{\theta }(s)\}, \text{ and} \label{eqn:equality of busemann to dual weight}\\[1em]
B_{r - 1}(u,v) &=X_r^{\theta}(u,v) + \sup_{-\infty < s \le u}\{X_r^{\theta}(s,u) - \h_{r - 1}^{\theta}(s,u)\} - \sup_{-\infty < s \le v}\{X_r^{\theta}(s,v) - \h_{r - 1}^{\theta}(s,v)\} \nonumber \\[1em]
&= \h_{r - 1}^{\theta}(u,v) + \sup_{-\infty < s \le u}\{ \h_{r - 1}^{\theta}(s)-X_{r}^{\theta}(s) \} - \sup_{-\infty < s \le v}\{ \h_{r - 1}^{\theta}(s)-X_{r}^{\theta}(s) \} . \label{eqn:equality of busemann to BM dual}
\end{align}
 As $\tau_{r -1}^\star$ is a maximizer of $h_{r - 1}(s) - X_r^\theta(s)$ over $s \in (\infty,\tau_r^\star]$, from~\eqref{eqn:v_backwards}, it follows that $\vv_r^\theta(\tau_{r - 1}^\star) = 0$ for each $r \le m$. By~\eqref{eqn:equality of busemann to dual weight}, $\h_r^{\theta}(u,v) = X_r^\theta(u,v)$ for $u,v \in [\tau_{r - 1}^\star,\tau_r^\star]$. In general, $\vv_r^\theta(u) \ge 0$ and $\h_r^{\theta}(u,v) \ge X_r^{\theta}(u,v)$ for $u \le v$. Then, Parts~\ref{southwest energy of path along semi-infinte geodesic}-\ref{southwest Leftandrightmost} follow just as for the analogous statements in the proofs of Theorem~\ref{existence of semi-infinite geodesics intro version}, Parts~\ref{energy of path along semi-infinte geodesic}-\ref{Leftandrightmost} and Lemma~\ref{lemma:equality of busemann to weights of BLPP}.

\medskip \noindent \textbf{Parts~\ref{dual geodesics same dist as NW geodesics}--\ref{southwest geodesics limits}}:
By~\eqref{eqn:equal_dist},  $\{\wt \h_{r - 1}^\theta, \wt X_{r}^\theta\}_{r \in \Z}$ has the same distribution as $\{\h_{-(r - 1)}^\theta,B_{-r}\}_{r \in \Z}$. Furthermore, the leftmost (rightmost) maximizers of $\h_{r - 1}^\theta(s) - X_r^\theta(s)$ for $s \in (-\infty,t]$ are the negative of the rightmost (resp. leftmost) maximizers of $\wt X_{r}^\theta(s) - \wt \h_{r - 1}^\theta(s)$ for $s \in [-t,\infty)$, establishing Part~\ref{dual geodesics same dist as NW geodesics}. Part~\ref{southwest geodesics limits} then follows from Theorem~\ref{existence of semi-infinite geodesics intro version}\ref{general limits for semi-infinite geodesics}.
\end{proof}

\begin{theorem} \label{strong crossing theorem for geodesics and dual geodesics}
On the event $\Omega^{(\theta)}$, the following hold for all $s \le t \in \R$ and $m \in \Z$. 
 \begin{enumerate} [label=\rm(\roman{*}), ref=\rm(\roman{*})]  \itemsep=3pt 
 \item If $\tau_{(m,s),m}^{\theta,R} < t$, then also $\tau_{(m,s),m}^{\theta,R} < \tau_{(m + 1,t),m}^{\theta,L \star}$. See Figure~\ref{fig:dual left horizontal misses right vertical} for clarity.  \label{itm:general dual left horizontal does not cross right vertical} 
    
    \item If $t \le \tau_{(m,s),m}^{\theta, R}$, then $\tau_{(m + 1,t),m}^{\theta ,L^\star} \le s$. See Figure~\ref{fig:weak dual left vertical misses right vertical} for clarity. \label{itm:weak dual left vertical does not cross right horizontal}
    
    \item If $ \tau_{(m,s),m}^{\theta ,L} \le t$, then also $\tau_{(m,s),m}^{\theta ,L} \le \tau_{(m + 1,t),m}^{\theta ,R \star}$. See Figure~\ref{fig:weak dual right horizontal misses left vertical} for clarity. \label{itm:weak  dual right horizontal does not cross left vertical}
    \item If $t < \tau_{(m,s),m}^{\theta , L}$, then $\tau_{(m,+1,t),m}^{\theta ,R \star} < s$. See Figure~\ref{fig:dual right vertical misses left vertical} for clarity. \label{itm:general dual right vertical does not cross left horizontal}
\end{enumerate}
\end{theorem}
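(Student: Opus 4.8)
The plan is to prove all four parts by translating the statements about jump times into statements about maximizers of the functions $s\mapsto B_r(s)-\h_{r+1}^\theta(s)$ and $s\mapsto \h_{r-1}^\theta(s)-X_r^\theta(s)$, and then comparing these maximizers directly. The governing identities are Lemma~\ref{lemma:equality of busemann to weights of BLPP} (for the northeast geodesics) and Theorem~\ref{existence of backwards semi-infinite geodesics}\ref{southwest identity for rightmost and leftmost geodesics} (for the dual southwest geodesics), together with the definition \eqref{definition of dual weights from Busemanns} of $X_m^\theta$ via the mapping $R$. The key structural fact, extracted by rearranging the queuing relation $\h_m^\theta = D(\h_{m+1}^\theta,B_m)$ and the identity $X_m^\theta = R(\h_m^\theta,B_{m-1})$, is \eqref{hrBe}: for $u\le v$,
\[
\h_m^\theta(u,v)=B_m(u,v)+\sup_{u\le s<\infty}\{B_m(s)-\h_{m+1}^\theta(s)\}-\sup_{v\le s<\infty}\{B_m(s)-\h_{m+1}^\theta(s)\},
\]
and its dual \eqref{eqn:equality of busemann to dual weight}. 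These let us read off that $\h_m^\theta(u,v)=B_m(u,v)$ precisely on the interval where the running supremum of $B_m-\h_{m+1}^\theta$ does not increase, and similarly for $X_m^\theta$. So the first level of the northeast geodesic from $(m,s)$ "rides along $B_m$" exactly on $[s,\tau_{(m,s),m}^{\theta,R}]$, and the first (vertical) level of the dual geodesic from $(m+1,t)$ rides along $\h_m^\theta$ exactly on $[\tau_{(m+1,t),m}^{\theta,L\star},t]$ by \eqref{eqn:equality of busemann to dual weight}.

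For Part~\ref{itm:general dual left horizontal does not cross right vertical}: suppose $\tau_{(m,s),m}^{\theta,R}<t$. By Lemma~\ref{lemma:equality of busemann to weights of BLPP}, $\tau_{(m,s),m}^{\theta,R}$ is the \emph{last} point $u\ge s$ with $B_m(s,u)=\h_m^{\theta+1?}$— rather, with $B_m(s,u)=\h_m^\theta(s,u)$ in the notation there applied one level up; equivalently, writing $f(u)=B_m(u)-\h_{m+1}^\theta(u)$, the point $\tau_{(m,s),m}^{\theta,R}$ is the rightmost maximizer of $f$ on $[s,\infty)$, and for all $v>\tau_{(m,s),m}^{\theta,R}$ one has $f(v)<f(\tau_{(m,s),m}^{\theta,R})$ strictly. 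Now I would use the dual identity: by Theorem~\ref{existence of backwards semi-infinite geodesics}\ref{southwest identity for rightmost and leftmost geodesics}, $\tau_{(m+1,t),m}^{\theta,L\star}$ is the leftmost point $u\le t$ with $\h_m^\theta(u,t)=X_m^\theta(u,t)$. Using \eqref{hrBe} to rewrite $\h_m^\theta(u,t)-B_m(u,t)=\sup_{u\le s}f - \sup_{t\le s}f$ and $X_m^\theta=R(\h_m^\theta,B_{m-1})$, the strict inequality $f(v)<f(\tau^R)$ for $v$ just past $\tau^R$ forces $\h_m^\theta(v,t)>B_m(v,t)$, hence $\h_m^\theta(\tau^R,t)=X_m^\theta(\tau^R,t)$ would fail only if the window where the dual geodesic rides $\h_m^\theta$ has already closed at $\tau^R$; carefully checking the $R$-mapping shows the dual "riding" window $[\tau_{(m+1,t),m}^{\theta,L\star},t]$ is exactly the complement (in $(-\infty,t]$) of the region where $f$ is below its supremum on $[\cdot,\infty)$, so $\tau^R\notin[\tau_{(m+1,t),m}^{\theta,L\star},t]$ and thus $\tau_{(m,s),m}^{\theta,R}<\tau_{(m+1,t),m}^{\theta,L\star}$. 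Part~\ref{itm:general dual right vertical does not cross left horizontal} is the strict/left analogue, proven the same way with leftmost maximizers of $f$ and rightmost dual maximizers; Parts~\ref{itm:weak dual left vertical does not cross right horizontal} and~\ref{itm:weak  dual right horizontal does not cross left vertical} are the non-strict boundary cases, obtained by the same comparison but allowing equality at the endpoint (so one gets $\le$ and $s$, $t$ on the "wrong" side only at a coincidence of maximizers), combined with the monotonicity in the starting point from Theorem~\ref{existence of semi-infinite geodesics intro version}\ref{itm:monotonicity of semi-infinite jump times}\ref{itm:monotonicity in t}.

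The main obstacle I expect is the bookkeeping in matching up "the window where the northeast geodesic from $(m,s)$ rides $B_m$" with "the window where the dual geodesic from $(m+1,t)$ rides $\h_m^\theta$" — i.e.\ showing these two windows are genuinely complementary intervals on the real line (up to the single shared endpoint $\tau_{(m,s),m}^{\theta,R}$ or $\tau_{(m,s),m}^{\theta,L}$). This requires careful use of the mapping $R$ in \eqref{definition of R}: the defining property of $R(\h,B)$ is that its increments equal those of $\h$ exactly where the queue $Q(\h,B)$ is zero, which by \eqref{v_m queue notation} and Lemma~\ref{lemma:equality of busemann to weights of BLPP} is exactly at the jump times of the northeast geodesic. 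Once this complementarity is pinned down — essentially a deterministic statement about the triple $(\h_{m+1}^\theta, B_m, \h_m^\theta=D(\h_{m+1}^\theta,B_m), X_m^\theta=R(\h_m^\theta,B_{m-1}))$ on the event $\Omega^{(\theta)}$ where all the queuing identities hold — each of the four parts follows by a short argument distinguishing strict from non-strict inequalities according to whether we are comparing leftmost-with-leftmost or rightmost-with-rightmost maximizers. I would organize the proof by first isolating this complementarity as an internal claim, proving it once, and then deducing~\ref{itm:general dual left horizontal does not cross right vertical}--\ref{itm:general dual right vertical does not cross left horizontal} from it.
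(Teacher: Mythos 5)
Your plan follows essentially the same route as the paper's proof: both arguments rest on the two biconditional characterizations (Lemma~\ref{lemma:equality of busemann to weights of BLPP} for the rightmost northeast jump, Theorem~\ref{existence of backwards semi-infinite geodesics}\ref{southwest identity for rightmost and leftmost geodesics} for the leftmost dual descent), on the queuing identities \eqref{hrBe} and \eqref{eqn:equality of busemann to dual weight} that convert $X$-increment equalities into $B$-versus-$\h$ increment equalities, and on the observation that an overlap of the two ``riding windows'' would force $B_m(s,t)=\h_m^{\theta}(s,t)$ by additivity of increments, contradicting the strict inequality implied by $\tau_{(m,s),m}^{\theta,R}<t$. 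The only cosmetic difference is that the paper dispatches Parts~\ref{itm:weak  dual right horizontal does not cross left vertical} and~\ref{itm:general dual right vertical does not cross left horizontal} directly via the zero set of $\vv_{m+1}^{\theta}$ rather than via monotonicity in the starting point.
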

\begin{remark} \label{rmk:geod to left}
 All the statements of Theorem~\ref{strong crossing theorem for geodesics and dual geodesics} only mention the first jump time for northeast geodesics and the first time of descent for southwest geodesics. However, for any $\mbf x \in \Z \times \R$ and any point $\mbf y$ along the rightmost (resp. leftmost) semi-infinite geodesic in $\mbf T_{\mbf x}^\theta$, the rightmost (resp. leftmost) semi-infinite geodesic in $\mbf T_{\mbf y}^\theta$ agrees with the remainder of the original semi-infinite geodesic started from $\mbf x$. Thus, the results of Theorem~\ref{strong crossing theorem for geodesics and dual geodesics} can be extended by induction. For example, Part~\ref{itm:general dual left horizontal does not cross right vertical} implies that if a leftmost dual southwest geodesic starts strictly to the right and below a rightmost northeast geodesic, it remains to the right and below the northeast geodesic. See Figure~\ref{fig:dual left horizontal misses right vertical}.
\end{remark}
\begin{remark}
Theorem~\ref{strong crossing theorem for geodesics and dual geodesics} is the analogue of Lemma 4.4 in~\cite{Timo_Coalescence}, which states a result for exponential last-passage percolation. In words, Theorem~\ref{strong crossing theorem for geodesics and dual geodesics} says that northeast Busemann geodesics do not cross dual southwest geodesics. In this sense, the theorem gives an analogue to Pimentel's dual tree~\cite{pimentel2016}. 
In the setting of exponential LPP, in Section 5 of~\cite{Timo_Coalescence}, it is shown that semi-infinite geodesics in the original environment are competition interfaces for geodesics in the dual environment with boundary conditions given by the Busemann process. This gives some intuition on why the northeast geodesics do not cross the southwest dual geodesics. Due to the general non-uniqueness of geodesics, the construction of competition interfaces is somewhat delicate for BLPP and will be studied in future work. 
\end{remark}
\begin{proof}
\noindent \textbf{Part~\ref{itm:general dual left horizontal does not cross right vertical}}: 
By Lemma~\ref{lemma:equality of busemann to weights of BLPP}, if $u \ge s$, then
\begin{equation} \label{geodesic crossing: first biconditional for busemann increments to equal B}
B_m(s,u) =  \h_m^{\theta}(s,u) \text{ if and only if } u \le \tau_{(m,s),m}^{\theta,R}. 
\end{equation}
By Theorem~\ref{existence of backwards semi-infinite geodesics}\ref{southwest identity for rightmost and leftmost geodesics}, if $u \le t$, then
\begin{equation} \label{geodesic crossing: biconditional for X increments to equal h increments}
X_{m + 1}^{\theta}(u,t) = \h_{m + 1}^{\theta}(u, t) \text{ if and only if } u \ge \tau_{(m + 1,t),m }^{\theta,L \star}.
\end{equation}
By~\eqref{eqn:equality of busemann to dual weight} and~\eqref{eqn:equality of busemann to BM dual},
\begin{align*} 
&X_{m +1}^{\theta }(u,v) = \h_{m + 1}^{\theta }(u,v) \iff B_m(u,v) = \h_m^{\theta }(u,v) \\[1em]
\iff &\sup_{-\infty \le w \le u}\{\h_m^{\theta }(w) - X_{m + 1}^{\theta }(w)\} = \sup_{-\infty \le w \le v}\{\h_m^{\theta }(w) - X_{m + 1}^{\theta }(w)\}.
\end{align*}
This along with~\eqref{geodesic crossing: biconditional for X increments to equal h increments} implies that if $u \le t$,
\begin{equation} \label{geodesic crossing: second biconditional for busemann increments to equal B}
B_m^{\theta }(u,t) = \h_m^{\theta }(u,t) \text{ if and only if } u \ge \tau_{(m + 1,t),m }^{\theta ,L \star}.
\end{equation}

By assumption, $\tau_{(m,s),m}^{\theta ,R} < t$. Equation~\eqref{geodesic crossing: first biconditional for busemann increments to equal B} and the monotonicity of Theorem~\ref{thm:summary of properties of Busemanns for all theta}\ref{general monotonicity Busemanns} imply that
\begin{equation} \label{geodesic crossing: busemann increment greater than B}
B_m(s,t) < \h_m^{\theta }(s,t).
\end{equation}

Assume, by way of contradiction, that $\tau_{(m + 1,t),m}^{\theta ,L^\star} \le \tau_{(m,s),m}^{\theta ,R}$. Then, there exists $u \le \tau_{(m,s),m}^{\theta ,R}$ with $ u \ge s \vee \tau_{(m,t),m}^{\theta ,L^\star}$. Then, for such $u$, by~\eqref{geodesic crossing: first biconditional for busemann increments to equal B} and~\eqref{geodesic crossing: second biconditional for busemann increments to equal B},
\[
B_r(s,u) =  \h_r^{\theta }(s,u)\qquad\text{and}\qquad B_r^{\theta }(u,t) = \h_r^{\theta }(u, t). 
\]
Adding these two equations gives us a contradiction to~\eqref{geodesic crossing: busemann increment greater than B}.

\medskip \noindent \textbf{Part~\ref{itm:weak dual left vertical does not cross right horizontal}}: Let $t \le \tau_{(m,s),m}^{\theta ,R}$, and assume by way of contradiction, that $\tau_{(m + 1,t),m}^{\theta , L\star} > s$. This also implies that $s < t$ since $t \ge \tau_{(m + 1,t),m}^{\theta , L\star}$. By Equation~\eqref{geodesic crossing: second biconditional for busemann increments to equal B}, $B_m(s,t) < \h_m^{\theta }(s,t)$. However, since $s < t \le \tau_{(m,s),m}^{\theta ,R}$, Equation~\eqref{geodesic crossing: first biconditional for busemann increments to equal B}, implies that $B_m(s,t) = \h_m^{\theta }(s,t)$, giving the desired contradiction. 

\medskip \noindent \textbf{Part~\ref{itm:weak  dual right horizontal does not cross left vertical}:} Assume that $\tau_{(m,s),m}^{\theta ,L} \le t$. By Theorem~\ref{existence of backwards semi-infinite geodesics}\ref{southwest identity for rightmost and leftmost geodesics},
\[
\tau_{(m + 1,t),m}^{\theta ,R \star} = \sup\{u \le t: \vv_{m + 1}^{\theta }(u) = 0\}. 
\]
By Lemma~\ref{lemma:equality of busemann to weights of BLPP}, 
$
\vv_{m + 1}^{\theta }(\tau_{(m,s),m}^{\theta ,L}) = 0$, so the desired conclusion follows.

\medskip \noindent \textbf{Part~\ref{itm:general dual right vertical does not cross left horizontal}}: Assume that $t < \tau_{(m,s),m}^{\theta ,L}$. By Lemma~\ref{lemma:equality of busemann to weights of BLPP},
\[
\tau_{(m,s),m}^{\theta ,L} = \inf\{u \ge s: \vv_{m + 1}^{\theta }(u) = 0\}.
\]
By assumption, $\tau_{(m + 1,t),m}^{\theta ,R \star} \le t < \tau_{(m,s),m}^{\theta ,L}$. By Theorem~\ref{existence of backwards semi-infinite geodesics}\ref{southwest identity for rightmost and leftmost geodesics},
$\vv_{m + 1}^{\theta }(\tau_{(m + 1,t),m}^{\theta ,R \star}) = 0$, so $\tau_{(m + 1,t),m}^{\theta ,R \star} < s$.
\end{proof}

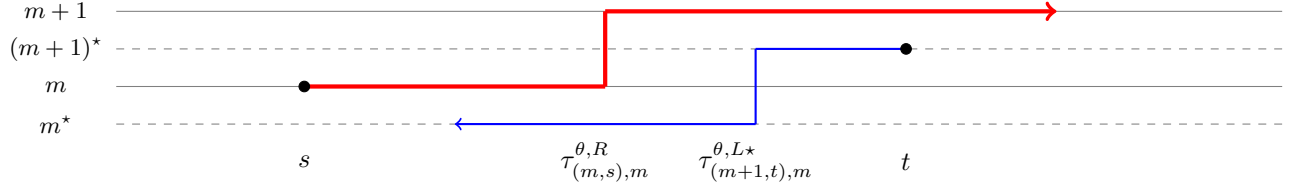
\begin{figure}
\begin{tikzpicture}
\draw[gray,thin] (-0.5,0) -- (15,0);
\draw[gray,thin] (-0.5,1)--(15,1);
\draw[gray,thin,dashed] (-0.5,-0.5)--(15,-0.5);
\draw[gray,thin,dashed] (-0.5,0.5)--(15,0.5);
\draw[red, ultra thick] (2,0)--(6,0);
\draw[red, ultra thick] (6,0)--(6,1);
\draw[red, ultra thick,->] (6,1)--(12,1);
\draw[blue,thick] (10,0.5)--(8,0.5);
\draw[blue,thick] (8,0.5)--(8,-0.5);
\draw[blue,thick,->] (8,-0.5)--(4,-0.5);
\filldraw[black] (2,0) circle (2pt);
\filldraw[black] (10,0.5) circle (2pt);
\node at (-1.3,-0.5) {\small $m^\star$};
\node at (-1.3,0) {\small $m$};
\node at (-1.3,0.5) {\small $(m + 1)^\star$};
\node at (-1.3,1) {\small $m + 1$};
\node at (2,-1) {$s$};
\node at (6,-1) {$\tau_{(m,s),m}^{\theta ,R}$};
\node at (8,-1) {$\tau_{(m + 1,t),m}^{\theta ,L \star}$};
\node at (10,-1) {$t$};
\end{tikzpicture}
\caption{\small A rightmost northeast geodesic (red/thick) lies strictly above and to the left of a leftmost dual southwest geodesic (blue/thin).}
\label{fig:dual left horizontal misses right vertical}
\end{figure}

\begin{figure}
\begin{tikzpicture}
\draw[gray,thin] (-0.5,0) -- (15,0);
\draw[gray,thin] (-0.5,1)--(15,1);
\draw[gray,thin,dashed] (-0.5,-0.5)--(15,-0.5);
\draw[gray,thin,dashed] (-0.5,0.5)--(15,0.5);
\draw[red, ultra thick] (5,0)--(9,0);
\draw[red, ultra thick] (9,0)--(9,1);
\draw[red, ultra thick,->] (9,1)--(12,1);
\draw[blue,thick] (5,0.5)--(7,0.5);
\draw[blue,thick] (5,0.5)--(5,-0.5);
\draw[blue,thick,->] (5,-0.5)--(1,-0.5);
\filldraw[black] (5,0) circle (2pt);
\filldraw[black] (7,0.5) circle (2pt);
\node at (-1.3,-0.5) {\small $m^\star$};
\node at (-1.3,0) {\small $m$};
\node at (-1.3,0.5) {\small $(m + 1)^\star$};
\node at (-1.3,1) {\small $m + 1$};
\node at (4.5,-1) {$\tau_{(m + 1,t),m}^{\theta ,L \star} \le s$};
\node at (9,-1) {$\tau_{(m,s),m}^{\theta ,R}$};
\node at (7,-1) {$t$};
\end{tikzpicture}
\caption{\small A rightmost northeast geodesic (red/thick) lies weakly to the right and below a leftmost dual southwest geodesic (blue/thin).}
\label{fig:weak dual left vertical misses right vertical}
\end{figure}
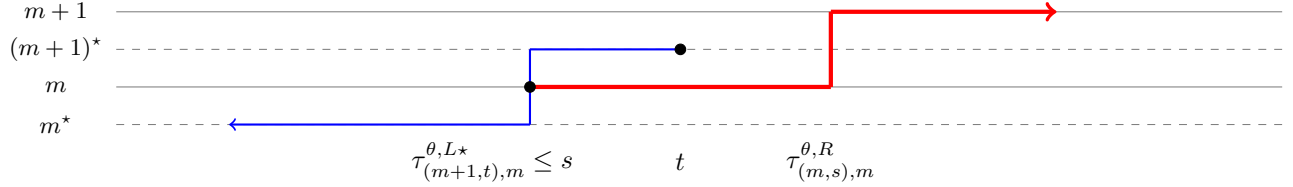

\begin{figure}
\begin{tikzpicture}
\draw[gray,thin] (-0.5,0) -- (15,0);
\draw[gray,thin] (-0.5,1)--(15,1);
\draw[gray,thin,dashed] (-0.5,-0.5)--(15,-0.5);
\draw[gray,thin,dashed] (-0.5,0.5)--(15,0.5);
\draw[red, ultra thick] (2,0)--(6,0);
\draw[red, ultra thick] (6,0)--(6,1);
\draw[red, ultra thick,->] (6,1)--(12,1);
\draw[blue,thick,->] (9,0.5)--(6,0.5)--(6,-0.5)--(3,-0.5);
\filldraw[black] (2,0) circle (2pt);
\filldraw[black] (9,0.5) circle (2pt);
\node at (-1.3,-0.5) {\small $m^\star$};
\node at (-1.3,0) {\small $m$};
\node at (-1.3,0.5) {\small $(m + 1)^\star$};
\node at (-1.3,1) {\small $m + 1$};
\node at (2,-1) {\small $s$};
\node at (6,-1) {\small $\tau_{(m,s),m}^{\theta ,L} \le \tau_{(m + 1,t),m}^{\theta ,R \star}$};
\node at (9,-1) {$t$};
\end{tikzpicture}
\caption{\small A leftmost northeast geodesic (red/thick) lies weakly above and to the left of a rightmost dual southwest geodesic (blue/thin).}
\label{fig:weak dual right horizontal misses left vertical}
\end{figure}
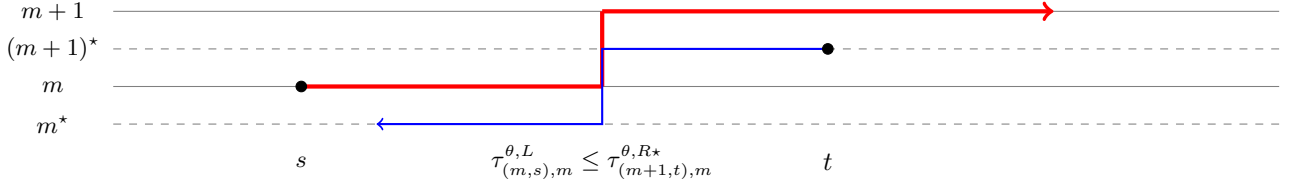
\begin{figure}
\begin{tikzpicture}
\draw[gray,thin] (-0.5,0) -- (15,0);
\draw[gray,thin] (-0.5,1)--(15,1);
\draw[gray,thin,dashed] (-0.5,-0.5)--(15,-0.5);
\draw[gray,thin,dashed] (-0.5,0.5)--(15,0.5);
\draw[red, ultra thick] (5,0)--(9,0);
\draw[red, ultra thick] (9,0)--(9,1);
\draw[red, ultra thick,->] (9,1)--(12,1);
\draw[blue,thick] (3,0.5)--(7,0.5);
\draw[blue,thick] (3,0.5)--(3,-0.5);
\draw[blue,thick,->] (3,-0.5)--(1,-0.5);
\filldraw[black] (5,0) circle (2pt);
\filldraw[black] (7,0.5) circle (2pt);
\node at (-1.3,-0.5) {$m^\star$};
\node at (-1.3,0) {$m$};
\node at (-1.3,0.5) {$(m + 1)^\star$};
\node at (-1.3,1) {$m + 1$};
\node at (5,-1) {$s$};
\node at (9,-1) {$\tau_{(m,s),m}^{\theta ,L}$};
\node at (3,-1) {$\tau_{(m + 1,t),m}^{\theta ,R \star}$};
\node at (7,-1) {$t$};
\end{tikzpicture}
\caption{\small A leftmost northeast geodesic (red/thick) lies strictly to the right and below a rightmost dual southwest geodesic (blue/thin).}
\label{fig:dual right vertical misses left vertical}
\end{figure}
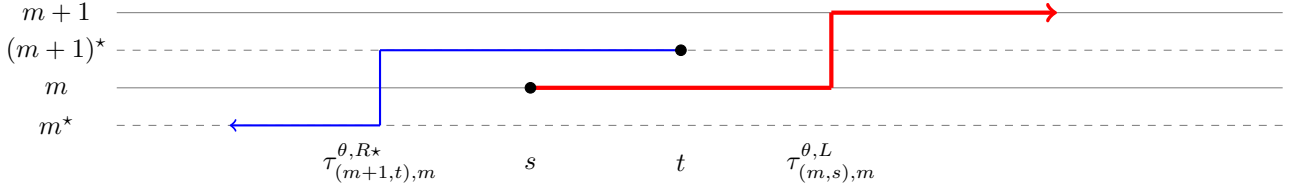

We use these non-intersection properties to prove Theorem~\ref{thm:bi-infinite geodesic between disjoint geodesics}. 

\begin{proof}[Proof of Theorem~\ref{thm:bi-infinite geodesic between disjoint geodesics}]
We follow a similar compactness argument as in the proof of Lemma 4.6 in~\cite{Timo_Coalescence}, appropriately modified for the semi-discrete setting of BLPP. The full probability event of this theorem is $\Omega^{(\theta)} \cap \Omega^{(\theta,\star)} \cap \Omega_2$, where $\Omega^{(\theta,\star)}$ is the event of Theorem~\ref{existence of backwards semi-infinite geodesics}\ref{southwest geodesics limits}. On this event, let $\mbf x = (m,t), \mbf y = (k,t)$. Assume that $\Gamma_1$ and $\Gamma_2$ are disjoint, where $\Gamma_1$ denotes the path defined by the jump times $t = \tau_{(k,t),k - 1}^{\theta ,R} \le  \tau_{(k,t),k}^{\theta ,R} \le \cdots$, and $\Gamma_2$ denotes the path defined by the jump times $s = \tau_{(m,s),m - 1}^{\theta ,R} \le \tau_{(m,s),m}^{\theta ,R} \le \cdots$. By Theorem~\ref{existence of semi-infinite geodesics intro version}\ref{general limits for semi-infinite geodesics}, these paths satisfy
\be\label{678} 
\lim_{n \rightarrow \infty} \f{\tau_{(m,s),n}^{\theta ,R}}{n} = \lim_{n \rightarrow \infty}\f{\tau_{(k,t),n}^{\theta ,R}}{n} = \theta,
\ee
so both the space and time coordinates of these up-right paths go to $\infty$. Assume, without loss of generality, that $\Gamma_2$ lies above and to the left of $\Gamma_1$. Then, by picking $t_0$ large enough, there exists $k_0^1$ and $k_0^2$ such that $(k_0^1,t_0) \in \Gamma_1$ and $(k_0^2,t_0) \in \Gamma_2$. For each $t_0$, there are only finitely many choices of $k_0^1$ and $k_0^2$, so assume that $k_0^1$ is the largest such choice and $k_0^2$ is the smallest such choice. Then, $k_0^1 < k_0^2$ by the assumption that $\Gamma_2$ lies above and to the left of $\Gamma_1$. See Figure~\ref{fig:discrete time for constructing bi-infinte path} for clarity.  For $i = 1,2,\ldots$, define $t_i = t_0 + i$. For each time $i$, similarly define $k_i^1$ and $k_i^2$ so that $k_i^1 < k_i^2$ and $(k_i^j,t_i)$ lies on $\Gamma_j$ for $j = 1,2$. Recall the notation $m^\star = m - \f{1}{2}$. Then, for $i = 0,1,2,\ldots$, there exists $k_i \in \Z$ with $k_i^1 < k_i^\star < k_i^2$. 
This gives us an infinite sequence $(k_i^\star,t_i)_{i \ge 0}$ such that, for each $i$, the point $\bigl(k_i^\star,t_i\bigr)$, lies between the paths $\Gamma_1$ and $\Gamma_2$ (see Figure~\ref{fig:discrete time for constructing bi-infinte path}). 
Starting from each of these points $\bigl(k_i^\star,t_i\bigr)$, let $\Gamma_i^\star$ be the leftmost dual geodesic path in $\mbf T_{(k_i,t_i)}^{\theta, \star }$. Each $\Gamma_i^\star$ is an infinite, down-left path satisfying the limit condition of Theorem~\ref{existence of backwards semi-infinite geodesics}\ref{southwest geodesics limits}, so for each $j < i$, each of the paths $\Gamma_i^\star$  intersects the vertical line $t =t_j$. Let $k_{i,i}^\star = k_i^\star$, and for $j < i$, let $k_{i,j}$ be the maximal integer such that $\bigl(k_{i,j}^\star,t_j\bigr)$ lies on the path $\Gamma_i^\star$. Since $\bigl(k_i^\star,t_i\bigr)$ lies between $\Gamma_1$ and $\Gamma_2$, and $\Gamma_1,\Gamma_2$ are rightmost semi-infinite geodesics constructed from the Busemann functions, Theorem~\ref{strong crossing theorem for geodesics and dual geodesics}\ref{itm:general dual left horizontal does not cross right vertical}--\ref{itm:weak dual left vertical does not cross right horizontal} implies that for $i \ge 0$, $\Gamma_i^\star$ lies strictly below and to the right of $\Gamma_2$, and weakly above and to the left
of $\Gamma_1$ (see also Remark~\ref{rmk:geod to left}).  

Since $k_j^1 < k_{i,j}^\star < k_j^2$ for $0 \le j \le i$, there are only finitely many values of $k_{i,j}$ for each value of $j$. Then, there exists a subsequence $k_{i_r}$ such that for some $N_0 \in \Z$ with $k_0^1 < N_0^\star < k_0^2$ and all $\ell$, $k_{i_\ell,0} = N_0$. Take a further subsequence $k_{i_{r_\ell}}$ such that for some $k_1^1 < N_1^\star < k_1^2$ and every element of this subsequence, $k_{i_{r_\ell},1} = N_1$. Continuing in this way, there exists a sequence $(N_i,t_i)$ of elements of $\Z \times \R$, that $(N_i^\star,t_i)$ lies between the paths $\Gamma_1$ and $\Gamma_2$, and such that the leftmost southwest dual semi-infinite geodesic starting from $(N_i^\star,t_i)$ passes through $(N_j^\star,t_j)$ for $0 \le j \le i$. Since we chose the leftmost paths in $\mbf T_{(N_i,t_i)}^{\theta,\star }$ at each step, the paths are consistent at all space-time points. Hence, this construction gives a bi-infinite path such that, for any point along the path, the part of the path to the southwest of that point is a semi-infinite southwest dual geodesic.  To get the last part of the theorem about asymptotic direction of the paths, we already showed the direction to the southwest. The direction to the northeast follows by~\eqref{678} because the path lies between $\Gamma_1$ and $\Gamma_2$.

The proof for the disjoint left geodesics is analogous, replacing the use of leftmost southwest dual geodesics with rightmost southwest geodesics, and now, the bi-infinite path lies weakly below and to the right of $\Gamma_2$ and strictly above and to the left of $\Gamma_1$.
\end{proof}

\begin{figure}[ht!]
\begin{tikzpicture}
\draw[gray,thin] (-0.5,0) -- (15,0);
\draw[gray,thin] (-0.5,1)--(15,1);
\draw[gray,thin] (-0.5,2)--(15,2);
\draw[gray,thin] (-0.5,3)--(15,3);
\draw[gray,thin] (-0.5,4)--(15,4);
\draw[gray,thin] (-0.5,4)--(15,4);
\draw[gray,thin] (-0.5,5)--(15,5);
\draw[gray,thin,dashed] (-0.5,-0.5)--(15,-0.5);
\draw[gray,thin,dashed] (-0.5,0.5)--(15,0.5);
\draw[gray,thin,dashed] (-0.5,1.5)--(15,1.5);
\draw[gray,thin,dashed] (-0.5,2.5)--(15,2.5);
\draw[gray,thin,dashed] (-0.5,3.5)--(15,3.5);
\draw[gray,thin,dashed] (-0.5,4.5)--(15,4.5);
\draw[gray,thin,dashed] (1,-0.5)--(1,5);
\draw[gray,thin,dashed] (4,-0.5)--(4,5);
\draw[gray,thin,dashed] (7,-0.5)--(7,5);
\draw[gray,thin,dashed] (10,-0.5)--(10,5);
\draw[gray,thin,dashed] (13,-0.5)--(13,5);
\node at (1,-1) {$t_0$};
\node at (4,-1) {$t_1$};
\node at (7,-1) {$t_2$};
\node at (10,-1) {$t_3$};
\node at (13,-1) {$t_4$};
\filldraw[black] (1,0) circle (2pt) node[anchor = north] {\small $(k_0^1,t_0)$};
\filldraw[black] (1,2) circle (2pt) node[anchor = south] {\small $(k_0^2,t_0)$};
\draw[red, ultra thick,->] (1,0)--(3,0)--(3,1)--(6,1)--(6.5,1)--(6.5,2)--(9,2)--(11,2)--(11,3)--(13,3)--(15,3)--(15,4);
\draw[red, ultra thick,->] (-0.5,1)--(0.5,1)--(0.5,2)--(2.5,2)--(2.5,3)--(5,3)--(5,4)--(12,4)--(12,5)--(15,5);
\draw[blue,thick,->] (10,2.5)--(6.5,2.5)--(6.5,1.5)--(4.5,1.5)--(4.5,1.5)--(1.5,1.5)--(1.5,0.5)--(0.5,0.5)--(0.5,-0.5)--(-0.5,-0.5);
\filldraw[black] (10,2.5) circle (2pt) node[anchor = south] {\small $(k_3^\star,t_3)$};
\node at (14,5.5) {$\Gamma_2$};
\node at (15.5,3.5) {$\Gamma_1$};
\end{tikzpicture}
\caption{\small Constructing a backwards semi-infinite path from each discrete time point. The upper red/thick path is $\Gamma_2$ and the lower red/thick path is $\Gamma_1$.}
\label{fig:discrete time for constructing bi-infinte path}
\end{figure}

\subsection{Proof of Lemma~\ref{lemma:midpoint prob for BLPP}, Theorem~\ref{thm:general_SIG}, and Parts~\ref{itm:monotonicity of semi-infinite jump times}\ref{itm:strong monotonicity in t} and~\ref{itm:convergence of geodesics}\ref{itm:limits in theta to infty} of Theorem~\ref{existence of semi-infinite geodesics intro version} }

 \begin{proof}[Proof of Lemma~\ref{lemma:midpoint prob for BLPP}]
    We follow the procedure of the proof of Theorem 4.12 in the arXiv version of~\cite{Sepp_lecture_notes}. We take $(m,t) = \mbf 0$, and the general case follows analogously. We simplify further: for any sequence $\{t_k\}_{k \in \Z}$ satisfying 
    \[
    \lim_{n \rightarrow \infty} \f{t_n}{n} = \theta \qquad\text{and}\qquad\lim_{n \rightarrow \infty} \f{t_{-n}}{-n} =\eta,
    \]
    set $\wt t_k = - t_{-(k + 1)}$. Then, $\{\wt t_k\}_{k \in \Z}$ is a sequence satisfying 
    \[
     \lim_{n \rightarrow \infty} \f{\wt t_n}{n} = \eta \qquad\text{and}\qquad\lim_{n \rightarrow \infty} \f{t_{-n}}{-n} =\theta. 
    \]
    Furthermore, a geodesic path between $(-n,t_{-n})$ and $(n,t_n)$ for the original field of Brownian motions $\mbf B = \{B_r\}_{r \in \Z}$, when reflected through the origin, becomes a geodesic path between $(-n,\wt t_{-n})$ and $(n,\wt t_n)$ for the field of Brownian motions $\{\wt B_{-r}\}_{r \in \Z}$. Then, without loss of generality, assume that
    \begin{equation} \label{expectation assumption for midpoint problem}
    \Ee\bigl[\B^\eta((1,-1),\mbf 0) \bigr] \ge \Ee\bigl[\B^\theta((1,-1),\mbf 0)\bigr].
    \end{equation}

    If a geodesic between $(-n,t_{-n})$ and $(n,t_n)$ passes through $\mbf 0$, then the geodesic cannot pass through any point above and to the left of $\mbf 0$. Then, for any $\mbf x \in \Z_{> 0} \times \R_{< 0}$,
    \[
    L_{(-n,t_{-n}),\mbf 0} + L_{\mbf 0,(n,t_n)} \ge L_{(-n,t_{-n}),\mbf x} + L_{\mbf x,(n,t_n)}
    \]
     Specifically, for all $k \in \Z_{> 0}$, 
    \[
     L_{(-n,t_{-n}),\mbf 0} - L_{(-n,t_{-n}),(k,-k)} \ge L_{(k,-k),(n,t_n)} - L_{\mbf 0,(n,t_n)}.
    \]
    For fixed $\mbf x,\mbf y \in \Z \times \R$, let $\wt \B^\eta(\mbf x,\mbf y)$ denote the almost sure limit
    \[
    \lim_{n \rightarrow \infty}  L_{(-n,t_{-n}),\mbf x} - L_{(-n,-t_{-n}),\mbf y}.
    \]
    By Theorem~\ref{thm:dist of Busemann functions}\ref{mutual independence of the X_m} and Lemma~\ref{lemma:Busemann limits for southwest geodesics}, the limit exists almost surely, and $\wt \B^\theta(\mbf x,\mbf y) \deq \B^\theta(\mbf y,\mbf x)$. Let $\Omega_{\Z}$ be the full probability event on which, for every $k \in \Z_{> 0}$ and for every sequence $\{t_n\}_{n \in \Z}$ satisfying
    \[
    \lim_{n \rightarrow \infty} \f{t_n}{n} = \theta\qquad\text{and}\qquad\lim_{n \rightarrow \infty}\f{t_{-n}}{-n} = \eta,
    \]
    we have
    \begin{align*}
    &\lim_{n \rightarrow \infty} L_{(-n,t_{-n},\mbf 0)}(\mbf B) - L_{(-n,t_{-n},(k,-k)}(\mbf B) = \wt \B^\eta(\mbf 0,(k,-k)),\qquad\text{and} \\[1em] &\lim_{n \rightarrow \infty} L_{(k,-k),(n,t_n)}(\mbf B) - L_{\mbf 0,(n,t_n)}(\mbf B) = \B^\theta((k,-k),\mbf 0).
    \end{align*}
    Then, we have the following inclusion of sets:
    \begin{align}
        &\Bigg\{\text{There exists a sequence } \{t_n\}_{n \in \Z}  \text{ satisfying } \lim_{n \rightarrow \infty} \f{t_n}{n} = \theta \text{ and }\lim_{n \rightarrow \infty} \f{t_{-n}}{-n} = \eta \nonumber \\[1em]
         &\qquad\text{such that, for all } n \in \Z_{> 0}, \text{ some geodesic between } (-n,t_{-n}) \text{ and } (n,t_n) \text{ passes through } \mbf 0\Bigg\}\nonumber \\[1em]
         &\subseteq \bigcap_{m = 1}^\infty \Bigg\{\text{There exists a sequence } \{t_n\}_{n \in \Z}  \text{ satisfying } \lim_{n \rightarrow \infty} \f{t_n}{n} = \theta \text{ and }\lim_{n \rightarrow \infty} \f{t_{-n}}{-n} = \eta \text{ such that } \nonumber\\[1em]
         &\quad\text{for all }n \in \Z_{> 0} \text{ and } k = 1,\ldots,m,\; L_{(-n,t_{-n}),\mbf 0}(\mbf B) - L_{(-n,t_{-n}),(k,-k)}(\mbf B) \ge L_{(k,-k),(n,t_n)}(\mbf B) - L_{\mbf 0,(n,t_n)}(\mbf B) \Bigg\}\nonumber \\[1em]
         &\subseteq \bigcap_{m = 1}^\infty \bigl\{\wt \B^\eta(\mbf 0,(k,-k)) \ge \B^\theta((k,-k),\mbf 0),\;\; k = 1,\ldots,m \bigr\} \cup  \Omega_{\Z}^C. \nonumber 
    \end{align}
    We show that this last event has probability $0$. 
    Define the process $\{S_k\}_{k \ge 0}$ by $S_0 = 0$ and for $k \ge 1$,
    \[
    S_k  = \B^\eta(\mbf 0,(k,-k)) - \B^\theta((k,-k),\mbf 0) = \sum_{i = 1}^k  \bigl(\wt \B^\eta((i - 1,- i + 1),(i,-i)) - \B^\theta\bigl((i,-i),(i - 1,-i +1)\bigr)\bigr).
    \]
    The sequences
    \[
    \bigl\{\B^\theta((i,-i),(i - 1,-i +1))\bigr\}_{i \in \Z_{> 0}} \qquad\text{and}\qquad \bigl\{\wt \B^\eta((i - 1,-i + 1),(i,-i))\bigr\}_{i \in \Z_{> 0}} 
    \]
    are independent because they are constructed from disjoint increments of the field of independent Brownian motions $\mbf B$. Furthermore,
    \[
    \bigl\{\B^\theta((i,-i),(i - 1,-i +1))\bigr\}_{i \in \Z_{> 0}} = \bigl\{v_{i}^\theta(-i) + h_{i - 1}^\theta(-i,-i + 1)   \bigr\}_{i \in \Z_{>0}}
    \]
    is an i.i.d. collection of random variables by Theorem~\ref{thm:dist of Busemann functions}\ref{Burke property}. By reflection applied to the Busemann functions $\wt B^\eta$, the process $\{S_k\}_{k \ge 0}$ has independent increments.  By Theorem~\ref{thm:dist of Busemann functions}, the increments of this random walk are independent sums of normal and exponential random variables and therefore have finite first and second moment. Let $\mu$ be the mean and $\sigma^2$ the variance of $S_1$. Then, $\mu \le 0$ by the assumption~\eqref{expectation assumption for midpoint problem}. Thus, for $m \ge 1$, 
    \begin{align*}
    &\Pp\bigl(\wt \B^\eta\bigl(\mbf 0,(k,-k)\bigr)\ge \B^\theta\bigl((k,-k),\mbf 0\bigr),\;\; k = 1,\ldots,m\bigr) = \Pp(S_k \ge 0,\;\; k = 1,\ldots,m) \\[1em]
    = \;&\Pp\biggl(\f{S_{\floor{u m}} - u m \mu}{\sqrt{m \sigma^2}} \ge -\f{u m \mu}{\sqrt{m \sigma^2}}\;\text{for }u \in [0,1]\biggr)  \le \Pp\biggl(\inf_{u \in [0,1]}\f{S_{\floor{u m}} - u m \mu}{\sqrt{m \sigma^2}} \ge 0\biggr)\overset{m\rightarrow \infty}{\longrightarrow} 0.
    \end{align*}
    The convergence in the last step holds because
    \[
    \inf_{u \in [0,1]}\f{S_{\floor{u m}} - u m \mu}{\sqrt{m \sigma^2}} \overset{m\rightarrow \infty}{\Longrightarrow} \inf_{u \in [0,1]} B(u) 
    \]
    where the convergence holds in distribution by Donsker's Theorem, and $B$ is a standard Brownian motion.
 \end{proof}

We are now ready to prove all parts of the main theorem of this paper. 
 
 \begin{proof}[Proof of Theorem~\ref{thm:general_SIG}]
 Unless specified otherwise, the full probability event of the parts of this theorem is the event $\Omega_2$ defined in Equation~\eqref{eqn:Omega2_def}.
 
 \medskip \noindent \textbf{Part~\ref{itm:SIG_existence}:} This is a direct corollary of Theorem~\ref{existence of semi-infinite geodesics intro version}\ref{itm:SIG_existence}.
 
 \medskip \noindent \textbf{Part~\ref{itm:uniqueness of geodesic for fixed point and direction}:} 
 Let $\Omega_{\mbf x}^{(\theta)}$ be the event defined in Equation~\eqref{eqn:omegaxtheta}. By Lemma~\ref{lemma: uniqueness and directedness of geodesics for fixed parameters}
 on $\Omega_{\mbf x}^{(\theta)}$, there is a unique element of $\mbf T_{\mbf x}^{\theta}$. Setting $\mbf x = (m,t)$, $\tau_{(m,t),r}^{\theta-,L} = \tau_{(m,t),r}^{\theta+,R}$ for all $r \ge m$, and the desired conclusion follows from Theorem~\ref{thm:convergence and uniqueness}\ref{all semi-infinite geodesics lie between leftmost and rightmost}.
 
 \medskip \noindent \textbf{Part~\ref{itm:size of non-uniqueness}:} By Theorems~\ref{thm:summary of properties of Busemanns for all theta}\ref{busemann functions agree for fixed theta} and~\ref{thm:convergence and uniqueness}\ref{all semi-infinite geodesics lie between leftmost and rightmost}, on the full probability event $\Omega^{(\theta)} \cap \Omega_2$, there exists multiple $\theta$-directed semi-infinite geodesics from $(m,t)$ if and only if $\tau_{(m,t),r}^{\theta,L} < \tau_{(m,t),r}^{\theta,R}$ for some $r \ge m$, or in other words, if and only if $\mbf T_{(m,t)}^{\theta}$ contains more than one element. Since the event $\wt \Omega^{(\theta)}$ constructed in Equation~\eqref{eqn:omega tilde space} is contained in $\Omega^{(\theta)}$, Theorem~\ref{thm:non_unique_size}\ref{decomposition} implies that this set of points whose $\theta$-directed geodesic is not unique is countable. Then, since $\wt \Omega^{(\theta)} \subseteq \bigcap_{\mbf x \in \Z \times \Q} \Omega_{\mbf x}^{(\theta)}$, if two $\theta$-directed geodesics from $(m,t)$ pass through $(m,t + \ve)$ for some $\ve > 0$, then they both pass through $(m,q)$ for some $(m,q) \in \Z \times \Q$. Further, the portions of the geodesics after $(m,q)$ are both $\theta$-directed semi-infinite geodesics from $(m,q)$ and are therefore the same geodesic by Part~\ref{itm:uniqueness of geodesic for fixed point and direction}.

\medskip  \noindent \textbf{Part~\ref{itm:all geodesics are directed}}: Assume to the contrary, that, for some $\omega \in \Omega_2$, there exists a point $(m,t) \in \Z \times \R$ and a sequence $t = t_{m - 1} \le t_m \le t_{m + 1} \le \cdots$ defining a semi-infinite geodesic $\Gamma$, starting from $(m,t)$ and satisfying
\[
0 \le \underline \theta := \liminf_{n\rightarrow \infty}\f{t_n}{n} < \limsup_{n\rightarrow \infty} \f{t_n}{n} =: \overline \theta \le \infty.
\]
Choose some $\theta \in (\underline \theta,\overline \theta)$. By Theorem~\ref{existence of semi-infinite geodesics intro version}\ref{general limits for semi-infinite geodesics},
\[
\lim_{n\rightarrow \infty}\f{\tau_{(m,t),n}^{\theta-,L}}{n} = \theta
\]
Therefore, there are infinitely many values of $n$ with $t_n < \tau_{(m,t),n}^{\theta - ,L}$ and another infinitely many values of $n$ with $t_n > \tau_{(m,t),n}^{\theta-,L}$. Let $n_1$ be the minimal index such that $t_{n_1} < \tau_{(m,t),n_1}^{\theta-,L}$, and let $n_2$ be the minimal index larger than $n_1$ such that $t_{n_2} > \tau_{(m,t),n_2}^{\theta-,L}$. See Figure~\ref{fig:double crossing of geodesics} for clarity. By planarity, the points $(n_1,t_1)$ and $(n_2,\tau_{(m,t),n_2 - 1}^{\theta -,L})$ lie on both geodesics, and between the two points, the path $\Gamma$ lies strictly to the left of the leftmost geodesic in $\mbf T_{(m,t)}^{\theta-}$. This contradicts Theorem~\ref{existence of semi-infinite geodesics intro version}\ref{Leftandrightmost} which states the leftmost geodesic in $\mbf T_{(m,t)}^{\theta-}$ is the leftmost geodesic between any two of its points. 

\begin{figure}[t]
\centering
\begin{tikzpicture}
\draw[gray,thin, dashed] (-2,-2)--(13,-2);
\draw[gray,thin, dashed] (-2,-1)--(13,-1);
\draw[gray,thin, dashed] (-2,0) -- (13,0);
\draw[gray,thin, dashed] (-2,1) --(13,1);
\draw[gray, thin, dashed] (-2,2)--(13,2);
\draw[gray,thin, dashed] (-2,3)--(13,3);
\draw[red, ultra thick,->] (-1.5,-2)--(-0.5,-2)--(-0.5,-1)--(1.5,-1)--(1.5,0)--(6,0)--(6,1)--(7,1)--(7,2)--(9,2)--(9,3)--(10,3);
\draw[blue, thick,->] (-1.5,-2)--(2,-2)--(2,-1)--(2.5,-1)--(2.5,0.05)--(3.5,0.05)--(3.5,1)--(4,1)--(4,2)--(11,2)--(11,3)--(13.5,3);
\filldraw[black] (-1.5,-2) circle (2pt) node[anchor = north] {\small $(m,t)$};
\filldraw[black] (9,2) circle (2pt) node[anchor = north] {\small $(n_2,\tau_{(m,t),n_2 }^{\theta-,L})$};
\filldraw[black] (11,2) circle (2pt) node[anchor = north] {\small $(n_2,t_{n_2})$};
\filldraw[black] (3.5,0) circle (2pt) node[anchor = north] { \small$(n_1,t_{n_1})$};
\filldraw[black] (7,2) circle (2pt) node[anchor = south] {\small $(n_2,\tau_{(m,t),n_2 - 1}^{\theta -,L})$};
\filldraw[black] (6,0) circle (2pt) node[anchor = north] {\small $(n_1,\tau_{(m,t),n_1}^{\theta -,L})$};
\end{tikzpicture}
\caption{\small Crossing of geodesic paths. The blue/thin path is the semi-infinite geodesic defined by the jump times $t \le t_m \le t_{m + 1} \le \cdots$, and the red/thick path is the semi-infinite geodesic defined by the times $t \le \tau_{(m,t),m}^{\theta -,L} \le \tau_{(m,t),m + 1}^{\theta -,L} \le \cdots$.}
\label{fig:double crossing of geodesics}
\end{figure}

\medskip \noindent \textbf{Part~\ref{itm:only vertical or horizontal geodesics are trivial}}:
Next, assume that for some $\omega \in \Omega_2$, there exists $(m,t) \in \Z \times \R$, and  a sequence $t = t_{m - 1} \le t_m \le \cdots$ that defines a semi-infinite geodesic $\Gamma$, starting from $(m,t)$ and satisfying
\[
\lim_{n\rightarrow \infty} \f{t_n}{n} = 0.
\]
To show that $t_r = t$ for all $r \ge m$, it is sufficient to show that $t_m =t$. For then, this semi-infinite geodesic travels vertically to $(m + 1,t)$, the remaining part of the geodesic is a semi-infinite geodesic starting at $(m + 1,t)$, and the result follows by induction. By Theorem~\ref{existence of semi-infinite geodesics intro version}\ref{general limits for semi-infinite geodesics}, for every $\theta > 0$ and any $(m,t) \in \Z \times \R$,
\[
\lim_{n\rightarrow \infty} \f{\tau_{(m,t),n}^{\theta+,R}}{n} = \theta.
\]
Hence, for all $\theta > 0$ and all sufficiently large $n$, $t_n < \tau_{(m,t),n}^{\theta+,R}$. However, by an analogous argument as in Part~\ref{itm:all geodesics are directed}, again using Theorem~\ref{thm:existence of Busemann functions for fixed points}\ref{Leftandrightmost}, $t_r \le \tau_{(m,t),r}^{\theta+,R}$ for all $r \ge m$. Specifically, the inequality holds for $r = m$. By monotonicity of Theorem~\ref{existence of semi-infinite geodesics intro version}\ref{itm:monotonicity of semi-infinite jump times}\ref{itm:monotonicity in theta}, 
$\lim_{\theta \searrow 0} \tau_{(m,t),m}^{\theta +,R}$ exists.
By definition of the event $\Omega_1$ from Lemma~\ref{lemma:Omega 1 prob 1}, this limit equals $t$ on the event $\Omega_1 \supseteq \Omega_2$. The case where $\lim_{n \rightarrow \infty}\f{t_n}{n} = \infty$ is handled similarly.

 
\medskip \noindent  \textbf{Part~\ref{itm:no bi-infinite geodesics in given direction}:} Let $\Omega^{(\theta,\eta)}$ be a full probability event on which, for every $(m,q) \in \Z \times \Q$, there exists no sequences satisfying the conditions~\eqref{eqn:limit condition for bi-infintie geodesics} and such that, for every $n \in \Z$, there exists a geodesic between $(-n,\tau_{-n})$ and $(n,\tau_n)$ that passes through $(m,q)$. Such an event exists by Lemma~\ref{lemma:midpoint prob for BLPP}. Then, on this event, 
if a bi-infinite geodesic that satisfies~\eqref{eqn:limit condition for bi-infintie geodesics} exists, it cannot 
 pass through  $(m,q)$ for any $q \in \Q$. But then this bi-infinite geodesic cannot ever move horizontally, and so there must exist $t \in \R$ such that the bi-infinite geodesic consists only of points $(r,t)$. Now~\eqref{eqn:limit condition for bi-infintie geodesics} fails. 
 
 \medskip \noindent \textbf{Part~\ref{itm:coalescence}:}
 Define
\[
\widehat \Omega^{(\theta)} = \wt \Omega^{(\theta)} \cap  \Omega^{(\theta,\star)} \cap \Omega^{(\theta,\theta)}(\mbf X^\theta),
\]
where $\wt \Omega^{(\theta)}$ is the event defined in~\eqref{eqn:omega tilde space}, $\Omega^{(\theta,\star)}$ is the event of Theorem~\ref{existence of backwards semi-infinite geodesics}\ref{southwest geodesics limits}, and $\Omega^{(\theta,\theta)}(\mbf X^\theta)$ is the event of Part~\ref{itm:no bi-infinite geodesics in given direction}, applied to the random environment $\mbf X^\theta$ of i.i.d Brownian motions  (Lemma~\ref{thm:dist of Busemann functions}\ref{mutual independence of the X_m}).

By the construction of the $\mbf T_{\mbf x}^{\theta }$ in terms of the variational formula (Definition~\ref{def:semi-infinite geodesics}), if for some $\mbf x,\mbf y \in \Z \times \R$, the rightmost semi-infinite geodesics in $\mbf T_{\mbf x}^{\theta }$ and $\mbf T_{\mbf y}^{\theta }$ ever intersect, they agree above and to the right of the point of intersection. The same is true of leftmost geodesics. Because $\wt \Omega^{(\theta)} \subseteq \Omega^{(\theta)} \cap \Omega_2$ (see discussion after~\eqref{eqn:omega tilde space}), 
\[
\widehat \Omega^{(\theta)} \subseteq \Omega^{(\theta)} \cap \Omega^{(\theta,\star)} \cap \Omega_2.
\]
If, by way of contradiction, for some $\omega \in \widehat \Omega^{(\theta)}$, there exists $\mbf x$ and $\mbf y \in \Z \times \R$ such that the rightmost geodesics in $\mbf T_{\mbf x}^{\theta}$ and $\mbf T_{\mbf y}^{\theta}$ are disjoint, then by Theorem~\ref{thm:bi-infinite geodesic between disjoint geodesics}, there exists a bi-infinite geodesic for the environment $\mbf X^\theta$ that is defined by jump times $\{\tau_r\}_{r \in \Z}$ and satisfies
\[
\lim_{n \rightarrow \infty} \f{\tau_n}{n} = \theta = \lim_{n \rightarrow \infty} \f{\tau_{-n}}{-n}.
\]
This is a contradiction since $\omega \in \Omega^{(\theta,\theta)}(\mbf X^\theta)$. Therefore, for all $\omega \in \widehat \Omega^{(\theta)}$, whenever $(m,s),(k,t) \in \Z \times \R$, the rightmost geodesics in $\mbf T_{(m,s)}^\theta$ and $\mbf T_{(r,t)}^\theta$ coalesce. The same is true by replacing ``right" with ``left." In other words, for all sufficiently large $r$, 
\begin{equation} \label{eqn:coalRL}
\tau_{(m,s),r}^{\theta,R} = \tau_{(k,t),r}^{\theta,R} \qquad\text{and}\qquad \tau_{(m,s),r}^{\theta,L} = \tau_{(k,t),r}^{\theta,L}.
\end{equation}
Now, let $s = s_{m - 1} \le s_m \le \cdots$ and $t = t_{k -1 } \le t_k \le \cdots$ be any sequences defining $\theta$-directed semi-infinite geodesics starting from $(m,s)$ and $(k,t)$, respectively. We show that these semi-infinite geodesics coalesce. Without loss of generality, assume that $m \le k$ and $s_{k -1} < t_{k - 1} = t $. The other cases are handled similarly. Then, by Theorems~\ref{thm:convergence and uniqueness}\ref{all semi-infinite geodesics lie between leftmost and rightmost} and~\ref{existence of semi-infinite geodesics intro version}\ref{itm:monotonicity of semi-infinite jump times}\ref{itm:strong monotonicity in t}, for all $r \ge k$,
\[
\tau_{(k,s_{k - 1}),r}^{\theta,L} \le s_r \le \tau_{(k,s_{k - 1}),r}^{\theta,R} \le \tau_{(k,t),r}^{\theta,L} \le t_r \le \tau_{(k,t),r}^{\theta,R}.
\]
Then, by~\eqref{eqn:coalRL}, these inequalities are all equalities for all sufficiently large $r$, and the geodesics coalesce. 
\end{proof}

We conclude this section by completing the remaining parts of Theorem~\ref{existence of semi-infinite geodesics intro version}.
\begin{proof}[Proof of remaining parts of Theorem~\ref{existence of semi-infinite geodesics intro version}:]

\textbf{Part~\ref{itm:monotonicity of semi-infinite jump times}\ref{itm:strong monotonicity in t}:} Let $\omega \in \wt \Omega^{(\theta)}$, $m \in \Z$, and $s < t \in \R$. We use a modified induction, in the following manner.
\begin{enumerate} [label=\rm(\Alph{*}), ref=\rm(\Alph{*})]  \itemsep=3pt 
    \item First, note by definition that $s = \tau_{(m,s),m - 1}^{\theta,R} < \tau_{(m,t),m - 1}^{\theta,L} = t$.
    \item For each $r \ge m$, we assume that $\tau_{(m,s),r - 1}^{\theta,R} < \tau_{(m,t),r - 1}^{\theta,L}$. \label{itm:inductive asumption}
    \item Under assumption~\ref{itm:inductive asumption}, we show that if $\tau_{(m,s),r}^{\theta,R} \ge \tau_{(m,t),r}^{\theta,L}$, then $\tau_{(m,s),k}^{\theta,R} = \tau_{(m,t),k}^{\theta,L}$ for all $k \ge r$.  \label{itm:inductive conclusion}
    \end{enumerate}
    \medskip
    
    \noindent With this procedure mapped out, assume that, for some $r \ge m$, 
    \[
    \tau_{(m,s),r - 1}^{\theta,R} < \tau_{(m,t),r - 1}^{\theta,L}\qquad\text{and}\qquad \tau_{(m,s),r}^{\theta,R} \ge \tau_{(m,t),r}^{\theta,L}.
    \]
    By definition, $\tau_{(m,s),r}^{\theta,R}$ is a maximizer of $B_r(u) - h_{r + 1}^\theta(u)$ over $u \in [\tau_{(m,s),r-1}^{\theta,R},\infty)$. But since 
    \[
    \tau_{(m,s),r}^{\theta,R} \ge \tau_{(m,t),r}^{\theta,L} \ge \tau_{(m,t),r - 1}^{\theta,L},
    \]
    $\tau_{(m,s),r}^{\theta,R}$ is also a maximizer over $u \in [\tau_{(m,t),r - 1}^{\theta,L},\infty)$. By definition, $\tau_{(m,t),r}^{\theta,L}$ is another maximizer over this set, so
    \[
    B_r(\tau_{(m,s),r}^{\theta,R}) - h_{r + 1}^\theta(\tau_{(m,s),r}^{\theta,R}) = B_r(\tau_{(m,t),r}^{\theta,L}) - h_{r + 1}^\theta(\tau_{(m,t),r}^{\theta,L}),
    \]
   and both $\tau_{(m,s),r}^{\theta,R}$ and $\tau_{(m,t),r}^{\theta,L}$ are maximizers of $B_r(u) - h_{r + 1}^{\theta}(u)$ over $u \in [q,\infty)$ for any  rational $q \in [\tau_{(m,s),r -1}^{\theta,R},\tau_{(m,t),r - 1}^{\theta,L}]$. By assumption~\ref{itm:inductive asumption}, such a rational $q$ exists. Then, the sequence $\{\tau_{(m,s),k}^{\theta,R}\}_{k \ge r}$ and the sequence $\{\tau_{(m,t),k}^{\theta,L}\}_{k \ge r}$ both define jump times for a semi-infinite geodesic starting from $(r,q)$. Since $\omega \in \wt \Omega^{(\theta)}$, there is a unique $\theta$-directed semi-infinite geodesic starting from this point, and conclusion~\ref{itm:inductive conclusion} holds.

\medskip \noindent \textbf{Part~\ref{itm:convergence of geodesics}\ref{itm:limits in theta to infty}}: By the monotonicity of Theorem~\ref{existence of semi-infinite geodesics intro version}\ref{itm:monotonicity of semi-infinite jump times}\ref{itm:monotonicity in theta}, it suffices to show that
\[
\lim_{\theta \searrow 0}\tau_{(m,t),r}^{\theta +,R} = t \qquad\text{and}\qquad\lim_{\theta \rightarrow \infty} \tau_{(m,t),r}^{\theta-,L} = \infty\qquad\text{for } r \ge m.
\]
 The proof of Theorem~\ref{thm:general_SIG}\ref{itm:only vertical or horizontal geodesics are trivial}, established the statement for $r = m$. For the limits as $\theta \rightarrow \infty$, this implies the statement holds for all $r \ge m$ since $\tau_{(m,t),m}^{\theta -,L} \le \tau_{(m,t),r}^{\theta-,L}$. By the monotonicity of Theorem~\ref{existence of semi-infinite geodesics intro version}\ref{itm:monotonicity of semi-infinite jump times}\ref{itm:monotonicity in theta}, the limit
\[
\tau_{(m,t),r}^{0} := \lim_{\theta \searrow 0} \tau_{(m,t),r}^{\theta +,R}
\]
exists and satisfies $\tau_{(m,t),r}^{0} \le \tau_{(m,t),r}^{\theta+,R}$ for all $\theta > 0$. Furthermore, since $\{\tau_{(m,t),r}^{\theta+,R}\}_{r \ge m - 1}$ is a nondecreasing sequence for each $\theta > 0$, the sequence $\{\tau_{(m,t),r}^{0}\}_{r \ge m - 1}$ is also nondecreasing. By Theorem~\ref{existence of semi-infinite geodesics intro version}\ref{general limits for semi-infinite geodesics},
\[
0 \le \limsup_{n \rightarrow \infty} \f{\tau_{(m,t),n}^{0}}{n} \le \lim_{n \rightarrow \infty} \f{\tau_{(m,t),n}^{\theta +,R}}{n} = \theta\qquad\text{ for all }\theta > 0 \qquad\Longrightarrow\qquad \lim_{n \rightarrow \infty} \f{\tau_{(m,t),n}^{0}}{n}= 0.
\]
Therefore, by Theorem~\ref{thm:general_SIG}\ref{itm:only vertical or horizontal geodesics are trivial}, if the sequence of jump times $t = \tau_{(m,t),m -1}^{0,R} \le \tau_{(m,t),m}^{0,R} \le \cdots$ defines a semi-infinite geodesic starting from $(m,t)$, the desired conclusion follows. It is sufficient to show that, for any $n \ge m$, the sequence $t = \tau_{(m,t),m - 1}^{0} \le \tau_{(m,t),m}^{0} \le \cdots \le \tau_{(m,t),n}^{0}$ defines jump times for a finite geodesic between $(m,t)$ and $(n,\tau_{(m,t),n}^{0})$. By Theorem~\ref{existence of semi-infinite geodesics intro version}\ref{energy of path along semi-infinte geodesic}, For each $\theta > 0$, the sequence $t = \tau_{(m,t),m - 1}^{\theta+,R} \le \cdots \le \tau_{(m,t),n}^{\theta+,R}$ is a maximizing sequence for
\[
L_{(m,t),(n,\tau_{(m,t),n}^{\theta +,R})} = \sup\Bigl\{\sum_{r = m}^n B_r(s_r,s_{r - 1}): t = s_{m - 1} \le s_m \le \cdots \le s_n = \tau_{(m,t),n}^{\theta +,R}    \Bigr\},
\]
so since $\lim_{\theta \searrow 0}\tau_{(m,t),n}^{\theta +,R} = \tau_{(m,t),n}^0$, Lemma~\ref{lemma:convergence of maximizers from converging sets} completes the proof. 
\end{proof}

\appendix
 \section{Deterministic facts about continuous functions}

\begin{lemma} \label{monotonicity of maximizers from function monotonicity}
Let $\eta^1,\eta^2:\R\rightarrow \R$ be continuous functions satisfying
\[
\limsup_{t\rightarrow \infty} \eta^i(t) =-\infty \qquad\text{for }i = 1,2.
\]
Further, assume that $\eta^1(s,u) \le \eta^2(s,u)$ for all $s < u$. For each $t\in \R$ and $i = 1,2$, let $s_t^{i,L}$ and $s_t^{i,R}$ be the leftmost and rightmost maximizers of $\eta^i$ on the set $[t,\infty)$. Then, 
\[
s_t^{1,L} \le s_t^{2,L} \qquad \text{and}\qquad s_t^{1,R} \le s_t^{2,R}.
\]
\end{lemma}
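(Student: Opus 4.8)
The plan is to exploit the monotonicity of a running supremum under the pointwise ordering of increments, together with a standard exchange/interchange argument for leftmost maximizers. Fix $t \in \R$. For a continuous function $\eta$ with $\limsup_{u\to\infty}\eta(u)=-\infty$, the supremum over $[t,\infty)$ is attained, and the set of maximizers is compact, so $s_t^{i,L}$ and $s_t^{i,R}$ are well defined.

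First I would handle the leftmost maximizers. Suppose, for contradiction, that $s_t^{1,L} > s_t^{2,L}$; write $a = s_t^{2,L}$ and $b = s_t^{1,L}$, so $t \le a < b$. Because $a$ is a maximizer of $\eta^2$ on $[t,\infty)$ and $b \ge a$, we have $\eta^2(a) \ge \eta^2(b)$, i.e. $\eta^2(a,b) \le 0$. Because $b$ is the \emph{leftmost} maximizer of $\eta^1$ on $[t,\infty)$ and $a \in [t,b)$, we have $\eta^1(a) < \eta^1(b)$, i.e. $\eta^1(a,b) > 0$. But the hypothesis $\eta^1(a,b) \le \eta^2(a,b)$ then forces $0 < \eta^1(a,b) \le \eta^2(a,b) \le 0$, a contradiction. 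Hence $s_t^{1,L} \le s_t^{2,L}$.

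Next the rightmost maximizers, by the symmetric argument. Suppose $s_t^{1,R} > s_t^{2,R}$; write $c = s_t^{2,R}$ and $d = s_t^{1,R}$, so $t \le c < d$. Since $d$ is a maximizer of $\eta^1$ on $[t,\infty)$ and $c \ge t$, $\eta^1(c) \le \eta^1(d)$, i.e. $\eta^1(c,d) \ge 0$. Since $c$ is the \emph{rightmost} maximizer of $\eta^2$ and $d > c$, $\eta^2(d) < \eta^2(c)$, i.e. $\eta^2(c,d) < 0$. Then $0 \le \eta^1(c,d) \le \eta^2(c,d) < 0$, a contradiction, so $s_t^{1,R} \le s_t^{2,R}$.

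There is no real obstacle here; the only points requiring a word of care are that the maximizers exist (from $\limsup \eta^i = -\infty$ and continuity, the sup over $[t,\infty)$ is attained on a nonempty compact set, so leftmost and rightmost elements exist) and the correct strict-versus-weak inequalities coming from the definitions of leftmost/rightmost maximizer. I would state these facts briefly and then give the two short contradiction arguments above.
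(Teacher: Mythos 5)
Your proof is correct and rests on the same key observation as the paper's: the increment comparison $\eta^1(a,b)\le\eta^2(a,b)$ applied at the two maximizers. The paper phrases it directly (everything to the right of $s_t^{2,L}$, resp.\ $s_t^{2,R}$, is weakly, resp.\ strictly, dominated for $\eta^1$) while you argue by contradiction, but these are the same argument.
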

\begin{proof}
Since $s_t^{2,L}$ is a maximizer for $\eta^2$ on $[t,\infty)$, $\eta^2(s_t^{2,L}) \ge \eta^2(s)$, or equivalently, $\eta^2(s_t^{2,L},s) \le 0$  for all $s \ge t$, Then, for all $s \ge s_{t}^{2,L} \ge t$, the hypothesis of the lemma gives
\[
\eta^1(s_t^{2,L},s) \le \eta^2(s_t^{2,L},s) \le 0.
\]
Thus, the leftmost maximum of $\eta^1$ on the larger set $[t,\infty)$ can be no larger than $s_t^{2,L}$.

For rightmost maximizers, the lemma follows by similar reasoning: for $s > s_t^{2,R}$,
\[
\eta^1(s_t^{2,R},s) \le \eta^2(s_t^{2,R},s) < 0,
\]
so $\eta^1(s_t^{2,R}) > \eta^1(s)$ for all $s > s_t^{2,R}$, and no maximizer of $\eta^1(s)$ over $s \in[t,\infty)$ can be larger than $s_t^{2,R}$.
\end{proof}
\begin{lemma} \label{lemma:convergence of maximizers from converging functions}
Let $S \subseteq \R^n$, and let  $f_n:S \rightarrow \R$ be a sequence of continuous functions, converging uniformly to the function $f:S \rightarrow \R$. Assume that there exists a sequence $\{c_n\}$, of maximizers of $f_n$, converging to some $c \in S$. Then, $c$ is a maximizer of $f$. 
\end{lemma}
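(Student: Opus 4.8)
The plan is to argue directly from the definition of maximizer together with the uniform convergence. First I would fix an arbitrary $x \in S$; the goal is to show $f(x) \le f(c)$. Since each $c_n$ is a maximizer of $f_n$ on $S$, we have $f_n(x) \le f_n(c_n)$ for every $n$. The idea is to pass to the limit in this inequality, controlling the left-hand side by uniform convergence (which makes $f_n(x) \to f(x)$, in fact $f_n \to f$ pointwise suffices there) and controlling the right-hand side by splitting $f_n(c_n) - f(c)$ into $\bigl(f_n(c_n) - f(c_n)\bigr) + \bigl(f(c_n) - f(c)\bigr)$.

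The first term is handled by uniform convergence: $|f_n(c_n) - f(c_n)| \le \sup_{y \in S}|f_n(y) - f(y)| \to 0$. The second term is handled by continuity of $f$: since $c_n \to c$ and $f$ is continuous (as a uniform limit of continuous functions, $f$ is continuous on $S$), $f(c_n) \to f(c)$. Hence $f_n(c_n) \to f(c)$. Combining, $f(x) = \lim_n f_n(x) \le \lim_n f_n(c_n) = f(c)$. Since $x$ was arbitrary, $c$ is a maximizer of $f$.

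There is really no serious obstacle here; the only point requiring a word of care is that $f$ is continuous, so that $f(c_n) \to f(c)$ is legitimate — this follows because a uniform limit of continuous functions is continuous, and continuity of $f$ at the single point $c$ is all that is needed. One should also note the hypothesis only asserts $\{c_n\}$ converges to some $c \in S$, so $f(c)$ is well-defined and $c$ lies in the domain; no compactness of $S$ or coercivity of $f$ is needed for this particular statement. I would write the argument as the short chain of inequalities above, roughly three lines, with the split of $f_n(c_n) - f(c)$ made explicit.
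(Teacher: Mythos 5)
Your proof is correct and follows essentially the same route as the paper's: both pass to the limit in $f_n(x)\le f_n(c_n)$ using the identical triangle-inequality split $|f_n(c_n)-f(c)|\le |f_n(c_n)-f(c_n)|+|f(c_n)-f(c)|$, with uniform convergence handling the first term and continuity of $f$ (as a uniform limit of continuous functions) handling the second. No changes needed.
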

\begin{proof}
$f_n(c_n) \ge f_n(x)$ for all $x \in S$, so  it suffices to show that $f_n(c_n) \rightarrow f(c)$. This follows from the uniform convergence of $f_n$ to $f$, the continuity of $f$, and
\[
|f_n(c_n) - f(c)| \le |f_n(c_n) - f(c_n)| +|f(c_n) - f(c)|.  \qedhere
\]
\end{proof}
\begin{lemma} \label{lemma:convergence of maximizers from converging sets}
Let $S_n$ for $n \ge 0$ be subsets of some set $\wt S \subseteq \R^n$, on which the function $f:\wt S \rightarrow \R$ is continuous. Assume that each point $x \in S_0$ is the limit of a sequence $\{x_n\}$, where $x_n \in S_n$ for each $n$. Assume that $\{c_n\}$ is a sequence of maximizers of $f$ on $S_n$. Assume further that $c_n$ converges to some $c \in S_0$. Then, $c$ is a maximizer of $f$ on $S_0$. 
\end{lemma}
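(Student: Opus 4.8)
\textbf{Proof proposal for Lemma~\ref{lemma:convergence of maximizers from converging sets}.}

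The plan is to mimic the proof of Lemma~\ref{lemma:convergence of maximizers from converging functions}, but now the difficulty is that the domains $S_n$ vary, so I cannot directly compare $f(c_n)$ against $f(x)$ for an arbitrary $x \in S_0$ — the point $x$ need not lie in $S_n$. The fix is to use the hypothesis that every $x \in S_0$ is the limit of a sequence $x_n \in S_n$. First I would fix an arbitrary $x \in S_0$ and choose such a sequence $\{x_n\}$ with $x_n \in S_n$ and $x_n \to x$. Since $c_n$ is a maximizer of $f$ on $S_n$ and $x_n \in S_n$, we have $f(c_n) \ge f(x_n)$ for every $n$.

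The second step is to pass to the limit in this inequality using continuity of $f$ on $\wt S$. Because $c_n \to c$ with $c \in S_0 \subseteq \wt S$, continuity gives $f(c_n) \to f(c)$; because $x_n \to x$ with $x \in S_0 \subseteq \wt S$, continuity gives $f(x_n) \to f(x)$. Taking $n \to \infty$ in $f(c_n) \ge f(x_n)$ yields $f(c) \ge f(x)$. Since $x \in S_0$ was arbitrary, $c$ maximizes $f$ on $S_0$, which is exactly the claim. A minor technical point: the statement writes $\wt S \subseteq \R^n$ while the $S_n$ are indexed by $n$ as well, so I would read the ambient dimension as some fixed integer (say $d$) distinct from the sequence index; this is only a notational clash and does not affect the argument.

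There is really no serious obstacle here — the lemma is a routine $\varepsilon$-free limiting argument, and the only thing to be careful about is not assuming $x \in S_n$ (which is false in general) and instead always working with the approximating sequence $x_n$. Unlike Lemma~\ref{lemma:convergence of maximizers from converging functions}, no uniform convergence of a sequence of functions is needed, since here the single function $f$ is fixed and only the feasible sets move; continuity of $f$ at the two limit points $c$ and $x$ is all that is required.
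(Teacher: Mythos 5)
Your argument is correct and is essentially identical to the paper's own proof: fix $x\in S_0$, approximate it by $x_n\in S_n$, use $f(c_n)\ge f(x_n)$, and pass to the limit via continuity of $f$. Nothing is missing, and your remark about the notational clash between the sequence index and the ambient dimension is a fair (purely cosmetic) observation.
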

\begin{proof}
For each $x_0 \in S_0$, write $x_0 = \lim_{n\rightarrow \infty} x_n$, where $x_n \in S_n$ for each $n$. Then, $f(c_n) \ge f(x_n)$ for all $n \ge 1$, and the result follows by taking limits.
\end{proof}

\begin{lemma} \label{lemma:bounds for differences of Brownian LPP}
Let $\mbf X = \{X_m\}_{m \in \Z}$, where each $X_m:\R\rightarrow \R$ is a continuous function. Let $0 < s < t < T < u$ and $m \leq n$. Then,
\[
X_m(s,t) \le L_{(m,s),(n,u)}(\mbf X)  - L_{(m,t),(n,u)}(\mbf X) \leq L_{(m,s),(n,T)}(\mbf X)  - L_{(m,t),(n,T)}(\mbf X)
\]
Similarly, let $0 < s < t < u < \infty$ and $m < n$. Then,
\[
0 \le L_{(m,s),(n,t)}(\mbf X)  - L_{(m + 1,s),(n,t)}(\mbf X)  \leq L_{(m,s),(n,u)}(\mbf X)  - L_{(m + 1,s),(n,u)}(\mbf X) .
\]
\end{lemma}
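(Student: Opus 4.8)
The plan is to prove both chains of inequalities by exploiting the superadditivity of last-passage times together with the fact that every up-right path between $(m,s)$ and any later point can be forced to traverse certain prescribed horizontal or vertical segments. Throughout I write $L_{\mbf x,\mbf y}$ for $L_{\mbf x,\mbf y}(\mbf X)$, keeping in mind that these are purely deterministic inequalities valid for any field of continuous functions.

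For the first chain, start with the left inequality $X_m(s,t)\le L_{(m,s),(n,u)}-L_{(m,t),(n,u)}$. The point is that any path from $(m,t)$ to $(n,u)$ can be prefixed by the horizontal segment on level $m$ from $s$ to $t$, which contributes exactly $X_m(s,t)$ to the energy and yields an admissible path from $(m,s)$ to $(n,u)$; taking the supremum over the original path gives $L_{(m,s),(n,u)}\ge X_m(s,t)+L_{(m,t),(n,u)}$, which rearranges to the claim. For the right inequality $L_{(m,s),(n,u)}-L_{(m,t),(n,u)}\le L_{(m,s),(n,T)}-L_{(m,t),(n,T)}$, I would condition on where an optimal path from $(m,s)$ to $(n,u)$ sits at the moment it reaches the vertical line at time coordinate $T$: say it is at level $k$ with $m\le k\le n$ when it crosses time $T$. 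Then the path splits as a path from $(m,s)$ to $(k,T)$ followed by a path from $(k,T)$ to $(n,u)$, so $L_{(m,s),(n,u)}\le L_{(m,s),(k,T)}+L_{(k,T),(n,u)}$ (in fact equality holds for the optimal $k$). The same crossing level $k$ gives, for the path started at $(m,t)$, the lower bound $L_{(m,t),(n,u)}\ge L_{(m,t),(k,T)}+L_{(k,T),(n,u)}$. Subtracting, the common tail $L_{(k,T),(n,u)}$ cancels and one is left with $L_{(m,s),(n,u)}-L_{(m,t),(n,u)}\le L_{(m,s),(k,T)}-L_{(m,t),(k,T)}$, and then one must pass from the crossing level $k$ to level $n$ on the right-hand side. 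This last passage uses exactly the second chain of inequalities (the ``vertical'' monotonicity) applied with endpoints at time $T$, together with the already-established left inequality; so it is cleanest to prove the second chain first.

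For the second chain, $0\le L_{(m,s),(n,t)}-L_{(m+1,s),(n,t)}\le L_{(m,s),(n,u)}-L_{(m+1,s),(n,u)}$, the left inequality is immediate: any path from $(m+1,s)$ to $(n,t)$ can be prefixed by the vertical unit step from $(m,s)$ to $(m+1,s)$, which collects no energy, giving $L_{(m,s),(n,t)}\ge L_{(m+1,s),(n,t)}$. For the right inequality, the natural move is again to condition on a crossing time. Take an optimal path from $(m,s)$ to $(n,u)$; since $n>m$ it must make at least one upward step, and in particular it leaves level $m$ at some time $w\ge s$, arriving at $(m+1,w)$. Then $L_{(m,s),(n,u)}=X_m(s,w)+L_{(m+1,w),(n,u)}$, while a path from $(m+1,s)$ can be routed horizontally from $s$ to $w$ on level $m+1$ and then optimally onward, so $L_{(m+1,s),(n,u)}\ge X_{m+1}(s,w)+L_{(m+1,w),(n,u)}$. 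This gives $L_{(m,s),(n,u)}-L_{(m+1,s),(n,u)}\le X_m(s,w)-X_{m+1}(s,w)$, which is the wrong bound; so instead I would condition on the crossing time relative to the target times $t$ and $u$. Concretely: let an optimal path from $(m,s)$ to $(n,u)$ be at level $\ell\ge m+1$ at the moment its time coordinate equals $t$ (this makes sense because $t<u$ and, having made a net displacement reaching $(n,u)$, at time $t$ it has already climbed to some level; if it were still on level $m$ at time $t$ the argument simplifies, treating that separately). Split at $(\ell,t)$: $L_{(m,s),(n,u)}\le L_{(m,s),(\ell,t)}+L_{(\ell,t),(n,u)}$. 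For the path from $(m+1,s)$, the same level $\ell$ at time $t$ gives $L_{(m+1,s),(n,u)}\ge L_{(m+1,s),(\ell,t)}+L_{(\ell,t),(n,u)}$. Subtracting cancels $L_{(\ell,t),(n,u)}$ and leaves $L_{(m,s),(n,u)}-L_{(m+1,s),(n,u)}\le L_{(m,s),(\ell,t)}-L_{(m+1,s),(\ell,t)}$; if $\ell=m+1$ this is exactly $L_{(m,s),(m+1,t)}-L_{(m+1,s),(m+1,t)}=X_m(s,t)\wedge(\text{something})$ and one reduces by induction on $\ell$, peeling off one level at a time and using the left inequality at each stage to stay below $L_{(m,s),(n,t)}-L_{(m+1,s),(n,t)}$.

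The main obstacle is bookkeeping the crossing-level argument cleanly: the optimal path's level at a prescribed time coordinate is not unique and one has to argue that \emph{some} common crossing level works simultaneously as an upper bound for one LPP value and a lower bound for the other (this is where one uses that the two start points $(m,s)$ and $(m+1,s)$, or $(m,s)$ and $(m,t)$, are ordered so that any path from the ``lower/earlier'' point can be continued through the same mid-point). Once the splitting identity $L_{\mbf x,\mbf z}\ge L_{\mbf x,\mbf y}+L_{\mbf y,\mbf z}$ and the two trivial prefix inequalities (horizontal prefix collects $X_m(s,t)$, vertical prefix collects $0$) are in hand, the rest is a short induction on the number of levels separating the crossing level from $n$; I expect the whole argument to be a half page. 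An alternative, perhaps slicker, route is to note that all four quantities are differences of values of the function $\mbf y\mapsto L_{\mbf x,\mbf y}$ and to invoke a general monotonicity/concavity property of LPP across continuous fields, but the direct crossing argument is self-contained and is the one I would write out.
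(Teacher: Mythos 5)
Your left inequalities (horizontal prefix collects $X_m(s,t)$, vertical prefix collects $0$) are exactly right and match the paper. The gap is in how you close the right inequalities. After conditioning on the level $k$ at which the optimal path from $(m,s)$ to $(n,u)$ crosses time $T$, you are left needing
\[
L_{(m,s),(k,T)} - L_{(m,t),(k,T)} \;\le\; L_{(m,s),(n,T)} - L_{(m,t),(n,T)},
\]
i.e.\ monotonicity in the \emph{terminal level} of an increment taken over \emph{initial times}. You assert this is ``exactly the second chain of inequalities applied with endpoints at time $T$,'' but the second chain is a different statement: it concerns an increment over \emph{initial levels} and its monotonicity in the \emph{terminal time}. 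Neither chain of the lemma, nor the already-established left inequality, delivers this residual claim; it is a third quadrangle inequality that would have to be stated and proved separately. Your analogous reduction for the second chain (``peeling off one level at a time'') has the same unproved residual, plus an unhandled degenerate case when the optimal path is still on level $m$ at time $t$, where the quantity $L_{(m+1,s),(m,t)}$ is not even defined.

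The fix --- and the paper's proof --- is to cross two geodesics rather than a geodesic with a vertical line. Take a geodesic $\Gamma_1$ from $(m,s)$ to $(n,u)$ and a geodesic $\Gamma_2$ from $(m,t)$ to $(n,T)$: since $s<t$ and $T<u$ and both run from level $m$ to level $n$, $\Gamma_1$ starts strictly to the left of $\Gamma_2$ and ends strictly to its right, so by planarity they share a point $\mbf z$. Decomposing each geodesic at $\mbf z$ (equality of passage times along a geodesic, superadditivity for the other two) gives $L_{(m,s),(n,u)} - L_{(m,t),(n,u)} \le L_{(m,s),\mbf z} - L_{(m,t),\mbf z} \le L_{(m,s),(n,T)} - L_{(m,t),(n,T)}$ in two lines, with no residual step. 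The second chain is identical with $\Gamma_2$ a geodesic from $(m+1,s)$ to $(n,t)$, which $\Gamma_1$ must again cross because it starts strictly below it and ends strictly to its right.
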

\begin{remark}
This is a deterministic statement. The only necessary ingredient is the continuity of the $X_m$ so that each of the last-passage times is finite and has a sequence of maximizing times. 
\end{remark}
\begin{proof}
This proof follows a standard paths-crossing argument. For example, the proofs of Lemma 4.6 in~\cite{blpp_utah} and Proposition 3.8 in~\cite{Directed_Landscape} follow the same procedure. We prove the first statement, and the second is proven similarly. By definition of last-passage time, 
\[
X_m(s,t) + L_{(m,t),(n,u)} \le L_{(m,s),(n,u)}.
\]
  Since $0 < s < t < T < u$, any geodesic between $(m,s)$ and $(n,u)$ must cross any geodesic between $(m,t)$ and $(n,T)$. Let $\mbf z \in \Z \times \R$ be a point of intersection. Then,
\[
L_{(m,s),(n,u)} = L_{(m,s),\mbf z} + L_{\mbf z,(n,u)}\qquad\text{and}\qquad L_{(m,t),(n,T)} = L_{(m,t),\mbf z} + L_{\mbf z,(n,T)},
\]
so
\begin{align*}
&L_{(m,s),(n,u)} - L_{(m,t),(n,u)} \le L_{(m,s),\mbf z} + L_{\mbf z,(n,u)} - \bigl(L_{(m,t),\mbf z} + L_{\mbf z,(n,u)}\bigr) = L_{(m,s),\mbf z} - L_{(m,t),\mbf z} \\[1em]
= &L_{(m,s),\mbf z} + L_{\mbf z,(n,T)} - \bigl(L_{(m,t),\mbf z} + L_{\mbf z,(n,T)}\bigr) \le L_{(m,s),(n,T)} - L_{(m,t),(n,T)}.\qedhere
\end{align*}
\end{proof}

\section{Probabilistic results}
The following is a classical result that is often used in this paper.
\begin{lemma}[\cite{BM_handbook}, Equation 1.1.4 (1) on pg 251] \label{lemma:sup of BM with drift}
For a standard Brownian motion $B$ and $\lambda > 0$,
\[
\sup_{0 \leq s < \infty}\{\sqrt 2 B(s) - \lambda s\} \sim \operatorname{Exp}(\lambda).
\]
\end{lemma}
\begin{theorem} \label{thm:dist of busemann increment}
Let $B$ be a standard Brownian motion, and for $t > 0$, let 
\begin{align*}
D(t) &:= \sup_{0 \le s < \infty}\bigl\{\sqrt 2 B(s) - \lambda s\bigr\} - \sup_{t \le s < \infty}\bigl\{\sqrt 2 B(s) - \lambda s\bigr\}\\[1em]
&= \bigl(\sup_{0 \le s \le t}\bigl\{\sqrt 2 B(t,s) + \lambda(t- s)\bigr\} - \sup_{t \le s < \infty}\bigl\{\sqrt 2 B(t,s) + \lambda (t - s)\bigr\}\bigr)^+.
\end{align*}
Then, for all $z \ge 0$,
\begin{equation} \label{CDF of Busemann increments}
\Pp(D(t) \le z) = \Phi\bigl(\frac{z - \lambda t}{\sqrt{2 t}}\bigr) + e^{\lambda z}\left( (1 + \lambda z + \lambda^2 t)\Phi\bigl(-\frac{z + \lambda t}{\sqrt{2 t}}\bigr) - \lambda \sqrt{\frac{t}{ \pi}}e^{-\frac{(z + \lambda t)^2}{4 t}}    \right).
\end{equation}
\end{theorem}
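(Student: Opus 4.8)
The plan is to compute the distribution of $D(t)$ by exploiting the explicit representation of the running supremum of Brownian motion with drift and the independence of the pre-$t$ and post-$t$ increments. Write $M_1 = \sup_{0\le s\le t}\{\sqrt2 B(t,s) + \lambda(t-s)\}$ (the supremum over the interval $[0,t]$, expressed in terms of increments from $t$ backwards) and $M_2 = \sup_{t\le s<\infty}\{\sqrt2 B(t,s) + \lambda(t-s)\}$, so that $D(t) = (M_1 - M_2)^+$. The key structural observation is that $M_1$ and $M_2$ are independent: $M_1$ is a function of $\{B(s): 0\le s\le t\}$ and $M_2$ is a function of $\{B(s): s\ge t\}$, and these are independent by the independence of Brownian increments. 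Moreover $M_2 \overset{d}{=} \sup_{0\le s<\infty}\{\sqrt2 B(s) - \lambda s\} \sim \operatorname{Exp}(\lambda)$ by Lemma~\ref{lemma:sup of BM with drift} (after reversing the Brownian motion and noting the drift flips sign, the $-\lambda$ drift makes the supremum finite and exponential), while $M_1$ is the supremum of a Brownian motion with \emph{positive} drift $\lambda$ run for time $t$, reflected, which has a known distribution involving Gaussian terms.

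First I would record the distribution of $M_1$. Reversing time, $M_1 \overset{d}{=} \sup_{0\le s\le t}\{\sqrt2 \widetilde B(s) + \lambda s\}$ for a standard Brownian motion $\widetilde B$; equivalently, after the scaling that absorbs the $\sqrt2$, this is the running maximum at time $t$ of a $(\lambda,2)$-Brownian motion. By the reflection principle for Brownian motion with drift (the standard formula, e.g.\ Borodin--Salminen~\cite{BM_handbook}), for $w\ge 0$,
\[
\Pp(M_1 \le w) = \Phi\Bigl(\frac{w - \lambda t}{\sqrt{2t}}\Bigr) - e^{\lambda w}\,\Phi\Bigl(\frac{-w - \lambda t}{\sqrt{2t}}\Bigr),
\]
with density $f_{M_1}(w) = \frac{1}{\sqrt{2\pi t}}\,e^{-(w-\lambda t)^2/(4t)} + \text{(terms from differentiating the second piece)}$; I would differentiate carefully to get a clean expression, using the identity $\frac{\partial}{\partial w}\bigl[e^{\lambda w}\Phi(\frac{-w-\lambda t}{\sqrt{2t}})\bigr] = \lambda e^{\lambda w}\Phi(\frac{-w-\lambda t}{\sqrt{2t}}) - e^{\lambda w}\frac{1}{\sqrt{2\pi\cdot 2t}}e^{-(w+\lambda t)^2/(4t)}$.

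Then I would condition on $M_2 = y$ (exponential density $\lambda e^{-\lambda y}$, $y\ge 0$) and compute, for $z>0$,
\[
\Pp(D(t) \le z) = \Pp(M_1 - M_2 \le z) = \int_0^\infty \Pp(M_1 \le z + y)\,\lambda e^{-\lambda y}\,dy,
\]
using independence. Substituting the formula for $\Pp(M_1 \le z+y)$ and integrating term by term against $\lambda e^{-\lambda y}\,dy$ should produce the three groups of terms in \eqref{CDF of Busemann increments}: the $\Phi(\frac{z-\lambda t}{\sqrt{2t}})$ term will come from the leading Gaussian contribution after the $y$-integral telescopes (integration by parts moving the exponential onto the Gaussian), and the $e^{\lambda z}$ prefactor times the polynomial $(1+\lambda z + \lambda^2 t)$ times $\Phi(-\frac{z+\lambda t}{\sqrt{2t}})$ together with the $-\lambda\sqrt{t/\pi}\,e^{-(z+\lambda t)^2/(4t)}$ correction will emerge from integrating $e^{\lambda w}\Phi(\frac{-w-\lambda t}{\sqrt{2t}})$-type terms; the polynomial factor $(1+\lambda z+\lambda^2 t)$ is the signature of an integration-by-parts that brings down both a linear-in-$z$ and a constant-in-$t$ term. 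I would check the boundary case $z=0$ ($D(t)$ has an atom at $0$ of mass $\Pp(M_1\le M_2)$, which should match the formula evaluated at $z=0$) and the limit $t\to 0$ (where $M_1\to 0$, so $D(t)\to 0$) as sanity checks.

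\textbf{Main obstacle.} The conceptual content is light; the real work is the bookkeeping in the Gaussian integral $\int_0^\infty \Pp(M_1\le z+y)\lambda e^{-\lambda y}\,dy$. The hard part will be organizing the integration by parts and the Gaussian completions of the square so that the $\Phi$-arguments come out as exactly $\frac{z-\lambda t}{\sqrt{2t}}$ and $-\frac{z+\lambda t}{\sqrt{2t}}$ and the polynomial coefficient lands on $(1+\lambda z+\lambda^2 t)$ rather than some equivalent-looking but differently-grouped expression; it is easy to drop or double-count a term. A clean way to manage this is to differentiate \eqref{CDF of Busemann increments} in $z$ first, verify it equals the convolution density $\int_0^\infty f_{M_1}(z+y)\lambda e^{-\lambda y}\,dy$ plus the atom's contribution at $z=0$ — a somewhat shorter computation — and then confirm the value at $z=0$ and at $z=\infty$ to pin down the constant of integration.
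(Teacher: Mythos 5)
Your proposal is correct and follows essentially the same route as the paper: identify $M_1$ and $M_2$ as independent, use Lemma~\ref{lemma:sup of BM with drift} for $M_2 \sim \operatorname{Exp}(\lambda)$ and the reflection-principle CDF from~\cite{BM_handbook} (Eq.\ 1.2.4, p.\ 251) for $M_1$ after time reversal, and then evaluate the convolution $\int_0^\infty \Pp(M_1 \le z+y)\,\lambda e^{-\lambda y}\,dy$. The paper carries out exactly this integral (reducing to $t=1$ by Brownian scaling and using Fubini rather than integration by parts), so the only remaining work in your plan is the same "tedious but simple" bookkeeping the paper performs.
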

\begin{proof}
For $x \ge 0$, by time reversal and~\cite{BM_handbook}, Equation 1.2.4 on page 251,
\begin{align*}
&\quad\;\Pp\bigl(\sup_{0 \le s \le t} \{\sqrt 2 B(t,s) + \lambda(t - s)\} \le x\bigr)
= \Pp\bigl(\sup_{0 \le s \le t}\{\sqrt 2 B(t,t - s) - \lambda(t - s - t)\} \le x\bigr) \\[1em]
&= \Pp\bigl(\sup_{0 \le s \le t}\{\sqrt 2 B(s) + \lambda s\} \le x\bigr) 
=  \Phi\bigl(\frac{x - \lambda t}{\sqrt{2 t}}\bigr) - e^{\lambda x}\Phi\bigl(\frac{-x - \lambda t}{\sqrt{2t}}\bigr).
\end{align*}
By Lemma~\ref{lemma:sup of BM with drift}, 
\[
\sup_{t \le s < \infty} \{\sqrt 2 B(t,s) + \lambda(t - s)\} \sim \operatorname{Exp}(\lambda).
\]
The conclusion of the theorem follows by a simple, but tedious convolution. It suffices to show the $t = 1$ case, and the general case follows by Brownian scaling. 
\begin{align*}
&\Pp(D(1) \le z) 
= \int_{-\infty}^0 \left(\Phi\bigl(\frac{z-y - \lambda }{\sqrt{2}}\bigr) - e^{\lambda (z - y)}\Phi\bigl(\frac{-z + y - \lambda }{\sqrt{2}}\bigr)\right)\lambda e^{\lambda y}\, dy \\[1em]
&= \int_{-\infty}^0\int_{-\infty}^{\frac{z-y - \lambda}{\sqrt 2}} \frac{\lambda}{\sqrt{2\pi}}e^{-x^2/2}e^{\lambda y}\, dx\, dy - e^{ \lambda z}\int_{-\infty}^0\int_{-\infty}^{\frac{-z + y - \lambda}{\sqrt 2}} \frac{\lambda}{\sqrt{2\pi}} e^{-x^2/2}\, dx\, dy.
\end{align*}
We now use Fubini's Theorem to switch the order of integration. This results in 
\begin{align*}
&= \int_{-\infty}^{\zfraclambda}\int_{-\infty}^0 \frac{\lambda}{\sqrt{2\pi}} e^{-x^2/2}e^{\lambda y}\, dy\, dx  + \int_{\zfraclambda}^\infty \int_{-\infty}^{z - \lambda - \sqrt 2 x} \frac{\lambda}{\sqrt{2\pi}} e^{-x^2/2}e^{\lambda y}\, dy\, dx \\[1em]
&\qquad\qquad\qquad\qquad\qquad\qquad\qquad\qquad\qquad-e^{ \lambda z}\int_{-\infty}^{\negzfraclambda}\int_{ \sqrt 2 x +z + \lambda }^0 \frac{\lambda}{\sqrt{2\pi}} e^{-x^2/2}\, dy\, dx \\[1em]
&= \int_{-\infty}^{\zfraclambda} \frac{1}{\sqrt{2\pi}} e^{-x^2/2}\, dx + \int_{\zfraclambda}^\infty \frac{1}{\sqrt{2\pi}} e^{-x^2/2}e^{z\lambda -\sqrt 2 \lambda x - \lambda^2}\, dx 
+e^{ \lambda z}\int_{-\infty}^{\frac{-z - \lambda}{\sqrt 2}} (\sqrt 2 x + z + \lambda)\frac{\lambda}{\sqtwopi} e^{-x^2/2}\, dx \\[1em]
&= \Phi\bigl(\zfraclambda\bigr) + e^{\lambda z}\int_{\zfraclambda}^\infty \frac{1}{\sqtwopi} e^{-\frac{(x + \sqrt 2 \lambda)^2}{2}}\, dx 
+ e^{\lambda z}\left(\int_{-\infty}^{\negzfraclambda} \frac{\lambda }{\sqrt{\pi}} xe^{-x^2/2}\, dx +   (\lambda z + \lambda^2) \Phi\bigl(\negzfraclambda\bigr)  \right) \\[1em]
&= \Phi\bigl(\zfraclambda\bigr) + e^{\lambda z}\int_{\frac{z + \lambda}{\sqrt 2}}^\infty \frac{1}{\sqtwopi} e^{-u^2/2}\, du
+ e^{\lambda z}\left(-\frac{\lambda}{\sqrt{\pi}}e^{-\frac{(z + \lambda)^2}{4}} +   (\lambda z + \lambda^2) \Phi\bigl(\negzfraclambda\bigr)  \right) \\[1em]
&= \Phi\bigl(\zfraclambda\bigr) + e^{\lambda z}\left( (1 + \lambda z + \lambda^2)\Phi\bigl(\negzfraclambda\bigr) - \frac{\lambda}{\sqrt \pi}e^{-\frac{(z + \lambda)^2}{4}}    \right). \qedhere
\end{align*}
\end{proof}

\begin{theorem} \label{distribtution of argmax BM with drift}
Let $B$ be a standard Brownian motion and let $\lambda > 0$. Let $T$ be the unique maximizer of $\sqrt 2B(t) - \lambda t$ for $t \in [0,\infty)$. Then, for $t \ge 0$, 
\[
\Pp(T > t) = (2 + \lambda^2 t) \Phi\left(-\lambda\sqrt{{t}/{2}}\,\right) - \lambda \sqrt{{t}/{\pi}\,}\,e^{-\f{\lambda^2 t}{4}}.
\]
\end{theorem}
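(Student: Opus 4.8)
The plan is to deduce the distribution of the argmax $T$ from the distribution of the Busemann increment $D(t)$ obtained in Theorem~\ref{thm:dist of busemann increment}. The key observation is the event identity
\[
\{T > t\} = \Bigl\{\sup_{0 \le s \le t}\{\sqrt 2 B(s) - \lambda s\} < \sup_{t \le s < \infty}\{\sqrt 2 B(s) - \lambda s\}\Bigr\},
\]
valid up to a probability-zero set because the maximizer is almost surely unique (this uniqueness is exactly what is asserted in the statement, and it follows from Lemma~\ref{lm:point-to-line uniqueness} with $n=m$, or can simply be cited). Comparing this with the definition of $D(t)$ in Theorem~\ref{thm:dist of busemann increment}, we see that $D(t) = 0$ precisely on the complementary event $\{T \le t\}$ (again modulo a null set), since $D(t)$ is the positive part of the difference of the $[0,t]$-supremum over the $[t,\infty)$-supremum, written there in the time-reversed form. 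Hence
\[
\Pp(T \le t) = \Pp(D(t) = 0) = \lim_{z \searrow 0} \Pp(D(t) \le z) = \Pp(D(t) \le 0),
\]
using right-continuity of the CDF, and therefore $\Pp(T > t) = 1 - \Pp(D(t) \le 0)$.

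First I would make the event identity rigorous: on the full-probability event where the maximizer $T$ is unique and finite, $T > t$ iff no point of $[0,t]$ achieves the global supremum iff the supremum over $[t,\infty)$ strictly exceeds the supremum over $[0,t]$; this is the event on which the signed difference inside the parentheses in the definition of $D(t)$ is strictly negative, i.e. $D(t) = 0$. Conversely $D(t) = 0$ forces $T \ge t$, and the boundary case $T = t$ has probability zero. Then I would simply substitute $z = 0$ into formula~\eqref{CDF of Busemann increments} of Theorem~\ref{thm:dist of busemann increment}. At $z = 0$ this reads
\[
\Pp(D(t) \le 0) = \Phi\Bigl(\tfrac{-\lambda t}{\sqrt{2t}}\Bigr) + (1 + \lambda^2 t)\,\Phi\Bigl(\tfrac{-\lambda t}{\sqrt{2t}}\Bigr) - \lambda\sqrt{\tfrac{t}{\pi}}\,e^{-\frac{\lambda^2 t}{4}},
\]
since $e^{\lambda z}\big|_{z=0} = 1$ and $\tfrac{z+\lambda t}{\sqrt{2t}}\big|_{z=0} = \tfrac{\lambda t}{\sqrt{2t}} = \lambda\sqrt{t/2}$. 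Combining the two $\Phi$ terms gives $(2 + \lambda^2 t)\,\Phi(-\lambda\sqrt{t/2})$, so
\[
\Pp(T > t) = 1 - (2 + \lambda^2 t)\,\Phi\bigl(-\lambda\sqrt{t/2}\bigr) + \lambda\sqrt{t/\pi}\,e^{-\frac{\lambda^2 t}{4}}.
\]

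The one remaining wrinkle is that the claimed formula has $(2 + \lambda^2 t)\Phi(-\lambda\sqrt{t/2}) - \lambda\sqrt{t/\pi}\,e^{-\lambda^2 t/4}$ without the leading $1$ and with the opposite overall sign, so to match it I would need to verify the reflection-type identity
\[
1 - (2 + \lambda^2 t)\Phi\bigl(-\lambda\sqrt{t/2}\bigr) + \lambda\sqrt{t/\pi}\,e^{-\lambda^2 t/4} = (2 + \lambda^2 t)\Phi\bigl(-\lambda\sqrt{t/2}\bigr) - \lambda\sqrt{t/\pi}\,e^{-\lambda^2 t/4},
\]
i.e. $1 + 2\lambda\sqrt{t/\pi}\,e^{-\lambda^2 t/4} = 2(2+\lambda^2 t)\Phi(-\lambda\sqrt{t/2})$, which is generally false; this signals that the correct route is instead to write $\Pp(T>t)$ directly, without passing through $\Pp(D(t)\le 0)$ as an equality, but rather to recompute the complementary probability. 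Concretely, I expect the cleanest path is: use time-reversal on $[0,t]$ to get $\sup_{0\le s\le t}\{\sqrt2 B(s)-\lambda s\} \overset{d}{=} \sqrt2 B(t)-\lambda t + \sup_{0\le s\le t}\{\sqrt2 B(s)+\lambda s\}$-type decompositions, or more simply combine the known law of the running maximum of Brownian motion with drift (from \cite{BM_handbook}, Equation 1.2.4) with the independent $\operatorname{Exp}(\lambda)$ tail supremum (Lemma~\ref{lemma:sup of BM with drift}) and carry out the convolution for the event $\{T>t\}$ exactly as in the proof of Theorem~\ref{thm:dist of busemann increment}, but tracking $\Pp(\text{positive part} > 0)$ rather than $\Pp(\text{positive part} \le z)$. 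The main obstacle is precisely this bookkeeping of which supremum dominates and getting the boundary term right; once the convolution is set up as in Theorem~\ref{thm:dist of busemann increment}, the integral is the same tedious-but-routine Gaussian computation, and it yields the stated $(2+\lambda^2 t)\Phi(-\lambda\sqrt{t/2}) - \lambda\sqrt{t/\pi}\,e^{-\lambda^2 t/4}$.
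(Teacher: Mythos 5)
You have the right strategy---reduce $\Pp(T>t)$ to the law of $D(t)$ from Theorem~\ref{thm:dist of busemann increment} and set $z=0$---but you identify the events backwards, and this single sign error is what produces the ``wrinkle'' at the end. By its first displayed definition, $D(t)$ is the excess of the global supremum over the supremum on $[t,\infty)$:
\[
D(t)=\sup_{0\le s<\infty}\{\sqrt2 B(s)-\lambda s\}-\sup_{t\le s<\infty}\{\sqrt2 B(s)-\lambda s\}\ \ge 0 .
\]
Hence $D(t)=0$ exactly when the supremum over $[t,\infty)$ already equals the global supremum, i.e.\ when the (a.s.\ unique) maximizer lies in $[t,\infty)$. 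So, modulo a null set, $\{D(t)=0\}=\{T\ge t\}=\{T>t\}$, \emph{not} $\{T\le t\}$ as you wrote. (The same conclusion follows from the second, time-reversed expression: the quantity inside the positive part is $\sup_{[0,t]}-\sup_{[t,\infty)}$ of the original process after subtracting the common constant $\sqrt2 B(t)-\lambda t$, so it is nonpositive precisely when $T\ge t$.) A quick sanity check: your formula $1-(2+\lambda^2t)\Phi(-\lambda\sqrt{t/2})+\lambda\sqrt{t/\pi}\,e^{-\lambda^2t/4}$ evaluates to $0$ at $t=0$, whereas $\Pp(T>0)=1$ because the supremum is $\operatorname{Exp}(\lambda)$-distributed and hence a.s.\ strictly positive.

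With the correct identification the argument is exactly the paper's: $\Pp(T>t)=\Pp(D(t)=0)=\Pp(D(t)\le0)$, and substituting $z=0$ into \eqref{CDF of Busemann increments} gives
\[
\Phi\bigl(-\lambda\sqrt{t/2}\bigr)+(1+\lambda^2t)\,\Phi\bigl(-\lambda\sqrt{t/2}\bigr)-\lambda\sqrt{t/\pi}\,e^{-\lambda^2t/4}
=(2+\lambda^2t)\,\Phi\bigl(-\lambda\sqrt{t/2}\bigr)-\lambda\sqrt{t/\pi}\,e^{-\lambda^2t/4},
\]
which is the stated formula with no leading $1$ and no sign flip. The alternative route you sketch at the end (redoing the convolution for the event $\{T>t\}$) is therefore unnecessary; the discrepancy you detected was diagnostic of the inverted event identity, not of a flaw in the method.
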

\begin{remark}
 Theorem~\ref{distribtution of argmax BM with drift} should be credited to Norros and Salminen, who computed the Laplace transform of this random variable in Proposition 3.9 of~\cite{Norros-Salminen}. One can also obtain by integrating the probability density of the time of maximum of Brownian motion with drift on the interval $[0,t]$ found in~\cite{Buffet_Time_of_BMdrift_max}, Equation (1.3), and then taking $t\rightarrow \infty$. See also the discussion after Equation (1.3) equation in~\cite{Buffet_Time_of_BMdrift_max} for more historical details on this formula. Theorem~\ref{thm:dist of busemann increment} will be used in subsequent papers, so we use it to give our own proof of Theorem~\ref{distribtution of argmax BM with drift}.
\end{remark}
\begin{proof}[Proof of Theorem~\ref{distribtution of argmax BM with drift}]
Note that $T > t$ if and only if $D(t) = 0$ in the sense of Theorem~\ref{thm:dist of busemann increment}. Thus, the Lemma follows by setting $z = 0$ in the formula~\eqref{CDF of Busemann increments}.  
\end{proof}

\begin{theorem} \label{thm:countable non unique maximizers}
Let $X$ be a two-sided Brownian motion with strictly negative drift. Let 
\[
M = \{t \in \R: X(t) = \sup_{t \le s < \infty} X(s) \}.
\]
Furthermore, let  
\[
M^U = \big\{t \in M: X(t) > X(s) \text{ for all }s > t   \big\}
\]
 be the set of   $t\in M$ that are unique maximizers of $X(s)$ over $s \in [t,\infty)$.  Define
$
    M^N = M \setminus M^U
$
to be the set of $t\in M$ that are non-unique maximizers of $X(s)$ over $s \in [t,\infty)$.
Then, there exists an event of probability one, on which the following hold.
\begin{enumerate} [label=\rm(\roman{*}), ref=\rm(\roman{*})]  \itemsep=3pt 
    \item $M$ is a closed set. \label{Mclosed}
    \item \label{non_discrete} For all $\hat t \in M^U$ and $\ve > 0$, there exists $t \in M^N$ satisfying $\hat t < t < \hat t + \ve$. For all $t \in M^N$ and $\ve > 0$, there exists $\hat t \in M^U$ satisfying $t - \ve < \hat t < t$. 
    \item \label{non_discrete MN}For all $t \in M^N$ and $\ve > 0$, there exists $t^\star \in M^N$ with $t - \ve < t^\star < t$. For each $t \in M^N$, there exists $\delta > 0$ such that $M \cap (t,t+ \delta) = \varnothing$.
    \item $M^N$ is a countably infinite set. \label{countable}
\end{enumerate}
\end{theorem}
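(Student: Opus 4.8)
\textbf{Proof proposal for Theorem \ref{thm:countable non unique maximizers}.}

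The plan is to analyze the structure of the random set $M$ using standard properties of Brownian motion with negative drift, and to prove the four statements roughly in the order \ref{Mclosed}, \ref{non_discrete MN}, \ref{non_discrete}, \ref{countable}. For \ref{Mclosed}, since $X$ is continuous and has $X(t)\to-\infty$ as $t\to\infty$ a.s., the function $\mu(t):=\sup_{s\ge t}X(s)$ is continuous and nonincreasing, so $M=\{t:X(t)=\mu(t)\}$ is the preimage of $\{0\}$ under the continuous map $t\mapsto \mu(t)-X(t)$, hence closed. I would also record the basic running-maximum description: $t\in M$ iff $t$ is a time at which $X$ attains a ``future running maximum,'' i.e.\ $X$ never exceeds $X(t)$ after time $t$. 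The complement $\R\setminus M$ is open, hence a countable union of disjoint open intervals; on each such interval $X$ stays strictly below the level $\mu$ of the endpoints. The key distinction for \ref{non_discrete MN}: a point $t\in M^N$ is, by definition, a non-unique maximizer, meaning there is some $s>t$ with $X(s)=X(t)=\mu(t)$; equivalently, the level $\mu(t)$ is attained at least twice on $[t,\infty)$, and the first such later attainment $s$ also lies in $M$ (indeed $s$ is the right endpoint of a complementary interval, or an isolated point of $M$). The right-isolation claim in \ref{non_discrete MN} ($M\cap(t,t+\delta)=\varnothing$ for $t\in M^N$) should follow because if $t\in M^N$ then immediately to the right of $t$ the path $X$ strictly decreases below level $\mu(t)$ on some interval (a non-unique maximizer cannot be a left-accumulation-free point of $M$ from the right unless $X$ is flat, which a.s.\ it is not), so $(t,t+\delta)$ lies in a complementary open interval. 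The left-accumulation claim (points of $M^N$ arbitrarily close on the left of a given $t\in M^N$) is subtler and I address it below.

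For \ref{non_discrete}, the two halves use the zero set structure of reflected Brownian motion. Note that $\mu(t)-X(t)$, run in reverse time or analyzed via the reflection at the future supremum, behaves locally like a reflected Brownian motion, whose zero set is a.s.\ a perfect set (closed, no isolated points) of Lebesgue measure zero. A point $t\in M^U$ is a unique maximizer of $X$ on $[t,\infty)$; such points are exactly the left endpoints of ``excursions below the future max'' together with limit points thereof, and immediately to the right of $\hat t\in M^U$ there are points $t\in M$ that are non-unique maximizers because the future supremum level $\mu(\hat t)$ gets re-attained — more precisely, just after a unique maximizer the path dips down and then, at the next time it returns to a running-maximum value, that value is attained again later (since the path keeps fluctuating before the drift finally wins), giving a point of $M^N$. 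Conversely, given $t\in M^N$ and $\ve>0$: the level $L:=\mu(t)=X(t)$ is attained at some later time too, and just to the left of $t$ the path was strictly below $L$ on an interval whose left endpoint is a running-maximum time; using the perfectness of the relevant zero set one locates a unique maximizer $\hat t\in(t-\ve,t)\cap M^U$. I would make the ``perfect set'' input precise by invoking the classical fact that for Brownian motion with drift the set of times at which the future infimum of a related reflected process is zero has no isolated points a.s., then transfer isolated-point statements between $M^U$ and $M^N$. The main obstacle — see below — is exactly this local structural analysis near points of $M$, i.e.\ correctly identifying which times in $M$ are unique vs.\ non-unique maximizers and showing $M^U$ and $M^N$ interlace densely.

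For \ref{countable}: first, $M^N$ is infinite a.s.\ because $X$ has negative drift, so $\mu$ decreases to $-\infty$ through infinitely many distinct levels each attained at a running-maximum time, and infinitely many of these running-maximum values are attained at least twice (a non-uniqueness event recurs infinitely often as $t\to-\infty$; formally one uses that along $t\to-\infty$ the events ``the running maximum from $t$ is attained twice in $[t,t+1]$'' have probability bounded below and are, by the Markov property, realized infinitely often, or one simply notes $M^N\supseteq$ the set of right endpoints of the infinitely many complementary intervals of $M$). For countability, the key observation is that each $t\in M^N$ is \emph{right-isolated} in $M$ by \ref{non_discrete MN} (there is $\delta_t>0$ with $(t,t+\delta_t)\cap M=\varnothing$), hence the intervals $(t,t+\delta_t)$ over $t\in M^N$ are pairwise disjoint nonempty open intervals, and each contains a rational, giving an injection $M^N\hookrightarrow\Q$. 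Thus $M^N$ is countable, and combined with the previous paragraph it is countably infinite. Alternatively, and equivalently, each $t\in M^N$ corresponds to a distinct complementary interval of the closed set $M$ (namely the interval with left endpoint $t$ or an interval immediately abutting $t$ on the right), and a closed subset of $\R$ has only countably many complementary intervals. I would phrase the final argument via the injection into $\Q$, since it only uses \ref{non_discrete MN}, which is the cleanest dependency. The hardest part of the whole theorem, as noted, is the fine path analysis underlying \ref{non_discrete MN} and \ref{non_discrete} — establishing right-isolation of $M^N$ and the left-density of $M^N$ in itself and of $M^U$ below $M^N$ — for which I would lean on the perfectness of the zero set of reflected Brownian motion and a careful case analysis of excursions of $X$ below its future supremum.
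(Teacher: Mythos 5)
Your overall architecture for parts \ref{Mclosed} and \ref{countable} matches the paper's: closedness of $M$ via continuity of $t\mapsto\sup_{s\ge t}X(s)-X(t)$, and countability via right-isolation of $M^N$ in $M$, disjoint open gaps, and an injection into $\Q$. But there is a genuine gap at the heart of parts \ref{non_discrete} and \ref{non_discrete MN}, which you yourself flag as "the hardest part" and then defer. The right-isolation of a point $t\in M^N$ is \emph{not} automatic from the definition: a priori the set of maximizers of $X$ over $[t,\infty)$ could be infinite with maximizer times accumulating at $t$ from the right, in which case $M\cap(t,t+\delta)\neq\varnothing$ for every $\delta$. Your justification ("the path strictly decreases below level $\mu(t)$ on some interval... unless $X$ is flat") is circular -- the path stays strictly below $\mu(t)$ on a right-neighborhood of $t$ if and only if there are no maximizers there, which is exactly what needs proving -- and non-flatness of $X$ does not exclude a decreasing sequence of maximizer times converging to $t$. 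Likewise, the appeal to perfectness of the zero set of reflected Brownian motion describes $M$ itself but does not by itself deliver the $M^U$/$M^N$ dichotomy or the interlacing in part \ref{non_discrete}.

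The missing ingredient, which is the engine of the paper's proof, is the almost-sure statement that \emph{simultaneously for all} $t$, $X$ has at most two maximizers on $[t,\infty)$. The paper obtains it cheaply: intersect over rational $q$ the events that the maximizer of $X$ over $[q,\infty)$ is unique (Lemma \ref{lm:point-to-line uniqueness}) and strictly exceeds $q$ (Theorem \ref{distribtution of argmax BM with drift}); three maximizers for some $t$ would put two of them strictly to the right of $t$, hence both would maximize $X$ over $[q,\infty)$ for a rational $q$ squeezed in between, a contradiction. From this single countable-intersection event, everything else is deterministic calculus: for $t\in M^N$ the (now unique) second maximizer $\hat t>t$ satisfies $(t,\hat t)\cap M=\varnothing$, giving right-isolation; and both halves of part \ref{non_discrete} follow by placing a rational $q$ in the relevant $\ve$-window and running an intermediate-value-theorem argument with the unique maximizer over $[q,\infty)$. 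I recommend replacing your excursion-theoretic sketch with this device. Two smaller points: in your part \ref{countable} sketch, $M^N$ contains the \emph{left} endpoints of the complementary intervals of $M$, not the right endpoints (the right endpoints lie in $M^U$ once the two-maximizer bound is in force); and your left-density claims (points of $M^U$, respectively $M^N$, accumulating at $t\in M^N$ from the left) also need the rational-$q$ argument rather than perfectness alone, since perfectness of $M$ does not tell you which of the accumulating points are unique maximizers.
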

\begin{proof} 
By the $n = m$ case of Lemma~\ref{lm:point-to-line uniqueness}, for each fixed $t \in \R$, $X(s)$ is almost surely uniquely maximized for $s \in [t,\infty)$. By Theorem~\ref{distribtution of argmax BM with drift}, this maximizer is almost surely strictly greater than $t$. Let $\Omega_{\Q}$ be the full probability event on which,
\[
\lim_{s \rightarrow -\infty} X(s) = \infty \qquad\text{and}\qquad\lim_{s \rightarrow \infty} X(s) = -\infty,
\]
and such that, for every $q \in \Q$, there is a unique maximizer of $X(s)$ over $s \in [q,\infty)$ that is strictly larger than $q$.

\medskip \noindent \textbf{Part~\ref{Mclosed}}: This follows from Lemma~\ref{lemma:convergence of maximizers from converging sets}.

\medskip \noindent \textbf{Part~\ref{non_discrete}}: Let $\omega \in \Omega_{\Q}$, $\hat t \in M^U$ and $\ve > 0$, and let $q \in \Q$ satisfy $\hat t < q < \hat t + \ve$. Then, there is a unique maximizer, $s> q$ of $X(u)$ over $u \in [q,\infty)$. By assumption, $\hat t$ uniquely maximizes  $X(u)$ over the larger set $[\hat t,\infty)$, so $X(q) < X(s) < X(\hat t)$. By the intermediate value theorem, there exists a point $u$ with $\hat t < u < q$ and $X(u) = X(s)$. Then, set
\[
t = \max\{u \in (\hat t,s): X(u) = X(s)\}.
\]
Then, $t < q < s$, and 
\[
X(t) = X(s) = \sup_{t \le u <\infty}X(u)
\]
 because $X(u) < X(s)$ for all $t < u < q$. Therefore, $\hat t < t < q < \hat t + \ve$, and $t \in M^N$.

Now, let $t \in M^N$ and $\ve > 0$. Let $q \in \Q$ satisfy $t - \ve < q < t$. Then, there is a unique maximizer $\hat t$ of $X(s)$ over $s \in [q,\infty)$, so $X(\hat t) > X(s)$ for all $s \in [q,\infty) \backslash \{\hat t\}$. Specifically, $\hat t \in M^U$ and $X(\hat t) > X(t)$, so since $t$ maximizes $X(s)$ over $s \in [t,\infty)$, $\hat t$ cannot be greater than or equal to $t$. Therefore, $t - \ve < q < \hat t < t$. 

\medskip \noindent \textbf{Part~\ref{non_discrete MN}}: The first statement follows immediately from Part~\ref{non_discrete}. We now prove the second statement. We start by showing that for each $\omega \in \Omega_{\Q}$, there does not exist $t \in \R$ such that $X(s)$ has three maximizers over $s \in [t,\infty)$. If, on the contrary, such a value of $t$ exists, at least two of the maximizers must be greater than $t$, and therefore, these two maximizers are also maximizers of $X(s)$ over $s \in [q,\infty)$ for some $q \in \Q$.  Thus, on $\Omega_{\Q}$, for each $t \in M^N$, there exists a unique $\hat t \in M^U$ such that $\hat t > t$ and $(t,\hat t) \cap M = \varnothing.$

\medskip \noindent \textbf{Part~\ref{countable}}: The set $M$ is nonempty because, for any $s \in \R$, any maximizer $t$ of $X(u)$ over $u \in [s,\infty)$ lies in $M$. Then, by the first statement of Part~\ref{non_discrete MN}, the set $M^N$ is infinite. By the second statement of Part~\ref{non_discrete MN}, $M^N$ is countable.
 \end{proof}
 \begin{remark}
 Originally discovered by Taylor~\cite{taylor_1955}, it is well known that the zero set of Brownian motion almost surely has Hausdorff dimension $\f{1}{2}$. Parts~\ref{Mclosed} and~\ref{non_discrete} of Theorem~\ref{thm:countable non unique maximizers} imply that $M$ is the supporting set of the Lebesgue-Stieltjes measure defined by the nondecreasing function $t \mapsto -\sup_{t \le s < \infty} X(s)$. Using this fact, Taylor's proof (see also Theorem 4.24 in~\cite{morters_peres_2010}) can be modified to show that the set $M$ almost surely has Hausdorff dimension $\f{1}{2}$. 
 \end{remark}

\section{The Brownian queue and stationary last-passage process} \label{section:queue and stationary}
This section discusses the Brownian queue in the formulation of~\cite{brownian_queues}. Let $A$ and $S$ be two independent, two-sided Brownian motions, and let $\lambda > 0$. For $s < t$, $A(s,t)$ represents the arrivals to the queue in the time interval $(s,t]$, and $\lambda(t - s) - S(s,t)$ is the amount of service available   in  $(s,t]$.   For $t \in \R$, set
\[
q(t) = \sup_{-\infty < s \le t}\{A(s,t) + S(s,t) - \lambda(t - s)\}\qquad\text{and}\qquad d(t) = A(t) + q(0) - q(t).
\]
In queuing terms, $q(t)$ is the length of the queue at time $t$, and for $s < t$, $d(s,t)$ is the number of departures from the queue in the interval $(s,t]$. These processes are not integer-valued, but are viewed as heavy-traffic limits. We also define $e(t) = S(t) + q(0) - q(t)$. The following is due to O'Connell and Yor~\cite{brownian_queues}. Without the statements for the process $e$ the theorem is a special case of a more general result  previously   shown by Harrison and Williams~\cite{harrison1990}.
\begin{theorem}[\cite{brownian_queues}, Theorem 4] \label{O Connell Yor original}
The processes $d$ and $e$ are independent, two-sided Brownian motions. Furthermore, for each $t \in \R$, 
$\{d(s,t),e(s,t): - \infty < s \leq t\}$ is independent of $\{q(u): u \ge t\}$.
\end{theorem}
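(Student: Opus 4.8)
\textbf{Proof proposal for Theorem~\ref{O Connell Yor original}.}

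The plan is to reduce this to the reverse-time queuing identities already in place and a symmetry argument, rather than reproving the O'Connell--Yor result from scratch. First I would note that the statement is exactly the ``reverse-time'' analogue of the forward queuing output theorem (Theorem~\ref{O Connell Yor BM independence theorem queues} in the paper), applied to the time-reversed processes. Concretely, set $\wt A(t) = -A(-t)$ and $\wt S(t) = -S(-t)$, which are again independent two-sided Brownian motions. A direct computation from the definitions of $q$, $d$, $e$ (here) and of $Q$, $D$, $R$ in~\eqref{definition of Q}--\eqref{definition of R} shows that $q(t) = Q(\wt S, \wt A)(-t)$ up to the sign/orientation bookkeeping, and likewise $d$ and $e$ are, after time reversal, the $D$- and $R$-images of the pair $(\wt S,\wt A)$. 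This is the same manipulation used to pass between~\eqref{eqn:reverse} and~\eqref{382} via Lemma~\ref{lemma:time reversal equality of queuing maps}, so I would invoke that lemma to make the bijection between the forward and reverse queuing maps precise.

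Once the reverse-time queue is identified with a forward queue on the reversed environment, the independence and marginal statements transfer directly. The forward theorem gives that the departure process and the ``unused service'' process are independent two-sided Brownian motions, and that for each fixed time the future of the queue length is independent of the past departures/unused-service increments. Reversing time turns ``past'' into ``future'' and vice versa, which is exactly the form asserted here: $\{d(s,t), e(s,t): s \le t\}$ (increments up to time $t$) independent of $\{q(u): u \ge t\}$ (queue length after time $t$). The marginal claim that $d$ and $e$ are two-sided Brownian motions is immediate from the corresponding forward statement together with the fact that time reversal preserves the law of two-sided Brownian motion. The remark in the excerpt that this is a special case of the Harrison--Williams result could be cited as an alternative route, but the reverse-time reduction keeps everything self-contained within the paper's own machinery.

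The main obstacle I expect is purely bookkeeping: getting the orientation conventions exactly right so that the $\sup$ over $s \le t$ in the definition of $q(t)$ matches the $\sup$ over $s \ge t$ in $Q$ after the substitution $t \mapsto -t$, and tracking the minus signs introduced by $\wt f(t) = -f(-t)$ through the formulas for $d$ and $e$. There is also the mild point that the forward theorem's independence statement is phrased in terms of increments of the output processes on one side of a fixed time, while the definition of $d(t) = A(t) + q(0) - q(t)$ mixes the input $A$ with the queue length; one needs to check that the sigma-algebra generated by $\{d(s,t): s \le t\}$ is the one the forward theorem controls. None of this is deep, but it is the place where an error would creep in, so I would write out the substitution carefully once and then let Lemma~\ref{lemma:time reversal equality of queuing maps} and Theorem~\ref{O Connell Yor BM independence theorem queues} do the rest.
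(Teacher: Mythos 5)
Your proposed reduction is circular within the logical structure of the paper. Theorem~\ref{O Connell Yor original} is an imported result (O'Connell--Yor, Theorem 4); the paper does not prove it but cites it, and then \emph{derives} Theorem~\ref{O Connell Yor BM independence theorem queues} from it by the purely algebraic substitution $A(t) = \lambda t - Y(t)$, $S(t) = C(t)$, under which $q = \Qa(Y,C)$, $d(t) = \lambda t - \Da(Y,C)(t)$ and $e = \Ra(Y,C)$. So the two theorems are the same statement up to a deterministic change of variables (no time reversal is even needed to pass between them), and you are proposing to prove the first from the second while the paper proves the second from the first. Invoking Lemma~\ref{lemma:time reversal equality of queuing maps} does not break the circle: that lemma is a deterministic identity relating $Q,D,R$ to $\Qa,\Da,\Ra$ under $f \mapsto \wt f$, so it only ever converts the assertion into an equivalent assertion about the reflected environment; it supplies no probabilistic content.

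The missing ingredient is the actual probabilistic input behind Burke's theorem for the Brownian queue: one must use that the stationary queue-length process $q$ (a reflected Brownian motion with drift in equilibrium) is a \emph{reversible} Markov process, so that under time reversal of the whole system the arrival process and the departure process exchange roles, whence $d$ inherits the law of $A$ and the stated independence of $\{d(s,t), e(s,t): s\le t\}$ from $\{q(u): u\ge t\}$ follows from the Markov property of the reversed process. (Alternatively one appeals to the Harrison--Williams stability result, as the paper notes.) Your write-up never touches reversibility or any distributional symmetry of $q$; the ``symmetry argument'' you describe is only the deterministic path-level symmetry of the queuing maps, which, as above, cannot distinguish the statement from its time-reversed restatement. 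To repair the proof you would need to either reproduce the O'Connell--Yor reversibility argument or simply cite the result, as the paper does.
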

We reformulate Theorem~\ref{O Connell Yor original} in terms of the queuing mappings of~\eqref{definition of Q}--\eqref{definition of R} and~\eqref{reverse definition of Q}--\eqref{reverse definition of R}. 
\begin{theorem} \label{O Connell Yor BM independence theorem queues} Let $Y$ be a two-sided Brownian motion with drift $\lambda > 0$, independent of the two-sided Brownian motion $C$ (with no drift). Then, $\Da(Y,C)$ is a two-sided Brownian motion with drift $\lambda$, independent of the two-sided Brownian motion $\Ra(Y,C)$. Furthermore, for all $t \in \R$, $\{(\Da(Y,C)(s,t),\Ra(Y,C)(s,t)): - \infty < s \leq t\}$ is independent of $\{\Qa(Y,C)(u): u \geq t\}$. 
\end{theorem}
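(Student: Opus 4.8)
The plan is to deduce Theorem~\ref{O Connell Yor BM independence theorem queues} directly from the reverse-time queuing theorem, Theorem~\ref{O Connell Yor original}, by matching the reverse-time mappings $\Qa,\Da,\Ra$ to the queuing functionals $q,d,e$. First I would set $A=C$ (the driftless Brownian motion playing the role of arrivals) and $S(t)=Y(t)-\lambda t$, which is a driftless two-sided Brownian motion independent of $A$, so that $A$ and $S$ satisfy exactly the hypotheses of Theorem~\ref{O Connell Yor original}. With these choices, unwinding the definitions~\eqref{reverse definition of Q}--\eqref{reverse definition of R} gives, for every $t\in\R$,
\[
\Qa(Y,C)(t)=\sup_{-\infty<s\le t}\{C(s,t)-Y(s,t)\}=\sup_{-\infty<s\le t}\{A(s,t)+S(s,t)-\lambda(t-s)\}=q(t),
\]
and then $\Da(Y,C)(t)=Y(t)+q(0)-q(t)$. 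Since $Y(t)=S(t)+\lambda t$ and $d(t)=A(t)+q(0)-q(t)$, the functional $\Da(Y,C)$ differs from $d$ only by the deterministic drift $\lambda t$ (note $A=C$ is driftless while $Y$ has drift $\lambda$, and the difference $Y-A$ equals $S+\lambda t-A$ only in law, so one must be a little careful here: in fact $\Da(Y,C)(t)=C(t)+(\text{something})$? No —). Here I should be careful: $\Da(Y,C)(t)=Y(t)+q(0)-q(t)$, and $Y=A$ is \emph{not} an identity, so instead I would observe that $\Da(Y,C)(t)-\lambda t = (Y(t)-\lambda t)+q(0)-q(t)=S(t)+q(0)-q(t)=e(t)$ in the notation of Theorem~\ref{O Connell Yor original}. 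Likewise $\Ra(Y,C)(t)=C(t)+q(0)-q(t)=A(t)+q(0)-q(t)=d(t)$.

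So the dictionary is: $\Ra(Y,C)=d$ exactly, and $\Da(Y,C)=e+\lambda\,\mathrm{id}$, i.e. $\Da(Y,C)$ is $e$ plus drift $\lambda$. Then Theorem~\ref{O Connell Yor original} says $d$ and $e$ are independent standard two-sided Brownian motions; hence $\Ra(Y,C)$ is a standard two-sided Brownian motion, $\Da(Y,C)$ is a two-sided Brownian motion with drift $\lambda$, and the two are independent. For the conditional independence statement, Theorem~\ref{O Connell Yor original} gives that $\{d(s,t),e(s,t):-\infty<s\le t\}$ is independent of $\{q(u):u\ge t\}$ for each fixed $t$. Since increments $\Da(Y,C)(s,t)=e(s,t)+\lambda(t-s)$ differ from $e(s,t)$ by a constant, and $\Ra(Y,C)(s,t)=d(s,t)$, the sigma-algebra generated by $\{(\Da(Y,C)(s,t),\Ra(Y,C)(s,t)):-\infty<s\le t\}$ coincides with that generated by $\{(d(s,t),e(s,t)):-\infty<s\le t\}$; and $\{\Qa(Y,C)(u):u\ge t\}=\{q(u):u\ge t\}$. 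The desired independence follows verbatim.

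I would write this up as a short proof: declare the substitution $A=C$, $S(\cdot)=Y(\cdot)-\lambda\cdot$; verify the three identities $\Qa(Y,C)=q$, $\Ra(Y,C)=d$, $\Da(Y,C)=e+\lambda\,\mathrm{id}$ by direct comparison of definitions; then invoke Theorem~\ref{O Connell Yor original} and note that adding a deterministic drift to one coordinate changes neither its Brownian-motion-with-drift status nor any independence relation. There is essentially no obstacle here; the only place requiring care is bookkeeping of the drift — making sure it is attributed to $\Da$ (equivalently to the service process $S$ versus the input $Y$) and not to $\Ra$, and confirming that the constant shift $\lambda(t-s)$ in increments does not affect the generated sigma-algebras. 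This is the kind of "straightforward reformulation" the paper already flags, so a half-page argument suffices.
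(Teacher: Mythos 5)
Your strategy is exactly the paper's: reduce to Theorem~\ref{O Connell Yor original} by identifying $(\Qa,\Da,\Ra)(Y,C)$ with the queuing functionals $(q,d,e)$ for a suitable choice of arrivals $A$ and service-related process $S$. (The paper takes $A=\lambda\,\mathrm{id}-Y$ and $S=C$, so that $q=\Qa(Y,C)$, $\Da(Y,C)=\lambda\,\mathrm{id}-d$ and $\Ra(Y,C)=e$; your assignment merely swaps which process plays the role of arrivals, which swaps $d$ and $e$ and is equally legitimate in principle.) However, as written your verification contains two sign errors, and the first one breaks the key identity. With $A=C$ and $S(t)=Y(t)-\lambda t$ one gets
\[
A(s,t)+S(s,t)-\lambda(t-s)=C(s,t)+Y(s,t)-2\lambda(t-s),
\]
which is not $C(s,t)-Y(s,t)$, so your displayed identity $\Qa(Y,C)=q$ is false and the whole dictionary collapses. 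The correct choice is $S(t)=\lambda t-Y(t)$ (still a driftless two-sided Brownian motion independent of $A=C$), and with that choice $\Qa(Y,C)=q$ does hold.

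The second error is in the definition of $\Da$: by~\eqref{reverse definition of D}, $\Da(Y,C)(t)=Y(t)+\Qa(Y,C)(t)-\Qa(Y,C)(0)$, not $Y(t)+\Qa(Y,C)(0)-\Qa(Y,C)(t)$. With the corrected $S$, the dictionary reads $\Ra(Y,C)=d$ and $\Da(Y,C)(t)=\lambda t-e(t)$, i.e.\ $\Da(Y,C)=\lambda\,\mathrm{id}-e$ rather than $e+\lambda\,\mathrm{id}$. Neither correction harms the conclusion: $-e$ is again a standard two-sided Brownian motion independent of $d$, the increments generate the same sigma-algebras after removing the deterministic drift, and the independence from $\{q(u):u\ge t\}$ transfers verbatim. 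But the intermediate identities you display are false as stated, so the write-up needs these two sign fixes before it is sound.
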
 
\begin{proof}
Let $A(t) = \lambda t - Y(t)$ and $S(t) = C(t)$, two  independent two-sided Brownian motions. Then 
\begin{align*}
q(t) &= \sup_{-\infty < s \le t}\{A(s,t) + S(s,t) - \lambda(t - s)\} = \sup_{-\infty < s \le t}\{C(s,t) - Y(s,t)\} = \Qa(Y,C)(t). 
\end{align*}
Next observe that 
\begin{align*}
\Da(Y,C)(t) &= Y(t) + \Qa(Y,C)(t) - \Qa(Y,C)(0) = -A(t) + \lambda t + q(t) - q(0) = \lambda t -d(t),
  \\[2pt]
\text{and}\quad \Ra(Y,C)(t) &= C(t) + \Qa(Y,C)(0) - \Qa(Y,C)(t) = S(t) + q(0) - q(t) = e(t).
\end{align*}
The result follows from Theorem~\ref{O Connell Yor original}.
\end{proof}
Fix a parameter $\lambda > 0$.   Given an environment of Brownian motions $\mbf B = \{B_m\}_{m \in\Z}$, set \[
Y_0^\lambda(t) = -B_0(t) + \lambda t.
\]
For $m > 0$, recalling definitions~\eqref{reverse definition of Q}--\eqref{reverse definition of R}, set
\begin{equation} \label{eqn:stationary BLPP definitions}
q_m^\lambda(t) := \Qa(Y_{m - 1}^\lambda,B_m)(t), \;\;
Y_m^\lambda(t) := \Da(Y_{m - 1}^\lambda,B_m)(t),\;\;\text{and}\;\;
W_{m - 1}^\lambda(t) := \Ra(Y_{m - 1}^\lambda,B_m)(t).  
\end{equation}
The increment-stationary BLPP is constructed as follows:   For $(n,t) \in \Z_{> 0} \times \R$, set
\[
L^\lambda_{(n,t)} = \sup_{-\infty < s \le t}\{Y_0^\lambda(s) + L_{(1,s),(n,t)}(\mbf B)\}.
\]
One can check inductively, as   in Section 4 of~\cite{brownian_queues}, that for $m \geq 1$  and $s,t \in \R$,
    \[
    q_m^\lambda(t) = L^\lambda_{(m,t)} - L^\lambda_{(m - 1,t)},\;\;\;\text{and}\;\;\;\; Y_m^\lambda(s,t) = L^\lambda_{(m,t)} - L^\lambda_{(m,s)}. 
    \]
    The stationarity of  these increments comes from the next theorem that  follows from Theorem~\ref{O Connell Yor BM independence theorem queues} and induction. It is the zero-temperature analogue of Theorem 3.3 in~\cite{Sepp_and_Valko} and Theorem 2.11 in ~\cite{blpp_utah}.   Figure~\ref{fig:Independence structure for Brownian queue} demonstrates the independence structure, and we give credit to~\cite{blpp_utah} for a very similar picture.
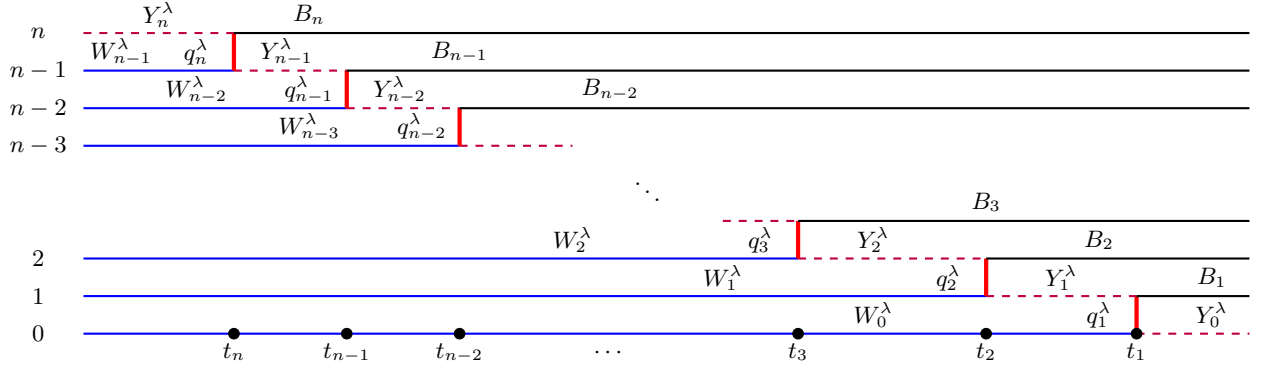
\begin{figure}[t]
\begin{tikzpicture}
\draw[blue,thick] (-0.5,0) -- (13.5,0);
\node at (-1.1,0) {\small $0$};
\node at (-1.1,0.5) {\small $1$};
\node at (-1.1,1) {\small $2$};
\node at (-1.1,2.5) {\small $n - 3$};
\node at (-1.1,3) {\small $n - 2$};
\node at (-1.1,3.5) {\small $n -1$};
\node at (-1.1,4) {\small $n$};
\draw[purple, thick, dashed] (13.5,0)--(15,0);
\node at (10,0.25) {\small $W^\lambda_0$};
\draw[red, ultra thick] (13.5,0)--(13.5,0.5);
\node at(13,0.25) {\small $q_{1}^\lambda$};
\draw[blue,thick] (-0.5,0.5) --(11.5,0.5);
\draw[red, ultra thick] (11.5,0.5)--(11.5,1);
\draw[purple,thick, dashed] (11.5,0.5)--(13.5,0.5);
\draw[black,thick] (13.5,0.5)--(15,0.5);
\node at (8,0.75) {\small$W^\lambda_{1}$};
\node at (11,0.75) {\small$q^\lambda_{2}$};
\node at(12.5,0.75) {\small$Y^\lambda_{1}$};
\node at (14.5,0.75) {\small$B_{1}$};
\node at (14.5,0.25) {\small $Y_0^\lambda$};
\draw[blue,thick] (-0.5,1) --(9,1);
\draw[purple,thick, dashed] (9,1)--(11.5,1);
\draw[black, thick] (11.5,1)--(15,1);
\draw[red, ultra thick] (9,1)--(9,1.5);
\node at (6,1.25) {\small$W^\lambda_{2}$};
\node at (10,1.25) {\small$Y^\lambda_{2}$};
\node at (8.5,1.25) {\small$q^\lambda_{3}$};
\node at (13,1.25) {\small$B_{2}$};
\draw[black,thick] (9,1.5)--(15,1.5);
\draw[purple, thick, dashed] (8,1.5)--(9,1.5);
\node at (11.5,1.75) {\small$B_{3}$};
\draw[blue,thick] (-0.5,2.5)--(4.5,2.5);
\draw[purple, thick,dashed] (4.5,2.5)--(6,2.5);
\draw[red, ultra thick] (4.5,2.5)--(4.5,3);
\node at (4,2.75) {\small$q_{n - 2}^\lambda$};
\node at(2.5,2.75) {\small$W_{n - 3}^\lambda$};
\draw[blue, thick] (-0.5,3)--(3,3);
\draw[purple,thick, dashed] (3,3)--(4.5,3);
\node at (7,2) {\small $\ddots$};
\draw[black,thick] (4.5,3)--(15,3);
\draw[red, ultra thick] (3,3)--(3,3.5);
\node at(2.5,3.25) {\small$q^\lambda_{n - 1}$};
\node at (3.7,3.25) {\small$Y^\lambda_{n - 2}$};
\node at (1,3.25) {\small$W^\lambda_{n - 2}$};
\node at (6.5,3.25) {\small$B_{n - 2}$};
\draw[blue,thick] (-0.5,3.5)--(1.5,3.5);
\draw[purple,thick, dashed] (1.5,3.5)--(3,3.5);
\draw[black, thick] (3,3.5)--(15,3.5);
\draw[red, ultra thick] (1.5,3.5)--(1.5,4);
\node at (1,3.75) {\small$q_{n}^\lambda$};
\node at (0,3.75) {\small$W_{n - 1}^\lambda$};
\node at (2.2,3.75) {\small$Y^\lambda_{n - 1}$};
\node at (4.5,3.75) {\small$ B_{n - 1}$};
\draw[purple, thick, dashed] (-0.5,4)--(1.5,4);
\draw[black, thick] (1.5,4)--(15,4);
\node at (0.5,4.25) {\small$Y_n^\lambda$};
\node at (2.5,4.25) {\small$B_n$};
\filldraw[black] (1.5,0) circle (2pt) node[anchor = north] {\small $t_n$};
\filldraw[black] (3,0) circle (2pt) node[anchor = north] {\small $t_{n -1}$};
\filldraw[black] (4.5,0) circle (2pt) node[anchor = north] {\small $t_{n - 2}$};
\node at (6.5,-0.25) {\small $\cdots$};
\filldraw[black] (9,0) circle (2pt) node[anchor = north] {\small $t_{3}$};
\filldraw[black] (11.5,0) circle (2pt) node[anchor = north] {\small $t_{2}$};
\filldraw[black] (13.5,0) circle (2pt) node[anchor = north] {\small $t_{1}$};
\end{tikzpicture}
\caption{\small Independence structure for stationary BLPP. Each process $Y^\lambda_r$ is associated to the (purple) dashed segment on level $r$, processes $B_r$ and $W^\lambda_r$ cover the remaining portions of horizontal level $r$, and the process $q_r^\lambda$ is associated to the (red) vertical edge from level $r - 1$ to $r$ at time point $t_r$.  }
\label{fig:Independence structure for Brownian queue}
\end{figure}

\begin{theorem} \label{Burke-type Theorem}
Let $Y_m^\lambda,q_m^\lambda,W_m^\lambda$ be as constructed in~\eqref{eqn:stationary BLPP definitions}. Then, the following hold. 
\begin{enumerate} [label=\rm(\roman{*}), ref=\rm(\roman{*})]  \itemsep=3pt 
    \item For all $m > 0$ and $t \in \R$, $q_m^\lambda(t) \sim \operatorname{Exp}(\lambda)$.   $\{W_m^\lambda\}_{m \ge 0}$ is a field of independent two-sided Brownian motions.   $\{Y_m^\lambda\}_{m \ge 0}$ is a field of (non-independent) two-sided Brownian motions with drift $\lambda$.
\item Let $-\infty < t_n \leq t_{n - 1} \leq \cdots \leq t_{1} < \infty$. Then, the following are mutually independent:
\begin{multline*}
\{W_0^\lambda(u,t_1):u \le t_{1}\},\;\; q_1^\lambda(t_1),\;\;
 \{Y_0^\lambda(t_1,u): u \ge t_1\}, \; \\[1em]
\{W_{r }^\lambda(u,t_{r + 1}): u \leq t_{r + 1}\},\; q_{r + 1}^\lambda(t_{r + 1}),\; \{Y_r^\lambda(t_{r + 1},u): t_{r + 1} \le u \le t_r\},\; \\[1em]\text{and}\; \{B_r(t_r,u): u \geq t_r  \},\; \text{ for }\; 1 \leq r \leq n - 1,\;\; 
\{Y_n^\lambda(u,t_n):u \le t_n\},\;\;\text{ and }\;\;
 \{B_n(t_n,u): u \ge t_n\}.
\end{multline*}
\end{enumerate} 
\end{theorem}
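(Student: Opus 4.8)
\textbf{Proof proposal for Theorem~\ref{Burke-type Theorem}.}

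The plan is to prove the two statements simultaneously by induction on the number of levels $n$, peeling off one level at a time from the top, starting with $n=1$ and using Theorem~\ref{O Connell Yor BM independence theorem queues} as the single-level engine. For the base case $n=1$, observe that $Y_0^\lambda$ is a two-sided Brownian motion with drift $\lambda$ and is independent of $B_1$ (since the $B_m$ are i.i.d.\ two-sided Brownian motions), so Theorem~\ref{O Connell Yor BM independence theorem queues} applied with $Y=Y_0^\lambda$, $C=B_1$ gives that $Y_1^\lambda = \Da(Y_0^\lambda,B_1)$ is a two-sided Brownian motion with drift $\lambda$, independent of the driftless two-sided Brownian motion $W_0^\lambda=\Ra(Y_0^\lambda,B_1)$, and moreover that for each $t_1$, $\{(W_0^\lambda(u,t_1),Y_1^\lambda(u,t_1)):u\le t_1\}$ is independent of $\{q_1^\lambda(v):v\ge t_1\}$. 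Evaluating the "future of the queue" part at $v=t_1$ gives $q_1^\lambda(t_1)\sim\operatorname{Exp}(\lambda)$ (Lemma~\ref{lemma:sup of BM with drift}, since $q_1^\lambda(t_1)=\sup_{s\le t_1}\{B_1(s,t_1)-Y_0^\lambda(s,t_1)\}$ and $s\mapsto B_1(s,t_1)-Y_0^\lambda(s,t_1)$ is a scaled Brownian motion with negative drift run backward). Finally, $Y_0^\lambda(t_1,\cdot)$ restricted to $[t_1,\infty)$ and $W_0^\lambda(\cdot,t_1)$ restricted to $(-\infty,t_1]$ are independent because $A$ and $S$ (in the notation of Theorem~\ref{O Connell Yor original}) have independent increments on disjoint intervals and $W_0^\lambda$ on $(-\infty,t_1]$ is a measurable function only of $\{A(s,t_1),S(s,t_1):s\le t_1\}$; re-expressing $Y_0^\lambda(t_1,u)=-A(t_1,u)$ for $u\ge t_1$ shows it depends only on the increments of $A$ strictly to the right of $t_1$. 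This pins down the base case.

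For the inductive step, suppose the theorem holds for $n-1$ levels with time points $t_{n-1}\ge\cdots\ge t_1$, i.e.\ the listed family of objects on levels $1,\dots,n-1$ together with $\{B_r(t_r,u):u\ge t_r\}$ for $1\le r\le n-2$ and $\{Y_{n-1}^\lambda(u,t_{n-1}):u\le t_{n-1}\}$ and $\{B_{n-1}(t_{n-1},u):u\ge t_{n-1}\}$ is mutually independent, with the stated marginals. Now adjoin level $n$ and the point $t_n\le t_{n-1}$. The new objects are $q_n^\lambda(t_n)$, $\{Y_n^\lambda(u,t_n):u\le t_n\}$, $\{B_n(t_n,u):u\ge t_n\}$, and we must also split $\{Y_{n-1}^\lambda(u,t_{n-1}):u\le t_{n-1}\}$ into $\{Y_{n-1}^\lambda(t_n,u):t_n\le u\le t_{n-1}\}$ and $\{Y_{n-1}^\lambda(u,t_n):u\le t_n\}$, with the first of these becoming part of the level-$(n-1)$ data and the second feeding into level $n$. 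The key point is that $Y_{n-1}^\lambda$ is (by the inductive hypothesis and the $n=1$-type argument, applied one level up) a two-sided Brownian motion with drift $\lambda$, independent of $B_n$; apply Theorem~\ref{O Connell Yor BM independence theorem queues} with $Y=Y_{n-1}^\lambda$, $C=B_n$ to get that $Y_n^\lambda=\Da(Y_{n-1}^\lambda,B_n)$ is a drift-$\lambda$ two-sided Brownian motion independent of $W_{n-1}^\lambda=\Ra(Y_{n-1}^\lambda,B_n)$, that $q_n^\lambda(t_n)\sim\operatorname{Exp}(\lambda)$ and is independent of $\{(W_{n-1}^\lambda(u,t_n),Y_n^\lambda(u,t_n)):u\le t_n\}$, and that the past data on level $n$ (the $W_{n-1}^\lambda$ and $Y_n^\lambda$ restricted to $(-\infty,t_n]$, plus $q_n^\lambda(t_n)$) is a measurable function of $\{Y_{n-1}^\lambda(s,t_n),B_n(s,t_n):s\le t_n\}$ alone. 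Then splice: the level-$n$ past data is a function of $(Y_{n-1}^\lambda\restriction(-\infty,t_n],\,B_n\restriction(-\infty,t_n])$, which by the inductive hypothesis (treating $Y_{n-1}^\lambda$ as the analogue of $Y_0^\lambda$ one level up and noting $B_n$ is fresh, independent of everything below level $n$) is independent of all the level-$1,\dots,n-1$ objects and of $\{Y_{n-1}^\lambda(t_n,u):t_n\le u\le t_{n-1}\}$; meanwhile $\{B_n(t_n,u):u\ge t_n\}$ is independent of $\{B_n(s,t_n):s\le t_n\}$ and of everything on levels $\le n-1$. Combining these independences and the within-group independence from Theorem~\ref{O Connell Yor BM independence theorem queues} yields the full mutual independence of the $n$-level family, and the marginals follow from the same theorem and Lemma~\ref{lemma:sup of BM with drift}. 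The statement that $\{Y_m^\lambda\}_{m\ge0}$ are (non-independent) drift-$\lambda$ two-sided Brownian motions is an immediate byproduct of iterating $\Da$, and $\{W_m^\lambda\}_{m\ge0}$ being an independent field follows because $W_m^\lambda$ is built from $(Y_m^\lambda,B_{m+1})$ with $B_{m+1}$ fresh, so an induction identical in structure to the above gives their joint independence.

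I expect the main obstacle to be bookkeeping rather than a conceptual difficulty: carefully tracking which $\sigma$-algebra each listed object is measurable with respect to, so that the "disjoint increments" splitting arguments can be chained without circularity. In particular, one must be careful that $Y_{n-1}^\lambda\restriction(-\infty,t_n]$ genuinely does not involve $B_n$ and does not involve the increments of the lower-level $B_r$ to the right of $t_r$ — this is where the monotonicity $t_n\le t_{n-1}\le\cdots\le t_1$ and the backward (reverse-time) nature of the $\Da,\Ra,\Qa$ mappings is used, since $\Qa(Y,C)(t)$ and hence $\Da,\Ra$ evaluated on $(-\infty,t]$ depend only on increments of $Y,C$ over $(-\infty,t]$. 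Once the measurability dependencies are laid out cleanly (a diagram like Figure~\ref{fig:Independence structure for Brownian queue} essentially encodes them), each independence assertion reduces to "independent increments of Brownian motion on disjoint intervals" plus one invocation of Theorem~\ref{O Connell Yor BM independence theorem queues}, and the induction closes.
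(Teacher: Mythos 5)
Your proof is correct and takes essentially the same route as the paper: part (i) is the same iteration of Theorem~\ref{O Connell Yor BM independence theorem queues} down the levels, and part (ii) — which the paper simply outsources to the proof of Theorem 3.3 in~\cite{Sepp_and_Valko} — is exactly the induction on levels you describe, combining the single-queue past/future independence with independent Brownian increments across the ordered times $t_n\le\cdots\le t_1$ and the fact that the reverse-time maps $\Qa,\Da,\Ra$ restricted to $(-\infty,t_n]$ depend only on increments of their inputs over $(-\infty,t_n]$. (One cosmetic slip: $Y_0^\lambda(t_1,u)=\lambda(u-t_1)-A(t_1,u)$, not $-A(t_1,u)$, but the deterministic drift is irrelevant to the independence claim.)
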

\begin{proof}   To prove  Part (i), begin with the assumption: 
\[
(Y_{0}^\lambda,B_{1},\ldots,B_n)\qquad\text{are mutually independent}.
\]
By Theorem~\ref{O Connell Yor BM independence theorem queues}, $Y_{1}^\lambda = \Da(Y_{0}^\lambda,B_{ 1})$ is a two-sided Brownian motion with drift $\lambda$, independent of the two-sided Brownian motion $W_{0}^\lambda = \Ra(Y_{0}^\lambda,B_{1})$.  Hence, 
\[
(W_{0}^\lambda,Y_{1}^\lambda, B_{2},\ldots,B_n) \qquad\text{are mutually independent}.
\]
Inductively, assume that $Y_{r}^\lambda$ is a two-sided Brownian motion with drift $\lambda$ and 
\[
(W_{0}^\lambda,\ldots,W_{r - 1}^\lambda,Y_{r}^\lambda, B_{r + 1},\ldots,B_n)\qquad\text{are mutually independent}.
\]
Apply Theorem~\ref{O Connell Yor BM independence theorem queues} to  $Y_{r+ 1}^\lambda = \Da(Y_{r}^\lambda,B_{r + 1})$ and  $W_r^\lambda = \Ra(Y_r^\lambda,B_{r + 1})$ to continue the induction.  Part (i) is proved. 
The proof of Part (ii) is the proof of Theorem 3.3 in~\cite{Sepp_and_Valko}.
\end{proof}

The proof of Theorem~\ref{thm:existence of Busemann functions for fixed points} is achieved through the coupling of $q_r^\lambda,Y_r^\lambda$, and $W_r^\lambda$. The following theorem is the key.
\begin{theorem}[Zero-temperature analogue of Theorem 3.1 in~\cite{blpp_utah}] \label{thm:busemann sandwich theorem}
Fix real numbers $t > s$, $\lambda > 0$, and $\gamma > \lambda^{-2} > \delta > 0$. Then, with probability one,
\be \label{eqn:sandwich inequality for h}\begin{aligned} 
&\limsup_{n \rightarrow \infty} \bigl[ \, L_{(0,t),(n,n\delta)}(\mbf W^\lambda) - L_{(0,s),(n,n\delta)}(\mbf W^\lambda)\,\bigr]   
\leq B_0(s,t) - \lambda(t - s) =  -Y_{0}^\lambda(s,t)  \\  
&\qquad\qquad \qquad\qquad
\leq \liminf_{n \rightarrow \infty} \bigl[\, L_{(0,t),(n,n\gamma)}(\mbf W^\lambda) - L_{(0,s),(n,n\gamma)}(\mbf W^\lambda)\,\bigr], 
\end{aligned}\ee
and 
\be\label{eqn:sandwich inequality for v}\begin{aligned} 
&\limsup_{n \rightarrow \infty} \bigl[\,L_{(0,t),(n,n\delta)}(\mbf W^\lambda) - L_{( 1,t),(n,n\delta)}(\mbf W^\lambda)\,\bigr]  
 \\ 
&\qquad\qquad\qquad  \le q_{1}^\lambda(t)  
\leq \liminf_{n \rightarrow \infty}  \bigl[\, L_{(0,t),(n,n\gamma)}(\mbf W^\lambda) - L_{( 1,t),(n,n\gamma)}(\mbf W^\lambda)\,\bigr] .  
\end{aligned}\ee

\end{theorem}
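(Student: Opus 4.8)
\textbf{Proof plan for Theorem~\ref{thm:busemann sandwich theorem}.}
The plan is to reduce the statement to two inputs: the stationarity/Burke structure of the queueing construction (Theorem~\ref{Burke-type Theorem}) and a comparison argument against the stationary process $L^\lambda$, together with the point-to-line shape estimate of Lemma~\ref{point to line shape theorem}. First I would recall that the stationary last-passage process built from $\mbf W^\lambda$ via \eqref{eqn:stationary BLPP definitions} has, by construction, the increment identities $q_m^\lambda(t) = L^\lambda_{(m,t)} - L^\lambda_{(m-1,t)}$ and $-Y_0^\lambda(s,t) = B_0(s,t) - \lambda(t-s)$; so what must be shown is that the last-passage differences across $\mbf W^\lambda$ to a far point in direction $\gamma$ (resp.\ $\delta$) bound these stationary increments from below (resp.\ above). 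The key observation is that, in the environment $\mbf W^\lambda$ with the added ``boundary'' function $Y_0^\lambda$ on level $0$, the stationary last-passage value $L^\lambda_{(n,t)} = \sup_{s \le t}\{Y_0^\lambda(s) + L_{(1,s),(n,t)}(\mbf B)\}$ splits into a ``bulk'' contribution and a ``boundary'' contribution depending on where the maximizing path exits the level-$0$ axis. This is exactly the point-to-line structure of \eqref{point-to-line definition}, and Lemma~\ref{Utah Lemma 4.7} tells us that when $\delta < \lambda^{-2}$ the exit point is $o(n)$ with probability one, while when $\gamma > \lambda^{-2}$ it is $\ge s + n\gamma'$ for some $\gamma' > 0$ — i.e. the maximizing path leaves the boundary immediately in the subcritical case and travels along it macroscopically in the supercritical case.

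The steps, in order: (1) Rewrite $L_{(0,t),(n,n\gamma)}(\mbf W^\lambda)$ by conditioning on the first jump time off level $0$; the continuity of the $B_r,W_r^\lambda$ guarantees finiteness and existence of maximizers, and I can compare a path from $(0,t)$ against a path from $(0,s)$ that mimics it after a common point via the standard crossing lemma (the analogue of Lemma~\ref{lemma:bounds for differences of Brownian LPP}). (2) For the lower bound in \eqref{eqn:sandwich inequality for h}: force the path from $(0,s)$ to use the horizontal segment from $(0,s)$ to $(0,t)$ first — this collects $W_0^\lambda(s,t)$ — and then observe that the stationary value $L^\lambda$ obtained by optimizing over the whole negative axis dominates; quantitatively, $L_{(0,t),(n,n\gamma)}(\mbf W^\lambda) - L_{(0,s),(n,n\gamma)}(\mbf W^\lambda) \ge Y_0^\lambda(t) - Y_0^\lambda(s)$ fails in general, so instead I pass to the stationary comparison: $L^\lambda_{(n,n\gamma)} - W_0^\lambda(s) - L_{(0,s),(n,n\gamma)}(\mbf W^\lambda) \le$ (boundary error), and use Lemma~\ref{Utah Lemma 4.7} in the supercritical regime $\gamma > \lambda^{-2}$ to show this error vanishes, which pins the $\liminf$ below $-Y_0^\lambda(s,t)$. (3) Symmetrically, in the subcritical regime $\delta < \lambda^{-2}$, Lemma~\ref{Utah Lemma 4.7} forces the maximizing path to exit near the starting point, so the difference $L_{(0,t),(n,n\delta)}(\mbf W^\lambda) - L_{(0,s),(n,n\delta)}(\mbf W^\lambda)$ is asymptotically at most the boundary increment $-Y_0^\lambda(s,t)$, giving the upper bound. (4) The inequalities \eqref{eqn:sandwich inequality for v} for $q_1^\lambda$ follow from the same dichotomy applied one level up: $q_1^\lambda(t) = L^\lambda_{(1,t)} - L^\lambda_{(0,t)}$, and the vertical increment is controlled by comparing last-passage from $(0,t)$ and $(1,t)$ to the far point, again using that the exit behavior from the stationary boundary is macroscopic iff $\gamma > \lambda^{-2}$. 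Here the shape-theorem value $\sup_{t}\{2\sqrt t - \lambda t\} = \lambda^{-1}$ attained at $t = \lambda^{-2}$ is what makes $\lambda^{-2}$ the critical direction.

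The main obstacle I expect is Step~(2): making precise that, in the supercritical direction, forcing the competitor path from $(0,s)$ to run along the level-$0$ axis up to $(0,t)$ costs exactly $W_0^\lambda(s,t)$ while losing nothing asymptotically — equivalently, that the stationary path to $(n,n\gamma)$ uses the boundary for a macroscopic time, so that restricting it to start from $s$ rather than $t$ only changes the boundary portion. This requires combining the stationarity of $L^\lambda$ increments (Theorem~\ref{Burke-type Theorem}(ii)) with the quantitative exit-point estimate of Lemma~\ref{Utah Lemma 4.7}, and care is needed because the comparison couples the random environment $\mbf B$, the boundary $Y_0^\lambda$, and the dual weights $W_0^\lambda$, which are all built from the same Brownian field; the independence structure in Theorem~\ref{Burke-type Theorem}(ii) is precisely what lets the boundary and bulk contributions be analyzed separately. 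Once that coupling is set up, the rest is the routine crossing-lemma bookkeeping together with a sandwich of the deterministic shape function, and taking $\delta \nearrow \lambda^{-2} \nearrow \gamma$ recovers the almost-sure limit identity for $\h_0^\theta$ and $\vv_1^\theta$ with $\theta = \lambda^{-2}$.
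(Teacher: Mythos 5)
First, a remark on scope: the paper itself gives no proof of this theorem — immediately after the statement it records that ``all necessary lemmas are provided in Section 4 of~\cite{blpp_utah}'' — so there is no in-house argument to compare against. Judged on its own terms, your proposal assembles the right ingredients (Lemma~\ref{Utah Lemma 4.7} and the criticality of the direction $\lambda^{-2}$ for the shape function $2\sqrt{t}-\lambda t$, the increment monotonicity of Lemma~\ref{lemma:bounds for differences of Brownian LPP}, and the independence package of Theorem~\ref{Burke-type Theorem}), but the central comparison is aimed at the wrong object, and Step (2) as written does not close.

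The theorem concerns last-passage values in the \emph{dual} environment $\mbf W^\lambda$, and the stationary structure that controls them carries its boundary at the \emph{far} level $n+1$, given by $-Y_{n+1}^\lambda$, not at level $0$ of the original environment $\mbf B$. The identity your argument is missing is the point-to-line representation
\[
-Y_0^\lambda(s,t) \;=\; \overline L_{(0,t),n} - \overline L_{(0,s),n}, \qquad \overline L_{(0,t),n} := \sup_{u \ge t}\bigl\{ L_{(0,t),(n,u)}(\mbf W^\lambda) - Y_{n+1}^\lambda(u)\bigr\},
\]
valid for \emph{every} $n$ and obtained by iterating $Y_m^\lambda = D(Y_{m+1}^\lambda, W_m^\lambda)$ (the inverse relations supplied by Theorem~\ref{bijectivity of D R joint mapping}), exactly as in Lemma~\ref{lemma:ptl_sig}; the analogous identity $\overline L_{(0,t),n} - \overline L_{(1,t),n} = q_1^\lambda(t)$ handles \eqref{eqn:sandwich inequality for v}. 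With this in hand, Lemma~\ref{Utah Lemma 4.7} applies verbatim to the environment $\mbf W^\lambda$ (i.i.d.\ Brownian motions by Theorem~\ref{Burke-type Theorem}) with boundary $\overline B_{n+1}(u) = Y_{n+1}^\lambda(u) - \lambda u$: for $\gamma > \lambda^{-2}$ every maximizing endpoint $u_t^*$ of $\overline L_{(0,t),n}$ eventually satisfies $u_t^* \le n\gamma$, whence $-Y_0^\lambda(s,t) \le L_{(0,t),(n,u_t^*)}(\mbf W^\lambda) - L_{(0,s),(n,u_t^*)}(\mbf W^\lambda) \le L_{(0,t),(n,n\gamma)}(\mbf W^\lambda) - L_{(0,s),(n,n\gamma)}(\mbf W^\lambda)$ by Lemma~\ref{lemma:bounds for differences of Brownian LPP}; the $\delta$ bound is symmetric. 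Your Step (2) instead compares against the south-west-boundary process $L^\lambda_{(n,t)} = \sup_{s\le t}\{Y_0^\lambda(s)+L_{(1,s),(n,t)}(\mbf B)\}$, and the displayed inequality ``$L^\lambda_{(n,n\gamma)} - W_0^\lambda(s) - L_{(0,s),(n,n\gamma)}(\mbf W^\lambda) \le$ boundary error'' mixes two different environments with no identity connecting them. Relatedly, the exit-point claim is off: for the south-west-boundary process in direction $\delta < \lambda^{-2}$ the exit point is macroscopically \emph{negative} (of order $n(\delta-\lambda^{-2})$), not $o(n)$, and Lemma~\ref{Utah Lemma 4.7} constrains the free endpoint $s_n$ at level $n$, not an exit from level $0$.
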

Here, $L_{\mbf x,\mbf y}(\mbf W^\lambda)$ is the last passage time between points $\mbf x \le \mbf y \in \Z_{> 0} \times \R$ when $\mbf W^\lambda := \{W_m^\lambda\}_{m \ge 0}$ is the random environment. ($W_m^\lambda$ need not be defined for $m < 0$ since all points $(m,t) \in \Z \times \R$ in the above expression satisfy $m \ge 0$.) While Theorem~\ref{thm:busemann sandwich theorem} is not explicitly stated in~\cite{blpp_utah}, all necessary lemmas are provided in Section 4 of that paper.

\begin{lemma} \label{lemma:equality of Busemann function distribution at rationals}
Let $\mathcal A \subseteq \R$ be any countable set. Consider the space $\R^{\A}$ of functions $\A \rightarrow \R$, equipped with the Borel product $\sigma$-algebra. Fix $\theta > 0$. Then,
\begin{align*}
    &\bigl\{\h_0^\theta(t) : t \in \mathcal A\bigr\}
    \overset{d}{=} \bigl\{-B_0(t) + \f{1}{\sqrt \theta} t: t \in \mathcal A \bigr\} \qquad \text{and} \qquad
    \bigl\{\vv_{1}^\theta(t): t \in \mathcal A\bigr\} \overset{d}{=} \bigl\{q_{1}^{\f{1}{\sqrt \theta}}(t): t \in \mathcal A\bigr\}.
\end{align*}
\end{lemma}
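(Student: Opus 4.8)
\textbf{Proof plan for Lemma~\ref{lemma:equality of Busemann function distribution at rationals}.}
The strategy is to deduce the distributional identities from the sandwich inequalities of Theorem~\ref{thm:busemann sandwich theorem}, using a monotonicity-plus-matching-marginals argument, exactly in the spirit of the proof of Theorem~\ref{thm:existence of Busemann functions for fixed points} in~\cite{blpp_utah}. Set $\lambda = \tfrac{1}{\sqrt\theta}$, and recall that $\theta = \lambda^{-2}$. First I would pick countable dense sets of directions $\gamma_j \searrow \lambda^{-2}$ and $\delta_k \nearrow \lambda^{-2}$ and observe that, by Theorem~\ref{thm:existence of Busemann functions for fixed points} applied in the environment $\mbf W^\lambda$ (which is a field of i.i.d.\ two-sided Brownian motions by Theorem~\ref{Burke-type Theorem}(i)), for each fixed $\gamma,\delta$ the limits
\[
L_{(0,t),(n,n\gamma)}(\mbf W^\lambda) - L_{(0,s),(n,n\gamma)}(\mbf W^\lambda) \longrightarrow \B^{\gamma}_{\mbf W^\lambda}((0,s),(0,t))
\]
exist a.s.\ and likewise for $\delta$ and for the vertical increments. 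Here I use that the Busemann functions of Theorem~\ref{thm:existence of Busemann functions for fixed points} are built for the i.i.d.\ field $\mbf W^\lambda$; in particular $\h^{\gamma}_{\mbf W^\lambda,0}(s,t)\sim\Nor\big(\tfrac{t-s}{\sqrt\gamma},|t-s|\big)$ and $\vv^{\gamma}_{\mbf W^\lambda,1}(t)\sim\operatorname{Exp}(\tfrac{1}{\sqrt\gamma})$.

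Next I would combine these a.s.\ limits with the sandwich \eqref{eqn:sandwich inequality for h}–\eqref{eqn:sandwich inequality for v}: for every $j,k$,
\[
\h^{\delta_k}_{\mbf W^\lambda,0}(s,t)\ \le\ -Y_0^\lambda(s,t)\ =\ B_0(s,t)-\lambda(t-s)\ \le\ \h^{\gamma_j}_{\mbf W^\lambda,0}(s,t),
\]
and similarly $\vv^{\delta_k}_{\mbf W^\lambda,1}(t)\le q_1^\lambda(t)\le \vv^{\gamma_j}_{\mbf W^\lambda,1}(t)$, on a single event of probability one (intersect over the countably many $s,t\in\A$ and over $j,k$). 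Letting $\gamma_j\searrow\lambda^{-2}$ and $\delta_k\nearrow\lambda^{-2}$, the outer marginals $\Nor\big(\tfrac{t-s}{\sqrt{\gamma_j}},|t-s|\big)$ and $\Nor\big(\tfrac{t-s}{\sqrt{\delta_k}},|t-s|\big)$ both converge weakly to $\Nor\big(\tfrac{t-s}{\sqrt\theta},|t-s|\big)$, and the exponential marginals converge to $\operatorname{Exp}(\tfrac{1}{\sqrt\theta})$. A squeeze at the level of one-dimensional (indeed finite-dimensional) distributions then forces $-Y_0^\lambda(s,t) \deq \h_0^\theta(s,t)$ and $q_1^\lambda(t)\deq\vv_1^\theta(t)$, first for single increments, then — because the inequalities hold simultaneously for all $s,t\in\A$ on one event — for all finite-dimensional marginals, which by the definition of the product $\sigma$-algebra on $\R^{\A}$ gives the claimed equality of laws. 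The joint statement (the two families considered together) follows the same way, since the sandwich bounds \eqref{eqn:sandwich inequality for h} and \eqref{eqn:sandwich inequality for v} are stated for the same realization of $\mbf W^\lambda$, so one squeezes the joint law of $\big(\{\h_0^{\gamma}(t)\},\{\vv_1^{\gamma}(t)\}\big)$ between two families with a common weak limit.

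The main obstacle is the passage from ``matching one-dimensional marginals and a monotone squeeze'' to ``equality of the full finite-dimensional (hence process-level) law.'' A pointwise-in-$t$ squeeze of random variables between two sequences with a common weak limit does not by itself identify the joint law of the squeezed family; what rescues the argument is that the lower and upper bounding processes $\h^{\delta_k}_{\mbf W^\lambda,0}$ and $\h^{\gamma_j}_{\mbf W^\lambda,0}$ are themselves genuine Busemann functions of the \emph{same} i.i.d.\ environment $\mbf W^\lambda$, so they have known, explicit finite-dimensional distributions (Gaussian increments with the stated covariance), and monotonicity in the direction parameter (Lemma~\ref{lemma:summary of Busemanns fixed theta}\ref{itm:fixed theta monotonicity}, transported to $\mbf W^\lambda$) makes the sandwiching coherent across all increments at once. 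I would therefore phrase the final step as: the vector $\big(B_0(s_i,s_{i+1})-\lambda(s_{i+1}-s_i)\big)_i$ is coordinatewise squeezed between $\big(\h^{\delta_k}_{\mbf W^\lambda,0}(s_i,s_{i+1})\big)_i$ and $\big(\h^{\gamma_j}_{\mbf W^\lambda,0}(s_i,s_{i+1})\big)_i$, both of which converge in distribution (as $k,j\to\infty$) to the Gaussian vector with the law of $\big(\h^\theta_0(s_i,s_{i+1})\big)_i$; a standard lemma on monotone squeezing of random vectors with a common weak limit then yields the equality in law, and additivity reduces the vertical-increment statement to the same mechanism via $\vv_1^\gamma = Q(\h_1^\gamma,B_0)$ together with the explicit $\operatorname{Exp}$ marginal. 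I expect the rest to be routine bookkeeping with the product $\sigma$-algebra and with choosing the countable direction sequences.
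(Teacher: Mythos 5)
Your squeeze is set up with the roles of the two parameters reversed relative to the paper's proof, and as written the key step is circular. You fix $\lambda=1/\sqrt\theta$ and let the \emph{direction} parameters $\gamma_j\searrow\theta$, $\delta_k\nearrow\theta$ vary, so your bounding objects are the Busemann processes $\h^{\gamma_j}_{\mbf W^\lambda,0}$, $\h^{\delta_k}_{\mbf W^\lambda,0}$ of the i.i.d.\ field $\mbf W^\lambda$. To close the squeeze you assert that these ``have known, explicit finite-dimensional distributions (Gaussian increments with the stated covariance)'' and that they converge in law to ``the Gaussian vector with the law of $(\h_0^\theta(s_i,s_{i+1}))_i$.'' But Theorem~\ref{thm:existence of Busemann functions for fixed points} supplies only the \emph{one-dimensional} marginals $\h_m^\gamma(s,t)\sim\Nor\bigl(\tfrac{t-s}{\sqrt\gamma},|t-s|\bigr)$; the joint law of the increments of $\h_0^\gamma$ --- in particular that it is a Brownian motion with drift --- is exactly the statement of this lemma for the directions $\gamma_j,\delta_k$, and the continuity in the direction parameter of the finite-dimensional Busemann laws is established only later, in Section~\ref{section:Busemann construction}, \emph{using} this lemma. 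So your proposed resolution of what you correctly identify as the main obstacle assumes the conclusion. (There is also an orientation slip: with $\gamma_j>\lambda^{-2}>\delta_k$ the sandwich reads $\h^{\gamma_j}_{\mbf W^\lambda,0}(s,t)\le -B_0(s,t)+\lambda(t-s)\le \h^{\delta_k}_{\mbf W^\lambda,0}(s,t)$, the reverse of what you wrote, consistent with monotonicity in the direction.)

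The paper's proof avoids the issue by fixing the direction at $\theta$ and varying $\lambda$ across $1/\sqrt\theta$: the unknown law, that of the vector $(-\h_0^\theta(s_i,t_i))_i$, is the same for every $\lambda$ because $\mbf W^\lambda\deq\mbf B$, and it is bounded on both sides, at the level of joint survival functions, by the \emph{explicit} Gaussian vector $(B_0(s_i,t_i)-\lambda(t_i-s_i))_i$, whose joint distribution function is monotone and continuous in $\lambda$; letting $\lambda\to 1/\sqrt\theta$ from both sides closes the squeeze with no input about Busemann laws in other directions. Your route can in fact be repaired, but by a different mechanism than the one you propose: for fixed $(s,t)$ the gap $-B_0(s,t)+\lambda(t-s)-\h^{\gamma_j}_{\mbf W^\lambda,0}(s,t)$ is nonnegative, monotone in $j$, and has mean $\tfrac{t-s}{\sqrt\theta}-\tfrac{t-s}{\sqrt{\gamma_j}}\to 0$ by the one-dimensional marginals alone, so it vanishes a.s.; the same argument identifies the limit with $\h^{\theta}_{\mbf W^\lambda,0}(s,t)$, yielding the a.s.\ pathwise identity $\h^\theta_{\mbf W^\lambda,0}=Y_0^\lambda$ on $\A$ and hence the claim. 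That argument uses only one-dimensional marginals plus almost-sure monotonicity, and is not the finite-dimensional weak-convergence squeeze you wrote down.
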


\begin{proof}
This proof is a multivariate extension of the ``Proof of Theorem 2.5, assuming Theorem 3.1" on page 1937 of~\cite{blpp_utah}. This method originated in the setting of the log-gamma polymer in~\cite{geor-rass-sepp-yilm-15}.  From definition \eqref{horizontal Busemann simple expression} of $\h_0^\theta$, $\h_0^\theta(0) = 0 = B_0(0)$. Hence, to prove the first statement, it is sufficient to show that
\[
\bigl\{\h_0^\theta(s,t):s,t \in \mathcal \A, s < t\bigr\} \deq \bigl\{-B_0(s,t) + \f{1}{\sqrt \theta}(t - s):s,t \in \mathcal \A, s < t\bigr\}. 
\]
Theorem~\ref{thm:existence of Busemann functions for fixed points} establishes that, for fixed $s,t \in \R$,
\[
\h_0^\theta(s,t) = \lim_{n \rightarrow \infty} \bigl[ L_{(0,s),(n,n\theta)}(\mbf B) - L_{(0,t),(n,n\theta)}(\mbf B)\bigr]  \text{ a.s.}
\]
   By Theorem~\ref{Burke-type Theorem}, $\{W_m^\lambda\}_{m \ge 0}$ is a field of independent, two-sided Brownian motions. By equality of distribution between $\{W_m^\lambda\}_{m \ge 0}$ and $\{B_m\}_{m \ge 0}$, the first inequality of~\eqref{eqn:sandwich inequality for h} implies that if $\{s_1,\ldots,s_k\}$ and $\{t_1,\ldots,t_k\}$ are finite subsets of $\mathcal A$ with $s_i < t_i$ for all $i$, and $x_1,\ldots,x_k \in \overline \R$, then whenever $\theta < \lambda^{-2}$, or equivalently, $\lambda < \frac{1}{\sqrt \theta}$,
\begin{align*}
\Pp\bigl(-\h_0^\theta(s_i,t_i) \ge x_i, \;\;1 \le i \le k\bigr)
\leq \Pp\bigl(B_0(s_i,t_i) - \lambda(t_i - s_i)\ge x_i,\;\; 1 \le i \le k\bigr).
\end{align*}
 Since $\lambda > 0$ and $s_i < t_i$ for all $i$, the right-hand side 
decreases as $\lambda$ increases, so taking $\lambda \nearrow \frac{1}{\sqrt \theta}$ gives us
\begin{align*}
\Pp\bigl(-\h_0^\theta(s_i,t_i) \ge x_i, \;\;1 \le i \le k\bigr) 
\leq \Pp\Bigl(B_0(s_i,t_i) - \f{1}{\sqrt \theta}(t_i - s_i)\ge x_i,\;\; 1 \le i \le k\Bigr).
\end{align*}
 The inequality on the right of \eqref{eqn:sandwich inequality for h} with $\lambda \searrow \frac{1}{\sqrt \theta}$ establishes the reverse inequality. 


  The same argument works for  $\vv_1^\theta$, utilizing this monotonicity for $\gamma < \delta$:
\begin{align*}
q_1^\gamma(t) &= \Qa(Y,B_1)(t) = \sup_{-\infty < s \le t}\{B_1(s,t) + B_0(s,t) - \gamma(t - s)\} \\[1em]
&\ge \sup_{-\infty < s \le t}\{B_1(s,t) + B_0(s,t) - \delta(t - s)\} \ge q_1^\delta(t). 
\qedhere
\end{align*}
\end{proof}

\section{Time reversal}
\begin{theorem} \label{bijectivity of D R joint mapping}
Let $Z,B,Y,C$ be continuous functions satisfying $Z(0) = B(0) = Y(0) = C(0) = 0$ and 
\[
\lim_{t \rightarrow \pm\infty} (B(t) - Z(t)) = \lim_{t \rightarrow \pm\infty} (C(t) - Y(t)) = \mp \infty.
\]
Then,  
\[
Y = D(Z,B) \text{ and } C = R(Z,B) \qquad\text{if and only if}\qquad Z = \Da(Y,C) \text{ and } B = \Ra(Y,C),
\]
The equalities above denote equality as functions of $t$. If either of the above equivalent conditions are satisfied, then also $Q(Z,B) = \Qa(Y,C)$.
\end{theorem}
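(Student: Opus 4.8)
The plan is to prove the forward direction ($Y=D(Z,B)$, $C=R(Z,B)$ implies $Z=\Da(Y,C)$, $B=\Ra(Y,C)$ and $Q(Z,B)=\Qa(Y,C)$); the converse follows by the time-reversal symmetry of the statement (applying the forward direction to $\wt Z, \wt B$, etc., via Lemma~\ref{lemma:time reversal equality of queuing maps}), and the final assertion about $Q$ and $\Qa$ will drop out of the computation. Write $Q(t)=Q(Z,B)(t)=\sup_{t\le s<\infty}\{B(t,s)-Z(t,s)\}$. The hypothesis $\limsup_{t\to\infty}(B(t)-Z(t))=-\infty$ guarantees the sup is attained and finite, and the additional hypothesis $\lim_{t\to-\infty}(B(t)-Z(t))=+\infty$ is what will make the reverse-time sup $\Qa(Y,C)(t)=\sup_{-\infty<s\le t}\{C(s,t)-Y(s,t)\}$ finite and attained once we know what $C-Y$ is.

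The key computation is to identify $C-Y$. From the definitions, $C(t)-Y(t) = R(Z,B)(t)-D(Z,B)(t) = [B(t)+Q(t)-Q(0)] - [Z(t)+Q(0)-Q(t)] = B(t)-Z(t) + 2Q(t) - 2Q(0)$. More usefully, for $s\le t$ I would compute the increment $C(s,t)-Y(s,t) = [B(s,t)-Z(s,t)] + 2[Q(t)-Q(s)]$. Now the heart of the matter is the one-step queueing identity: I claim that for $s\le t$,
\[
Q(s) = \max\Bigl(Q(t) + B(s,t) - Z(s,t),\ \sup_{s\le u\le t}\{B(s,u)-Z(s,u)\}\Bigr),
\]
which comes from splitting the defining supremum $Q(s)=\sup_{s\le r<\infty}\{B(s,r)-Z(s,r)\}$ according to whether $r\le t$ or $r\ge t$ and using $B(s,r)-Z(s,r) = [B(s,t)-Z(s,t)]+[B(t,r)-Z(t,r)]$ for $r\ge t$. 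Rearranging this gives $Q(t)-Q(s) = \min\bigl(-(B(s,t)-Z(s,t)),\ Q(t)-\sup_{s\le u\le t}\{B(s,u)-Z(s,u)\}\bigr)$, hence
\[
C(s,t)-Y(s,t) = [B(s,t)-Z(s,t)] + 2(Q(t)-Q(s)) = -(B(s,t)-Z(s,t)) + 2\min\bigl(0,\ \ldots\bigr),
\]
and from here a short case analysis shows $\sup_{-\infty<s\le t}\{C(s,t)-Y(s,t)\} = Q(t)$, i.e. $\Qa(Y,C)(t)=Q(Z,B)(t)=Q(t)$. This simultaneously proves the last sentence of the theorem.

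Given $\Qa(Y,C)=Q$, the remaining identities are bookkeeping: $\Da(Y,C)(t) = Y(t)+\Qa(Y,C)(t)-\Qa(Y,C)(0) = Y(t)+Q(t)-Q(0)$; substituting $Y(t)=D(Z,B)(t)=Z(t)+Q(0)-Q(t)$ gives $\Da(Y,C)(t)=Z(t)$. Likewise $\Ra(Y,C)(t) = C(t)+\Qa(Y,C)(0)-\Qa(Y,C)(t) = C(t)+Q(0)-Q(t)$, and substituting $C(t)=R(Z,B)(t)=B(t)+Q(t)-Q(0)$ gives $\Ra(Y,C)(t)=B(t)$. I would also need to check at the outset that $C-Y$ satisfies $\limsup_{s\to-\infty}(Y(s)-C(s))=-\infty$ so that $\Qa(Y,C)$ is well-defined; this follows from the identity $C(s,t)-Y(s,t) = -(B(s,t)-Z(s,t)) + 2\min(0,\ldots) \ge -(B(s,t)-Z(s,t)) \to +\infty$ as $s\to-\infty$ (using the left-limit hypothesis on $B-Z$), combined with the fact that the same argument shows $C(s,t)-Y(s,t)\le Q(t)$ uniformly.

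\textbf{Main obstacle.} The only genuinely delicate point is the one-step supremum-splitting identity and the ensuing case analysis establishing $\sup_{-\infty<s\le t}\{C(s,t)-Y(s,t)\}=Q(t)$ — one must track carefully which term of the max/min is active and confirm the supremum over $s$ is actually achieved (equality holds, e.g., taking $s$ to be the argmax in $Q(t)$'s definition, suitably interpreted, or letting $s\to t$). Everything after that is substitution. I expect this to be a routine but slightly fiddly argument about suprema of increment processes, exactly the kind of "straightforward exercise" the paper elsewhere relegates to a one-line remark.
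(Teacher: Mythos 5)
Your overall architecture is sound and, once unwound, coincides with the paper's: both arguments reduce to a single Pitman-type identity, and you simply enter the equivalence from the opposite side (the paper first proves $Z=\Da(Y,C)$, $B=\Ra(Y,C)$ $\Rightarrow$ $Y=D(Z,B)$, $C=R(Z,B)$ and obtains your direction by Lemma~\ref{lemma:time reversal equality of queuing maps}; you do the reverse). Your algebra is correct: writing $g=B-Z$ and $G(s)=\sup_{r\ge s}g(r)$, so that $Q(Z,B)(s)=G(s)-g(s)$, your supremum-splitting identity is just $G(s)=\max\bigl(\sup_{[s,t]}g,\,G(t)\bigr)$, and your expression for the increment becomes $C(s,t)-Y(s,t)=g(s)-2G(s)+2G(t)-g(t)$. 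The claim $\Qa(Y,C)(t)=Q(Z,B)(t)$ is then exactly the statement $\inf_{s\le t}\{2G(s)-g(s)\}=G(t)$, which is the time-reversal of Lemma~\ref{pitman Representation Lemma}; everything downstream is, as you say, substitution, and your route to $Q=\Qa$ is in fact more direct than the paper's (which deduces it last from a ``difference is constant and both infima are $0$'' argument).

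The soft spot is exactly where you placed it, but your hints for closing it are wrong. The inequality $2G(s)-g(s)\ge G(t)$ for $s\le t$ is immediate, since $G(s)\ge g(s)$ and $G$ is nonincreasing; this gives $C(s,t)-Y(s,t)\le Q(t)$. For the reverse inequality you must exhibit $s^*\le t$ with $g(s^*)=G(s^*)=G(t)$, and neither of your suggestions produces one: the argmax in the definition of $Q(t)$ lies in $[t,\infty)$, on the wrong side of $t$, while $s=t$ gives $C(t,t)-Y(t,t)=0$, which equals $Q(t)$ only when $Q(t)=0$. The correct witness is $s^*=\sup\{s\le t: g(s)=G(t)\}$, which exists by the intermediate value theorem because $g(t)\le G(t)$ and $g(s)\to+\infty$ as $s\to-\infty$; one then checks $G(s^*)=G(t)$ since $g<G(t)$ on $(s^*,t]$. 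This is the only place the left-limit hypothesis on $B-Z$ enters, and it is precisely the content of the paper's proof of Lemma~\ref{pitman Representation Lemma}. (Two small slips elsewhere: in your well-definedness check the inequality should read $C(s,t)-Y(s,t)\le -(B(s,t)-Z(s,t))$, not $\ge$ --- the bound you actually need is the uniform upper bound $C(s,t)-Y(s,t)\le Q(t)$, which you already have; and the limit conditions on $Y,C$ are hypotheses of the theorem as stated, so verifying them is not strictly required for the proof, only for applying the result.)
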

The following lemmas will help to prove Theorem~\ref{bijectivity of D R joint mapping}.
\begin{lemma} \label{lemma:time reversal equality of queuing maps}
Let $Z,B:\R\rightarrow \R$ be continuous functions satisfying $X(0) = B(0) = 0$ and
\[
\lim_{t \rightarrow \pm \infty} (B(t) - X(t)) = \mp \infty.
\]
For a function $f:\R \rightarrow \R$, recall that we define $\wt f:\R \rightarrow \R$ by $\wt f(t) = -f(-t)$.
Then, for all $t \in \R$,
\[
Q(Z,B)(-t) = \Qa(\wt Z,\wt B)(t), \qquad -D(Z,B)(-t) = \Da(\wt Z,\wt B)(t),\qquad\text{and}\qquad -R(Z,B)(-t) = \Ra(\wt Z,\wt B)(t).
\]
\end{lemma}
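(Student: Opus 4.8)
\textbf{Proof plan for Lemma~\ref{lemma:time reversal equality of queuing maps}.}
The plan is to verify the three identities directly from the definitions \eqref{definition of Q}--\eqref{definition of R} and \eqref{reverse definition of Q}--\eqref{reverse definition of R}, using the basic algebraic fact that for any function $f$ one has $\wt f(s,t) = \wt f(t) - \wt f(s) = -f(-t) + f(-s) = f(-t,-s)$, together with the change of variables $s \mapsto -s$ in each supremum. First I would record the hypothesis transfer: since $\limsup_{t\to\infty}(B(t)-Z(t)) = -\infty$ (which is implied by $\lim_{t\to\pm\infty}(B(t)-Z(t)) = \mp\infty$, though only the $+\infty$ tail is needed for $Q$), the map $Q(Z,B)$ is well defined; and the reversed tail condition $\limsup_{s\to-\infty}(\wt Z(s) - \wt B(s)) = \limsup_{s\to-\infty}(-Z(-s)+B(-s)) = \limsup_{u\to\infty}(B(u)-Z(u)) = -\infty$ makes $\Qa(\wt Z,\wt B)$ well defined, so both sides of each claimed identity make sense.

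Next I would handle the $Q$ identity, which is the engine for the other two. Starting from
\[
\Qa(\wt Z,\wt B)(t) = \sup_{-\infty < s \le t}\{\wt B(s,t) - \wt Z(s,t)\} = \sup_{-\infty < s \le t}\{\wt B(t) - \wt B(s) - \wt Z(t) + \wt Z(s)\},
\]
substitute $\wt B(t) = -B(-t)$, $\wt B(s) = -B(-s)$, etc., and then replace $s$ by $-u$ so the constraint $s \le t$ becomes $u \ge -t$; this turns the expression into $\sup_{-t \le u < \infty}\{B(-t,u) - Z(-t,u)\} = Q(Z,B)(-t)$, which is exactly \eqref{definition of Q} evaluated at $-t$. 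With the $Q$ identity in hand, the $D$ identity follows by plugging into \eqref{definition of D} and \eqref{reverse definition of D}: compute
\[
\Da(\wt Z,\wt B)(t) = \wt Z(t) + \Qa(\wt Z,\wt B)(t) - \Qa(\wt Z,\wt B)(0) = -Z(-t) + Q(Z,B)(-t) - Q(Z,B)(0),
\]
and recognize the right-hand side as $-\bigl(Z(-t) + Q(Z,B)(0) - Q(Z,B)(-t)\bigr) = -D(Z,B)(-t)$. The $R$ identity is entirely analogous, using \eqref{definition of R} and \eqref{reverse definition of R} and $\wt B(t) = -B(-t)$.

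There is no genuine obstacle here — this is a bookkeeping lemma — so the only thing to be careful about is sign conventions and the direction of the inequality constraint in the supremum when performing the substitution $s \mapsto -s$; I would write out the constraint manipulation $\{\,s \le t\,\} \leftrightarrow \{\,-u \le t\,\} \leftrightarrow \{\,u \ge -t\,\}$ explicitly so the reader sees that a reverse-time (left-looking) supremum becomes a forward-time (right-looking) supremum and vice versa. Since the statement is asserted for all $t \in \R$, no limiting or continuity argument beyond well-definedness is needed; the identities are pointwise.
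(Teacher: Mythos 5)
Your proposal is correct and is precisely the "routine check, using the definitions" that the paper gives as its entire proof: the change of variables $s\mapsto -u$ in the supremum for the $Q$ identity, followed by direct substitution into the definitions of $D$, $R$, $\Da$, $\Ra$. The sign bookkeeping ($\wt B(s,t)=B(-t,-s)$ and the reversal of the constraint) is handled correctly, so nothing is missing.
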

\begin{proof}
This is a routine check, using the definitions. 
\end{proof}

The following is a well-known fact, but is often stated without proof, so we include full justification for the sake of completeness. For example, it appears as Equation (1.4) in~\cite{Pitman1975} and Equation (13) in~\cite{brownian_queues}.
\begin{lemma} \label{pitman Representation Lemma}
Let $f:\R \rightarrow \R$ be a continuous function such that 
\[
\lim_{t \rightarrow \pm \infty} f(t) = \pm \infty.
\]
Set $F(t) = \sup_{-\infty < s \le t} f(s)$. Then,
\begin{equation} \label{pitman Representation Equation}
\inf_{t \le s < \infty } (2 F(s) - f(s)) = F(t).
\end{equation}
\end{lemma}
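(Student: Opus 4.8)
The plan is to establish the two inequalities in \eqref{pitman Representation Equation} separately. Throughout, write $F(t)=\sup_{-\infty<s\le t}f(s)$; this supremum is finite for every $t$ because $f$ is continuous and $f(s)\to-\infty$ as $s\to-\infty$, so $f$ is bounded above on $(-\infty,t]$ and in fact attains its supremum there. The function $F$ is clearly non-decreasing, it satisfies $F(s)\to+\infty$ as $s\to+\infty$ (since $f(s)\to+\infty$), and it is continuous: for $t_1<t_2$ one has $F(t_2)=\max\bigl(F(t_1),\sup_{t_1\le s\le t_2}f(s)\bigr)$, and both right- and left-continuity then follow by a routine argument from the continuity of $f$ and the inequality $F(t)\ge f(t)$. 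These three properties of $F$ are the only inputs beyond the hypotheses on $f$.

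For the lower bound I would fix $s\ge t$ and note that $F(s)\ge f(s)$ directly from the definition of $F$, whence $2F(s)-f(s)=F(s)+\bigl(F(s)-f(s)\bigr)\ge F(s)\ge F(t)$, the last step using monotonicity of $F$ and $s\ge t$. Taking the infimum over $s\ge t$ gives $\inf_{t\le s<\infty}\bigl(2F(s)-f(s)\bigr)\ge F(t)$.

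For the reverse inequality I would exhibit a point at which the value $F(t)$ is attained. Set $\sigma:=\sup\{s\ge t:\ F(s)=F(t)\}$. The set $\{s\ge t: F(s)=F(t)\}$ contains $t$, is bounded above because $F(s)\to+\infty$, and is closed because $F$ is continuous; hence it equals the interval $[t,\sigma]$ with $t\le\sigma<\infty$ and $F(\sigma)=F(t)$. The key claim is that $f(\sigma)=F(t)$. On one hand $f(\sigma)\le F(\sigma)=F(t)$. If instead $f(\sigma)<F(t)$, then by continuity of $f$ there is $\delta>0$ with $f(u)<F(t)$ for all $u\in[\sigma,\sigma+\delta]$; since also $f(u)\le F(\sigma)=F(t)$ for all $u\le\sigma$, this gives $F(\sigma+\delta)=\sup_{u\le\sigma+\delta}f(u)\le F(t)$, and combined with $F(\sigma+\delta)\ge F(\sigma)=F(t)$ we get $F(\sigma+\delta)=F(t)$, contradicting the choice of $\sigma$. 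Therefore $f(\sigma)=F(\sigma)=F(t)$, so $2F(\sigma)-f(\sigma)=2F(t)-F(t)=F(t)$, which shows $\inf_{t\le s<\infty}\bigl(2F(s)-f(s)\bigr)\le F(t)$. Combining the two bounds yields \eqref{pitman Representation Equation}.

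I do not anticipate a genuine obstacle here. The only point requiring care is the continuity of the running supremum $F$, as this is what makes $[t,\sigma]$ closed and legitimizes the local perturbation argument at $\sigma$; everything else is immediate from the definitions.
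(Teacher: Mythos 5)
Your proof is correct and follows essentially the same strategy as the paper: the lower bound is identical, and the upper bound is obtained by exhibiting a point $s\ge t$ with $f(s)=F(s)=F(t)$. The only difference is cosmetic — the paper takes the \emph{first} time $f$ reaches $F(t)$ after $t$ (via the intermediate value theorem and $f(s)\to+\infty$), whereas you take the \emph{last} time $F$ equals $F(t)$ (via monotonicity and continuity of $F$); both witnesses work.
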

\begin{proof}
The left-hand side of~\eqref{pitman Representation Equation} is
\[
\inf_{t \le s < \infty} (2F(s) - f(s)) = \inf_{t \le s < \infty} \bigl(2\sup_{-\infty <u \leq s} f(u) - f(s)\bigr).
\]
For all $s \geq t$, $\sup_{\infty < u \le s} f(u)$ is greater than or equal to both $f(s)$ and $\sup_{-\infty < u \le t} f(u)$. Therefore,
\[
2\sup_{-\infty < u \leq s} f(u) - f(s)\geq \sup_{-\infty < u \leq t} f(u)+ f(s) - f(s) = F(t).
\]
 This establishes one direction of Equation~\eqref{pitman Representation Equation}. To show the other direction, we show that there exists $s \geq t$ such that 
\[
2F(s) - f(s) = F(t). 
\]
Note that $f(t) \leq F(t)$ and that $\lim_{s \rightarrow \infty} f(s) = \infty$ by assumption. Hence, by continuity of $f$,  $f(s) = F(t)$ for some $s \geq t$. Let 
\[
s^* = \inf\{s \geq t: f(s) = F(t)\}.
\]
Then, $F(s^*) = F(t)$. Therefore,
\[
2F(s^\star) - f(s^*) = 2F(t) - F(t) = F(t). \qedhere
\]
\end{proof}

\begin{proof}[Proof of Theorem~\ref{bijectivity of D R joint mapping}]
Assume that $Z,B,Y,C$ satisfy the conditions of the Theorem. First, assume that $Z = \Da(Y,C)$ and $B = \Ra(Y,C)$.
By definitions~\eqref{reverse definition of Q}--\eqref{reverse definition of R},
\begin{align*}
Z(s) &=Y(s) + \sup_{-\infty < u \le s}\{C(u,s) - Y(u,s)\} - \sup_{-\infty < u \le 0}\{C(u,0)- Y(u,0)\} \\[1em]
&= C(s) + \sup_{-\infty < u \le s}\{Y(u)- C(u)\} - \sup_{-\infty < u \le 0}\{Y(u) - C(u)\},\qquad\text{and} \\[1em]
B(s) &=C(s) + \sup_{-\infty < u \le 0}\{C(u,0)- Y(u,0)\} - \sup_{-\infty < u \le s}\{C(u,s) - Y(u,s)\} \\[1em]
&= Y(s)+\sup_{-\infty < u \le 0}\{Y(u) - C(u)\} - \sup_{-\infty < u \le s}\{Y(u) - C(u)\}.
\end{align*}

Then,
\begin{align*}
&D(Z,B)(t) = Z(t) + Q(Z,B)(0) - Q(Z,B)(t)  \\[1em]
&= Z(t) + \sup_{0 \le s < \infty}\{B(0,s) - Z(0,s)\} - \sup_{t \le s < \infty}\{B(t,s) - Z(t,s)\} \\[1em]
&= B(t) +\sup_{0 \le s < \infty}\{B(s) - Z(s)\} - \sup_{t \le s < \infty}\{B(s) - Z(s)\} \\[1em]
&= Y(t) + \sup_{-\infty < s \le 0}\{Y(s) - C(s)\} - \sup_{-\infty < s \le t}\{Y(s) - C(s)\} \\[1em]
&\qquad\qquad\qquad+ \sup_{0 \le s < \infty}\bigl\{Y(s) - C(s) - 2\sup_{-\infty < u \le s}\{Y(u) - C(u)\} \bigr\}  \\[1em]
&\qquad\qquad\qquad\qquad\qquad\qquad\qquad\qquad\qquad-\sup_{t \le s < \infty}\bigl\{Y(s) - C(s) - 2\sup_{-\infty < u \le s}\{Y(u) - C(u)\} \bigr\}. 
\end{align*}
To show that this equals $Y(t)$, it is therefore sufficient to show that for $t \in \R$,
\[
\sup_{-\infty < s \le t}\{Y(s) - C(s)\} = \inf_{t \le s < \infty}\bigl\{2\sup_{-\infty < u \le s}\{Y(u) - C(u)\} - (Y(s) - C(s))   \bigr\},
\]
which follows from Lemma~\ref{pitman Representation Lemma}. The proof that $R(Z,B)(t) = C(t)$ follows by the same reasoning. The converse then follows by the previous case and Lemma~\ref{lemma:time reversal equality of queuing maps}: if $Y = D(Z,B)$ and $C = R(Z,B)$, $\wt Y = \Da(\wt Z,\wt B)$ and $\wt C = \Ra(\wt Z,\wt B)$, so $\wt Z = D(\wt Y,\wt C)$ and $\wt B = R(\wt Y,\wt C)$. Hence $Z = \Da(Y,C)$ and $B = \Ra(Y,C)$.

We finish by showing that $Q(Z,B) = \Qa(Y,C)$ whenever the two equivalent conditions of the theorem are met. For all $t \in \R$,
\begin{align*}
Y(t) &= D(Z,B)(t) = Z(t) + Q(Z,B)(0) - Q(Z,B)(t), \text{ and } \\[1em]
Z(t) &= \Da(Y,C) = Y(t) + \Qa(Y,C)(t) - \Qa(Y,C)(0).
\end{align*}
Putting these two equations together,
\[
Q(Z,B)(t) - \Qa(Y,C)(t) = Q(Z,B)(0) - \Qa(Z,B)(0).
\]
This is true for all $t \in \R$, so $Q(Z,B)(t) - \Qa(Y,C)(t)$ must be equal to some constant. Recall that 
\[
Q(Z,B)(t) = \sup_{t \le s <\infty}\{B(t,s) - Z(t,s)\},\qquad\text{and}\qquad\Qa(Y,C)(t) = \sup_{-\infty < s \le t}\{C(s,t) - Y(s,t)\}.
\]
By the limit conditions of the theorem, maximizers exist for each of the supremums above, so $Q(Z,B)$ and $\Qa(Y,C)$ both achieve a minimum value of $0$. Hence, the constant must be $0$. 
\end{proof}

\section*{Acknowledgements}
The authors thank Tom Alberts, Firas Rassoul-Agha, and Neil O'Connell for helpful discussions. We also thank the anonymous referees for their feedback that has improved the exposition of this paper. T.\ Sepp\"al\"ainen was partially supported by National Science Foundation grant DMS-1854619 and by the Wisconsin Alumni Research Foundation. E. Sorensen was partially supported by T. Sepp{\"a}l{\"a}inen, through National Science Foundation grants DMS-1602846 and DMS-1854619.

\bibliographystyle{alpha}
\bibliography{references_file}

\begin{thebibliography}{GRASY15}

\bibitem[AH16]{Ahlberg_Hoffman}
Daniel {Ahlberg} and Christopher {Hoffman}.
\newblock {Random coalescing geodesics in first-passage percolation}.
\newblock {\em arXiv e-prints}, page arXiv:1609.02447, 2016.

\bibitem[ARAS20]{blpp_utah}
Tom Alberts, Firas Rassoul-Agha, and Mackenzie Simper.
\newblock Busemann functions and semi-infinite {O}'{C}onnell-{Y}or polymers.
\newblock {\em Bernoulli}, 26(3):1927--1955, 2020.

\bibitem[Bar01]{Baryshnikov}
Yu. Baryshnikov.
\newblock G{UE}s and queues.
\newblock {\em Probab. Theory Related Fields}, 119(2):256--274, 2001.

\bibitem[BBS20]{Balzs2019NonexistenceOB}
M\'{a}rton Bal\'{a}zs, Ofer Busani, and Timo Sepp\"{a}l\"{a}inen.
\newblock Non-existence of bi-infinite geodesics in the exponential corner
  growth model.
\newblock {\em Forum Math. Sigma}, 8:Paper No. e46, 34, 2020.

\bibitem[BBS21]{bala-busa-sepp-20}
M\'{a}rton Bal\'{a}zs, Ofer Busani, and Timo Sepp\"{a}l\"{a}inen.
\newblock Local stationarity in exponential last-passage percolation.
\newblock {\em Probab. Theory Related Fields}, 180(1-2):113--162, 2021.

\bibitem[BF18]{busa-ferr-20}
Ofer {Busani} and Patrik {Ferrari}.
\newblock {Universality of the geodesic tree in last passage percolation}.
\newblock {\em Ann. Probab.}, page arXiv:2008.07844, 2018.
\newblock To Appear.

\bibitem[BGH19]{DirectedLanscape_Hausdorff_dim}
Erik {Bates}, Shirshendu {Ganguly}, and Alan {Hammond}.
\newblock {Hausdorff dimensions for shared endpoints of disjoint geodesics in
  the directed landscape}.
\newblock {\em Electron. J. Probab.}, page arXiv:1912.04164, 2019.
\newblock To appear.

\bibitem[BGH21]{Airy_Fractal}
Riddhipratim Basu, Shirshendu Ganguly, and Alan Hammond.
\newblock Fractal geometry of {$\rm Airy_2$} processes coupled via the {A}iry
  sheet.
\newblock {\em Ann. Probab.}, 49(1):485--505, 2021.

\bibitem[BHS19]{SlyNonexistenceOB}
Riddhipratim Basu, Christopher Hoffman, and Allan Sly.
\newblock Nonexistence of bigeodesics in planar exponential last passage
  percolation.
\newblock {\em Comm. Math. Phys.}, page arXiv:1811.04908, 2019.
\newblock To appear.

\bibitem[BKS03]{benjamini2003}
Itai Benjamini, Gil Kalai, and Oded Schramm.
\newblock First passage percolation has sublinear distance variance.
\newblock {\em Ann. Probab.}, 31(4):1970--1978, 2003.

\bibitem[BS02]{BM_handbook}
Andrei~N. Borodin and Paavo Salminen.
\newblock {\em Handbook of {B}rownian motion---facts and formulae}.
\newblock Probability and its Applications. Birkh\"{a}user Verlag, Basel,
  second edition, 2002.

\bibitem[BSS19]{BasuSarkarSly_Coalescence}
Riddhipratim Basu, Sourav Sarkar, and Allan Sly.
\newblock Coalescence of geodesics in exactly solvable models of last passage
  percolation.
\newblock {\em J. Math. Phys.}, 60(9):093301, 22, 2019.

\bibitem[Buf03]{Buffet_Time_of_BMdrift_max}
Emannuel Buffet.
\newblock On the time of the maximum of {B}rownian motion with drift.
\newblock {\em J. Appl. Math. Stochastic Anal.}, 16(3):201--207, 2003.

\bibitem[CH14]{CorwinHammond}
Ivan Corwin and Alan Hammond.
\newblock Brownian {G}ibbs property for {A}iry line ensembles.
\newblock {\em Invent. Math.}, 195(2):441--508, 2014.

\bibitem[CHHM21]{KPZ_violate_Johansson}
Ivan {Corwin}, Alan {Hammond}, Milind {Hegde}, and Konstantin {Matetski}.
\newblock {Exceptional times when the KPZ fixed point violates Johansson's
  conjecture on maximizer uniqueness}.
\newblock {\em arXiv e-prints}, page arXiv:2101.04205, 2021.

\bibitem[DH14]{Damron_Hanson2012}
Michael Damron and Jack Hanson.
\newblock Busemann functions and infinite geodesics in two-dimensional
  first-passage percolation.
\newblock {\em Comm. Math. Phys.}, 325(3):917--963, 2014.

\bibitem[DH17]{Damron_Hanson2016}
Michael Damron and Jack Hanson.
\newblock Bigeodesics in first-passage percolation.
\newblock {\em Comm. Math. Phys.}, 349(2):753--776, 2017.

\bibitem[DNV19]{Dauvergne2019UniformCT}
Duncan {Dauvergne}, Mihai {Nica}, and B{\'a}lint {Vir{\'a}g}.
\newblock {Uniform convergence to the Airy line ensemble}.
\newblock {\em arXiv e-prints}, page arXiv:1907.10160, 2019.

\bibitem[DOV18]{Directed_Landscape}
Duncan {Dauvergne}, Janosch {Ortmann}, and B{\'a}lint {Vir{\'a}g}.
\newblock {The directed landscape}.
\newblock {\em arXiv e-prints}, page arXiv:1812.00309, 2018.

\bibitem[FP05]{ferr-pime-05}
Pablo~A. Ferrari and Leandro P.~R. Pimentel.
\newblock Competition interfaces and second class particles.
\newblock {\em Ann. Probab.}, 33(4):1235--1254, 2005.

\bibitem[FS20]{CGM_Joint_Buse}
Wai-Tong~(Louis) Fan and Timo Sepp{\"a}l{\"a}inen.
\newblock {Joint distribution of Busemann functions for the exactly solvable
  corner growth model}.
\newblock {\em Probability and Mathematical Physics}, 1(1):55--100, 2020.

\bibitem[GH21]{Ganguly-Hegde-2021}
Shirshendu {Ganguly} and Milind {Hegde}.
\newblock {Local and global comparisons of the Airy difference profile to
  Brownian local time}.
\newblock {\em arXiv e-prints}, page arXiv:2103.12029, 2021.

\bibitem[GRAS17a]{goed_and_comp_interface}
Nicos Georgiou, Firas Rassoul-Agha, and Timo Sepp\"{a}l\"{a}inen.
\newblock Geodesics and the competition interface for the corner growth model.
\newblock {\em Probab. Theory Related Fields}, 169(1-2):223--255, 2017.

\bibitem[GRAS17b]{general_Busemanns}
Nicos Georgiou, Firas Rassoul-Agha, and Timo Sepp\"{a}l\"{a}inen.
\newblock Stationary cocycles and {B}usemann functions for the corner growth
  model.
\newblock {\em Probab. Theory Related Fields}, 169(1-2):177--222, 2017.

\bibitem[GRASY15]{geor-rass-sepp-yilm-15}
Nicos Georgiou, Firas Rassoul-Agha, Timo Sepp{\"a}l{\"a}inen, and Atilla
  Yilmaz.
\newblock Ratios of partition functions for the log-gamma polymer.
\newblock {\em Ann. Probab.}, 43(5):2282--2331, 2015.

\bibitem[GTW01]{Gravner}
Janko Gravner, Craig~A. Tracy, and Harold Widom.
\newblock Limit theorems for height fluctuations in a class of discrete space
  and time growth models.
\newblock {\em J. Statist. Phys.}, 102(5-6):1085--1132, 2001.

\bibitem[GW91]{glynn1991}
Peter~W. Glynn and Ward Whitt.
\newblock Departures from many queues in series.
\newblock {\em Ann. Appl. Probab.}, 1(4):546--572, 1991.

\bibitem[{Ham}16]{Hammond1}
Alan {Hammond}.
\newblock {Brownian regularity for the Airy line ensemble, and multi-polymer
  watermelons in Brownian last passage percolation}.
\newblock {\em arXiv e-prints}, page arXiv:1609.02971, 2016.

\bibitem[Ham19a]{Hammond3}
Alan Hammond.
\newblock Modulus of continuity of polymer weight profiles in {B}rownian last
  passage percolation.
\newblock {\em Ann. Probab.}, 47(6):3911--3962, 2019.

\bibitem[Ham19b]{Hammond4}
Alan Hammond.
\newblock A patchwork quilt sewn from {B}rownian fabric: regularity of polymer
  weight profiles in {B}rownian last passage percolation.
\newblock {\em Forum Math. Pi}, 7:e2, 69, 2019.

\bibitem[Ham20]{Hammond2}
Alan Hammond.
\newblock Exponents governing the rarity of disjoint polymers in {B}rownian
  last passage percolation.
\newblock {\em Proc. Lond. Math. Soc. (3)}, 120(3):370--433, 2020.

\bibitem[Har85]{Harrison1985}
J.~Michael Harrison.
\newblock {\em Brownian motion and stochastic flow systems}.
\newblock Wiley Series in Probability and Mathematical Statistics: Probability
  and Mathematical Statistics. John Wiley \& Sons, Inc., New York, 1985.

\bibitem[HMO02]{Concentration_Results}
B.~M. Hambly, James~B. Martin, and Neil O'Connell.
\newblock Concentration results for a {B}rownian directed percolation problem.
\newblock {\em Stochastic Process. Appl.}, 102(2):207--220, 2002.

\bibitem[HN01]{howard2001}
C.~Douglas Howard and Charles~M. Newman.
\newblock Geodesics and spanning trees for {E}uclidean first-passage
  percolation.
\newblock {\em Ann. Probab.}, 29(2):577--623, 2001.

\bibitem[Hof08]{hoffman2008}
Christopher Hoffman.
\newblock Geodesics in first passage percolation.
\newblock {\em Ann. Appl. Probab.}, 18(5):1944--1969, 2008.

\bibitem[HW90]{harrison1990}
J.~M. Harrison and R.~J. Williams.
\newblock On the quasireversibility of a multiclass {B}rownian service station.
\newblock {\em Ann. Probab.}, 18(3):1249--1268, 1990.

\bibitem[HW92]{harrison1992}
J.~M. Harrison and R.~J. Williams.
\newblock Brownian models of feedforward queueing networks: quasireversibility
  and product form solutions.
\newblock {\em Ann. Appl. Probab.}, 2(2):263--293, 1992.

\bibitem[JRS21]{Geometry_of_Geodesics}
Christopher {Janjigian}, Firas {Rassoul-Agha}, and Timo {Sepp{\"a}l{\"a}inen}.
\newblock {Geometry of geodesics through Busemann measures in directed
  last-passage percolation}.
\newblock {\em J. Eur. Math. Soc.}, 2021.
\newblock To appear.

\bibitem[LN96]{licea1996}
Cristina Licea and Charles~M. Newman.
\newblock Geodesics in two-dimensional first-passage percolation.
\newblock {\em Ann. Probab.}, 24(1):399--410, 1996.

\bibitem[MO07]{Moriarty-OConnell-2007}
J.~Moriarty and N.~O'Connell.
\newblock On the free energy of a directed polymer in a {B}rownian environment.
\newblock {\em Markov Process. Related Fields}, 13(2):251--266, 2007.

\bibitem[MP10]{morters_peres_2010}
Peter M\"{o}rters and Yuval Peres.
\newblock {\em Brownian motion}, volume~30 of {\em Cambridge Series in
  Statistical and Probabilistic Mathematics}.
\newblock Cambridge University Press, Cambridge, 2010.
\newblock With an appendix by Oded Schramm and Wendelin Werner.

\bibitem[MQR21]{KPZfixed}
Konstantin Matetski, Jeremy Quastel, and Daniel Remenik.
\newblock The {KPZ} fixed point.
\newblock {\em Acta Math.}, 227(1):115--203, 2021.

\bibitem[New95]{Newman}
Charles~M. Newman.
\newblock A surface view of first-passage percolation.
\newblock In {\em Proceedings of the {I}nternational {C}ongress of
  {M}athematicians, {V}ol. 1, 2 ({Z}\"{u}rich, 1994)}, pages 1017--1023.
  Birkh\"{a}user, Basel, 1995.

\bibitem[NQR20]{reflected_KPZfixed}
Mihai Nica, Jeremy Quastel, and Daniel Remenik.
\newblock One-sided reflected {B}rownian motions and the {KPZ} fixed point.
\newblock {\em Forum Math. Sigma}, 8:Paper No. e63, 16, 2020.

\bibitem[OY01]{brownian_queues}
Neil O'Connell and Marc Yor.
\newblock Brownian analogues of {B}urke's theorem.
\newblock {\em Stochastic Process. Appl.}, 96(2):285--304, 2001.

\bibitem[OY02]{rep_non_colliding}
Neil O'Connell and Marc Yor.
\newblock A representation for non-colliding random walks.
\newblock {\em Electron. Comm. Probab.}, 7:1--12, 2002.

\bibitem[Pim16]{pimentel2016}
Leandro P.~R. Pimentel.
\newblock Duality between coalescence times and exit points in last-passage
  percolation models.
\newblock {\em Ann. Probab.}, 44(5):3187--3206, 2016.

\bibitem[Pit75]{Pitman1975}
J.~W. Pitman.
\newblock One-dimensional {B}rownian motion and the three-dimensional {B}essel
  process.
\newblock {\em Advances in Appl. Probability}, 7(3):511--526, 1975.

\bibitem[QS20]{KPZ_equation_convergence}
Jeremy {Quastel} and Sourav {Sarkar}.
\newblock {Convergence of exclusion processes and KPZ equation to the KPZ fixed
  point}.
\newblock {\em arXiv e-prints}, page arXiv:2008.06584, 2020.

\bibitem[Sep18]{Sepp_lecture_notes}
Timo Sepp\"{a}l\"{a}inen.
\newblock The corner growth model with exponential weights.
\newblock In {\em Random growth models}, volume~75 of {\em Proc. Sympos. Appl.
  Math.}, pages 133--201. Amer. Math. Soc., Providence, RI, 2018.

\bibitem[Sep20]{Timo_Coalescence}
Timo Sepp\"{a}l\"{a}inen.
\newblock Existence, uniqueness and coalescence of directed planar geodesics:
  proof via the increment-stationary growth process.
\newblock {\em Ann. Inst. Henri Poincar\'{e} Probab. Stat.}, 56(3):1775--1791,
  2020.

\bibitem[SN01]{Norros-Salminen}
Paavo Salminen and Ilkka Norros.
\newblock On busy periods of the unbounded {B}rownian storage.
\newblock {\em Queueing Syst.}, 39(4):317--333 (2002), 2001.

\bibitem[SS20]{XiaoTimoCoalescence}
Timo Sepp\"{a}l\"{a}inen and Xiao Shen.
\newblock Coalescence estimates for the corner growth model with exponential
  weights.
\newblock {\em Electron. J. Probab.}, 25:Paper No. 85, 31, 2020.

\bibitem[SV10]{Sepp_and_Valko}
Timo Sepp\"{a}l\"{a}inen and Benedek Valk\'{o}.
\newblock Bounds for scaling exponents for a {$1+1$} dimensional directed
  polymer in a {B}rownian environment.
\newblock {\em ALEA Lat. Am. J. Probab. Math. Stat.}, 7:451--476, 2010.

\bibitem[SV21]{Sarkar-Virag-21}
Sourav Sarkar and B\'{a}lint Vir\'{a}g.
\newblock Brownian absolute continuity of the {KPZ} fixed point with arbitrary
  initial condition.
\newblock {\em Ann. Probab.}, 49(4):1718--1737, 2021.

\bibitem[Tay55]{taylor_1955}
S.~J. Taylor.
\newblock The {$\alpha$}-dimensional measure of the graph and set of zeros of a
  {B}rownian path.
\newblock {\em Proc. Cambridge Philos. Soc.}, 51:265--274, 1955.

\bibitem[Vir20]{heat_and_landscape}
B{\'a}lint Vir{\'a}g.
\newblock {The heat and the landscape I}.
\newblock {\em arXiv e-prints}, page arXiv:2008.07241, 2020.

\bibitem[WW98]{wehr_woo_1998}
Jan Wehr and Jung Woo.
\newblock Absence of geodesics in first-passage percolation on a half-plane.
\newblock {\em Ann. Probab.}, 26(1):358--367, 1998.

\end{thebibliography}

\end{document}